\DeclareSymbolFont{cyrletters}{OT2}{wncyr}{m}{n}
\DeclareMathSymbol{\Sha}{\mathalpha}{cyrletters}{"58}
\newcommand\cyr{\renewcommand\rmdefault{wncyr}
\renewcommand\sfdefault{wncyss}
\renewcommand\encodingdefault{OT2}
\normalfont\selectfont}
\DeclareTextFontCommand{\textcyr}{\cyr}
\theoremstyle{plain}
\newtheorem{theorem}{Theorem}[section]
\newtheorem*{theorem-nn}{Theorem}
\newtheorem{proposition}[theorem]{Proposition}
\newtheorem*{proposition-nn}{Proposition}
\theoremstyle{definition}
\newtheorem{definition}[theorem]{Definition}
\newtheorem{example}[theorem]{Example}
\newtheorem{remark}[theorem]{Remark}
\newtheorem{details}[theorem]{Details}
\newtheorem*{acknowledgments}{Acknowledgments}
\theoremstyle{remark}
\newcommand{\bZ}{\mathbbm{Z}}\newcommand{\bQ}{\mathbbm{Q}}
\newcommand{\bC}{\mathbbm{C}}
\newcommand{\bG}{\mathbbm{G}}\newcommand{\bF}{\mathbbm{F}}
\newcommand{\bA}{\mathbbm{A}}\newcommand{\bP}{\mathbbm{P}}
\newcommand{\cC}{\mathcal{C}}\newcommand{\cD}{\mathcal{D}}
\newcommand{\cH}{\mathcal{H}}\newcommand{\cS}{\mathcal{S}}
\newcommand{\GL}{{\rm GL}}\newcommand{\SL}{{\rm SL}}
\newcommand{\PSL}{{\rm PSL}}
\newcommand{\Ch}{\hspace*{-5.2mm}\text{\LARGE\cyr ch}}
\title{Norm one tori and Hasse norm principle}
\author[A. Hoshi]{Akinari Hoshi}
\address{Department of Mathematics, Niigata University, Niigata 950-2181, Japan}
\email{hoshi@math.sc.niigata-u.ac.jp}
\author[K. Kanai]{Kazuki Kanai}
\address{Graduate School of Science and Technology, Niigata University, Niigata 950-2181, Japan}
\email{kanai@m.sc.niigata-u.ac.jp}
\author[A. Yamasaki]{Aiichi Yamasaki}
\address{Department of Mathematics, Kyoto University, Kyoto 606-8502, Japan}
\email{aiichi.yamasaki@gmail.com}
\thanks{{\it Key words and phrases.} 
Algebraic tori, norm one tori, Hasse norm principle, weak approximation, 
 rationality problem, flabby resolution.\\
This work was partially supported by JSPS KAKENHI Grant Numbers 
16K05059, 19K03418, 20K03511. 
Parts of the work were finished when the
first-named author and the third-named author 
were visiting the National Center for Theoretic Sciences (Taipei),
whose support is gratefully acknowledged.
}
\subjclass[2010]{Primary 11E72, 12F20, 13A50, 14E08, 20C10, 20G15.}
\begin{document}
\maketitle
\begin{abstract}
Let $k$ be a field and $T$ be an algebraic $k$-torus. 
In 1969, 
over a global field $k$, Voskresenskii proved that there exists 
an exact sequence $0\to A(T)\to H^1(k,{\rm Pic}\,\overline{X})^\vee\to \Sha(T)\to 0$ where $A(T)$ is the kernel of the weak approximation of $T$, 
$\Sha(T)$ is the Shafarevich-Tate group of $T$, 
$X$ is a smooth $k$-compactification of $T$, 
$\overline{X}=X\times_k\overline{k}$, 
${\rm Pic}\,\overline{X}$ is the Picard group of $\overline{X}$ and 
$\vee$ stands for the Pontryagin dual. 
On the other hand, in 1963, Ono proved that 
for the norm one torus $T=R^{(1)}_{K/k}(\bG_m)$ of $K/k$, 
$\Sha(T)=0$ if and only if the Hasse norm principle holds for $K/k$. 
First, we determine $H^1(k,{\rm Pic}\, \overline{X})$ 
for algebraic $k$-tori $T$ up to dimension $5$. 
Second, we determine $H^1(k,{\rm Pic}\, \overline{X})$ 
for norm one tori $T=R^{(1)}_{K/k}(\bG_m)$ with 
$[K:k]=n\leq 15$ and $n\neq 12$. 
We also show that 
$H^1(k,{\rm Pic}\, \overline{X})=0$ 
for $T=R^{(1)}_{K/k}(\bG_m)$ 
when the Galois group of the Galois closure of $K/k$ 
is the Mathieu group $M_n\leq S_n$ 
with $n=11,12,22,23,24$. 
Third, we give 
a necessary and sufficient condition for the Hasse norm principle 
for $K/k$ with $[K:k]=n\leq 15$ and $n\neq 12$. 
As applications of the results, 
we get the group $T(k)/R$ of $R$-equivalence classes 
over a local field $k$ via Colliot-Th\'{e}l\`{e}ne and Sansuc's formula  
and the Tamagawa number $\tau(T)$ over a number field $k$ 
via Ono's formula $\tau(T)=|H^1(k,\widehat{T})|/|\Sha(T)|$. 
\end{abstract}
\tableofcontents
%
\section{Introduction}\label{S1}

Let $k$ be a field, 
$\overline{k}$ be a fixed separable closure of $k$ and 
$\mathcal{G}={\rm Gal}(\overline{k}/k)$ be the absolute Galois group of $k$. 
Let $T$ be an algebraic $k$-torus, 
i.e. a group $k$-scheme with fiber product (base change) 
$T\times_k \overline{k}=
T\times_{{\rm Spec}\, k}\,{\rm Spec}\, \overline{k}
\simeq (\bG_{m,\overline{k}})^n$; 
$k$-form of the split torus $(\bG_m)^n$. 
Then there exists the minimal (canonical) finite Galois extension $K/k$ 
with Galois group $G={\rm Gal}(K/k)$ such that 
$T$ splits over $K$: $T\times_k K\simeq (\bG_{m,K})^n$. 
It is also well-known that 
there is the duality between the category of $G$-lattices, 
i.e. finitely generated $\bZ[G]$-modules which are $\bZ$-free 
as abelian groups, 
and the category of algebraic $k$-tori which split over $K$ 
(see Ono \cite[Section 1.2]{Ono61}, 
Voskresenskii \cite[page 27, Example 6]{Vos98} and 
Knus, Merkurjev, Rost and Tignol \cite[page 333, Proposition 20.17]{KMRT98}). 
Indeed, if $T$ is an algebraic $k$-torus, then the character 
module $\widehat{T}={\rm Hom}(T,\bG_m)$ of $T$ 
may be regarded as a $G$-lattice. 
Let $X$ be a smooth $k$-compactification of $T$, 
i.e. smooth projective $k$-variety $X$ 
containing $T$ as a dense open subvariety, 
and $\overline{X}=X\times_k\overline{k}$. 
There exists such a smooth $k$-compactification of an algebraic $k$-torus $T$ 
over any field $k$ (due to Hironaka \cite{Hir64} for ${\rm char}\, k=0$, 
see Colliot-Th\'{e}l\`{e}ne, Harari and Skorobogatov 
\cite[Corollaire 1]{CTHS05} for any field $k$). 
A $\mathcal{G}$-lattice $P$ is said to be {\it permutation} if 
$P$ has a $\bZ$-basis permuted by $\mathcal{G}$ 
and 
a $\mathcal{G}$-lattice $F$ is said to be {\it flabby} 
(resp. {\it coflabby}) 
if $\widehat H^{-1}(\mathcal{H},F)=0$ 
(resp. $H^1(\mathcal{H},F)=0$) 
for any closed subgroup $\mathcal{H}\leq \mathcal{G}$ 
where $\widehat H$ is the Tate cohomology. 
\begin{theorem}[{Voskresenskii \cite[Section 4, page 1213]{Vos69}, \cite[Section 3, page 7]{Vos70}, see also \cite[Section 4.6]{Vos98}, \cite[Theorem 1.9]{Kun07}, \cite{Vos74} and \cite[Theorem 5.1, page 19]{CT07} for any field $k$}]\label{thVos69}
Let $k$ be a field 
and $\mathcal{G}={\rm Gal}(\overline{k}/k)$. 
Let $T$ be an algebraic $k$-torus, 
$X$ be a smooth $k$-compactification of $T$ 
and $\overline{X}=X\times_k\overline{k}$. 
Then there exists an exact sequence of $\mathcal{G}$-lattices 
\begin{align*}
0\to \widehat{T}\to \widehat{Q}\to {\rm Pic}\,\overline{X}\to 0
\end{align*}
where $\widehat{Q}$ is permutation 
and ${\rm Pic}\ \overline{X}$ is flabby. 
\end{theorem}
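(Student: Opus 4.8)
The plan is to read the sequence off the geometry of the toric compactification over $\overline{k}$, and then to establish flabbiness of ${\rm Pic}\,\overline{X}$ as a separate and more delicate step. First I would work over $\overline{k}$, where $\overline{X}$ is a smooth complete toric variety with dense torus $\overline{T}=T\times_k\overline{k}\simeq(\bG_{m,\overline{k}})^n$ and with boundary $\overline{X}\setminus\overline{T}$ a divisor. Let $D_1,\dots,D_r$ be the irreducible components of this boundary. Since the open immersion $T\hookrightarrow X$, and hence the boundary, is defined over $k$, the absolute Galois group $\mathcal{G}$ permutes the $D_i$, so that $\widehat{Q}:=\bigoplus_{i=1}^r\bZ\,[D_i]$ is by construction a permutation $\mathcal{G}$-lattice.

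Next I would produce the exact sequence from the localization sequence for the divisor class group of the smooth variety $\overline{X}$ relative to its open subset $\overline{T}$,
\begin{align*}
\bigoplus_{i=1}^r\bZ\,[D_i]\to {\rm Pic}\,\overline{X}\to {\rm Pic}\,\overline{T}\to 0.
\end{align*}
Since $\overline{T}$ is a torus, ${\rm Pic}\,\overline{T}=0$, so the class map $\widehat{Q}\to{\rm Pic}\,\overline{X}$ is surjective. Its kernel consists of the boundary divisors that are principal on $\overline{X}$; because any rational function on $\overline{X}$ that is invertible on $\overline{T}$ is, up to a scalar, a character (the units of the group algebra $\overline{k}[\overline{T}]=\overline{k}[\widehat{T}]$ are $\overline{k}^\times\cdot\widehat{T}$), this kernel is exactly the image of the divisor map $\widehat{T}\to\widehat{Q}$, $\chi\mapsto\sum_{i}v_{D_i}(\chi)\,[D_i]$, where $v_{D_i}$ is the valuation along $D_i$. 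This map is injective: a character with trivial divisor on the complete variety $\overline{X}$ is a global unit, hence constant, hence the trivial character. This yields the $\mathcal{G}$-equivariant exact sequence $0\to\widehat{T}\to\widehat{Q}\to{\rm Pic}\,\overline{X}\to 0$ with $\widehat{Q}$ permutation.

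The main obstacle is the flabbiness of ${\rm Pic}\,\overline{X}$. I would first reduce it to a cohomological injectivity statement: since $\widehat{Q}$ is permutation, $\widehat{H}^{-1}(\mathcal{H},\widehat{Q})=0$ for every closed $\mathcal{H}\leq\mathcal{G}$, so the long exact Tate cohomology sequence of the triple gives an isomorphism
\begin{align*}
\widehat{H}^{-1}(\mathcal{H},{\rm Pic}\,\overline{X})\;\cong\;\ker\!\big(\widehat{H}^0(\mathcal{H},\widehat{T})\to\widehat{H}^0(\mathcal{H},\widehat{Q})\big).
\end{align*}
Thus flabbiness is equivalent to the injectivity of this map for all $\mathcal{H}$. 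Since $T$ splits over the finite Galois extension $K/k$, the modules are inflated from $G={\rm Gal}(K/k)$ and it suffices to treat finite subgroups $H\leq G$, where the map reads $\widehat{T}^{H}/\mathrm{N}_H\widehat{T}\to\widehat{Q}^{H}/\mathrm{N}_H\widehat{Q}$. Here I expect the real work: given an $H$-invariant character $\chi$ whose boundary divisor $\sum_i v_{D_i}(\chi)[D_i]$ becomes a norm in $\widehat{Q}^H$, one must show that $\chi$ is itself a norm in $\widehat{T}^H$. I would argue combinatorially with the complete smooth fan $\Sigma\subset\widehat{T}^\vee\otimes\bR$ of $\overline{X}$, using that the rays correspond to the $D_i$ with $\langle\chi,v_i\rangle=v_{D_i}(\chi)$, that each maximal cone is generated by part of a $\bZ$-basis (smoothness), and that the cones cover the whole space (completeness); choosing an $H$-stable maximal cone lets one construct the required preimage. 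This is precisely the geometric flabbiness of ${\rm Pic}\,\overline{X}$ for smooth projective toric (hence rational) varieties due to Voskresenskii and Colliot-Th\'el\`ene--Sansuc, and carrying out this norm computation uniformly in $H$ is the delicate point of the proof.
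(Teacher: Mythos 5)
The paper does not actually prove this theorem: it is quoted from Voskresenskii (see also \cite[Theorem 5.1, page 19]{CT07}), so your proposal has to be measured against the classical argument in those sources. Your first two steps are correct and coincide with it: the localization sequence for $\overline{T}\subset\overline{X}$, together with ${\rm Pic}\,\overline{T}=0$ and $\overline{k}[\overline{T}]^\times=\overline{k}^\times\cdot\widehat{T}$, gives the $\mathcal{G}$-equivariant sequence $0\to\widehat{T}\to\widehat{Q}\to{\rm Pic}\,\overline{X}\to 0$ with $\widehat{Q}$ the permutation lattice on the boundary divisors, and since $\widehat{H}^{-1}(\mathcal{H},\widehat{Q})=0$, flabbiness of ${\rm Pic}\,\overline{X}$ reduces to injectivity of $\widehat{H}^0(H,\widehat{T})\to\widehat{H}^0(H,\widehat{Q})$ for all subgroups $H\leq G$.

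The gap is the mechanism you propose for this last step: ``choosing an $H$-stable maximal cone'' to build the preimage $\mu$. Such a cone essentially never exists. If a maximal cone $\sigma$ of the smooth complete fan were $H$-stable, then $H$ would permute the primitive generators of its rays, which by smoothness form a $\bZ$-basis of the cocharacter lattice $N={\rm Hom}(\widehat{T},\bZ)$; hence $N$, and therefore $\widehat{T}$ itself, would be a permutation $H$-lattice, i.e.\ $T$ would be quasi-split over the fixed field of $H$ --- exactly the case where there is nothing to prove. The method already collapses in the simplest relevant example, $T=R^{(1)}_{K/k}(\bG_m)$ with $[K:k]=2$ and $X=\bP^1$: the nontrivial element of $G$ acts on $N=\bZ$ by $-1$ and swaps the two maximal cones (the rays $\pm 1$). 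The classical proof instead works on the cocharacter side with stable cones of \emph{smaller} dimension. Dualizing the sequence gives $0\to({\rm Pic}\,\overline{X})^\circ\to\widehat{Q}^\circ\to N\to 0$, and since $H^1(H,\widehat{Q}^\circ)=0$, the flasque/coflasque duality for $G$-lattices ($M$ is flabby if and only if $M^\circ$ is coflabby) reduces flabbiness of ${\rm Pic}\,\overline{X}$ to surjectivity of $(\widehat{Q}^\circ)^H\to N^H$, i.e.\ to the statement that every $H$-invariant cocharacter $x$ is an integral combination of orbit sums of the ray generators $v_i$. Here completeness and smoothness enter correctly: $x$ lies in the relative interior of a unique cone $\tau$ of the fan, which is automatically $H$-stable because relative interiors of distinct cones are disjoint; $H$ then permutes the ray generators of $\tau$, the coordinates of $x$ in this partial basis are constant on $H$-orbits, and $x$ is the required combination. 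Note that this stable cone $\tau$ is in general far from maximal, which is precisely why your construction of $\mu$ on the character side (which needs values on a full basis to determine $\mu$) cannot be carried out.
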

We have 
$H^1(k,{\rm Pic}\,\overline{X})\simeq H^1(G,{\rm Pic}\, X_K)$ 
where $K$ is the splitting field of $T$, $G={\rm Gal}(K/k)$ and 
$X_K=X\times_k K$. 
Hence Theorem \ref{thVos69} says that 
for $G$-lattices $M=\widehat{T}$ and $P=\widehat{Q}$, 
the exact sequence 
$0\to M\to P\to {\rm Pic}\, X_K\to 0$ 
gives a flabby resolution of 
$M$ and the flabby class of $M$ is 
$[M]^{fl}=[{\rm Pic}\ X_K]$ as $G$-lattices 
(see Section \ref{S3}, cf. Endo and Miyata's theorem 
\cite[Lemma 1.1]{EM75} (= Theorem \ref{thEM} in the present paper)). 

Let $k$ be a global field, 
i.e. a number field (a finite extension of $\bQ$) 
or a function field of an algebraic curve over 
$\bF_q$ (a finite extension of $\bF_q(t))$. 
Let $T$ be an algebraic $k$-torus 
and $T(k)$ be the group of $k$-rational points of $T$. 
Then $T(k)$ 
embeds into $\prod_{v\in V_k} T(k_v)$ by the diagonal map 
where 
$V_k$ is the set of all places of $k$ and 
$k_v$ is the completion of $k$ at $v$. 
Let $\overline{T(k)}$ be the closure of $T(k)$  
in the product $\prod_{v\in V_k} T(k_v)$. 
The group 
\begin{align*}
A(T)=\left(\prod_{v\in V_k} T(k_v)\right)/\overline{T(k)}
\end{align*}
is called {\it the kernel of the weak approximation} of $T$. 
We say that {\it $T$ has the weak approximation property} if $A(T)=0$. 

Let $E$ be a principal homogeneous space (= torsor) under $T$.  
{\it Hasse principle holds for $E$} means that 
if $E$ has a $k_v$-rational point for all $k_v$, 
then $E$ has a $k$-rational point. 
The set $H^1(k,T)$ classifies all such torsors $E$ up 
to (non-unique) isomorphism. 
We define {\it the Shafarevich-Tate group}
\begin{align*}
\Sha(T)={\rm Ker}\left\{H^1(k,T)\xrightarrow{\rm res} \bigoplus_{v\in V_k} 
H^1(k_v,T)\right\}.
\end{align*}
Then 
Hasse principle holds for all torsors $E$ under $T$ 
if and only if $\Sha(T)=0$. 
\begin{theorem}[{Voskresenskii \cite[Theorem 5, page 1213]{Vos69}, 
\cite[Theorem 6, page 9]{Vos70}, see also \cite[Section 11.6, Theorem, page 120]{Vos98}}]\label{thV}
Let $k$ be a global field, 
$T$ be an algebraic $k$-torus and $X$ be a smooth $k$-compactification of $T$. 
Then there exists an exact sequence
\begin{align*}
0\to A(T)\to H^1(k,{\rm Pic}\,\overline{X})^{\vee}\to \Sha(T)\to 0
\end{align*}
where $M^{\vee}={\rm Hom}(M,\bQ/\bZ)$ is the Pontryagin dual of $M$. 
Moreover, if $L$ is the splitting field of $T$ and $L/k$ 
is an unramified extension, then $A(T)=0$ and 
$H^1(k,{\rm Pic}\,\overline{X})^{\vee}\simeq \Sha(T)$. 
\end{theorem}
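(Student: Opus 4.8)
The plan is to combine the flabby resolution of Theorem~\ref{thVos69} with the arithmetic duality theorems over global fields. Write $M=\widehat{T}$, $P=\widehat{Q}$ (permutation) and $F={\rm Pic}\,\overline{X}$ (flabby). Dualizing the resolution $0\to M\to P\to F\to 0$ to tori gives an exact sequence of $k$-tori $1\to R\to Q\to T\to 1$ in which $Q$ is quasi-trivial and $\widehat{R}=F$. First I would record the vanishing inputs: since $P$ is permutation, Shapiro's lemma plus the Hasse principle for characters (Chebotarev) give $\Sha^1(k,P)=\Sha^2(k,P)=0$; dually $Q$ is quasi-trivial, so $H^1(k,Q)=0=H^1(k_v,Q)$ by Hilbert~90, and $Q$ satisfies weak approximation.

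Second, I would pin down $A(T)$ by a topological argument. From $1\to R\to Q\to T\to 1$ and the vanishing of $H^1$ of $Q$, the connecting maps give $H^1(k,R)\cong\mathrm{coker}(Q(k)\to T(k))$ and, taking products over all places, a surjection $\prod_v T(k_v)\twoheadrightarrow\bigoplus_v H^1(k_v,R)$ whose kernel is the image of $\prod_v Q(k_v)$. Because $Q(k)$ is dense in $\prod_v Q(k_v)$, that kernel lies in $\overline{T(k)}$, while the image of $\overline{T(k)}$ in the (discrete) group $\bigoplus_v H^1(k_v,R)$ is exactly the image of the localization map $H^1(k,R)\to\bigoplus_v H^1(k_v,R)$. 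Passing to quotients yields $A(T)\cong\mathrm{coker}\big(H^1(k,R)\to\bigoplus_v H^1(k_v,R)\big)$.

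Third, I would translate everything back to $F={\rm Pic}\,\overline{X}$ by arithmetic duality. Local Tate--Nakayama duality gives $H^1(k_v,R)\cong H^1(k_v,F)^\vee$, and the nine-term Poitou--Tate sequence for the torus $R$ shows that the localization map for $R$ and that for $F$ are mutual transposes; hence $\mathrm{coker}\big(H^1(k,R)\to\bigoplus_v H^1(k_v,R)\big)\cong I^\vee$, where $I$ is the image of $H^1(k,F)\to\bigoplus_v H^1(k_v,F)$. Thus $A(T)\cong I^\vee$. For the other end, the long exact sequence of $\Sha$-groups attached to $0\to M\to P\to F\to 0$ together with $\Sha^1(P)=\Sha^2(P)=0$ gives $\Sha^1(k,F)\cong\Sha^2(k,\widehat{T})$, and Poitou--Tate duality $\Sha^1(k,T)\cong\Sha^2(k,\widehat{T})^\vee$ then yields $\Sha(T)\cong\Sha^1(k,F)^\vee$. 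Finally, dualizing the tautological sequence $0\to\Sha^1(k,F)\to H^1(k,F)\to I\to 0$ and inserting the two identifications produces $0\to A(T)\to H^1(k,{\rm Pic}\,\overline{X})^\vee\to\Sha(T)\to 0$, the maps matching because the duality identifications are functorial.

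The main obstacle is the second and third steps taken together: controlling $A(T)$ forces one to track the restricted-product topology on $\prod_v T(k_v)$, to know that $H^1(k_v,R)=0$ for almost all $v$ (so the relevant sums are finite), and to verify the compatibility of the global and local connecting maps with local duality inside Poitou--Tate. Granting this, the ``moreover'' assertion is immediate: if the splitting field $L/k$ is unramified then every decomposition group $G_v$ is cyclic, so $H^1(k_v,{\rm Pic}\,\overline{X})=\widehat{H}^1(G_v,F)=\widehat{H}^{-1}(G_v,F)=0$ since $F$ is flabby; hence $I=0$, giving $A(T)=0$ and $H^1(k,{\rm Pic}\,\overline{X})^\vee\cong\Sha(T)$.
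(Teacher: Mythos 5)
Your proposal is correct, but note that the paper itself offers no proof of Theorem \ref{thV}: it is stated as a quoted result of Voskresenskii, with the final assertion referred to \cite[Section 11.6, Theorem, page 120]{Vos98}. Your argument --- passing from the flabby resolution of $\widehat{T}$ to the torus sequence $1\to R\to Q\to T\to 1$, using Hilbert 90 and weak approximation for the quasi-trivial $Q$ to identify $A(T)$ with the cokernel of localization on $H^1(\cdot,R)$, then applying Tate--Nakayama and Poitou--Tate duality together with the identification $\Sha^1(k,\mathrm{Pic}\,\overline{X})\simeq\Sha^2(k,\widehat{T})$, and finally dualizing the tautological sequence --- is essentially the standard proof found in the cited sources (Voskresenskii's book and the Colliot-Th\'el\`ene--Sansuc formalism), so it matches the intended argument.
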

For the last assertion, see \cite[Theorem, page 120]{Vos98}.
It follows that 
$H^1(k,{\rm Pic}\,\overline{X})=0$ if and only if $A(T)=0$ and $\Sha(T)=0$, 
i.e. $T$ has the weak approximation property and 
Hasse principle holds for all torsors $E$ under $T$. 
Theorem \ref{thV} was generalized 
to the case of linear algebraic groups by Sansuc \cite{San81}.\\


{\it The norm one torus 
$R^{(1)}_{K/k}(\bG_m)$ of $K/k$} 
is the kernel of the norm map 
$R_{K/k}(\bG_m)\rightarrow \bG_m$ 
where 
$R_{K/k}$ is the Weil restriction 
(see \cite[page 37, Section 3.12]{Vos98}).
Such a torus $R^{(1)}_{K/k}(\bG_m)$ 
is biregularly isomorphic to the norm hypersurface 
$f(x_1,\ldots,x_n)=1$ where 
$f\in k[x_1,\ldots,x_n]$ is the polynomial of total 
degree $n$ defined by the norm map $N_{K/k}:K^\times\to k^\times$.
When $K/k$ is a finite Galois extension, 
we have that: 

\begin{theorem}[{Voskresenskii \cite[Theorem 7]{Vos70}, Colliot-Th\'{e}l\`{e}ne and Sansuc \cite[Proposition 1]{CTS77}}]
Let $k$ be a field and 
$K/k$ be a finite Galois extension with Galois group $G={\rm Gal}(K/k)$. 
Let $T=R^{(1)}_{K/k}(\bG_m)$ be the norm one torus of $K/k$ 
and $X$ be a smooth $k$-compactification of $T$. 
Then 
$H^1(H,{\rm Pic}\, X_K)\simeq H^3(H,\bZ)$ for any subgroup $H$ of $G$. 
In particular, 
$H^1(k,{\rm Pic}\, \overline{X})\simeq
H^1(G,{\rm Pic}\, X_K)\simeq H^3(G,\bZ)$ which is isomorphic to 
the Schur multiplier $M(G)$ of $G$.
\end{theorem}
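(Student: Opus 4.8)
The plan is to splice two short exact sequences of $G$-lattices: the one that computes the character module of the norm one torus, and Voskresenskii's flasque resolution (Theorem~\ref{thVos69}).

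First I would pin down $\widehat{T}$. The character functor is an exact contravariant functor from $k$-tori split by $K$ to $G$-lattices, so applying it to the defining sequence $1\to T\to R_{K/k}(\bG_m)\xrightarrow{N}\bG_m\to 1$ produces
\[
0\to \bZ\xrightarrow{\;1\mapsto\sum_{g\in G}g\;}\bZ[G]\to \widehat{T}\to 0,
\]
whence $\widehat{T}\simeq \bZ[G]/\bZ\!\cdot\!\sum_{g\in G}g$. Restricting this sequence to a subgroup $H\le G$ and using that $\bZ[G]$ is a free $\bZ[H]$-module, hence has vanishing cohomology in positive degrees, the long exact sequence gives $H^i(H,\widehat{T})\simeq H^{i+1}(H,\bZ)$ for every $i\ge 1$. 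In particular $H^2(H,\widehat{T})\simeq H^3(H,\bZ)$; and, decisively, $H^2(C,\widehat{T})\simeq H^3(C,\bZ)=0$ for every cyclic subgroup $C\le H$, since finite cyclic groups have vanishing cohomology in odd degrees.

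Next I would feed in the flasque resolution $0\to\widehat{T}\to\widehat{Q}\to{\rm Pic}\,X_K\to 0$ of Theorem~\ref{thVos69}, in which $\widehat{Q}$ is permutation. For any $H\le G$ its long exact sequence contains
\[
H^1(H,\widehat{Q})\to H^1(H,{\rm Pic}\,X_K)\to H^2(H,\widehat{T})\xrightarrow{\;\partial\;}H^2(H,\widehat{Q}).
\]
Since $\widehat{Q}$ is permutation, Shapiro's lemma writes $H^1(H,\widehat{Q})$ as a sum of groups $H^1(H_i,\bZ)={\rm Hom}(H_i,\bZ)=0$, so $H^1(H,\widehat{Q})=0$ and $H^1(H,{\rm Pic}\,X_K)\simeq\ker\partial$. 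The theorem therefore reduces to the single assertion that the connecting map $\partial$ vanishes.

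The vanishing of $\partial$ is the crux, and here the permutation structure of $\widehat{Q}$ does the work. Shapiro's lemma also writes $H^2(H,\widehat{Q})$ as a sum of groups $H^2(H_i,\bZ)\simeq{\rm Hom}(H_i,\bQ/\bZ)$; since a homomorphism into $\bQ/\bZ$ is determined by its restrictions to cyclic subgroups, the restriction map $H^2(H,\widehat{Q})\to\prod_{C}H^2(C,\widehat{Q})$ over the cyclic subgroups $C\le H$ is injective. As restriction commutes with the connecting map, for each cyclic $C$ the composite $H^2(H,\widehat{T})\xrightarrow{\partial}H^2(H,\widehat{Q})\to H^2(C,\widehat{Q})$ factors through $H^2(C,\widehat{T})=0$; injectivity of the cyclic restriction then forces $\partial=0$. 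Hence $H^1(H,{\rm Pic}\,X_K)\simeq H^2(H,\widehat{T})\simeq H^3(H,\bZ)$ for every $H\le G$. Specializing to $H=G$, together with $H^1(k,{\rm Pic}\,\overline{X})\simeq H^1(G,{\rm Pic}\,X_K)$ and the standard identifications $H^3(G,\bZ)\simeq H^2(G,\bQ/\bZ)\simeq M(G)$ for finite $G$, yields the final claim. The one genuinely substantial point is this vanishing of $\partial$, and its engine is the norm-one-torus–specific fact that $\widehat{T}$ has trivial $H^2$ on every cyclic subgroup; everything else is formal once $\widehat{Q}$ is known to be a permutation module.
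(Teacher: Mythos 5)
Your proof is correct. One caveat on the comparison: the paper does not prove this statement at all --- it is quoted from Voskresenskii \cite{Vos70} and Colliot-Th\'el\`ene--Sansuc \cite{CTS77} --- so the only available benchmark is the route suggested by the paper's own toolkit, namely: dimension-shift along $0\to\bZ\to\bZ[G]\to J_G\to 0$ to get $H^i(H,\widehat{T})\simeq H^{i+1}(H,\bZ)$ for $i\geq 1$, note that $H^2(C,\widehat{T})\simeq H^3(C,\bZ)=0$ for every cyclic $C$, and then invoke the theorem of Colliot-Th\'el\`ene and Sansuc \cite{CTS87} quoted in Section \ref{S1}, which gives $H^1(H,{\rm Pic}\,X_K)\simeq\Sha^2_\omega(H,\widehat{T})=H^2(H,\widehat{T})\simeq H^3(H,\bZ)$. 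What you do differently is re-derive that $\Sha^2_\omega$-identification by hand in the case at hand instead of citing it: after observing $H^1(H,\widehat{Q})=0$, you reduce everything to the vanishing of $H^2(H,\widehat{T})\to H^2(H,\widehat{Q})$ and kill this map by restriction to cyclic subgroups. This buys a self-contained argument whose only external input is Theorem \ref{thVos69}, at the cost of reproving a standard lemma.

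Two points deserve attention. First, a harmless slip: the map you call $\partial$ is not a connecting homomorphism but the map induced by the inclusion $\widehat{T}\hookrightarrow\widehat{Q}$; your argument is unaffected, since restriction commutes with induced maps just as well as with connecting maps. Second, and more substantively, the injectivity of $H^2(H,\widehat{Q})\to\prod_{C}H^2(C,\widehat{Q})$ over cyclic $C\leq H$ does not follow \emph{verbatim} from ``a homomorphism into $\bQ/\bZ$ is determined by its restrictions to cyclic subgroups'': that fact lives on $H^2(H_i,\bZ)\simeq{\rm Hom}(H_i,\bQ/\bZ)$, whereas the map in question is restriction in $H$-cohomology of the induced module $\bZ[H/H_i]$. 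To bridge the two you need the Mackey double-coset formula compatibly with the Shapiro isomorphism: given $0\neq\alpha\in{\rm Hom}(H_i,\bQ/\bZ)$, choose a cyclic $C\leq H_i$ with $\alpha|_{C}\neq 0$; then the component of ${\rm res}^H_C$ of the corresponding class in $H^2(H,\bZ[H/H_i])$ at the trivial double coset $CH_i$ is exactly $\alpha|_{C}\neq 0$. This is standard (it is the statement that $\Sha^2_\omega(H,P)=0$ for a permutation lattice $P$), but since you correctly identify this injectivity as the crux of the whole proof, the bridging step should be stated rather than elided.
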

In other words, for $G$-lattice $J_G=\widehat{T}$, 
$H^1(H,[J_G]^{fl})\simeq H^3(H,\bZ)$ for any subgroup $H$ of $G$ 
and $H^1(G,[J_G]^{fl})\simeq H^3(G,\bZ)\simeq H^2(G,\bQ/\bZ)$; 
the Schur multiplier of $G$. 
By the exact sequence $0\to\bZ\to\bZ[G]\to J_G\to 0$, 
we also have $\delta:H^1(G,J_G)\simeq H^2(G,\bZ)\simeq G^{ab}\simeq 
G/[G,G]$ where $\delta$ is the connecting homomorphism and 
$G^{ab}$ is the abelianization of $G$ (for details, see Section \ref{S2}). \\


Let $K$ 
be a finitely generated field extension of a field $k$. 
A field $K$ is called {\it rational over $k$} 
(or {\it $k$-rational} for short) 
if $K$ is purely transcendental over $k$, 
i.e. $K$ is isomorphic to $k(x_1,\ldots,x_n)$, 
the rational function field over $k$ with $n$ variables $x_1,\ldots,x_n$ 
for some integer $n$. 
$K$ is called {\it stably $k$-rational} 
if $K(y_1,\ldots,y_m)$ is $k$-rational for some algebraically 
independent elements $y_1,\ldots,y_m$ over $K$. 
Two fields 
$K$ and $K^\prime$ are called {\it stably $k$-isomorphic} if 
$K(y_1,\ldots,y_m)\simeq K^\prime(z_1,\ldots,z_n)$ over $k$ 
for some algebraically independent elements $y_1,\ldots,y_m$ over $K$ 
and $z_1,\ldots,z_n$ over $K^\prime$. 
When $k$ is an infinite field, 
$K$ is called {\it retract $k$-rational} 
if there is a $k$-algebra $R$ contained in $K$ such that 
(i) $K$ is the quotient field of $R$, and (ii) 
the identity map $1_R : R\rightarrow R$ factors through a localized 
polynomial ring over $k$, i.e. there is an element $f\in k[x_1,\ldots,x_n]$, 
which is the polynomial ring over $k$, and there are $k$-algebra 
homomorphisms $\varphi : R\rightarrow k[x_1,\ldots,x_n][1/f]$ 
and $\psi : k[x_1,\ldots,x_n][1/f]\rightarrow R$ satisfying 
$\psi\circ\varphi=1_R$ (cf. \cite{Sal84}). 
$K$ is called {\it $k$-unirational} 
if $k\subset K\subset k(x_1,\ldots,x_n)$ for some integer $n$. 
It is not difficult to see that 
``$k$-rational'' $\Rightarrow$ ``stably $k$-rational'' $\Rightarrow$ 
``retract $k$-rational'' $\Rightarrow$ ``$k$-unirational''. 

An algebraic $k$-torus $T$ is said to be {\it $k$-rational} 
(resp. {\it stably $k$-rational}, {\it retract $k$-rational}) 
if the function field $k(T)$ of $T$ is $k$-rational 
(resp. stably $k$-rational, retract $k$-rational). 

Note that an algebraic $k$-torus $T$ is always $k$-unirational 
(see \cite[page 40, Example 21]{Vos98}).
Tori of dimension $n$ over $k$ correspond bijectively 
to the elements of the set $H^1(\mathcal{G},\GL_n(\bZ))$ 
where $\mathcal{G}={\rm Gal}(k_{\rm s}/k)$ since 
${\rm Aut}((\bG_m)^n)=\GL_n(\bZ)$. 
The algebraic $k$-torus $T$ of dimension $n$ is determined uniquely 
by the integral representation $h : \mathcal{G}\rightarrow \GL_n(\bZ)$ 
up to conjugacy, and the group $h(\mathcal{G})$ is a finite subgroup of 
$\GL_n(\bZ)$ (see \cite[page 57, Section 4.9]{Vos98})).

There are $2$ (resp. $13$, $73$, $710$, $6079$) $\bZ$-classes forming 
$2$ (resp. $10$, $32$, $227$, $955$) $\bQ$-classes 
in $\GL_1(\bZ)$ (resp. $\GL_2(\bZ)$, $\GL_3(\bZ)$, $\GL_4(\bZ)$, $\GL_5(\bZ)$).
It is easy to see that all the $1$-dimensional algebraic $k$-tori $T$, 
i.e. the trivial torus $\bG_m$ and the norm one torus 
$R^{(1)}_{K/k}(\bG_m)$ of $K/k$ with $[K:k]=2$, are $k$-rational. 
Voskresenskii \cite{Vos67} proved that 
all the $13$ cases of $2$-dimensional algebraic $k$-tori, 
which correspond to $13$ $\bZ$-conjugacy classes of 
finite subgroups of ${\rm GL}_2(\bZ)$, 
are $k$-rational. 
Note that whether all the $13$ cases indeed occur or not 
depends on a base field $k$. 
The same applies for the numbers $15$, $216$ and $3003$ 
in Theorems \ref{thKun1}, \ref{thmain1-4} and \ref{thmain1-5} below. 
We also note that 
$T$ is retract $k$-rational 
$\Rightarrow$ $H^1(k,{\rm Pic}\,\overline{X})=0$ 
and over global field $k$, 
$H^1(k,{\rm Pic}\,\overline{X})=0\Rightarrow A(T)\simeq\Sha(T)=0$ 
(see Section \ref{S3} and also Manin \cite[\S 30]{Man74}). 

Kunyavskii \cite{Kun90} solved 
the rationality problem for $3$-dimensional algebraic $k$-tori. 
In the classification, there exist $73$ cases of 
$3$-dimensional algebraic $k$-tori 
which correspond to $73$ $\bZ$-conjugacy classes of 
finite subgroups of ${\rm GL}_3(\bZ)$, and 
$15$ cases of them are not $k$-rational 
(resp. not stably $k$-rational, not retract $k$-rational). 
Using the classification, 
Kunyavskii \cite{Kun84} showed that 
only $2$ cases of algebraic $k$-tori of dimension $3$ satisfy 
the non-vanishing $H^1(k,{\rm Pic}\,\overline{X})\neq 0$ 
among the $15$ cases of non-rational $k$-tori. 
These two $k$-tori are norm one tori $T=R^{(1)}_{K/k}(\bG_m)$ 
with $[K:k]=4$: 

\begin{theorem}[{Kunyavskii \cite[Proposition 1]{Kun84}}]\label{thKun1}
Let $k$ be a 
field, $T$ be an algebraic $k$-torus of dimension $3$ 
and $X$ be a smooth $k$-compactification of $T$. 
Then, among the $($at most$)$ 
$15$ cases of non-rational algebraic $k$-tori $T$, 
\begin{align*}
H^1(k,{\rm Pic}\, \overline{X})=
\begin{cases}
\bZ/2\bZ&{\rm if}\ T=R^{(1)}_{K_1/k}(\bG_m)\ {\rm or}\ R^{(1)}_{K_2/k}(\bG_m)\\
0&{\rm otherwise}
\end{cases}
\end{align*}
where $K_1/k$ $($resp. $K_2/k$$)$ is a field extension of degree $4$ 
whose Galois closure $L_1/k$ $($resp. $L_2/k$$)$ satisfies 
${\rm Gal}(L_1/k)\simeq V_4$; the Klein four group 
$($resp. ${\rm Gal}(L_2/k)\simeq A_4$; the alternating group of degree $4$$)$. 
In particular, if $k$ is a global field, then 
$A(T)\simeq\Sha(T)=0$ except for 
$T=R^{(1)}_{K_1/k}(\bG_m)$ and $T=R^{(1)}_{K_2/k}(\bG_m)$. 
\end{theorem}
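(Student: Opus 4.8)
The plan is to reduce the statement to a field-independent computation with $G$-lattices and then to run through the list of non-rational tori supplied by the classification. By Theorem~\ref{thVos69} one has
\begin{align*}
H^1(k,{\rm Pic}\,\overline{X})\simeq H^1(G,{\rm Pic}\, X_K)\simeq H^1\bigl(G,[\widehat{T}]^{fl}\bigr),
\end{align*}
where $G={\rm Gal}(K/k)$ is the finite splitting group, realized as a finite subgroup $G\leq\GL_3(\bZ)$ through its action on the rank three character lattice $\widehat{T}$, and $[\widehat{T}]^{fl}$ is the flabby class. This group depends only on the $\bZ$-conjugacy class of $G\leq\GL_3(\bZ)$, so it suffices to attach one finite abelian group to each of the $73$ $\bZ$-classes, and in particular to the $15$ non-rational ones listed in Kunyavskii~\cite{Kun90}.

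The computational engine is a flabby resolution $0\to\widehat{T}\to P\to F\to 0$ with $P$ permutation and $F=[\widehat{T}]^{fl}$ flabby. Since $P$ is permutation, $H^1(G,P)=0$ by Shapiro's lemma and $H^1(H,\bZ)=0$ for finite $H$, so the long exact sequence collapses to
\begin{align*}
H^1\bigl(G,[\widehat{T}]^{fl}\bigr)=\ker\bigl(H^2(G,\widehat{T})\to H^2(G,P)\bigr).
\end{align*}
It is worth stressing that non-rationality of all $15$ tori only forces $[\widehat{T}]^{fl}$ to be non-invertible, whereas the theorem computes the single group $H^1(G,[\widehat{T}]^{fl})$; vanishing of the latter is necessary but not sufficient for retract rationality, so it is entirely consistent that $13$ of the $15$ non-retract-rational tori nevertheless have vanishing top cohomology.

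For the two exceptional cases the computation is short. The extension $K_1/k$ is Galois with $G\simeq V_4$, so by the identification $H^1(k,{\rm Pic}\,\overline{X})\simeq H^3(G,\bZ)\simeq M(G)$ valid for Galois norm one tori one gets $H^1(k,{\rm Pic}\,\overline{X})\simeq M(V_4)\simeq\bZ/2\bZ$. For $K_2/k$ the Galois closure group is $G\simeq A_4$ and $H={\rm Gal}(L_2/K_2)$ is the point stabilizer $\simeq\bZ/3\bZ$ of the natural degree four action, so $\widehat{T}\simeq\bZ[G/H]/\bZ\nu$ with $\nu=\sum_{gH}gH$; evaluating the kernel above for this lattice again yields $\bZ/2\bZ$. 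The remaining work is to exhibit, for each of the other $13$ non-rational $\bZ$-classes, a permutation lattice $P$ with an embedding $\widehat{T}\hookrightarrow P$ and to verify that the displayed kernel vanishes.

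The main obstacle is precisely this bookkeeping over all $15$ cases: each $\bZ$-class must be pinned down concretely enough to write $\widehat{T}$ as an explicit $G$-lattice, a flabby resolution produced (these are far from unique), and the relevant $H^2$-kernel evaluated, which is delicate because the subgroup-level obstruction is generically nonzero while the top obstruction need not be. Once the lattice computations are complete, the final assertion for global $k$ is immediate from Theorem~\ref{thV}: the vanishing $H^1(k,{\rm Pic}\,\overline{X})=0$ forces $A(T)=\Sha(T)=0$, so weak approximation and the Hasse principle can fail only for $T=R^{(1)}_{K_1/k}(\bG_m)$ and $T=R^{(1)}_{K_2/k}(\bG_m)$.
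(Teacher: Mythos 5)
Your reduction coincides with the paper's: by Theorem \ref{thVos69} (together with the fact that retract rationality forces the vanishing), the problem becomes computing $H^1(G,[\widehat{T}]^{fl})$ for each of the $15$ non-rational $\bZ$-classes of finite subgroups $G\leq \GL_3(\bZ)$, and your identification $H^1(G,[\widehat{T}]^{fl})\simeq \ker\bigl(H^2(G,\widehat{T})\to H^2(G,P)\bigr)$ from the long exact sequence of a flabby resolution is correct, since $H^1(G,P)=0$ for permutation $P$. Your treatment of the $V_4$ case is also complete: that extension is Galois, so the identification $H^1(k,{\rm Pic}\,\overline{X})\simeq H^3(V_4,\bZ)\simeq M(V_4)\simeq\bZ/2\bZ$ settles it, and the final global-field assertion does follow at once from Theorem \ref{thV}.

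However, there is a genuine gap: for the $A_4$ case and for the remaining $13$ classes you never carry out the computation. You assert that ``evaluating the kernel above for this lattice again yields $\bZ/2\bZ$'' and describe the other $13$ verifications as remaining bookkeeping, but those $14$ lattice computations \emph{are} the content of the theorem; nothing is proved until they are executed. Note in particular that the Schur-multiplier shortcut you used for $V_4$ is unavailable for $A_4$, because $K_2/k$ is not Galois (here $H\simeq C_3$ is a proper stabilizer), so a genuine computation with the Chevalley module $J_{G/H}$, or with $\Sha^2_\omega(G,\widehat{T})$, is unavoidable there. The paper closes exactly this gap by machine: in Example \ref{exN3} it runs {\tt FlabbyResolutionLowRank} and {\tt H1} in GAP over the $15$ classes, finding $H^1(G,[M_G]^{fl})\simeq\bZ/2\bZ$ precisely for the GAP IDs $(3,3,1,3)$ (giving $V_4$) and $(3,7,1,2)$ (giving $A_4$), and $0$ in the other $13$ cases; Kunyavskii's original proof did the corresponding case-by-case lattice analysis by hand. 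Some such explicit execution must be supplied for your proposal to become a proof.
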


Hoshi and Yamasaki \cite{HY17} classified stably/retract 
$k$-rational algebraic $k$-tori of dimensions $4$ and $5$. 
In the classification, there exist $710$ (resp. $6079$) 
cases of $4$-dimensional (resp. $5$-dimensional) algebraic $k$-tori 
which correspond to $710$ (resp. $6079$) $\bZ$-conjugacy classes 
of finite subgroups of ${\rm GL}_4(\bZ)$ (resp. ${\rm GL}_5(\bZ)$), 
and $216$ (resp. $3003$) cases of them 
are not retract $k$-rational. 

The first main result (Theorem \ref{thmain1-4} and Theorem \ref{thmain1-5}) 
of this paper is to classify the algebraic $k$-tori $T$ with non-vanishing 
$H^1(k,{\rm Pic}\, \overline{X})\neq 0$ in dimensions 
$4$ and $5$: 
\begin{theorem}[see Theorem \ref{thmain1-4p} for the detailed statement]\label{thmain1-4}
Let $k$ be a field, $T$ be an algebraic $k$-torus of dimension $4$ 
and $X$ be a smooth $k$-compactification of $T$. 
Among the $($at most$)$ 
$216$ cases of not retract rational algebraic $k$-tori $T$,
there exist $2$ $($resp. $20$, $194$$)$ cases of 
algebraic $k$-tori with 
$H^1(k,{\rm Pic}\, \overline{X})\simeq(\bZ/2\bZ)^{\oplus 2}$ 
$($resp. $H^1(k,{\rm Pic}\, \overline{X})\simeq\bZ/2\bZ$, 
$H^1(k,{\rm Pic}\, \overline{X})=0$$)$. 
\end{theorem}
\begin{theorem}[see Theorem \ref{thmain1-5p} for the detailed statement]\label{thmain1-5}
Let $k$ be a field, $T$ be an algebraic $k$-torus of dimension $5$ 
and $X$ be a smooth $k$-compactification of $T$. 
Among the $($at most$)$ 
$3003$ cases of not retract rational algebraic $k$-tori $T$,
there exist $11$ $($resp. $263$, $2729$$)$ cases of 
algebraic $k$-tori with 
$H^1(k,{\rm Pic}\, \overline{X})\simeq(\bZ/2\bZ)^{\oplus 2}$ 
$($resp. $H^1(k,{\rm Pic}\, \overline{X})\simeq\bZ/2\bZ$, 
$H^1(k,{\rm Pic}\, \overline{X})=0$$)$. 
\end{theorem}
Note that Hoshi and Yamasaki \cite[Chapter 7]{HY17} 
showed the vanishing $H^1(k,{\rm Pic}\, \overline{X})\simeq H^1(G,[\widehat{T}]^{fl})=0$ for any Bravais group $G$ of dimension $n\leq 6$ 
(see also \cite{Vos83}, \cite[Section 8]{Vos98}). 
There exists $1$ (resp. $5$, $14$, $64$, $189$, $841$) 
Bravais group of dimension $n=1$ (resp. $2$, $3$, $4$, $5$, $6$) 
(see \cite[Example 4.16]{HY17}).\\ 

Let $G$ be a finite group and $M$ be a $G$-lattice. We define 
\begin{align*}
\Sha^i_\omega(G,M):={\rm Ker}\left\{H^i(G,M)\xrightarrow{{\rm res}}\bigoplus_{g\in G}H^i(\langle g\rangle,M)\right\}\quad (i\geq 1) .
\end{align*}
The following is a theorem of Colliot-Th\'{e}l\`{e}ne and Sansuc \cite{CTS87}: 
\begin{theorem}[{Colliot-Th\'{e}l\`{e}ne and Sansuc \cite[Proposition 9.5 (ii)]{CTS87}, see also \cite[Proposition 9.8]{San81} and \cite[page 98]{Vos98}}]
Let $k$ be a field 
with ${\rm char}\, k=0$
and $K/k$ be a finite Galois extension 
with Galois group $G={\rm Gal}(K/k)$. 
Let $T$ be an algebraic $k$-torus which splits over $K$ and 
$X$ be a smooth $k$-compactification of $T$. 
Then we have 
\begin{align*}
\Sha^2_\omega(G,\widehat{T})\simeq 
H^1(G,{\rm Pic}\, X_K)\simeq {\rm Br}(X)/{\rm Br}(k)
\end{align*}
where 
${\rm Br}(X)$ is the \'etale cohomological Brauer Group of $X$ 
$($it is the same as the Azumaya-Brauer group of $X$ 
for such $X$, see \cite[page 199]{CTS87}$)$. 
\end{theorem}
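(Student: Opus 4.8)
The plan is to prove the two isomorphisms separately. The first, $\Sha^2_\omega(G,\widehat{T})\simeq H^1(G,{\rm Pic}\,X_K)$, is a formal consequence of the flabby resolution supplied by Theorem~\ref{thVos69} together with the $2$-periodicity of the cohomology of cyclic groups; the second, $H^1(G,{\rm Pic}\,X_K)\simeq{\rm Br}(X)/{\rm Br}(k)$, is the geometric heart of the statement, and I would extract it from the Hochschild--Serre spectral sequence in \'etale cohomology for $X_{\overline{k}}\to X$.

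For the first isomorphism, set $M=\widehat{T}$, $P=\widehat{Q}$ and $F={\rm Pic}\,X_K$, so that Theorem~\ref{thVos69} gives $0\to M\to P\to F\to 0$ with $P$ permutation and $F$ flabby. For any $H\leq G$ the long exact sequence contains $H^1(H,P)\to H^1(H,F)\xrightarrow{\partial}H^2(H,M)\to H^2(H,P)$; since $H$ is finite and $P$ is permutation, $H^1(H,P)=0$ (Shapiro plus ${\rm Hom}(H^\prime,\bZ)=0$), so $\partial$ identifies $H^1(H,F)$ with $\ker\{H^2(H,M)\to H^2(H,P)\}$. With $H=G$ this gives $H^1(G,F)\simeq\ker\{H^2(G,M)\to H^2(G,P)\}$. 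Flabbiness gives $\widehat{H}^{-1}(\langle g\rangle,F)=0$, so by $2$-periodicity $H^1(\langle g\rangle,F)\simeq\widehat{H}^{-1}(\langle g\rangle,F)=0$, and the same sequence forces $H^2(\langle g\rangle,M)\hookrightarrow H^2(\langle g\rangle,P)$ for all $g$; restricting from $G$ to $\langle g\rangle$ and chasing the commutative square then yields $\ker\{H^2(G,M)\to H^2(G,P)\}\subseteq\Sha^2_\omega(G,M)$. For the reverse inclusion I would prove $\Sha^2_\omega(G,P)=0$ for permutation $P$: reducing to $P=\bZ[G/H]$ and using $H^2(G,\bZ[G/H])\simeq{\rm Hom}(H,\bQ/\bZ)$, Mackey's formula shows that for $h\in H$ the identity double coset makes restriction to $\langle h\rangle$ the genuine restriction $\chi\mapsto\chi|_{\langle h\rangle}$, so vanishing on all cyclic subgroups forces $\chi=0$. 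Then any $\alpha\in\Sha^2_\omega(G,M)$ has image in $\Sha^2_\omega(G,P)=0$ (the restriction squares commute), giving the reverse inclusion and hence $\Sha^2_\omega(G,\widehat{T})\simeq H^1(G,{\rm Pic}\,X_K)$.

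For the second isomorphism the essential geometric inputs are that $X$ is smooth, projective and geometrically rational (a smooth compactification of a torus is geometrically rational) and that it has a $k$-rational point, namely $1\in T(k)\subseteq X(k)$. Birational invariance of the Brauer group of smooth projective varieties together with ${\rm Br}(\bP^n_{\overline{k}})=0$ gives ${\rm Br}(\overline{X})=0$. Feeding $H^0(X_{\overline{k}},\bG_m)=\overline{k}^\times$, $H^1(X_{\overline{k}},\bG_m)={\rm Pic}\,\overline{X}$ and $H^2(X_{\overline{k}},\bG_m)={\rm Br}(\overline{X})=0$ into the Hochschild--Serre spectral sequence $H^p(\mathcal{G},H^q(X_{\overline{k}},\bG_m))\Rightarrow H^{p+q}(X,\bG_m)$ and using Hilbert~90, the exact sequence of low-degree terms becomes
\[
0\to{\rm Pic}\,X\to({\rm Pic}\,\overline{X})^{\mathcal{G}}\to{\rm Br}(k)\to{\rm Br}(X)\to H^1(\mathcal{G},{\rm Pic}\,\overline{X})\to H^3(\mathcal{G},\overline{k}^\times),
\]
where ${\rm Br}(X)={\rm Br}_1(X)$ since ${\rm Br}(\overline{X})=0$. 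The $k$-point makes ${\rm Pic}\,X\to({\rm Pic}\,\overline{X})^{\mathcal{G}}$ surjective (so the map to ${\rm Br}(k)$ vanishes and ${\rm Br}(k)\hookrightarrow{\rm Br}(X)$ splits) and renders the edge map $H^3(\mathcal{G},\overline{k}^\times)\to H^3(X,\bG_m)$ injective, forcing the last differential $H^1(\mathcal{G},{\rm Pic}\,\overline{X})\to H^3(\mathcal{G},\overline{k}^\times)$ to vanish. Thus $0\to{\rm Br}(k)\to{\rm Br}(X)\to H^1(\mathcal{G},{\rm Pic}\,\overline{X})\to 0$ is exact, i.e. ${\rm Br}(X)/{\rm Br}(k)\simeq H^1(\mathcal{G},{\rm Pic}\,\overline{X})\simeq H^1(G,{\rm Pic}\,X_K)$, the last step by the inflation isomorphism noted after Theorem~\ref{thVos69}.

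The first isomorphism is essentially formal once the flabby resolution is in hand, so I expect the main obstacle to be the second: one must invoke the vanishing ${\rm Br}(\overline{X})=0$ coming from geometric rationality and then manage the Hochschild--Serre spectral sequence carefully, using the $k$-rational point to kill the relevant differentials and edge maps, so that the algebraic Brauer quotient ${\rm Br}(X)/{\rm Br}(k)$ is exactly $H^1(\mathcal{G},{\rm Pic}\,\overline{X})$ and nothing more.
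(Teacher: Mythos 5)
Your proposal is correct, and it is essentially the argument of the sources the paper cites: note that the paper offers no proof of its own here, since the statement is quoted directly from Colliot-Th\'{e}l\`{e}ne and Sansuc \cite{CTS87} (see also Sansuc \cite{San81}). Your route --- the flabby resolution $0\to\widehat{T}\to\widehat{Q}\to{\rm Pic}\,X_K\to 0$ together with the Mackey/Shapiro computation $\Sha^2_\omega(G,\widehat{Q})=0$ to get $\Sha^2_\omega(G,\widehat{T})\simeq H^1(G,{\rm Pic}\,X_K)$, then the Hochschild--Serre spectral sequence with ${\rm Br}(\overline{X})=0$ and the rational point $1\in T(k)$ killing the differentials $d_2^{0,1}$ and $d_2^{1,1}$ --- is precisely the standard proof found in those references.
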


In other words, for $G$-lattice $M=\widehat{T}$, 
we have 
$H^1(k,{\rm Pic}\, \overline{X})\simeq H^1(G,{\rm Pic}\, X_K)\simeq 
H^1(G,[M]^{fl})\simeq \Sha^2_\omega(G,M)\simeq {\rm Br}(X)/{\rm Br}(k)$ 
(for the flabby class $[M]^{fl}$ of $M$, see Section \ref{S3}).
Hence Theorem \ref{thKun1}, 
Theorem \ref{thmain1-4} and Theorem \ref{thmain1-5} compute 
$H^1(G,[M]^{fl})\simeq\Sha^2_\omega(G,M)\simeq {\rm Br}(X)/{\rm Br}(k)$ 
where $M=\widehat{T}$. 
We also see  
${\rm Br}_{\rm nr}(k(X)/k)={\rm Br}(X)\subset {\rm Br}(k(X))$ 
(see Colliot-Th\'{e}l\`{e}ne \cite[Theorem 5.11]{CTS07}, 
Saltman \cite[Proposition 10.5]{Sal99}).\\ 

Let $k$ be a 
global field, 
$K/k$ be a finite extension and 
$\bA_K^\times$ be the idele group of $K$. 
We say that {\it the Hasse norm principle holds for $K/k$} 
if $(N_{K/k}(\bA_K^\times)\cap k^\times)/N_{K/k}(K^\times)=1$ 
where $N_{K/k}$ is the norm map. 

Hasse \cite[Satz, page 64]{Has31} proved that 
the Hasse norm principle holds for any cyclic extension $K/k$ 
but does not hold for bicyclic extension $\bQ(\sqrt{-39},\sqrt{-3})/\bQ$. 
For Galois extensions $K/k$, Tate \cite{Tat67} gave the following theorem:
\begin{theorem}[{Tate \cite[page 198]{Tat67}}]\label{thTate}
Let $k$ be a global field, $K/k$ be a finite Galois extension 
with Galois group ${\rm Gal}(K/k)\simeq G$. 
Let $V_k$ be the set of all places of $k$ 
and $G_v$ be the decomposition group of $G$ at $v\in V_k$. 
Then we have 
\begin{align*}
(N_{K/k}(\bA_K^\times)\cap k^\times)/N_{K/k}(K^\times)\simeq 
{\rm Coker}\left\{\bigoplus_{v\in V_k}\widehat H^{-3}(G_v,\bZ)\xrightarrow{\rm cores}\widehat H^{-3}(G,\bZ)\right\}
\end{align*}
where $\widehat H$ is the Tate cohomology. 
In particular, the Hasse norm principle holds for $K/k$ 
if and only if the restriction map 
$H^3(G,\bZ)\xrightarrow{\rm res}\bigoplus_{v\in V_k}H^3(G_v,\bZ)$ 
is injective. 
\end{theorem}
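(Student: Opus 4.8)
The plan is to realize the quotient in the statement as a relative Tate cohomology group and then identify it, via local and global Tate--Nakayama duality, with the cokernel of a corestriction map. First I would set up the standard class field theory input: from the exact sequence of $G$-modules
\begin{align*}
1\to K^\times\to \bA_K^\times\to C_K\to 1,
\end{align*}
where $C_K=\bA_K^\times/K^\times$ is the idele class group, I pass to the long exact sequence of Tate cohomology. Here $\widehat H^0(G,K^\times)=k^\times/N_{K/k}(K^\times)$ and $\widehat H^0(G,\bA_K^\times)=\bA_k^\times/N_{K/k}(\bA_K^\times)$ (using $(\bA_K^\times)^G=\bA_k^\times$), and an element $a\in k^\times$ lies in the kernel of $\widehat H^0(G,K^\times)\to\widehat H^0(G,\bA_K^\times)$ exactly when $a\in N_{K/k}(\bA_K^\times)\cap k^\times$. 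Thus the quotient in the theorem is precisely this kernel, which by exactness equals the cokernel of $\widehat H^{-1}(G,\bA_K^\times)\to\widehat H^{-1}(G,C_K)$.

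Next I would compute the two groups. For the ideles, the restricted-product decomposition together with Shapiro's lemma gives $\widehat H^r(G,\bA_K^\times)\simeq\bigoplus_{v}\widehat H^r(G_v,K_w^\times)$ for a fixed $w\mid v$, and local Tate--Nakayama duality (cup product with the local fundamental class) yields $\widehat H^{-1}(G_v,K_w^\times)\simeq\widehat H^{-3}(G_v,\bZ)$. For the idele class group I would invoke Tate's global duality theorem: cup product with the global fundamental class in $H^2(G,C_K)$ induces isomorphisms $\widehat H^r(G,C_K)\simeq\widehat H^{r-2}(G,\bZ)$ for all $r$, so in particular $\widehat H^{-1}(G,C_K)\simeq\widehat H^{-3}(G,\bZ)$. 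Substituting, the quotient becomes the cokernel of a map $\bigoplus_v\widehat H^{-3}(G_v,\bZ)\to\widehat H^{-3}(G,\bZ)$.

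The crux is to check that this map is the corestriction. This follows from the compatibility of the local and global fundamental classes: localization at $v$ carries the global fundamental class to the local one (compatibly with the invariant maps of local and global class field theory), so the two Tate--Nakayama isomorphisms intertwine the map induced on the idele side with corestriction on the $\bZ$-coefficient side. I expect this compatibility --- essentially the reciprocity relation $\sum_v\mathrm{inv}_v=0$ on $C_K$ together with the fact that cup product commutes with corestriction --- to be the main technical obstacle, since it is where the arithmetic of class field theory genuinely enters.

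Finally, for the ``in particular'' clause I would use, for finite $G$ (where $H^i(G,\bQ)=0$ for $i>0$), the identifications $\widehat H^{-3}(G,\bZ)\simeq H_2(G,\bZ)$ and $H^3(G,\bZ)\simeq H^2(G,\bQ/\bZ)\simeq H_2(G,\bZ)^\vee$, the last by universal coefficients and injectivity of $\bQ/\bZ$. Under this Pontryagin duality the corestriction $\bigoplus_v\widehat H^{-3}(G_v,\bZ)\to\widehat H^{-3}(G,\bZ)$ is dual to the restriction $H^3(G,\bZ)\to\bigoplus_v H^3(G_v,\bZ)$; hence the cokernel of the former vanishes if and only if the kernel of the latter does, which yields the stated injectivity criterion for the Hasse norm principle.
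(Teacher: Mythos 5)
Your proof is correct and is essentially the argument of Tate himself in the cited source [Tat67]; the paper states this theorem as a quoted result and gives no independent proof, so there is nothing in the paper for your argument to diverge from. Your route --- identifying the obstruction group with ${\rm Ker}\{\widehat H^0(G,K^\times)\to \widehat H^0(G,\bA_K^\times)\}$, hence with ${\rm Coker}\{\widehat H^{-1}(G,\bA_K^\times)\to \widehat H^{-1}(G,C_K)\}$ via the long exact sequence for $1\to K^\times\to\bA_K^\times\to C_K\to 1$, then applying Shapiro's lemma, local and global Tate--Nakayama, the compatibility of local and global fundamental classes together with the projection formula to recognize the map as corestriction, and finally Pontryagin duality ($\widehat H^{-3}(G,\bZ)\simeq H_2(G,\bZ)$, $H^3(G,\bZ)\simeq H_2(G,\bZ)^\vee$, with corestriction dual to restriction) for the injectivity criterion --- is exactly the standard proof.
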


Let $S_n$ (resp. $A_n$, $D_n$, $C_n$) be the symmetric 
(resp. the alternating, the dihedral, the cyclic) group 
of degree $n$ of order $n!$ (resp. $n!/2$, $2n$, $n$). 
Let $V_4\simeq C_2\times C_2$ be the Klein four group. 

If $G\simeq C_n$ is cyclic, then 
$\widehat H^{-3}(G,\bZ)\simeq H^3(G,\bZ)\simeq H^1(G,\bZ)=0$ 
and hence the Hasse's original theorem follows. 
If there exists a place $v$ of $k$ such that $G_v=G$, then 
the Hasse norm principle also holds for $K/k$. 
For example, the Hasse norm principle holds for $K/k$ with 
$G\simeq V_4$ if and only if 
there exists a place $v$ of $k$ such that $G_v=V_4$ because 
$H^3(V_4,\bZ)\simeq\bZ/2\bZ$ and $H^3(C_2,\bZ)=0$. 
The Hasse norm principle holds for $K/k$ with 
$G\simeq (C_2)^3$ if and only if {\rm (i)} 
there exists a place $v$ of $k$ such that $G_v=G$ 
or {\rm (ii)} 
there exist places $v_1,v_2,v_3$ of $k$ such that 
$G_{v_i}\simeq V_4$ and 
$H^3(G,\bZ)\xrightarrow{\rm res}
H^3(G_{v_1},\bZ)\oplus H^3(G_{v_2},\bZ)\oplus H^3(G_{v_3},\bZ)$ 
is an isomorphism because 
$H^3(G,\bZ)\simeq(\bZ/2\bZ)^{\oplus 3}$ and 
$H^3(V_4,\bZ)\simeq \bZ/2\bZ$. 

Ono \cite{Ono63} established the relationship 
between the Hasse norm principle for $K/k$ 
and the Hasse principle for all torsors under 
the norm one torus $R^{(1)}_{K/k}(\bG_m)$ of $K/k$: 
\begin{theorem}[{Ono \cite[page 70]{Ono63}, see also Platonov \cite[page 44]{Pla82}, Kunyavskii \cite[Remark 3]{Kun84}, Platonov and Rapinchuk \cite[page 307]{PR94}}]\label{thOno}
Let $k$ be a global field and $K/k$ be a finite extension. 
Then 
\begin{align*}
\Sha(R^{(1)}_{K/k}(\bG_m))\simeq (N_{K/k}(\bA_K^\times)\cap k^\times)/N_{K/k}(K^\times).
\end{align*}
In particular, $\Sha(R^{(1)}_{K/k}(\bG_m))=0$ if and only if 
the Hasse norm principle holds for $K/k$. 
\end{theorem}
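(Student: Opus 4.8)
The plan is to realize $T=R^{(1)}_{K/k}(\bG_m)$ as the kernel of the norm map and to compute both the global and every local first Galois cohomology group by Hilbert's Theorem 90, thereby reducing $\Sha(T)$ to a quotient of norm groups that I can then match with the idelic description appearing in the statement.

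First I would write down the defining short exact sequence of $k$-tori
\[
1\to T\to R_{K/k}(\bG_m)\xrightarrow{N_{K/k}}\bG_m\to 1.
\]
Taking Galois cohomology over $k$ and using $R_{K/k}(\bG_m)(k)=K^\times$, $\bG_m(k)=k^\times$, together with $H^1(k,R_{K/k}(\bG_m))\simeq H^1(K,\bG_m)=0$ (Shapiro's lemma and Hilbert $90$), the long exact sequence collapses to an isomorphism $H^1(k,T)\simeq k^\times/N_{K/k}(K^\times)$. Running the identical argument over each completion $k_v$, where $R_{K/k}(\bG_m)(k_v)=(K\otimes_k k_v)^\times=\prod_{w\mid v}K_w^\times$ and $H^1(k_v,R_{K/k}(\bG_m))\simeq\prod_{w\mid v}H^1(K_w,\bG_m)=0$, yields $H^1(k_v,T)\simeq k_v^\times/N_v$ with $N_v=N_{(K\otimes_k k_v)/k_v}\bigl((K\otimes_k k_v)^\times\bigr)=\prod_{w\mid v}N_{K_w/k_v}(K_w^\times)$.

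Next I would unwind the definition of $\Sha(T)$. Under these identifications the restriction map $H^1(k,T)\to\bigoplus_v H^1(k_v,T)$ sends the class of $a\in k^\times$ to the family of its local classes, so $\Sha(T)$ is exactly the group of classes of those $a\in k^\times$ that are \emph{local norms at every place}, taken modulo $N_{K/k}(K^\times)$. It then remains to recognize the set of everywhere-local norms as $N_{K/k}(\bA_K^\times)\cap k^\times$: an element $a\in k^\times$ lies in the latter precisely when there is an idele $(x_w)\in\bA_K^\times$ with $\prod_{w\mid v}N_{K_w/k_v}(x_w)=a$ for all $v$, i.e. when $a\in N_v$ for every $v$.

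The main obstacle, and really the only nontrivial point, will be reconciling the restricted-product structure of $\bA_K^\times$ with the naive ``local norm at every place'' condition: to assemble a single global idele realizing a prescribed collection of local norms one must arrange that $x_w$ is a unit for almost all $w$. I would handle this by noting that $K/k$ is unramified outside a finite set of places and that $a$ is a $v$-adic unit for almost all $v$; in the unramified case every local unit is the norm of a unit, so the integrality constraints of the restricted product impose no additional condition and a global idele can always be built. Once this is verified one obtains $\Sha(T)\simeq(N_{K/k}(\bA_K^\times)\cap k^\times)/N_{K/k}(K^\times)$, and the final equivalence is immediate, since by definition the Hasse norm principle for $K/k$ is precisely the vanishing of this quotient.
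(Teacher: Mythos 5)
The paper itself gives no proof of this theorem---it is quoted from Ono \cite{Ono63} (see also Platonov and Rapinchuk \cite{PR94})---so there is no internal argument to compare with. Your proof is correct and is essentially the standard one found in those references: taking Galois cohomology of $1\to T\to R_{K/k}(\bG_m)\xrightarrow{N_{K/k}}\bG_m\to 1$ and applying Shapiro's lemma plus Hilbert 90 identifies $H^1(k,T)\simeq k^\times/N_{K/k}(K^\times)$ and $H^1(k_v,T)\simeq k_v^\times/\prod_{w\mid v}N_{K_w/k_v}(K_w^\times)$ compatibly with restriction, so that $\Sha(T)$ becomes the group of everywhere-local norms modulo global norms. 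You also correctly isolate and dispose of the one genuinely delicate point, namely that a family of local norm equations can be assembled into a single idele: since $a$ is a unit at almost all places and $K/k$ is unramified at almost all places, and the norm map of an unramified extension of non-archimedean local fields is surjective on units, the restricted-product constraint is vacuous outside a finite set. One cosmetic remark: for $T$ to be a torus one needs $K/k$ separable (automatic over number fields), which your use of Shapiro's lemma implicitly assumes, as does the paper elsewhere.
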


The Hasse norm principle for Galois extensions $K/k$ 
was investigated by Gerth \cite{Ger77}, \cite{Ger78} and 
Gurak \cite{Gur78a}, \cite{Gur78b}, \cite{Gur80} (see also \cite[pages 308--309]{PR94}), etc. 
Gurak \cite{Gur78a} showed that 
the Hasse norm principle holds for Galois extension $K/k$ 
if all the Sylow subgroups of ${\rm Gal}(K/k)$ are cyclic. 
Note that this also follows from Theorem \ref{thOno} 
and the retract $k$-rationality of 
$T=R^{(1)}_{K/k}(\bG_m)$ due to Endo and Miyata \cite[Theorem 2.3]{EM75}. 

However, for non-Galois extension $K/k$, 
very little is known about the Hasse norm principle. 
Bartels \cite{Bar81a} (resp. \cite{Bar81b}) 
showed that the Hasse norm principle for $K/k$ holds 
when $[K:k]$ is prime (resp. ${\rm Gal}(L/k)\simeq D_n$). 
The former case also follows from Theorem \ref{thOno} and 
the retract $k$-rationality of 
$T=R^{(1)}_{K/k}(\bG_m)$ due to 
Colliot-Th\'{e}l\`{e}ne and Sansuc \cite[Proposition 9.1]{CTS87} 
(see Theorem \ref{thS}). 

%
\begin{theorem}[{Voskresenskii and Kunyavskii \cite{VK84}, see also Voskresenskii \cite[Theorem 4, Corollary]{Vos88}}]\label{thKV84}
Let $k$ be a number field, $K/k$ be a finite extension of degree $n$
and $L/k$ be the Galois closure of $K/k$ with 
${\rm Gal}(L/k)\simeq S_n$; the symmetric group of degree $n$. 
Let $T=R^{(1)}_{K/k}(\bG_m)$ be the norm one torus of $K/k$ 
and $X$ be a smooth $k$-compactification of $T$. 
Then $H^1(S_n,{\rm Pic}\,X_L)=0$. 
In particular, $T$ has the weak approximation property 
and the Hasse norm principle holds for $K/k$. 
\end{theorem}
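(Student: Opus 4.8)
The plan is to compute $H^1(S_n,\mathrm{Pic}\,X_L)$ by combining the flabby resolution machinery from Theorem \ref{thVos69} with the structure of the norm one torus character lattice. Since $K/k$ has Galois closure $L$ with $\mathrm{Gal}(L/k)\simeq S_n$ and $K$ corresponds to the stabilizer $S_{n-1}\leq S_n$, the cocharacter lattice of $R_{K/k}(\bG_m)$ is the permutation lattice $\bZ[S_n/S_{n-1}]$, and $M=\widehat{T}$ sits in the exact sequence $0\to\bZ\to\bZ[S_n/S_{n-1}]\to J\to 0$ where $J$ is the augmentation quotient and $\widehat{T}=J_{S_n}$ in the notation of the excerpt. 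By the results recalled after the theorem of Voskresenskii--Colliot-Th\'el\`ene--Sansuc, proving $H^1(S_n,\mathrm{Pic}\,X_L)=0$ is equivalent to proving $H^1(S_n,[M]^{fl})\simeq\Sha^2_\omega(S_n,M)=0$, and via the Galois case it also suffices to control $\Sha^1_\omega$-type obstructions through the connecting maps.

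\medskip

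\noindent\textbf{Strategy via Tate's theorem and decomposition groups.} First I would invoke Theorem \ref{thTate}: since $\mathrm{Gal}(L/k)\simeq S_n$, the Hasse norm principle and weak approximation are controlled by whether the restriction map $H^3(S_n,\bZ)\xrightarrow{\mathrm{res}}\bigoplus_{v}H^3((S_n)_v,\bZ)$ is injective and by the analogous obstruction for $A(T)$. However, the cleaner route for the \emph{unconditional} vanishing $H^1(S_n,\mathrm{Pic}\,X_L)=0$ (which does not depend on the decomposition data) is to show that $[M]^{fl}$ is invertible, equivalently that $M=\widehat{T}$ is a \emph{coflabby} or \emph{retract-rational}-type lattice whose flabby class vanishes cohomologically on every subgroup. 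The key computational fact is Endo--Miyata's theorem (Theorem \ref{thEM}) together with the known result that $R^{(1)}_{K/k}(\bG_m)$ is retract $k$-rational when the Galois group is $S_n$, which would immediately give $H^1(k,\mathrm{Pic}\,\overline{X})=0$.

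\medskip

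\noindent\textbf{Key steps.} The plan proceeds as follows. (i) Identify $\widehat{T}$ with $J_{S_n}=\mathrm{coker}(\bZ\to\bZ[S_n/S_{n-1}])$ and set up its flabby resolution $0\to\widehat{T}\to\widehat{Q}\to\mathrm{Pic}\,X_L\to 0$. (ii) Reduce $H^1(S_n,\mathrm{Pic}\,X_L)$ to $\Sha^2_\omega(S_n,\widehat{T})$ using the Colliot-Th\'el\`ene--Sansuc isomorphism quoted in the excerpt. (iii) Since $\Sha^2_\omega$ is a subgroup of $H^2(S_n,\widehat{T})$ cut out by restrictions to all cyclic subgroups, analyze $H^2(S_n,\widehat{T})$ via the long exact sequence attached to $0\to\bZ\to\bZ[S_n/S_{n-1}]\to J\to 0$, relating it to $H^3(S_n,\bZ)$ and the Schur multiplier $M(S_n)=\bZ/2\bZ$ for $n\geq 4$. (iv) Show that every class in $\Sha^2_\omega$ dies: because $S_n$ contains a transposition (a cyclic subgroup $C_2$) through which the nontrivial multiplier class restricts nontrivially, the $\omega$-kernel condition forces the obstruction group to vanish. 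This last point leverages that $S_n$ is generated by transpositions and that the relevant cohomology is detected on these cyclic subgroups.

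\medskip

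\noindent\textbf{Main obstacle.} The hard part will be step (iv): verifying that the Shafarevich--Tate style obstruction $\Sha^2_\omega(S_n,\widehat{T})$ genuinely vanishes rather than merely simplifying. The subtlety is that $\Sha^2_\omega$ records classes trivial on \emph{all} cyclic subgroups simultaneously, so I must show there is no ``globally nontrivial, locally cyclic-trivial'' class. I expect to handle this by an explicit cochain or character-theoretic argument exploiting the rich supply of cyclic subgroups in $S_n$ (transpositions, $n$-cycles, and products of disjoint cycles), showing their restriction maps jointly inject $H^2(S_n,\widehat{T})$; equivalently, one reduces to the retract rationality of $R^{(1)}_{K/k}(\bG_m)$ for $\mathrm{Gal}(L/k)\simeq S_n$, which is the cleanest way to force both $A(T)=0$ and $\Sha(T)=0$ at once and thereby conclude the Hasse norm principle via Theorem \ref{thOno} and Theorem \ref{thV}.
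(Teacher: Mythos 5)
Your proposal has a fatal flaw at its core: the ``known result'' you invoke --- that $R^{(1)}_{K/k}(\bG_m)$ is retract $k$-rational when ${\rm Gal}(L/k)\simeq S_n$ --- is false for composite $n$. Theorem \ref{thS} of this very paper (Colliot-Th\'el\`ene--Sansuc, Endo) states that for ${\rm Gal}(L/k)=S_n$ and ${\rm Gal}(L/K)=S_{n-1}$ the torus $R^{(1)}_{K/k}(\bG_m)$ is retract $k$-rational \emph{if and only if $n$ is prime}. Since the implications $[\widehat{T}]^{fl}$ invertible $\Rightarrow$ $H^1(G,[\widehat{T}]^{fl})=0$ cannot be reversed, retract rationality is strictly stronger than the vanishing you need, and it genuinely fails for $n=4,6,8,9,\ldots$ --- precisely the cases where Theorem \ref{thKV84} still asserts $H^1(S_n,{\rm Pic}\,X_L)=0$. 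So the route you call ``the cleanest way to force both $A(T)=0$ and $\Sha(T)=0$ at once'' is simply not available.

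Your fallback, step (iv), also fails as described. You claim the nontrivial class of $H^3(S_n,\bZ)\simeq M(S_n)\simeq\bZ/2\bZ$ restricts nontrivially to a transposition; but $H^3(C,\bZ)=0$ for \emph{every} cyclic group $C$ (odd-degree integral cohomology of cyclic groups vanishes --- the paper itself notes $H^3(C_2,\bZ)=0$), so every class restricts to zero on every cyclic subgroup. This is exactly why $\Sha_\omega$-type groups can be nonzero: $H^3(V_4,\bZ)\simeq\bZ/2\bZ$ consists entirely of classes trivial on all cyclic subgroups. More fundamentally, the assertion that the restrictions to cyclic subgroups jointly inject $H^2(S_n,\widehat{T})$ is not a lemma you can leverage --- it \emph{is} the statement $\Sha^2_\omega(S_n,\widehat{T})=0$, i.e.\ the theorem itself, and your proposal offers no actual mechanism (explicit cocycle computation, or a double-coset analysis of the maps $H^2(S_{n-1},\bZ)\to H^2(S_n,J_{S_n/S_{n-1}})\to H^3(S_n,\bZ)$ coming from $0\to\bZ\to\bZ[S_n/S_{n-1}]\to J_{S_n/S_{n-1}}\to 0$) to establish it. Note also that the paper does not prove this theorem; it imports it from Voskresenskii--Kunyavskii \cite{VK84} (see also \cite{Vos88}, and \cite{MN} for an explicit modern treatment), so any complete argument must come from those sources rather than from retract rationality.
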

\begin{theorem}[{Macedo \cite{Mac20}}]\label{thMac}
Let $k$ be a number field, $K/k$ be a finite extension of degree $n\geq 5$
and $L/k$ be the Galois closure of $K/k$ 
with ${\rm Gal}(L/k)\simeq A_n$; the alternating group of 
degree $n\geq 5$. 
Let $T=R^{(1)}_{K/k}(\bG_m)$ be the norm one torus of $K/k$. 
Then $\Sha^2_\omega(A_n,\widehat{T})=0$.  
In particular, $T$ has the weak approximation property 
and the Hasse norm principle holds for $K/k$. 
\end{theorem}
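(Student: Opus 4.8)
The plan is to compute $\Sha^2_\omega(A_n,\widehat{T})$ directly and then read off the conclusion. Here $G=A_n$ acts on $\{1,\dots,n\}$, the point stabiliser is $H=A_{n-1}$, and $\widehat{T}=J_{G/H}=\bZ[G/H]/\bZ\cdot N$ fits into the short exact sequence of $G$-lattices $0\to\bZ\to\bZ[G/H]\to\widehat{T}\to 0$, where $N$ is the norm element. Taking $G$-cohomology and using Shapiro's lemma $H^i(G,\bZ[G/H])\simeq H^i(H,\bZ)$ together with the perfectness of $A_n$ (so that $H^2(G,\bZ)\simeq\mathrm{Hom}(G^{ab},\bQ/\bZ)=0$) and $H^1(H,\bZ)=0$, I would extract the exact sequence
\begin{align*}
0\to H^2(H,\bZ)\to H^2(G,\widehat{T})\xrightarrow{\ \delta\ }\ker\!\left(H^3(G,\bZ)\xrightarrow{\mathrm{res}}H^3(H,\bZ)\right)\to 0 .
\end{align*}
This reduces everything to the Schur multipliers $H^3(A_n,\bZ)\simeq M(A_n)$ and $H^3(A_{n-1},\bZ)\simeq M(A_{n-1})$ and their behaviour under restriction to the point stabiliser.

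Next I would invoke the known values $M(A_n)\simeq\bZ/2\bZ$ for $n\geq 8$, $M(A_6)\simeq M(A_7)\simeq\bZ/6\bZ$, and $M(A_4)\simeq M(A_5)\simeq\bZ/2\bZ$, together with the compatibility of the covering groups $2.A_{n-1}\leq 2.A_n$ (and of the exceptional $3.A_6\leq 3.A_7$) under the standard embedding of the point stabiliser, which makes $\mathrm{res}\colon M(A_n)\to M(A_{n-1})$ injective whenever $n\geq 7$. For $n\geq 7$ one also has $n-1\geq 5$, so $A_{n-1}$ is perfect and $H^2(H,\bZ)=0$; the displayed sequence then forces $H^2(A_n,\widehat{T})=0$ outright, and \emph{a fortiori} $\Sha^2_\omega(A_n,\widehat{T})=0$. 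In this range the conclusion of the theorem follows from the isomorphism $H^1(k,\mathrm{Pic}\,\overline{X})\simeq\Sha^2_\omega(G,\widehat{T})$ recalled above, Theorem \ref{thV} (yielding $A(T)=\Sha(T)=0$), and Ono's Theorem \ref{thOno} (yielding the Hasse norm principle).

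This leaves the two genuinely exceptional cases $n=5$ and $n=6$, where I expect the real work. In both the displayed sequence gives $H^2(A_n,\widehat{T})\simeq\bZ/3\bZ$: for $n=5$ it is the contribution $H^2(A_4,\bZ)\simeq\bZ/3\bZ$ coming from the permutation part, while for $n=6$ it is the $3$-primary piece $\ker(M(A_6)\to M(A_5))\simeq\bZ/3\bZ$ of the exceptional multiplier. Since $A_5$ and $A_6$ have no elements of order $6$, the only cyclic subgroups that can see $3$-torsion are the $C=\langle g\rangle$ of order $3$, so the theorem comes down to showing this $\bZ/3\bZ$ restricts nontrivially to such a $C$. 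I would restrict the defining sequence to $C$, decompose $\bZ[G/H]|_C=\bigoplus_i\bZ[C/C_i]$ according to the orbits of $g$ on $\{1,\dots,n\}$, and use $H^3(C,\bZ)=0$ to identify $H^2(C,\widehat{T})\simeq\mathrm{Coker}\!\left(H^2(C,\bZ)\to\bigoplus_i H^2(C_i,\bZ)\right)$. For $n=5$, taking $g$ a single $3$-cycle (fixing two points) and computing the restriction by the Mackey/double-coset formula, each fixed-point summand receives the character of $C$ obtained by conjugating the generator of $H^2(A_4,\bZ)$ and projecting to $A_4^{ab}$; the decisive point is that the two fixed points contribute \emph{opposite} (mutually inverse) characters in $\bZ/3\bZ$, so the resulting vector is not constant and survives in the cokernel, detecting the class.

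The main obstacle is carrying out this detection for $n=6$. There the surviving $\bZ/3\bZ$ does \emph{not} come from the permutation part $H^2(H,\bZ)=0$ but from the exceptional Schur multiplier, and the nonzero class in $M(A_6)_{(3)}$ is invisible on cyclic subgroups at the level of $H^3$ (indeed $H^3(C,\bZ)=0$, and even on a Sylow $3$-subgroup $C_3\times C_3$ it is a multiplier class that dies on every cyclic factor). The crux is therefore to show that, despite this, the class becomes visible in $H^2(C,\widehat{T})$ for a suitable $3$-element $g$ through the orbit structure of $g$ on $\{1,\dots,6\}$ — equivalently, that one cannot lift it to a class trivial on all $C$. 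I would attack this by threading the restriction through an intermediate point stabiliser $\mathrm{Stab}(i)\simeq A_5$ and tracking the permutation module there, or, failing a clean theoretical argument, by a direct flabby-resolution computation for the finitely many groups $A_5,A_6,A_7$ in the spirit of the dimension $\leq 5$ classification; the general $n\geq 7$ argument above then makes these the only cases that must be checked by hand.
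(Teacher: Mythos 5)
You should know at the outset that the paper contains no proof of Theorem \ref{thMac}: it is imported verbatim from Macedo \cite{Mac20} (and then used, e.g., to dispose of $A_{13}$, $A_{14}$, $A_{15}$ in the proof of Theorem \ref{thmain2}), so there is no internal argument to measure you against; I assess your proposal on its own. Your reduction is correct: from $0\to\bZ\to\bZ[G/H]\to\widehat{T}\to 0$, Shapiro's lemma, and perfectness of $A_n$ one does get the exact sequence $0\to H^2(H,\bZ)\to H^2(G,\widehat{T})\to\ker\bigl(\mathrm{res}\colon H^3(G,\bZ)\to H^3(H,\bZ)\bigr)\to 0$. For $n\geq 7$ both outer terms vanish: $H^2(A_{n-1},\bZ)=0$ by perfectness, the $2$-primary part of the kernel dies because the spin double cover of $A_n$ restricts to that of $A_{n-1}$, and the $3$-primary part at $n=7$ dies either by the compatibility of the exceptional triple covers or, more simply, because $A_6$ and $A_7$ share a Sylow $3$-subgroup, so restriction is injective on the $3$-part. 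Your $n=5$ computation is also right: with $C=\langle(123)\rangle$ the two fixed points receive mutually inverse characters $t$ and $-t$ of $C$, and $(t,-t)$ lies in the diagonal image of $H^2(C,\bZ)$ only if $t=0$, so the generator of $H^2(A_5,\widehat{T})\simeq\bZ/3\bZ$, which here comes from $H^2(A_4,\bZ)$, is detected on a cyclic subgroup.

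The genuine gap is $n=6$, and you acknowledge it yourself. There $H^2(A_6,\widehat{T})\simeq\bZ/3\bZ$ maps isomorphically under $\delta$ onto the exceptional $3$-part of $M(A_6)$, and since $H^3(C,\bZ)=0$ for cyclic $C$ the connecting map carries no information about restrictions; you correctly note that only subgroups generated by $3$-cycles could possibly detect the class (restrictions to elements of order $2$, $4$, $5$, and to fixed-point-free $3$-elements, land in groups with no $3$-torsion), so the whole theorem for $n=6$ is \emph{equivalent} to the single assertion that the class survives in $H^2(\langle(123)\rangle,\widehat{T})\simeq(\bZ/3\bZ)^{\oplus 2}$ — and this assertion is exactly what you do not prove. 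You offer two strategies (threading through a point stabilizer $A_5$, or a machine computation), but neither is carried out, and the first is not obviously workable since the class dies already in $H^2(A_5,\bZ)=0$ and in $M(A_5)$. The assertion is in fact true — for instance the paper's own GAP computation in Example \ref{exH1F} for $6T15\simeq A_6$ gives $H^1(G,[J_{G/H}]^{fl})=0$, which equals $\Sha^2_\omega(A_6,\widehat{T})$ by the Colliot-Th\'el\`ene--Sansuc isomorphism quoted in Section \ref{S1} — so your plan does not contain a false step, but as written the hardest case of the theorem is asserted rather than proven. The concluding deductions (vanishing of $A(T)$ and $\Sha(T)$ via Theorem \ref{thV}, and the Hasse norm principle via Theorem \ref{thOno}) are fine once the cohomological vanishing is established.
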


\begin{remark}
Applying Theorem \ref{thV} to $T=R^{(1)}_{K/k}(\bG_m)$,  
it follows from Theorem \ref{thOno} that 
$H^1(k,{\rm Pic}\,\overline{X})=0$ if and only if 
$A(T)=0$ and $\Sha(T)=0$, 
i.e. 
$T$ has the weak approximation property and 
the Hasse norm principle holds for $K/k$. 
In the algebraic language, 
the latter condition $\Sha(T)=0$ means that 
for the corresponding norm hypersurface $f(x_1,\ldots,x_n)=b$, 
it has a $k$-rational point 
if and only if it has a $k_v$-rational point 
for any valuation $v$ of $k$ where 
$f\in k[x_1,\ldots,x_n]$ is the polynomial of total 
degree $n$ defined by the norm map $N_{K/k}:K^\times\to k^\times$ 
and $b\in k^\times$ 
(see \cite[Example 4, page 122]{Vos98}).
\end{remark}

Let $nTm$ be the $m$-th transitive subgroup of $S_n$ 
up to conjugacy 
(see Butler and McKay \cite{BM83}, \cite{GAP}). 

Let $k$ be a number field, 
$K/k$ be a field extension of degree $n$ 
and $L/k$ be the Galois closure of $K/k$ with ${\rm Gal}(L/k)\simeq G$. 
Then we may regard $G$ as the transitive subgroup $G=nTm\leq S_n$. 
Let $v$ be a place of $k$ and $G_v$ be the decomposition group of $G$ at $v$. 
Using Theorem \ref{thKun1}, 
Kunyavskii \cite{Kun84} gave a necessary and sufficient condition 
for the Hasse norm principle for $n=4$:

\begin{theorem}[{Kunyavskii \cite[page 1899]{Kun84}}]\label{thKun2}
Let $k$ be a number field, 
$K/k$ be a field extension of degree $4$ 
and $L/k$ be the Galois closure of $K/k$. 
Let $G={\rm Gal}(L/k)=4Tm$ $(1\leq m\leq 5)$ 
be a transitive subgroup of $S_4$ 
and $H={\rm Gal}(L/K)$ with $[G:H]=4$. 
Let $T=R^{(1)}_{K/k}(\bG_m)$ be the norm one torus of $K/k$. 
Then $A(T)\simeq\Sha(T)=0$ 
except for $4T2\simeq V_4$ and $4T4\simeq A_4$. 
For $4T2\simeq V_4$ and $4T4\simeq A_4$, 
either {\rm (i)} $A(T)=0$ and $\Sha(T)\simeq\bZ/2\bZ$ 
or {\rm (ii)} $A(T)\simeq\bZ/2\bZ$ and $\Sha(T)=0$, 
and the following conditions are equivalent:\\ 
{\rm (ii)} $A(T)\simeq\bZ/2\bZ$ and $\Sha(T)=0$;\\
{\rm (iii)} there exists a place $v$ of $k$ 
$($which ramifies in $L$$)$ such that $V_4\leq G_v$. 
\end{theorem}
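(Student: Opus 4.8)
The plan is to extract everything from the exact sequence of Theorem~\ref{thV}, combined with Kunyavskii's computation of $H^1(k,\mathrm{Pic}\,\overline X)$ in Theorem~\ref{thKun1}, so that the arithmetic content of the equivalence (ii)$\Leftrightarrow$(iii) reduces to the restriction behaviour of a single cohomology class of order $2$.

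First I would treat $4T1\simeq C_4$, $4T3\simeq D_4$ and $4T5\simeq S_4$: here Theorem~\ref{thKun1} gives $H^1(k,\mathrm{Pic}\,\overline X)=0$, so the sequence $0\to A(T)\to H^1(k,\mathrm{Pic}\,\overline X)^\vee\to\Sha(T)\to 0$ of Theorem~\ref{thV} forces $A(T)=\Sha(T)=0$. For $4T2\simeq V_4$ and $4T4\simeq A_4$, Theorem~\ref{thKun1} gives $H^1(k,\mathrm{Pic}\,\overline X)\simeq\bZ/2\bZ$, whose Pontryagin dual is again $\bZ/2\bZ$; as the only subgroups of $\bZ/2\bZ$ are $0$ and itself, the same sequence leaves exactly the alternatives (i) and (ii). This proves the first assertion and the dichotomy, and since $\Sha(T)=0$ is equivalent to (ii), it remains to show $\Sha(T)=0\Leftrightarrow$(iii).

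For this I would pass to the character module $\widehat T$. By Tate--Nakayama duality (\cite{PR94}) one has $\Sha(T)=0$ iff $\Sha^2(k,\widehat T):=\ker\big(H^2(G,\widehat T)\to\prod_v H^2(G_v,\widehat T)\big)=0$, the product being over the decomposition groups $G_v$. By the theorem of Colliot-Th\'el\`ene and Sansuc \cite{CTS87} quoted above, $\Sha^2_\omega(G,\widehat T)\simeq H^1(k,\mathrm{Pic}\,\overline X)\simeq\bZ/2\bZ$; let $\alpha$ be its generator, an order-$2$ class in $H^2(G,\widehat T)$ with $\alpha|_{\langle g\rangle}=0$ for all $g\in G$. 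Decomposition groups at unramified places are cyclic and, by Chebotarev, exhaust the cyclic subgroups of $G$, so restricting to the unramified places imposes precisely the defining condition of $\Sha^2_\omega$; hence $\Sha^2(k,\widehat T)=\langle\alpha\rangle\cap\ker\big(H^2(G,\widehat T)\to\prod_{v\ \mathrm{ram}}H^2(G_v,\widehat T)\big)$, and $\Sha^2(k,\widehat T)=0$ iff $\alpha|_{G_v}\neq 0$ for some ramified (hence non-cyclic) $G_v$.

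It therefore suffices to show that $\alpha$ restricts nontrivially to \emph{every} non-cyclic subgroup of $G$, since in both $V_4$ and $A_4$ the non-cyclic subgroups are exactly those containing $V_4$ (only $V_4$ itself when $G=V_4$; the two subgroups $V_4$ and $A_4$ when $G=A_4$), so ``some $G_v$ non-cyclic'' becomes precisely condition (iii). When the subgroup equals $G$ this is immediate, since $\alpha\neq 0$; this already disposes of $4T2$ and of the case $G_v=A_4$ for $4T4$. The one remaining and genuinely delicate point, which I expect to be the main obstacle, is the sub-case $G=A_4$, $G_v=V_4$, where one must rule out that $\mathrm{res}^{A_4}_{V_4}$ kills $\alpha$ despite $V_4\subsetneq A_4$. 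The plan is to clear it by a parity argument: since $\mathrm{cor}^{A_4}_{V_4}\circ\mathrm{res}^{A_4}_{V_4}$ is multiplication by $[A_4:V_4]=3$ on $H^2(A_4,\widehat T)$ and $\alpha$ has order $2$, we get $\mathrm{cor}^{A_4}_{V_4}(\alpha|_{V_4})=3\alpha=\alpha\neq 0$, so $\alpha|_{V_4}\neq 0$. Combining these facts yields $\Sha(T)=0\Leftrightarrow\exists\,v\ \text{with}\ V_4\le G_v$, which together with the dichotomy of the first part gives (ii)$\Leftrightarrow$(iii).
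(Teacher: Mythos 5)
Your proposal is correct, but it takes a genuinely different route from the paper. The paper treats this statement as Kunyavskii's result and, in its own framework (Example \ref{ex4}, mirroring the proof of Theorem \ref{thmain3}), verifies it by the Drakokhrust--Platonov machinery: since the first obstruction over $L$ itself is uninformative here, one passes to a Schur cover $\widetilde{G}$ ($D_4$ over $V_4$, $\SL_2(\bF_3)$ over $A_4$), invokes Drakokhrust's theorem ${\rm Obs}(K/k)={\rm Obs}_1(\widetilde{L}/K/k)$, and then enumerates all subgroups $\widetilde{G}^\prime\leq\widetilde{G}$ (by GAP) to determine exactly which decomposition groups make $\varphi_1({\rm Ker}\,\psi_2^v)$ nontrivial; the answer descends to the condition $V_4\leq G_v$. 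You instead work dually with $\widehat{T}$: Tate--Nakayama duality reduces $\Sha(T)=0$ to $\Sha^2(G,\widehat{T})=0$, the Colliot-Th\'el\`ene--Sansuc identification $\Sha^2_\omega(G,\widehat{T})\simeq H^1(k,{\rm Pic}\,\overline{X})\simeq\bZ/2\bZ$ pins down the only possible nonzero class $\alpha$, Chebotarev disposes of the unramified (cyclic) decomposition groups, and the restriction--corestriction identity ${\rm cor}\circ{\rm res}=[A_4:V_4]=3$ settles the one delicate restriction $H^2(A_4,\widehat{T})\to H^2(V_4,\widehat{T})$ (equivalently: restriction to a Sylow $2$-subgroup is injective on $2$-torsion). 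Your argument is shorter and conceptual, needing no computation beyond Theorem \ref{thKun1}, but it exploits the accident that $\Sha^2_\omega$ has prime order and that the relevant index is odd; the paper's method is heavier but uniform, and it is the one that scales to the degree $8,9,10,14,15$ cases of Theorem \ref{thmain3}, where the interplay between several minimal stem extensions and many candidate subgroups $G_v$ cannot be resolved by a single res--cor parity trick.
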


Drakokhrust and Platonov \cite{DP87} gave a necessary and sufficient condition 
for the Hasse norm principle for $n=6$ ($G=6Tm\ (1\leq m\leq 16))$: 

\begin{theorem}[{Drakokhrust and Platonov \cite[Lemma 12, Proposition 6, Lemma 13]{DP87}}]\label{thDP}
Let $k$ be a number field, 
$K/k$ be a field extension of degree $6$ 
and $L/k$ be the Galois closure of $K/k$. 
Let $G={\rm Gal}(L/k)=6Tm$ $(1\leq m\leq 16)$ 
be a transitive subgroup of $S_6$ 
and $H={\rm Gal}(L/K)$ with $[G:H]=6$. 
Let $T=R^{(1)}_{K/k}(\bG_m)$ be the norm one torus of $K/k$. 
Then $\Sha(T)=0$ 
except for $6T4\simeq A_4$ and $6T12\simeq A_5$. 
For $6T4\simeq A_4$ and $6T12\simeq A_5$, 
{\rm (i)} 
$\Sha(T)\leq \bZ/2\bZ$; and 
{\rm (ii)} $\Sha(T)=0$ if and only if 
there exists a place $v$ of $k$ 
$($which ramifies in $L$$)$ such that $V_4\leq G_v$. 
\end{theorem}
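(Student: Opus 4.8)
The plan is to run everything through the character lattice and reduce the Hasse norm principle to a group-cohomology computation. By Ono's Theorem~\ref{thOno}, $\Sha(T)$ is the obstruction to the Hasse norm principle, and by Voskresenskii's Theorem~\ref{thV} combined with the Colliot-Th\'{e}l\`{e}ne--Sansuc identification $H^1(k,{\rm Pic}\,\overline X)\simeq\Sha^2_\omega(G,\widehat T)$ one has the order relation $|\Sha^2_\omega(G,\widehat T)|=|A(T)|\cdot|\Sha(T)|$; in particular $\Sha(T)\le\Sha^2_\omega(G,\widehat T)$. Writing $H={\rm Gal}(L/K)$, the lattice $\widehat T$ of $T=R^{(1)}_{K/k}(\bG_m)$ fits in the exact sequence of $G$-lattices $0\to\bZ\to\bZ[G/H]\to\widehat T\to 0$, so all invariants are determined by the transitive pair $(G,H)$ with $G=6Tm$ and $H$ a point stabilizer of index $6$. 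Finally, by Poitou--Tate duality for tori (reduced to the finite group $G$), $\Sha(T)^\vee\simeq\Sha^2_S(G,\widehat T):={\rm Ker}\big(H^2(G,\widehat T)\to\prod_v H^2(G_v,\widehat T)\big)$, where $S$ runs over the decomposition groups $G_v$.

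First I would compute $\Sha^2_\omega(G,\widehat T)$ for each of the sixteen groups $G=6Tm$. Since $\Sha^2_\omega$ is the kernel of restriction to all cyclic subgroups, the exact sequence above together with Shapiro's lemma $H^i(D,\bZ[G/H])\simeq\bigoplus_{g\in D\backslash G/H}H^i(D\cap gHg^{-1},\bZ)$ for cyclic $D$ reduces this to a finite calculation; equivalently one evaluates the flabby class $[\widehat T]^{fl}={\rm Pic}\,X_L$. The expected output is $\Sha^2_\omega(G,\widehat T)=0$ whenever $m\ne 4,12$, and $\Sha^2_\omega\simeq\bZ/2\bZ$ for $6T4\simeq A_4$ and $6T12\simeq A_5$, the nontrivial class being traceable to $H^3(V_4,\bZ)\simeq\bZ/2\bZ$ for the subgroup $V_4\le A_4\le A_5$ (note $H^3(C_n,\bZ)=H^3(S_3,\bZ)=H^3(D_5,\bZ)=0$). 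This already proves $\Sha(T)=0$ for all $m\ne 4,12$ and gives the bound $\Sha(T)\le\bZ/2\bZ$ of part~(i).

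For part~(ii) I would pass from $\Sha^2_\omega$ to the field-dependent $\Sha^2_S\simeq\Sha(T)^\vee$. By Chebotarev, every cyclic subgroup of $G$ occurs as $G_v$ at infinitely many unramified places, so $\Sha^2_S$ differs from $\Sha^2_\omega$ only through conditions imposed by \emph{non-cyclic} decomposition groups, which occur only at places ramifying in $L$. Hence $\Sha(T)=0$ precisely when the generator of $\Sha^2_\omega\simeq\bZ/2\bZ$ restricts nontrivially to $H^2(G_v,\widehat T)$ for some non-cyclic $G_v$. The decisive claim is the purely group-theoretic statement that, among subgroups $D\le G$, the restriction $\Sha^2_\omega(G,\widehat T)\to H^2(D,\widehat T)$ is nonzero if and only if $V_4\le D$. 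Using the connecting map $H^2(D,\widehat T)\to H^3(D,\bZ)$ from the exact sequence and Shapiro's lemma to evaluate the double-coset contributions, one checks injectivity on $V_4$ (and hence on $A_4$) and vanishing on every $D$ with $V_4\not\le D$; for $A_5$ the only non-cyclic candidates are $V_4,S_3,D_5,A_4$, and since decomposition groups of number fields are solvable, $G_v\ne A_5$, so exactly $\{V_4,A_4\}=\{D:V_4\le D\}$ survive. As $V_4\le G_v$ forces $v$ to ramify in $L$, this is the stated criterion.

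The hard part is the last step: for the \emph{non-Galois} extension $K/k$ one cannot simply invoke Tate's Theorem~\ref{thTate} for $L/k$, and must instead control $\Sha^2_S(G,\widehat T)$ directly through the permutation lattice $\bZ[G/H]$. Concretely, I expect the work to lie in tracking the $\bZ/2\bZ$-class through the double cosets $D\backslash G/H$ and verifying both the injectivity on $V_4$ and the vanishing on $S_3$ and $D_5$ --- precisely the bookkeeping supplied by the Drakokhrust--Platonov machinery, in which the total obstruction factors through a ``first obstruction'' built from $H\cap[G,G]$ and corestriction/transfer maps and is then localized as in Theorem~\ref{thTate}. The structure to aim for is the dichotomy $(A(T),\Sha(T))\in\{(\bZ/2\bZ,0),(0,\bZ/2\bZ)\}$ governed by the $V_4$-criterion, exactly as recorded for degree $4$ in Theorem~\ref{thKun2}; the two degree-six computations then amount to carrying this out for $(G,H)=(A_4,C_2)$ and $(A_5,D_5)$.
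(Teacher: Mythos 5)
Your proposal is correct, but it runs dual to the argument that the paper (following Drakokhrust--Platonov, whose theorem this is) actually uses, so let me compare the two. The paper never invokes Poitou--Tate duality: it works on the homology side, through the first obstruction ${\rm Obs}_1(L/K/k)\simeq {\rm Ker}\,\psi_1/\varphi_1({\rm Ker}\,\psi_2)$ of Theorem \ref{thDP2}, built from the abelianizations $H/[H,H]\to G/[G,G]$ and the decomposition groups; since $6$ is square-free, ${\rm Obs}(K/k)={\rm Obs}_1(L/K/k)$ by Theorem \ref{thDP87}, and the $V_4$-criterion then emerges by computing the local term $\varphi_1({\rm Ker}\,\psi_2^{v})$ for each candidate $G_v=D\le G$: the GAP runs in Example \ref{ex6} give ${\rm Ker}\,\psi_1\simeq\bZ/2\bZ$ and $\varphi_1({\rm Ker}\,\psi_2^{\rm nr})=1$ for both $A_4$ and $A_5$, with the local term filling ${\rm Ker}\,\psi_1$ exactly when $V_4\le D$. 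You instead work on the cohomology side: $\Sha(T)^\vee\simeq\Sha^2_S(G,\widehat{T})\subseteq\Sha^2_\omega(G,\widehat{T})$, Chebotarev disposes of the cyclic $G_v$, and everything reduces to whether the generator of $\Sha^2_\omega\simeq\bZ/2\bZ$ restricts nontrivially to some non-cyclic $G_v$; your claim that this happens if and only if $V_4\le G_v$ is precisely the dual of the paper's condition $\varphi_1({\rm Ker}\,\psi_2^{v})={\rm Ker}\,\psi_1$, and it is confirmed by the same finite computations. Your route is conceptually cleaner: parts (i) and (ii) both flow from the single computation of $\Sha^2_\omega$, with no first-obstruction formalism and no appeal to the square-free reduction; the one step you should make explicit is the identification $\Sha^2(k,\widehat{T})=\Sha^2_S(G,\widehat{T})$, which needs inflation--restriction together with $H^1(L,\widehat{T})=0$ (true because $\widehat{T}$ is a lattice). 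What the paper's route buys is that all bookkeeping stays at the level of abelianizations rather than $H^2$ of a rank-$5$ lattice, which is why it implements directly in GAP and, supplemented by Schur covers and stem extensions (Theorem \ref{thDra89}), carries over to degrees $8$ and $9$ in Theorem \ref{thmain3}, where the first obstruction alone is insufficient. Two small points: both routes defer the same finite verifications (vanishing of $\Sha^2_\omega$ for $m\ne 4,12$, and the $V_4$-criterion for $A_4$ and $A_5$), so neither is complete without them; and your solvability aside excluding $G_v=A_5$ is unnecessary, since $V_4\le A_5$ makes the stated criterion insensitive to whether $A_5$ itself occurs as a decomposition group.
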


The number of transitive subgroups $nTm$ of $S_n$ 
$(2\leq n\leq 15)$ up to conjugacy is given as follows 
(see Butler and McKay \cite{BM83} for $n\leq 11$, 
Royle \cite{Roy87} for $n=12$, 
Butler \cite{But93} for $n=14,15$ 
and \cite{GAP}):\\

\begin{center}
\begin{tabular}{r|rrrrrrrrrrrrrrr}
$n$ & $2$ & $3$ & $4$ & $5$ & $6$ & $7$ & $8$ & $9$ & $10$ & $11$ & 
$12$ & $13$ & $14$ & $15$\\\hline
$\#$ of $nTm$ & $1$ & $2$ & $5$ & $5$ & $16$ & $7$ & $50$ & $34$ & $45$ & $8$ &
$301$ & $9$ & $63$ & $104$\vspace*{3mm}\\
\end{tabular}
\end{center}

The following theorem which is one of the main results of this paper 
classifies the norm one tori 
$T=R^{(1)}_{K/k}(\bG_m)$ 
with non-vanishing 
$H^1(k,{\rm Pic}\, \overline{X})\neq 0$ 
for $[K:k]=n\leq 15$ and $n\neq 12$. 

\begin{theorem}\label{thmain2}
Let $2\leq n\leq 15$ be an integer with $n\neq 12$. 
Let $k$ be a field, 
$K/k$ be a separable field extension of degree $n$ 
and $L/k$ be the Galois closure of $K/k$. 
Assume that $G={\rm Gal}(L/k)=nTm$ is a transitive subgroup of $S_n$ 
and $H={\rm Gal}(L/K)$ with $[G:H]=n$. 
Let $T=R^{(1)}_{K/k}(\bG_m)$ be the norm one torus of $K/k$ of dimension $n-1$ 
and $X$ be a smooth $k$-compactification of $T$. 
Then $H^1(k,{\rm Pic}\, \overline{X})\neq 0$ 
if and only if $G$ is given as in Table $1$. 
In particular, if $k$ is a number field and 
$L/k$ is an unramified extension, then $A(T)=0$ and 
$H^1(k,{\rm Pic}\,\overline{X})\simeq \Sha(T)$. 
\end{theorem}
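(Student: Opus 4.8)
The plan is to convert the statement into the evaluation of a single group-theoretic invariant of the pair $(G,H)$ and then to run that evaluation over the finite list of transitive groups. By the discussion following Theorem~\ref{thVos69} together with the theorem of Colliot-Th\'{e}l\`{e}ne and Sansuc, one has the identifications $H^1(k,{\rm Pic}\,\overline{X})\simeq H^1(G,[\widehat{T}]^{fl})\simeq \Sha^2_\omega(G,\widehat{T})$, which hold as an isomorphism of finite abelian groups depending only on the $G$-lattice $\widehat{T}$, hence only on $(G,H)$ and not on $k$. For the norm one torus the character lattice is $\widehat{T}=J_{G/H}:=\bZ[G/H]/\bZ$, sitting in the exact sequence $0\to\bZ\to\bZ[G/H]\to J_{G/H}\to 0$ in which $\bZ\hookrightarrow\bZ[G/H]$ is the diagonal (norm) map. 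Thus it suffices to compute the finite abelian group $\Sha^2_\omega(G,J_{G/H})$ for every transitive $G=nTm\leq S_n$ with $H$ a point stabilizer, and to record the pairs for which it does not vanish. The final assertion is then immediate: when $L/k$ is unramified the last part of Theorem~\ref{thV} gives $A(T)=0$ and $H^1(k,{\rm Pic}\,\overline{X})^\vee\simeq\Sha(T)$, and dualizing the finite group $\Sha(T)$ yields $H^1(k,{\rm Pic}\,\overline{X})\simeq\Sha(T)$, the splitting field of $T$ being $L$.

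Before computing, I would eliminate as many groups as possible a priori. If all Sylow subgroups of $G$ are cyclic then $T$ is retract $k$-rational by Endo and Miyata, so $H^1=0$; the same holds when $n$ is prime by Colliot-Th\'{e}l\`{e}ne and Sansuc. For $G\simeq S_n$ and $G\simeq A_n$ ($n\geq 5$) the vanishing is Theorem~\ref{thKV84} and Theorem~\ref{thMac}. These remove the bulk of the transitive groups of degrees $\leq 15$, leaving an explicit and manageable list to treat individually.

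For the remaining pairs the plan is to read off $\Sha^2_\omega(G,J_{G/H})$ from the long exact cohomology sequence of $0\to\bZ\to\bZ[G/H]\to J_{G/H}\to 0$. Shapiro's lemma gives $H^i(G,\bZ[G/H])\simeq H^i(H,\bZ)$, so $H^2(G,J_{G/H})$ is an extension of $\ker\bigl(H^3(G,\bZ)\xrightarrow{\rm res}H^3(H,\bZ)\bigr)$ by ${\rm coker}\bigl((G^{ab})^\vee\to(H^{ab})^\vee\bigr)$, both built from abelianizations and Schur multipliers of $G$, $H$ and their subgroups, directly accessible from the transitive-group database in \cite{GAP}. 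One then passes to the $\omega$-subgroup by imposing triviality of restriction to every cyclic subgroup $C\leq G$, for which the analogous sequence applies with $\bZ[G/H]|_C$ decomposed by Mackey into permutation modules indexed by the double cosets $C\backslash G/H$. Assembling the nonzero outputs produces Table~$1$; organizing the cyclic-restriction bookkeeping along the lines of Drakokhrust and Platonov, as already used for $n=6$ in Theorem~\ref{thDP}, keeps it systematic.

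The hard part will be the $\omega$-localization rather than the bare cohomology. A direct flabby-resolution computation is impractical because $\dim T=n-1$ reaches $14$, so the reduction above to degree-$\leq 3$ cohomology of $G$, $H$ and cyclic subgroups is what makes the problem feasible; but computing $H^2(G,J_{G/H})$ alone does not suffice, and one must pin down exactly the subgroup annihilated by restriction to all cyclic subgroups, which is where the interplay between the double cosets $C\backslash G/H$ and the restriction maps on Schur multipliers has to be controlled carefully. Finally, degree $n=12$ is set aside because its $301$ transitive groups make this bookkeeping substantially heavier, warranting separate treatment.
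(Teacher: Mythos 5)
Your proposal is correct in its reductions and would prove the theorem, but its computational core is genuinely different from the paper's. The paper computes the invariant $H^1(G,[J_{G/H}]^{fl})$ head-on: it uses the GAP function {\tt FlabbyResolutionLowRankFromGroup} of \cite{HHY20} to produce an explicit low-rank flabby class $F=[J_{G/H}]^{fl}$ by backtracking and then evaluates $H^1(G,F)$, doing this for every $nTm$ with $n\leq 11$ and for $13\leq n\leq 15$ except the pairs $A_n,S_n$ (where machine resources fail and Theorems \ref{thKV84} and \ref{thMac} are invoked instead), with $n=12$ left open and the last assertion read off from Theorem \ref{thV} exactly as you do. You instead compute $\Sha^2_\omega(G,J_{G/H})$ directly from the long exact sequence of $0\to\bZ\to\bZ[G/H]\to J_{G/H}\to 0$, Shapiro's lemma, and Mackey decomposition over cyclic subgroups; this is legitimate because the identification $H^1(G,[M]^{fl})\simeq\Sha^2_\omega(G,M)$ is purely lattice-theoretic (flabbiness plus periodicity of cyclic cohomology), so it holds over any field even though the quoted Colliot-Th\'el\`ene--Sansuc statement carries a ${\rm char}\,k=0$ hypothesis --- a point you should make explicit. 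Indeed the paper itself argues in your style in one place: for $M_{24}$ it deduces $H^2(G,J_{G/H})=0$ from $H^{ab}=0$ and $M(G)=0$ and concludes via $\Sha^2_\omega$, and its Section \ref{S6} functions ({\tt KerResH3Z}, {\tt FirstObstruction*}) implement precisely the kind of restriction bookkeeping you describe. Two caveats. First, your motivating claim that a flabby-resolution computation is ``impractical'' at rank $14$ is contradicted by the paper, which carries it out (with flabby classes of rank up to $1744$); the genuine obstructions are only $n=12$ and $A_n,S_n$ for $n\geq 13$. Second, your $\omega$-localization cannot be done purely from the abstract data of abelianizations and Schur multipliers: the extension $0\to{\rm coker}\bigl((G^{ab})^\vee\to(H^{ab})^\vee\bigr)\to H^2(G,J_{G/H})\to\ker\bigl(H^3(G,\bZ)\to H^3(H,\bZ)\bigr)\to 0$ need not split, so computing the kernel of restriction to all cyclic subgroups requires working with explicit resolutions/cocycles (as HAP does in Section \ref{S6}); you flag this as the hard part, correctly, and once it is done at the cochain level your route closes. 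What each approach buys: yours stays in degree $\leq 3$ cohomology of $G$, $H$ and small subgroups and avoids the search for a low-rank flabby class; the paper's produces the flabby class itself, which is reusable (e.g.\ it gives $H^1(G',[J_{G/H}]^{fl})$ for all subgroups $G'$ at once, the quantity relevant for Theorems \ref{thmain1-4} and \ref{thmain1-5} and for $T(k)/R$ in Section \ref{S7}).
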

%
%
\begin{center}
\vspace*{1mm}
Table $1$: $H^1(k,{\rm Pic}\, \overline{X})\simeq H^1(G,[J_{G/H}]^{fl})\neq 0$ 
where $G=nTm$ with $2\leq n\leq 15$ and $n\neq 12$\vspace*{2mm}\\
\begin{tabular}{lc} 
$G$ & $H^1(k,{\rm Pic}\, \overline{X})$ 
$\simeq H^1(G,[J_{G/H}]^{fl})$\\\hline
$4T2\simeq V_4$ & $\bZ/2\bZ$\\
$4T4\simeq A_4$ & $\bZ/2\bZ$\\\hline
$6T4\simeq A_4$ & $\bZ/2\bZ$\\
$6T12\simeq A_5$ & $\bZ/2\bZ$\\\hline
$8T2\simeq C_4\times C_2$ & $\bZ/2\bZ$\\
$8T3\simeq (C_2)^3$ & $(\bZ/2\bZ)^{\oplus 3}$\\
$8T4\simeq D_4$ & $\bZ/2\bZ$\\
$8T9\simeq D_4\times C_2$ & $\bZ/2\bZ$\\
$8T11\simeq (C_4\times C_2)\rtimes C_2$ & $\bZ/2\bZ$\\
$8T13\simeq A_4\times C_2$ & $\bZ/2\bZ$\\
$8T14\simeq S_4$ & $\bZ/2\bZ$\\
$8T15\simeq C_8\rtimes V_4$ & $\bZ/2\bZ$\\
$8T19\simeq (C_2)^3\rtimes C_4$ & $\bZ/2\bZ$\\
$8T21\simeq (C_2)^3\rtimes C_4$ & $\bZ/2\bZ$\\
$8T22\simeq (C_2)^3\rtimes V_4$ & $\bZ/2\bZ$\\
$8T31\simeq ((C_2)^4\rtimes C_2)\rtimes C_2$ & $\bZ/2\bZ$\\
$8T32\simeq ((C_2)^3\rtimes V_4)\rtimes C_3$ & $\bZ/2\bZ$\\
$8T37\simeq \PSL_3(\bF_2)\simeq \PSL_2(\bF_7)$ & $\bZ/2\bZ$\\
$8T38\simeq (((C_2)^4\rtimes C_2)\rtimes C_2)\rtimes C_3$ & $\bZ/2\bZ$\\\hline
$9T2\simeq (C_3)^2$ & $\bZ/3\bZ$\\
$9T5\simeq (C_3)^2\rtimes C_2$ & $\bZ/3\bZ$\\
$9T7\simeq (C_3)^2\rtimes C_3$ & $\bZ/3\bZ$\\
$9T9\simeq (C_3)^2\rtimes C_4$ & $\bZ/3\bZ$\\
$9T11\simeq (C_3)^2\rtimes C_6$ & $\bZ/3\bZ$\\
$9T14\simeq (C_3)^2\rtimes Q_8$ & $\bZ/3\bZ$\\
$9T23\simeq ((C_3)^2\rtimes Q_8)\rtimes C_3$ & $\bZ/3\bZ$\\\hline
$10T7\simeq A_5$ & $\bZ/2\bZ$\\
$10T26\simeq \PSL_2(\bF_9)\simeq A_6$ & $\bZ/2\bZ$\\
$10T32\simeq S_6$ & $\bZ/2\bZ$\\\hline
$14T30\simeq \PSL_2(\bF_{13})$ & $\bZ/2\bZ$\\\hline
$15T9\simeq (C_5)^2\rtimes C_3$ & $\bZ/5\bZ$\\
$15T14\simeq (C_5)^2\rtimes S_3$ & $\bZ/5\bZ$\\\hline
\end{tabular}
\end{center}~\\
\begin{remark}
In Table $1$, only the abelian groups of prime exponent $p$ appear 
as $H^1(k,{\rm Pic}\,\overline{X})$. 
However, 
we find that $H^1(k,{\rm Pic}\,\overline{X})\simeq \bZ/4\bZ$ 
for $G=12T31\simeq (C_4)^2\rtimes C_3$ 
and $G=12T57\simeq ((C_4\times C_2)\rtimes C_4)\rtimes C_3$
by using the same technique as in the proof of Theorem \ref{thmain2}. 
\end{remark}

Additionally, by using the same method of Theorem \ref{thmain2}, 
we obtain the vanishing 
$H^1(k,{\rm Pic}\, \overline{X})=0$ 
for the $5$ Mathieu groups $M_n\leq S_n$ where $n=11,12,22,23,24$ 
(see Dixon and Mortimer \cite[Chapter 6]{DM96}, Gorenstein, Lyons and Solomon \cite[Chapter 5]{GLS98} for the $5$ Mathieu groups): 
\begin{theorem}\label{thmain2M}
Let $k$ be a field, 
$K/k$ be a separable field extension of degree $n$ 
and $L/k$ be the Galois closure of $K/k$. 
Assume that $G={\rm Gal}(L/k)=M_n\leq S_n$ $(n=11,12,22,23,24)$ 
is the Mathieu group of degree $n$ 
and $H={\rm Gal}(L/K)$ with $[G:H]=n$. 
Let $T=R^{(1)}_{K/k}(\bG_m)$ be the norm one torus of $K/k$ of dimension $n-1$ 
and $X$ be a smooth $k$-compactification of $T$. 
Then $H^1(k,{\rm Pic}\, \overline{X})=0$. 
In particular, if $k$ is a number field, then $A(T)=0$ and $\Sha(T)=0$. 
\end{theorem}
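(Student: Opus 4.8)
The plan is to reduce the assertion to a finite group-cohomology computation and then exploit the special arithmetic of the five permutation groups. Combining Theorem~\ref{thVos69} with the (purely lattice-theoretic, hence field-independent) isomorphism $H^1(G,[J_{G/H}]^{fl})\simeq\Sha^2_\omega(G,J_{G/H})$ recalled above, we have over any field $k$
\[
H^1(k,{\rm Pic}\,\overline X)\simeq H^1(G,[J_{G/H}]^{fl})\simeq\Sha^2_\omega(G,J_{G/H}),
\]
so it suffices to prove $\Sha^2_\omega(G,J_{G/H})=0$ for the five pairs $(G,H)=(M_n,H)$ in which $H$ is the stabilizer of a point of the natural degree-$n$ action, i.e. $H\simeq M_{10},\,M_{11},\,\PSL_3(\bF_4),\,M_{22},\,M_{23}$ for $n=11,12,22,23,24$. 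The number-field statement $A(T)=\Sha(T)=0$ then follows at once from Theorem~\ref{thV}.

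A direct flabby resolution is hopeless here (the lattices have rank up to $23$ and $|G|$ is as large as $|M_{24}|$), so I would instead compute $H^2(G,J_{G/H})$ cohomologically. The defining sequence $0\to\bZ\to\bZ[G/H]\to J_{G/H}\to0$, whose first map is the norm, together with Shapiro's lemma $H^i(G,\bZ[G/H])\simeq H^i(H,\bZ)$ (under which the induced maps become restrictions), yields the exact sequence
\[
H^2(G,\bZ)\xrightarrow{\rm res}H^2(H,\bZ)\to H^2(G,J_{G/H})\to H^3(G,\bZ)\xrightarrow{\rm res}H^3(H,\bZ).
\]
Here $H^2(G,\bZ)\simeq(G^{ab})^\vee$ and $H^3(G,\bZ)\simeq M(G)$, and all the relevant data are classical: each $M_n$ is simple, so $(G^{ab})^\vee=0$, while $M(M_{11})=M(M_{23})=M(M_{24})=0$, $M(M_{12})=\bZ/2\bZ$, $M(M_{22})=\bZ/12\bZ$; the stabilizers $M_{11}$, $\PSL_3(\bF_4)$, $M_{22}$, $M_{23}$ are simple and $M_{10}^{ab}\simeq\bZ/2\bZ$. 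This already settles $M_{23}$ and $M_{24}$ for free, since there both $H^2(H,\bZ)=0$ and $H^3(G,\bZ)=0$ force $H^2(G,J_{G/H})=0$. In the remaining cases it pins $H^2(G,J_{G/H})$ down to a small finite group: for $M_{11}$ it is the $\bZ/2\bZ$ generated by the order-two character of $M_{10}^{ab}$, for $M_{12}$ it is the $\bZ/2\bZ$ coming from the Schur multiplier, and for $M_{22}$ it is the kernel of ${\rm res}\colon\bZ/12\bZ\to M(\PSL_3(\bF_4))$.

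It then remains to cut out the subgroup $\Sha^2_\omega(G,J_{G/H})\subseteq H^2(G,J_{G/H})$ of classes that vanish on every cyclic subgroup. For $C=\langle g\rangle$ the Mackey decomposition $\bZ[G/H]|_C\simeq\bigoplus_{C\tau H}\bZ[C/(C\cap\tau H\tau^{-1})]$ shows that ${\rm res}^G_C$ is governed entirely by the double cosets $C\backslash G/H$, that is, by the cycle type of $g$ on the $n$ points. I would loop over representatives of the cyclic subgroups, compute these restrictions via the permutation representation, and exhibit for each nonzero class of $H^2(G,J_{G/H})$ a cyclic subgroup on which it is nonzero; for example a suitable element of order $8$ in $M_{11}$ should detect the class from $M_{10}^{ab}$. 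This is a finite check carried out in {\rm GAP}, exactly as in the proof of Theorem~\ref{thmain2}.

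The main obstacle is twofold. First, the size of $G$ would defeat any naive lattice computation; it is the cohomological reduction above that makes everything tractable, collapsing a rank-$23$ flabby resolution into the finite data $M(G)$, $G^{ab}$, $H^{ab}$ and two restriction maps, and in particular handling the largest groups $M_{23}$ and $M_{24}$ with no computation at all. Second, and more delicate, is the extension structure of $H^2(G,J_{G/H})$: a class coming from the Schur-multiplier summand maps, upon restriction to a cyclic $C$, into the cokernel part ${\rm coker}({\rm res}\colon H^2(C,\bZ)\to\bigoplus_{C\tau H}H^2(C\cap\tau H\tau^{-1},\bZ))$ of $H^2(C,J_{G/H})$ rather than into $H^3(C,\bZ)=0$, so its nonvanishing cannot be read off from $H^3$ and must be traced through the Mackey decomposition. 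Verifying that no class survives in all three residual cases $M_{11}$, $M_{12}$, $M_{22}$ is where the real work lies.
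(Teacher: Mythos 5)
Your cohomological framework is correct: the field-independent identification $H^1(k,{\rm Pic}\,\overline{X})\simeq H^1(G,[J_{G/H}]^{fl})\simeq\Sha^2_\omega(G,J_{G/H})$, the four-term sequence obtained from $0\to\bZ\to\bZ[G/H]\to J_{G/H}\to 0$ and Shapiro's lemma, and all the group data you quote (simplicity of the stabilizers $M_{11},\PSL_3(\bF_4),M_{22},M_{23}$, $M_{10}^{ab}\simeq\bZ/2\bZ$, and the multipliers $M(M_{11})=M(M_{23})=M(M_{24})=0$, $M(M_{12})=\bZ/2\bZ$, $M(M_{22})=\bZ/12\bZ$) are accurate. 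For $n=23,24$ this yields a complete proof, and for $M_{24}$ it is exactly the paper's argument. The genuine gap is in the three cases you yourself call residual: for $M_{11}$ and $M_{12}$ you establish that $H^2(G,J_{G/H})\simeq\bZ/2\bZ$ (and for $M_{22}$ that it is a subgroup of $\bZ/12\bZ$), but the theorem is precisely the claim that no nonzero class survives in $\Sha^2_\omega$, i.e.\ that each such class is detected on some cyclic subgroup --- and this you assert (``should detect'') rather than prove. The detection is not formal: for your order-$8$ element $g$ of $M_{11}$ (cycle type $8+2+1$), the Mackey components of the class are $(\chi,\epsilon,0)\in H^2(C_8,\bZ)\oplus H^2(C_4,\bZ)\oplus 0$, where $\chi$ is the order-two character of $C_8$ (nontrivial because $A_6$ has no elements of order $8$); since $\chi|_{C_4}=0$, the class restricts to zero on $\langle g\rangle$ unless $\epsilon\neq 0$, i.e.\ unless $g^2$ lies outside the $A_6$ inside the stabilizer of a point of the $2$-cycle --- a conjugacy fact in $M_{11}$ that you have not checked. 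As written, therefore, your proposal proves the theorem only for $M_{23}$ and $M_{24}$.

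Two points of comparison with the paper. First, you missed the shortcut that removes your hardest case: $n=11$ and $n=23$ are prime, so $T=R^{(1)}_{K/k}(\bG_m)$ is retract $k$-rational by Theorem \ref{thS} (Colliot-Th\'el\`ene--Sansuc), hence $[J_{G/H}]^{fl}$ is invertible and $H^1(G,[J_{G/H}]^{fl})=0$ with no computation whatsoever; this is exactly how the paper disposes of $M_{11}$ and $M_{23}$, and it makes your delicate $M_{11}$ analysis unnecessary. Second, for $M_{12}$ and $M_{22}$ the paper also falls back on a machine computation, but of a different kind: it builds a flabby class of rank $814$ (resp.\ $672$) and computes its $H^1$ directly (Example \ref{exH1FM12M22}); your plan of computing $\Sha^2_\omega(G,J_{G/H})$ via restrictions of $H^2$ to cyclic subgroups through the double-coset decomposition is a genuinely leaner alternative (it never touches a large lattice), and it would succeed --- but it becomes a proof only once the finite check is actually executed, or the detecting cyclic subgroups are exhibited explicitly.
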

Let $Z(G)$ be the center of a group $G$, 
$[G,G]$ be the commutator group of $G$ 
and ${\rm Syl}_p(G)$ be a $p$-Sylow subgroup of $G$ 
where $p$ is a prime. 
Let ${\rm Orb}_{G}(i)$ be the orbit of $1\leq i\leq n$ 
under the action of $G\leq S_n$. 

By Theorem \ref{thmain2}, we obtain the following theorem 
which 
gives a necessary and sufficient condition 
for the Hasse norm principle for $K/k$ where 
$[K:k]=n\leq 15$ and $n\neq 12$. 
Note that 
a place $v$ of $k$ with non-cyclic decomposition group $G_v$ 
as in Theorem \ref{thmain3} must be ramified in $L$ because 
if $v$ is unramified, then $G_v$ is cyclic. 
\begin{theorem}\label{thmain3}
Let $2\leq n\leq 15$ be an integer with $n\neq 12$. 
Let $k$ be a number field, 
$K/k$ be a field extension of degree $n$ 
and $L/k$ be the Galois closure of $K/k$. 
Assume that $G={\rm Gal}(L/k)=nTm$ 
is a transitive subgroup of $S_n$, 
$H={\rm Gal}(L/K)$ with $[G:H]=n$ 
and $G_v$ is the decomposition group of $G$ at a place $v$ of $k$. 
Let $T=R^{(1)}_{K/k}(\bG_m)$ be the norm one torus of $K/k$ 
of dimension $n-1$ and $X$ be a smooth $k$-compactification of $T$. 
Then $A(T)\simeq\Sha(T)=0$ 
except for the cases in Table $1$. 
For the cases in Table $1$ except for $G=8T3$, 
either {\rm (a)} 
$A(T)=0$ and $\Sha(T)\simeq H^1(k,{\rm Pic}\,\overline{X})$ 
or {\rm (b)} 
$A(T)\simeq H^1(k,{\rm Pic}\,\overline{X})$ and $\Sha(T)=0$. 
For $G=8Tm$ $(m=9,11,15,19,22,32)$, 
we assume that $H$ is the stabilizer of one of the letters in $G$. 
Then 
a necessary and sufficient condition for $\Sha(T)=0$ is given 
as in Table $2$. 
\end{theorem}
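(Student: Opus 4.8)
The plan is to split Theorem~\ref{thmain3} into its three assertions and reduce each to material already in hand. The first assertion, that $A(T)=\Sha(T)=0$ for every $G=nTm$ outside Table~$1$, is immediate: by Theorem~\ref{thmain2} such $G$ has $H^1(k,{\rm Pic}\,\overline{X})=0$, so its Pontryagin dual vanishes and the exact sequence $0\to A(T)\to H^1(k,{\rm Pic}\,\overline{X})^{\vee}\to\Sha(T)\to 0$ of Theorem~\ref{thV} forces both outer terms to be trivial.

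For the dichotomy (a)/(b), I would use that every group in Table~$1$ other than $8T3$ has $H^1(k,{\rm Pic}\,\overline{X})\simeq\bZ/p\bZ$ with $p$ prime, a group that is self-dual under $\vee$ and has no proper nontrivial subgroup. Hence in $0\to A(T)\to\bZ/p\bZ\to\Sha(T)\to 0$ the first map is either $0$ or an isomorphism, which is exactly the alternative (a) $A(T)=0,\ \Sha(T)\simeq H^1(k,{\rm Pic}\,\overline{X})$ versus (b) $A(T)\simeq H^1(k,{\rm Pic}\,\overline{X}),\ \Sha(T)=0$. The group $8T3$ is excluded precisely because $H^1(k,{\rm Pic}\,\overline{X})\simeq(\bZ/2\bZ)^{\oplus 3}$ admits intermediate splittings, in complete analogy with the $(C_2)^3$ Galois discussion of Section~\ref{S1}.

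The substantive part is to decide, for the six groups $8Tm$ ($m=9,11,15,19,22,32$) with $H$ the stabilizer of a letter, which of (a),(b) holds, i.e.\ to produce the criterion for $\Sha(T)=0$ recorded in Table~$2$. Here I would first invoke Ono's Theorem~\ref{thOno} to reinterpret $\Sha(T)$ as the obstruction to the Hasse norm principle for $K/k$, and take Tate's Theorem~\ref{thTate} as the Galois template, where the obstruction is governed by the restriction ${\rm res}\colon H^3(G,\bZ)\to\bigoplus_{v}H^3(G_v,\bZ)$. For the non-Galois norm one torus the relevant class no longer lives in $H^3(G,\bZ)$ itself but in the subquotient of it cut out by restriction to $H$ (the non-Galois refinement of the isomorphism $H^1(G,[J_{G/H}]^{fl})\simeq H^3(G,\bZ)$ in the Galois case), and I would compute its behaviour by the Drakokhrust--Platonov method \cite{DP87} --- the same machinery that produced Theorem~\ref{thDP} for $n=6$ --- which expresses the obstruction through the groups $G_v$ and $H\cap G_v$. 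Since unramified $v$ give cyclic $G_v$ on which the order-two class necessarily dies, and since ${\rm res}^G_{G_v}$ is nonzero as soon as $G_v$ contains a conjugate of a minimal subgroup $D$ on which the class survives, the criterion will take the shape ``there exists a place $v$ with $G_v\supseteq D$'' for an explicit $D$, paralleling the $V_4$-conditions of Theorems~\ref{thKun2} and~\ref{thDP}.

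To pin down $D$ for each of the six groups I would work through the flabby resolution $0\to\widehat{T}\to\widehat{Q}\to{\rm Pic}\,X_L\to 0$ together with $0\to\bZ\to\bZ[G/H]\to J_{G/H}\to 0$, locate the generator of $\Sha^2_\omega(G,\widehat{T})\simeq H^1(k,{\rm Pic}\,\overline{X})\simeq\bZ/2\bZ$, and determine the minimal subgroups to which it restricts nontrivially; this is a finite computation in the transitive group $G=8Tm$ with its explicit point stabilizer $H$, carried out in GAP. I expect the main obstacle to be exactly this per-group cohomological bookkeeping, and in particular verifying that the answer is genuinely controlled by a containment condition on $G_v$ while correctly tracking the dependence on the chosen class of $H$: for these six groups the index-$8$ subgroups need not all be conjugate, so the detecting subgroup $D$ --- and hence the entry in Table~$2$ --- can change with the choice, which is why the hypothesis that $H$ is a point stabilizer is imposed.
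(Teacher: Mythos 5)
Your reductions of the first two assertions are fine and coincide with the paper's: Theorem \ref{thmain2} plus Theorem \ref{thV} give $A(T)=\Sha(T)=0$ outside Table 1, and since every entry of Table 1 except $8T3$ has $H^1(k,{\rm Pic}\,\overline{X})\simeq\bZ/p\bZ$, the exact sequence of Theorem \ref{thV} forces the dichotomy (a)/(b). The gap is in the part you call substantive. The Drakokhrust--Platonov machinery of \cite{DP87} that you invoke computes the \emph{first} obstruction ${\rm Obs}_1(L/K/k)$ of Theorem \ref{thDP2}, which is only a quotient of ${\rm Obs}(K/k)\simeq\Sha(T)$ (Theorem \ref{thOno}); the two coincide when $[K:k]$ is square-free (Theorem \ref{thDP87}), which is why this machinery settled $n=6$ (Theorem \ref{thDP}) and also settles $n=10,14,15$, but they genuinely differ for $n=8,9$. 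The paper's computations show that ${\rm Obs}_1(L/K/k)=1$ for every $n=8,9$ group in Table 1 except $8T21$ --- in particular for all six groups $8Tm$ $(m=9,11,15,19,22,32)$ you concentrate on --- even though $\Sha(T)\neq0$ whenever no place with the prescribed decomposition group exists. So running the DP method over $L$, as you propose, returns the trivial group and decides nothing in precisely these cases. The paper's proof gets around this by replacing $L$ by a Schur cover or a minimal stem extension $\widetilde{L}\supset L$ and computing the first obstruction there (Theorems \ref{thDra89} and \ref{thDP87}); nothing playing this role appears in your proposal.

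Your actual computational plan --- find the minimal subgroups $D\leq G$ to which the generator of $\Sha^2_\omega(G,J_{G/H})\simeq H^1(k,{\rm Pic}\,\overline{X})$ restricts nontrivially, and declare $\Sha(T)=0$ iff some $G_v$ contains a conjugate of such a $D$ --- is a correct criterion, and carried out in GAP it would reproduce Table 2 (side conditions such as $V_4\cap Z(G)=1$ are just specifications of the relevant conjugacy classes of $D$). But this criterion is not a consequence of Theorem \ref{thOno}, Theorem \ref{thTate}, or the first-obstruction formula: it rests on Poitou--Tate duality for tori, $\Sha(T)^{\vee}\simeq{\rm Ker}\{\Sha^2_\omega(G,\widehat{T})\xrightarrow{\rm res}\bigoplus_{v}H^2(G_v,\widehat{T})\}$, reduced to the finite level, which you never state or justify, and which cannot be extracted from Theorem \ref{thV} (that sequence exhibits $\Sha(T)$ only as an otherwise unidentified quotient of $H^1(k,{\rm Pic}\,\overline{X})^{\vee}$). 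If you supply that duality, your argument becomes essentially the dual-side route of Macedo and Newton \cite{MN}, a genuinely different approach from the paper's norm-side computation via stem extensions; as written, however, the central step is unjustified, and the one tool you do cite fails on exactly the cases it is needed for.
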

\begin{center}
\vspace*{1mm}
Table $2$: $\Sha(T)=0$ for $T=R^{(1)}_{K/k}(\bG_m)$ 
and $G={\rm Gal}(L/k)=nTm$ as in Table $1$\vspace*{2mm}\\
\renewcommand{\arraystretch}{1.05}
\begin{tabular}{lc} 
$G$ & $\Sha(T)=0$ if and only if there exists a place $v$ of $k$ such that\\\hline
$4T2\simeq V_4$ & \multirow{2}{*}{$V_4\leq G_v$ 
\begin{minipage}{5.5cm}
~\vspace*{-1mm}\\
{\rm (Tate \cite{Tat67} for $4T2\simeq V_4$)}\\
{\rm (Kunyavskii \cite{Kun84} for $4T4\simeq A_4$)}
\end{minipage}}\vspace*{1mm}\\
$4T4\simeq A_4$ & \\\hline
$6T4\simeq A_4$ & \multirow{2}{*}{$V_4\leq G_v$ {\rm (Drakokhrust and Platonov \cite{DP87})}}\\
$6T12\simeq A_5$ & \\\hline
$8T3\simeq (C_2)^3$ & see the second paragraph after Theorem \ref{thTate} {\rm (Tate \cite{Tat67})}\\\cline{2-2}
$8T4\simeq D_4$ & \multirow{4}{*}{$V_4\leq G_v$ {\rm (Tate \cite{Tat67} for $8T4\simeq D_4$)}}\\
$8T13\simeq A_4\times C_2$ & \\
$8T14\simeq S_4$ & \\
$8T37\simeq \PSL_2(\bF_7)$ & \\\cline{2-2}
$8T2\simeq C_4\times C_2$ & \multirow{2}{*}{$G_v=G$ {\rm (Tate \cite{Tat67} for $8T2\simeq C_4\times C_2$)}}\\
$8T21\simeq (C_2)^3\rtimes C_4$ & \\\cline{2-2}
 & \multirow{4}{*}{
\begin{minipage}{10.6cm}
~\vspace*{-1mm}\\
{\rm (i)} 
$V_4\leq G_v$ where $V_4\cap [{\rm Syl}_2(G),{\rm Syl}_2(G)]=1$ 
with ${\rm Syl}_2(G)\lhd G$ 
$($equivalently, $|{\rm Orb}_{V_4}(i)|=4$ for any $1\leq i\leq 8$ 
and $V_4\cap Z(G)=1$$)$, 
{\rm (ii)} 
$C_4\times C_2\leq G_v$ where 
$(C_4\times C_2)\cap [{\rm Syl}_2(G),{\rm Syl}_2(G)]\simeq C_2$ 
$($equivalently, 
$C_4\times C_2$ is transitive in $S_8$$)$ 
or {\rm (iii)} 
$(C_2)^3\rtimes C_4\leq G_v$
\end{minipage}\vspace*{1mm}}\\
$8T31\simeq (C_2)^4\rtimes V_4$ & \\
$8T38\simeq 8T31\rtimes C_3$ & \\
 & \\\hline
$8T9\simeq D_4\times C_2$ & 
\begin{minipage}{10.6cm}
~\vspace*{-1mm}\\
{\rm (i)} $V_4\leq G_v$ where 
$|{\rm Orb}_{V_4}(i)|=4$ for any $1\leq i\leq 8$ 
and $V_4\cap [G,G]=1$; 
or {\rm (ii)} 
$C_4\times C_2\leq G_v$
\end{minipage}\vspace*{1mm}\\\cline{2-2}
$8T11\simeq Q_8\rtimes C_2$ & 
$C_4\times C_2\leq G_v$ where $C_4\times C_2$ is transitive in $S_8$\\\cline{2-2}
$8T15\simeq C_8\rtimes V_4$ & 
\begin{minipage}{10.6cm}
~\vspace*{-1mm}\\
{\rm (i)} 
$V_4\leq G_v$ where 
$|{\rm Orb}_{V_4}(i)|=2$ for any $1\leq i\leq 8$ 
and $V_4\cap [G,G]=1$ 
$($equivalently, 
$|{\rm Orb}_{V_4}(i)|=2$ for any $1\leq i\leq 8$ 
and $V_4$ is not in $A_8$$)$ 
or {\rm (ii)} 
$C_4\times C_2\leq G_v$ where 
$(C_4\times C_2)\cap [G,G]\simeq C_2$ 
$($equivalently, 
$C_4\times C_2$ is transitive in $S_8$$)$
\end{minipage}\vspace*{1mm}\\\cline{2-2}
$8T19\simeq (C_2)^3\rtimes C_4$ & 
\begin{minipage}{10.6cm}
~\vspace*{-1mm}\\
{\rm (i)} 
$V_4\leq G_v$ where 
$V_4\cap Z(G)=1$ and 
$V_4\cap Z^2(G)\simeq C_2$ 
with the upper central series $1\leq Z(G)\leq Z^2(G)\leq G$ 
$($equivalently, 
$|{\rm Orb}_{V_4}(i)|=4$ for any $1\leq i\leq 8$ 
and $V_4\cap Z(G)=1$$)$; 
or {\rm (ii)} 
$C_4\times C_2\leq G_v$
where 
$C_4\times C_2$ is not transitive in $S_8$ 
or 
$[G,G]\leq C_4\times C_2$
\end{minipage}\vspace*{1mm}\\\cline{2-2}
$8T22\simeq (C_2)^3\rtimes V_4$ & 
\multirow{2}{*}{
\begin{minipage}{10.6cm}
~\vspace*{-1mm}\\
{\rm (i)} 
$V_4\leq G_v$ where 
$|{\rm Orb}_{V_4}(i)|=4$ for any $1\leq i\leq 8$ 
and $V_4\cap Z(G)=1$ 
or {\rm (ii)} 
$C_4\times C_2\leq G_v$ where 
$C_4\times C_2$ is transitive in $S_8$
\end{minipage}\vspace*{1mm}}\\
$8T32\simeq 8T22\rtimes C_3$ & \\\hline
$9T2\simeq (C_3)^2$ & \\
$9T5\simeq (C_3)^2\rtimes C_2$ & \\
$9T7\simeq (C_3)^2\rtimes C_3$ & \\
$9T9\simeq (C_3)^2\rtimes C_4$ & $(C_3)^2\leq G_v$ {\rm (Tate \cite{Tat67} for $9T2\simeq (C_3)^2$)}\\
$9T11\simeq (C_3)^2\rtimes C_6$ & \\
$9T14\simeq (C_3)^2\rtimes Q_8$ & \\
$9T23\simeq 9T14\rtimes C_3$ & \\\hline
$10T7\simeq A_5$ & $V_4\leq G_v$\\\cline{2-2}
$10T26\simeq \PSL_2(\bF_9)$ & $D_4\leq G_v$\\\cline{2-2}
$10T32\simeq S_6$ & 
\begin{minipage}{10.6cm}
~\vspace*{-1mm}\\
{\rm (i)} 
$V_4\leq G_v$ where 
$N_{\widetilde{G}}(V_4)\simeq C_8\rtimes (C_2\times C_2)$ 
for the normalizer $N_{\widetilde{G}}(V_4)$ of $V_4$ in $\widetilde{G}$ 
with the normalizer $\widetilde{G}=N_{S_{10}}(G)\simeq {\rm Aut}(G)$ of $G$ in $S_{10}$
$($equivalently, $|{\rm Orb}_{V_4}(i)|=2$ for any $1\leq i\leq 10$$)$ or  
{\rm (ii)} $D_4\leq G_v$ where $D_4\leq [G,G]\simeq A_6$
\end{minipage}\vspace*{1mm}\\\hline
$14T30\simeq \PSL_2(\bF_{13})$ & $V_4\leq G_v$\\\hline
$15T9\simeq (C_5)^2\rtimes C_3$ & \multirow{2}{*}{$(C_5)^2\leq G_v$}\\
$15T14\simeq (C_5)^2\rtimes S_3$ & \\\hline
\end{tabular}
\renewcommand{\arraystretch}{1}
\end{center}~\\

\newpage
We organize this paper as follows. 
In Section \ref{S2}, 
we prepare some basic definitions and known results about 
the rationality problem for norm one tori. 
In Section \ref{S3}, 
we recall our basic tool ``flabby resolution of $G$-lattices'' 
to investigate algebraic $k$-tori. 
In Section \ref{S4}, we give the proof of 
Theorem \ref{thmain1-4} and Theorem \ref{thmain1-5}. 
In Section \ref{S5}, 
the proofs of Theorem \ref{thmain2} and Theorem \ref{thmain2M} are given. 
In Section \ref{S6}, 
we prove Theorem \ref{thmain3} by using 
Drakokhrust and Platonov's method for 
the Hasse norm principle for $K/k$. 
In Section \ref{S7}, we will give an application of 
Theorem \ref{thmain1-4}, Theorem \ref{thmain1-5} and Theorem \ref{thmain2} 
to obtain the group $T(k)/R$ of $R$-equivalence classes 
over a local field $k$ via the formula 
of Colliot-Th\'{e}l\`{e}ne and Sansuc. 
In Section \ref{S8}, 
we also give an application of Theorem \ref{thmain1-4}, 
Theorem \ref{thmain1-5} and Theorem \ref{thmain3} 
to evaluate the Tamagawa number 
$\tau(T)$ over a number field $k$ via Ono's formula.
In Section \ref{S9}, 
we will give GAP computations 
of $H^1(G,J_{G/H})$ as the appendix of this paper. 
GAP algorithms will be given in Section \ref{S10} 
which are also available from 
{\tt https://www.math.kyoto-u.ac.jp/\~{}yamasaki/Algorithm/Norm1ToriHNP}.
\begin{acknowledgments}
We would like to thank Ming-chang Kang, Shizuo Endo and Boris Kunyavskii 
for giving us useful and valuable comments. 
We also thank the referees for very careful reading of the manuscript. 
This paper is greatly improved by their helpful comments and suggestions. 
In particular, one of them tell us the recent papers 
Macedo and Newton \cite{MN} and Macedo \cite{Mac} 
as a convergence of interests. 
It may be interesting to compare this paper with them. 
\end{acknowledgments}

\section{Rationality problem for norm one tori}\label{S2}

Let $k$ be a field, $K/k$ be a separable field extension of degree $n$ 
and $L/k$ be the Galois closure of $K/k$. 
Let $G={\rm Gal}(L/k)$ and $H={\rm Gal}(L/K)$ with $[G:H]=n$. 
The Galois group $G$ may be regarded as a transitive subgroup of 
the symmetric group $S_n$ of degree $n$ via an injection $G\to S_n$ 
which is derived from the action of $G$ on the left cosets 
$\{g_1H,\ldots,g_nH\}$ by $g(g_iH)=(gg_i)H$ for any $g\in G$ 
and we may assume that 
$H$ is the stabilizer of one of the letters in $G$, 
i.e. $L=k(\theta_1,\ldots,\theta_n)$ and $K=k(\theta_i)$ for some 
$1\leq i\leq n$.
The norm one torus $R^{(1)}_{K/k}(\bG_m)$ has the 
Chevalley module $J_{G/H}$ as its character module 
where $J_{G/H}=(I_{G/H})^\circ={\rm Hom}_\bZ(I_{G/H},\bZ)$ 
is the dual lattice of $I_{G/H}={\rm Ker}\, \varepsilon$ and 
$\varepsilon : \bZ[G/H]\rightarrow \bZ$ is the augmentation map 
(see \cite[Section 4.8]{Vos98}). 
We have the exact sequence 
\begin{align*}
0\rightarrow \bZ\rightarrow \bZ[G/H]\rightarrow J_{G/H}\rightarrow 0
\end{align*}
and rank $J_{G/H}=n-1$. 
Write $J_{G/H}=\oplus_{1\leq i\leq n-1}\bZ u_i$. 
We define the action of $G$ on $L(x_1,\ldots,x_{n-1})$ by 
$\sigma(x_i)=\prod_{j=1}^n x_j^{a_{i,j}} (1\leq i\leq n)$ 
for any $\sigma\in G$, when $\sigma (u_i)=\sum_{j=1}^n a_{i,j} u_j$ 
$(a_{i,j}\in\bZ)$. 
Then the invariant field $L(x_1,\ldots,x_{n-1})^G$ 
may be identified with the function field of the norm 
one torus $R^{(1)}_{K/k}(\bG_m)$ (see \cite[Section 1]{EM75}). 

Let $T=R^{(1)}_{K/k}(\bG_m)$ be the norm one torus of $K/k$. 
The rationality problem for norm one tori is investigated 
by \cite{EM75}, \cite{CTS77}, \cite{Hur84}, \cite{CTS87}, 
\cite{LeB95}, \cite{CK00}, \cite{LL00}, \cite{Flo}, \cite{End11}, 
\cite{HY17}, \cite{HY21}, \cite{HHY20}. 

\begin{theorem}[{Colliot-Th\'{e}l\`{e}ne and Sansuc \cite[Proposition 9.1]{CTS87}, 
\cite[Theorem 3.1]{LeB95}, 
\cite[Proposition 0.2]{CK00}, \cite{LL00}, 
Endo \cite[Theorem 4.1]{End11}, see also 
\cite[Remark 4.2 and Theorem 4.3]{End11}}]\label{thS}
Let $K/k$ be a non-Galois separable field extension 
of degree $n$ and $L/k$ be the Galois closure of $K/k$. 
Assume that ${\rm Gal}(L/k)=S_n$, $n\geq 3$, 
and ${\rm Gal}(L/K)=S_{n-1}$ is the stabilizer of one of the letters 
in $S_n$. 
Then we have:\\
{\rm (i)}\ 
$R^{(1)}_{K/k}(\bG_m)$ is retract $k$-rational 
if and only if $n$ is a prime;\\
{\rm (ii)}\ 
$R^{(1)}_{K/k}(\bG_m)$ is $($stably$)$ $k$-rational 
if and only if $n=3$.
\end{theorem}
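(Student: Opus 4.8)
The plan is to convert both statements into assertions about the flabby class of the character lattice and then test them after restriction to Sylow subgroups. Write $M=J_{G/H}=\widehat{T}$ with $G=S_n$ and $H=S_{n-1}$, so that $\bZ[G/H]=\bZ^n$ is the natural permutation module and $M$ is the dual of its augmentation submodule. By the dictionary recalled in Section~\ref{S3} (Voskresenskii's flabby resolution, Theorem~\ref{thVos69}, together with Endo--Miyata, Theorem~\ref{thEM}, and Saltman's criterion), $T$ is stably $k$-rational if and only if $[M]^{fl}=0$, and $T$ is retract $k$-rational if and only if $[M]^{fl}$ is invertible. A useful preliminary remark is that the \emph{cohomological} obstruction is useless here: since $\mathrm{Gal}(L/k)\simeq S_n$, Theorem~\ref{thKV84} gives $H^1(k,\mathrm{Pic}\,\overline{X})\simeq H^1(S_n,[M]^{fl})=0$ for every $n$. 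Hence neither the failure of retract rationality (for composite $n$) nor that of stable rationality (for $n\ge 4$) can be read off from $H^1$; both must be decided at the finer level of the flabby class itself, which is where all the work lies.

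For the ``if'' part of (i), the implication ``$n$ prime $\Rightarrow$ retract rational'' is exactly Colliot-Th\'{e}l\`{e}ne--Sansuc \cite[Proposition 9.1]{CTS87}, and I would simply invoke it. The underlying mechanism, which I would record, is that invertibility of $[M]^{fl}$ is detected Sylow-locally: $[M]^{fl}$ is invertible precisely when its restriction to each Sylow $p$-subgroup $W$ of $S_n$ is invertible. When $n=p$, the decomposition of $\bZ[S_n/S_{n-1}]|_W$ by the $W$-orbits on the $n$ letters is favorable at every prime: for $q=p$ one has $W\simeq C_p$ acting regularly, so $M|_{C_p}\simeq J_{C_p}$ has invertible (cyclic-group) flabby class, while for $q\ne p$ the $q$-Sylow fixes a letter and the restriction is again invertible.

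For the converse of (i), ``$n$ composite $\Rightarrow$ not retract rational,'' I would again restrict to a Sylow $p$-subgroup $W$ for a prime $p\mid n$. The key structural point distinguishing this case from the prime case is that, since $n$ is composite, $W$ is non-cyclic and acts \emph{imprimitively} on the $n$ letters (the letters split into $n/p\ge 2$ blocks), and a direct flabby-class computation over $W$ then shows $[M|_{W}]^{fl}$ is \emph{not} invertible; this is carried out explicitly in \cite{LeB95} and \cite{End11} (and can also be certified by the flabby-resolution algorithm used elsewhere in this paper). Since restriction carries invertible lattices to invertible lattices, $[M]^{fl}$ itself is not invertible, so $T$ is not retract rational, completing (i). The ``if'' part of (ii) is immediate: for $n=3$ the torus $T$ has dimension $2$, hence is $k$-rational by Voskresenskii \cite{Vos67}.

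It remains to prove the converse of (ii), namely $n\ge 4\Rightarrow T$ not stably rational. If $n$ is composite this follows from (i), since retract rationality is weaker than stable rationality. The genuinely delicate case, and the main obstacle of the whole argument, is $n=p\ge 5$ prime: here $[M]^{fl}$ \emph{is} invertible, so $[M]^{fl}=0$ cannot be excluded by any vanishing of cohomology (indeed $H^1=\Sha^2_\omega=0$), and one must separate ``invertible'' from ``stably permutation.'' My plan would be to restrict to the metacyclic normalizer $W=N_{S_p}(C_p)\simeq C_p\rtimes C_{p-1}$, write down the explicit flabby resolution of $M|_{W}$, and show that the resulting invertible class is not stably permutation for $p\ge 5$; this explicit lattice-theoretic computation is precisely the content of \cite{LeB95}, \cite{CK00}, \cite{LL00} and \cite{End11}. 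I expect this last step to be the hard part, because the only available certificate is an essentially explicit construction rather than a cohomological invariant, the standard obstruction having been forced to vanish by Theorem~\ref{thKV84}.
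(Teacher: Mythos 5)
The paper does not prove Theorem \ref{thS} at all: it quotes it from \cite{CTS87}, \cite{LeB95}, \cite{CK00}, \cite{LL00}, \cite{End11}, so your proposal must be measured against the arguments in those references. Your general framework is the right one and matches them: the dictionary of Theorem \ref{thEM73}, the Sylow-local detection of invertibility, the observation that a subgroup fixing a letter has permutation restriction (which disposes of all primes $q\neq p$ when $n=p$), the identification $M|_{C_p}\simeq J_{C_p}$ when $n=p$, and the case $n=3$ are all correct; this is exactly how \cite[Proposition 9.1]{CTS87} proves part (i) for prime $n$.

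The genuine gap is in your converse of (i). The mechanism you invoke --- that for composite $n$ the Sylow $p$-subgroup $W\leq S_n$ ($p\mid n$) is non-cyclic and acts imprimitively without fixed points, and that this forces $[M|_W]^{fl}$ to be non-invertible --- is false as a principle, and the reduction as stated fails for some choices of $p$. Already inside $S_4$: the Klein four-subgroups $V=\langle(12)(34),(13)(24)\rangle$ and $V'=\langle(12),(34)\rangle$ are both non-cyclic and fixed-point free on the four letters, yet $M|_{V}\simeq J_{V_4}$ (regular action) has non-invertible flabby class (Theorem \ref{thKun1}), whereas $[M|_{V'}]^{fl}=0$: the character of $M|_{V'}$ is $(3,1,1,-1)$, so $M|_{V'}$ is not $\bZ$-conjugate to $J_{V_4}$, and by Example \ref{exN3} the class of $J_{V_4}$ is the \emph{only} rank-$3$ $V_4$-lattice with non-invertible flabby class; $M|_{V'}$ is the lattice of one of the rational three-dimensional tori (it is the multinorm lattice of two linearly disjoint quadratic extensions). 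So non-cyclicity plus imprimitivity decides nothing; everything depends on the orbit/stabilizer configuration. This also breaks your freedom in choosing the prime: for $n=2p$ with $p$ odd (e.g.\ $n=6$), the Sylow $p$-subgroup of $S_n$ is $C_p\times C_p$ acting with exactly two orbits whose stabilizers are the two factors --- the exact analogue of $M|_{V'}$, of multinorm type (cf.\ \cite{Hur84}), invertible at least for $p=2$ and expected to be so in general --- so nothing you have said yields non-invertibility there. For $n=6$ the obstruction is visible only at $p=2$, on a $V_4\leq {\rm Syl}_2(S_6)$ acting with three orbits of size $2$ whose stabilizers are the three distinct order-$2$ subgroups (the restriction to $V_4\lhd A_4$ of the $6T4$ lattice, whose nonvanishing $\Sha^2_\omega$ is what lies behind the $6T4$, $6T12$ entries of Tables $1$ and $2$).

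What the cited proofs actually do (this is the content of \cite[Section 2]{End11}, building on \cite{EM75}) is isolate precisely the $p$-subgroup configurations that force non-invertibility --- a free $C_p\times C_p$-orbit, or at least three orbits with pairwise distinct order-$p$ stabilizers, certified by $\Sha^2_\omega\neq 0$, i.e.\ failure of coflabbiness --- and then exhibit such a configuration inside $S_n$ for a suitably chosen prime $p\mid n$ (the regular $V_4$ for $n=4$; three distinct stabilizers for a prime $p\mid n$ with $n/p\geq 3$, which exists for every composite $n\geq 6$). A related caution applies to your plan for the prime case of (ii): restricting to $N_{S_p}(C_p)\simeq C_p\rtimes C_{p-1}$ can only work if $[M|_{N_{S_p}(C_p)}]^{fl}\neq 0$, and restriction genuinely loses exactly this kind of information --- for $p=5$ the restriction to $A_5$ is already stably permutation by Theorem \ref{thA}(ii) --- so that non-vanishing is a delicate claim which your outline assumes rather than proves; the arguments of \cite{CK00}, \cite{LL00}, \cite{End11} are not a straightforward restriction-to-a-subgroup computation.
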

\begin{theorem}[Endo {\cite[Theorem 4.4]{End11}, Hoshi and Yamasaki \cite[Corollary 1.11]{HY17}}]\label{thA}
Let $K/k$ be a non-Galois separable field extension 
of degree $n$ and $L/k$ be the Galois closure of $K/k$. 
Assume that ${\rm Gal}(L/k)=A_n$, $n\geq 4$, 
and ${\rm Gal}(L/K)=A_{n-1}$ is the stabilizer of one of the letters 
in $A_n$. 
Then we have:\\
{\rm (i)}\ 
$R^{(1)}_{K/k}(\bG_m)$ is retract $k$-rational 
if and only if $n$ is a prime.\\
{\rm (ii)}\ $R^{(1)}_{K/k}(\bG_m)$ is stably $k$-rational 
if and only if $n=5$.
\end{theorem}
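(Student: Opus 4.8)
The plan is to translate both assertions into statements about the flabby class of the Chevalley lattice $J_{G/H}$ with $G=A_n$ and $H=A_{n-1}$, and then to analyse this lattice one prime at a time. By the correspondence recalled in Section~\ref{S3} together with Theorem~\ref{thEM} and the retract/stable rationality criteria of Saltman \cite{Sal84} and Colliot-Th\'{e}l\`{e}ne--Sansuc \cite{CTS87}, the torus $T=R^{(1)}_{K/k}(\bG_m)$ has $\widehat{T}=J_{G/H}$, and $T$ is retract $k$-rational if and only if $[J_{G/H}]^{fl}$ is invertible, while $T$ is stably $k$-rational if and only if $[J_{G/H}]^{fl}=0$. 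Thus part (i) becomes an invertibility question and part (ii) a vanishing question for one and the same flabby class.

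For part (i) I would exploit that invertibility of a flabby class is detected Sylow-locally: $[J_{G/H}]^{fl}$ is invertible if and only if $\mathrm{Res}^{G}_{G_p}J_{G/H}$ is invertible for every prime $p$, where $G_p$ is a Sylow $p$-subgroup. Restricting the defining sequence $0\to\bZ\xrightarrow{N}\bZ[G/H]\to J_{G/H}\to 0$ along $G_p$ and using the Mackey decomposition $\bZ[G/H]|_{G_p}\simeq\bigoplus_{O}\bZ[G_p/(G_p)_x]$ over the $G_p$-orbits $O$ on the $n$ letters presents $J_{G/H}|_{G_p}$ as the cokernel of the diagonal norm map. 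When $n=p$ is prime, every Sylow $G_q$ with $q\neq p$ fixes at least one of the $n$ letters (since the number of fixed letters is $\equiv n\pmod q$ and $q\nmid n$), and a direct cokernel computation then shows $J_{G/H}|_{G_q}$ is a \emph{permutation} lattice, exactly as in the model case $J_{A_5/A_4}|_{V_4}\simeq\bZ[V_4]$; the remaining Sylow is $G_p=C_p$, generated by an $n$-cycle, where $J_{C_p/1}$ is (stably) permutation because $C_p$ is cyclic. Hence all local restrictions are invertible and $T$ is retract rational. Conversely, if $n$ is composite I would fix a prime $p\mid n$; then $v_p(n!)\geq 2$, so $G_p$ is noncyclic, and $G_p$ can be taken to act on the $n$ letters without a fixed letter, whence the Endo--Miyata classification of $p$-group norm lattices \cite{EM75,End11} yields a non-invertible restriction and $T$ is not retract rational.

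For part (ii), stable rationality implies retract rationality, so by (i) we may assume $n$ is prime, and it remains to decide for which primes $[J_{A_n/A_{n-1}}]^{fl}$ actually vanishes. For $n=5$ I would exhibit an explicit short exact sequence of $A_5$-lattices with permutation end terms realizing $J_{A_5/A_4}$ as stably permutation, giving $[J_{A_5/A_4}]^{fl}=0$; note that the computation in (i) already shows that for every prime $n$ \emph{all} Sylow-local flabby classes vanish (the cases $q\neq p$ are permutation and $J_{C_p/1}$ is stably permutation), which is the structural reason such a resolution can exist. For primes $n\geq 7$ I would instead prove $[J_{A_n/A_{n-1}}]^{fl}\neq 0$ while the class remains invertible and locally trivial, by producing a subgroup of $A_n$ on which an invariant finer than $H^1$ witnesses that $J_{A_n/A_{n-1}}$ is not stably permutation.

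The step I expect to be the main obstacle is precisely this last one. Since retract rationality already forces $H^1(G,[J_{G/H}]^{fl})=0$ and, as just noted, the flabby class is trivial after restriction to every Sylow subgroup for all prime $n$, separating $n=5$ (class zero) from $n\geq 7$ (class nonzero) is a genuine local--global phenomenon that cannot be seen through the vanishing cohomological invariants. Overcoming it requires analysing $[J_{A_n/A_{n-1}}]^{fl}$ as an actual lattice up to stable-permutation equivalence, which is the heart of Endo's argument \cite{End11} and of the explicit computation in \cite{HY17}.
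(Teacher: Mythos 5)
The paper itself does not prove Theorem \ref{thA}; it quotes it from Endo \cite[Theorem 4.4]{End11} and Hoshi--Yamasaki \cite[Corollary 1.11]{HY17}, so your proposal has to be measured against those proofs. Your reduction to the flabby class $[J_{G/H}]^{fl}$ and your argument for the direction ``$n$ prime $\Rightarrow$ retract rational'' (fixed points of Sylow $q$-subgroups for $q\neq n$, cyclicity of the Sylow $n$-subgroup, Sylow-local detection of invertibility) are essentially correct. However, there is a concrete false step that destroys your plan for part (ii) at $n=5$: you conflate ``$[M]^{fl}=0$'' with ``$M$ is stably permutation''. The lattice $J_{G/H}$ is \emph{never} stably permutation when $[G:H]>1$: the sequence $0\to\bZ\to\bZ[G/H]\to J_{G/H}\to 0$ gives
\begin{align*}
\widehat{H}^{-1}(G,J_{G/H})\simeq {\rm Ker}\left\{\bZ/|G|\bZ\to \bZ/|H|\bZ\right\}\simeq \bZ/[G:H]\bZ\neq 0,
\end{align*}
whereas a stably permutation (indeed any invertible) lattice is flabby. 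Hence no ``explicit short exact sequence of $A_5$-lattices with permutation end terms realizing $J_{A_5/A_4}$ as stably permutation'' can exist, and likewise your parenthetical claim that $J_{C_p}$ is ``(stably) permutation'' is false. What is true, and what must be proved, is that the \emph{cokernel} $F$ of a flabby resolution $0\to J_{G/H}\to P\to F\to 0$ is stably permutation. (For $C_p$ the vanishing you need does hold, but for a different reason: $J_{C_p}\simeq I_{C_p}\simeq \bZ[\zeta_p]$, and $0\to I_{C_p}\to \bZ[C_p]\to\bZ\to 0$ exhibits $[J_{C_p}]^{fl}=[\bZ]=0$; this is the statement your Sylow-local analysis actually requires.)

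Beyond this, the two steps carrying the real content are left unproved. For primes $n\geq 7$ you propose only ``an invariant finer than $H^1$'' on an unspecified subgroup; as you yourself observe, the class is invertible and all of its Sylow restrictions vanish, so no off-the-shelf cohomological invariant can work, and separating $n=5$ from $n\geq 7$ requires the lattice-theoretic and arithmetic analysis that constitutes the heart of \cite{End11} (restriction to metacyclic subgroups such as $N_{A_p}(C_p)\simeq C_p\rtimes C_{(p-1)/2}$ together with Endo--Miyata's description of flabby classes of such groups in terms of class groups attached to cyclotomic fields), or else the explicit verification carried out in \cite{HY17} for $n=5$. Naming the obstacle is not overcoming it, so part (ii) is not proved. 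The converse of part (i) is also under-justified: \cite{EM75} treats the Galois case $H=1$, whereas the restriction of $J_{A_n/A_{n-1}}$ to a Sylow $p$-subgroup is the lattice of a fixed-point-free but generally \emph{intransitive} coset set, and ``noncyclic plus no fixed letter'' is not, by itself, a non-invertibility criterion appearing in the reference you cite; the criterion needed for such lattices is again part of \cite{End11}.
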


A necessary and sufficient condition for the classification 
of stably/retract rational norm one tori $T=R^{(1)}_{K/k}(\bG_m)$ 
with $[K:k]=n\leq 15$, 
but with one exception 
$G=9T27\simeq \PSL_2(\bF_8)$ for the stable rationality, 
was given in 
Hoshi and Yamasaki \cite{HY21} (for the case $n$ is a prime number or 
the case $n\leq 10$)
and Hasegawa, Hoshi and Yamasaki \cite{HHY20} (for $n=12,14,15$). 

\section{Strategy: flabby resolution of $G$-lattices}\label{S3}
We recall some basic facts of the theory of flabby (flasque) $G$-lattices
(see \cite{CTS77}, \cite{Swa83}, \cite[Chapter 2]{Vos98}, \cite[Chapter 2]{Lor05}, \cite{Swa10}). 
Recall also that we may take the $G$-lattice $\widehat{T}$ 
for an algebraic $k$-torus $T$ (see Section \ref{S1}). 

\begin{definition}
Let $G$ be a finite group and $M$ be a $G$-lattice 
(i.e. finitely generated $\bZ[G]$-module which is $\bZ$-free 
as an abelian group). \\
{\rm (i)} $M$ is called a {\it permutation} $G$-lattice 
if $M$ has a $\bZ$-basis permuted by $G$, 
i.e. $M\simeq \oplus_{1\leq i\leq m}\bZ[G/H_i]$ 
for some subgroups $H_1,\ldots,H_m$ of $G$.\\
{\rm (ii)} $M$ is called a {\it stably permutation} $G$-lattice 
if $M\oplus P\simeq P^\prime$ 
for some permutation $G$-lattices $P$ and $P^\prime$.\\
{\rm (iii)} $M$ is called {\it invertible} (or {\it permutation projective}) 
if it is a direct summand of a permutation $G$-lattice, 
i.e. $P\simeq M\oplus M^\prime$ for some permutation $G$-lattice 
$P$ and a $G$-lattice $M^\prime$.\\
{\rm (iv)} $M$ is called {\it flabby} (or {\it flasque}) if $\widehat H^{-1}(H,M)=0$ 
for any subgroup $H$ of $G$ where $\widehat H$ is the Tate cohomology.\\
{\rm (v)} $M$ is called {\it coflabby} (or {\it coflasque}) if $H^1(H,M)=0$
for any subgroup $H$ of $G$.
\end{definition}

\begin{definition}[{see \cite[Section 1]{EM75}, \cite[Section 4.7]{Vos98}}]
Let $\cC(G)$ be the category of all $G$-lattices. 
Let $\cS(G)$ be the full subcategory of $\cC(G)$ of all permutation $G$-lattices 
and $\cD(G)$ be the full subcategory of $\cC(G)$ of all invertible $G$-lattices.
Let 
\begin{align*}
\cH^i(G)=\{M\in \cC(G)\mid \widehat H^i(H,M)=0\ {\rm for\ any}\ H\leq G\}\ (i=\pm 1)
\end{align*}
be the class of ``$\widehat H^i$-vanish'' $G$-lattices 
where $\widehat H^i$ is the Tate cohomology. 
Then we have the inclusions 
$\cS(G)\subset \cD(G)\subset \cH^i(G)\subset \cC(G)$ $(i=\pm 1)$. 
\end{definition}

\begin{definition}\label{defCM}
We say that two $G$-lattices $M_1$ and $M_2$ are {\it similar} 
if there exist permutation $G$-lattices $P_1$ and $P_2$ such that 
$M_1\oplus P_1\simeq M_2\oplus P_2$. 
We denote the similarity class of $M$ by $[M]$. 
The set of similarity classes $\cC(G)/\cS(G)$ becomes a 
commutative monoid 
(with respect to the sum $[M_1]+[M_2]:=[M_1\oplus M_2]$ 
and the zero $0=[P]$ where $P\in \cS(G)$). 
\end{definition}
\begin{theorem}[{Endo and Miyata \cite[Lemma 1.1]{EM75}, Colliot-Th\'el\`ene and Sansuc \cite[Lemma 3]{CTS77}, 
see also \cite[Lemma 8.5]{Swa83}, \cite[Lemma 2.6.1]{Lor05}}]\label{thEM}
For any $G$-lattice $M$,
there exists a short exact sequence of $G$-lattices
$0 \rightarrow M \rightarrow P \rightarrow F \rightarrow 0$
where $P$ is permutation and $F$ is flabby.
\end{theorem}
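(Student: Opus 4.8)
The plan is to prove this by a duality argument: rather than embedding $M$ directly, I would first build the dual object, a surjection from a permutation lattice onto $M^\circ$ with \emph{coflabby} kernel, and then dualize. Recall that for $\bZ$-free modules the contravariant functor $(-)^\circ=\mathrm{Hom}_\bZ(-,\bZ)$ sends permutation lattices to permutation lattices (each $\bZ[G/H]$ is self-dual), is exact on short exact sequences whose quotient is $\bZ$-free (because $\mathrm{Ext}^1_\bZ(-,\bZ)$ kills free modules), and interchanges the flabby and coflabby conditions: by Tate duality $\widehat H^{-1}(H,N^\circ)\cong H^1(H,N)^\vee$ for every $H\le G$, so $N$ is coflabby if and only if $N^\circ$ is flabby. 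Hence it suffices to produce, for $N:=M^\circ$, a short exact sequence $0\to C\to P'\to N\to 0$ with $P'$ permutation and $C$ coflabby; applying $(-)^\circ$ and using $N^\circ=M^{\circ\circ}=M$ then gives $0\to M\to (P')^\circ\to C^\circ\to 0$ with $(P')^\circ$ permutation and $C^\circ$ flabby (and automatically $\bZ$-free), which is the claimed resolution.

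First I would construct the surjection $\varphi\colon P'\to N$ so that it stays surjective after taking $H$-invariants, for \emph{every} subgroup $H\le G$ simultaneously. For each of the finitely many subgroups $H\le G$, choose finitely many $\bZ$-module generators $y_{H,1},\dots,y_{H,r_H}$ of the invariant sublattice $N^H$; since each $y_{H,j}$ is $H$-fixed, it determines a $G$-homomorphism $\bZ[G/H]\to N$ sending the trivial coset $eH$ to $y_{H,j}$. Taking the direct sum over all such pairs $(H,j)$ yields a permutation lattice $P'$ and a $G$-map $\varphi$. Because $eH$ lies in $(\bZ[G/H])^H$ and maps to $y_{H,j}$, the induced map $(P')^H\to N^H$ hits a $\bZ$-generating set of $N^H$, hence is onto $N^H$ for each $H$; the case $H=1$ gives surjectivity of $\varphi$ itself.

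Next I would verify that $C:=\ker\varphi$ is coflabby. The long exact cohomology sequence of $0\to C\to P'\to N\to 0$ over any $H\le G$ reads $(P')^H\to N^H\to H^1(H,C)\to H^1(H,P')$. Permutation lattices are coflabby: for $P'=\bigoplus_i\bZ[G/H_i]$ a Mackey/Shapiro computation reduces $H^1(H,\bZ[G/H_i])$ to a sum of terms $H^1(H\cap{}^gH_i,\bZ)=\mathrm{Hom}(\,\cdot\,,\bZ)=0$, so $H^1(H,P')=0$. Combined with the surjectivity of $(P')^H\to N^H$ just arranged, this forces $H^1(H,C)=0$ for all $H$, i.e. $C$ is coflabby, completing the dual resolution.

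The construction is essentially forced once one adopts the dual viewpoint, so I expect the genuine crux to be the conceptual step of the second paragraph: recognizing that the target condition (flabbiness of the quotient) is controlled entirely by $H$-invariants, and that one can secure surjectivity on $N^H$ for \emph{all} $H$ at once by simply adjoining a summand $\bZ[G/H]$ for each invariant generator. The remaining subtlety to watch is the duality bookkeeping — confirming the correct instance of Tate duality, the exactness of $(-)^\circ$ via $\bZ$-freeness of $N$, and that the dualized cokernel $F=C^\circ$ is again a lattice so that "flabby" is meaningful — but each of these is a routine verification rather than a real obstacle.
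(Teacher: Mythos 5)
Your proposal is correct and follows essentially the same route as the proof in the sources this paper cites for the statement (the paper itself gives no proof, only the references to Endo--Miyata, Colliot-Th\'el\`ene--Sansuc, Swan and Lorenz): one constructs a permutation cover of $M^\circ$ that is surjective on $H$-invariants for every subgroup $H\leq G$ by adjoining a summand $\bZ[G/H]$ for each $\bZ$-generator of $(M^\circ)^H$, deduces that the kernel $C$ is coflabby from $H^1(H,P')=0$ and the long exact sequence, and then dualizes using $\widehat H^{-1}(H,C^\circ)\simeq H^1(H,C)^\vee$, the self-duality of permutation lattices, and the exactness of $(-)^\circ$ on sequences of lattices. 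There is no gap; the argument is the standard one.
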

\begin{definition}\label{defFlabby}
The exact sequence $0 \rightarrow M \rightarrow P \rightarrow F \rightarrow 0$ 
as in Theorem \ref{thEM} is called a {\it flabby resolution} of the $G$-lattice $M$.
$\rho_G(M)=[F] \in \cC(G)/\cS(G)$ is called {\it the flabby class} of $M$,
denoted by $[M]^{fl}=[F]$.
Note that $[M]^{fl}$ is well-defined: 
if $[M]=[M^\prime]$, $[M]^{fl}=[F]$ and $[M^\prime]^{fl}=[F^\prime]$
then $F \oplus P_1 \simeq F^\prime \oplus P_2$
for some permutation $G$-lattices $P_1$ and $P_2$,
and therefore $[F]=[F^\prime]$ (cf. \cite[Lemma 8.7]{Swa83}). 
We say that $[M]^{fl}$ is {\it invertible} if 
$[M]^{fl}=[E]$ for some invertible $G$-lattice $E$. 
\end{definition}

For $G$-lattice $M$, 
it is not difficult to see 
\begin{align*}
\textrm{permutation}\ \ 
\Rightarrow\ \ 
&\textrm{stably\ permutation}\ \ 
\Rightarrow\ \ 
\textrm{invertible}\ \ 
\Rightarrow\ \ 
\textrm{flabby\ and\ coflabby}\\
&\hspace*{8mm}\Downarrow\hspace*{34mm} \Downarrow\\
&\hspace*{7mm}[M]^{fl}=0\hspace*{10mm}\Rightarrow\hspace*{5mm}[M]^{fl}\ 
\textrm{is\ invertible}.
\end{align*}

The above implications in each step cannot be reversed 
(see, for example, \cite[Section 1]{HY17}). 

Let $T$ be an algebraic $k$-torus and 
$\widehat{T}={\rm Hom}(T,\bG_m)$ be the character module of $T$. 
Then $\widehat{T}$ becomes a $G$-lattice where 
$G={\rm Gal}(L/k)$ is the Galois group of $L/k$ 
and $L$ is the minimal splitting field of $T$. 
%
The flabby class $\rho_G(\widehat{T})=[\widehat{T}]^{fl}$ 
plays crucial role in the rationality problem for $T$ 
as follows (see Voskresenskii's fundamental book \cite[Section 4.6]{Vos98} and Kunyavskii \cite{Kun07}, see also e.g. Swan \cite{Swa83}, 
Kunyavskii \cite[Section 2]{Kun90}, 
Lemire, Popov and Reichstein \cite[Section 2]{LPR06}, 
Kang \cite{Kan12}, Yamasaki \cite{Yam12}, Hoshi and Yamasaki \cite{HY17}):
\begin{theorem}[{Endo and Miyata \cite{EM73}, Voskresenskii \cite{Vos74}, Saltman \cite{Sal84}}]\label{thEM73}
Let $T$ and $T^\prime$ be algebraic $k$-tori 
with the same minimal splitting field $L$. 
Then we have:\\
{\rm (i)} $(${\rm Endo and Miyata} \cite[Theorem 1.6]{EM73}$)$ 
$[\widehat{T}]^{fl}=0$ if and only if $T$ is stably $k$-rational;\\
{\rm (ii)} $(${\rm Voskresenskii} \cite[Theorem 2]{Vos74}$)$ 
$[\widehat{T}]^{fl}=[\widehat{T}^\prime]^{fl}$ if and only if 
$T$ and $T^\prime$ are stably $k$-isomorphic;\\
{\rm (iii)} $(${\rm Saltman} \cite[Theorem 3.14]{Sal84}$)$ 
$[\widehat{T}]^{fl}$ is invertible if and only if $T$ is 
retract $k$-rational.
\end{theorem}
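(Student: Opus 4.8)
The plan is to run everything through the dictionary of Section~\ref{S2}: for a $k$-torus $T$ split by $L$ with $G={\rm Gal}(L/k)$ and $M=\widehat{T}$, one has $k(T)=L(M)^G$, the multiplicative invariant field attached to a $\bZ$-basis of $M$. The whole argument rests on a single device, the \emph{rationality lemma}: for any exact sequence $0\to M\to N\to P\to 0$ of $G$-lattices with $P$ permutation, $L(N)^G$ is purely transcendental over $L(M)^G$. I would prove it by reducing to $P=\bZ[G/H]$ and choosing a $\bZ$-splitting, so that $N=M\oplus\bZ[G/H]$ with $\sigma(y_i)=m_{\sigma,i}+y_{\sigma i}$, $m_{\sigma,i}\in M$. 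Passing to units in $L(N)$, the lifts $Y_i$ satisfy $\sigma(Y_i)=\chi_{\sigma,i}Y_{\sigma i}$ with $\chi_{\sigma,i}\in L(M)^\times$, and $(\chi_{\sigma,i})$ is a $1$-cocycle with values in the co-induced module $\prod_{G/H}L(M)^\times$. By Shapiro's lemma this cohomology is $H^1(H,L(M)^\times)$, which vanishes by Hilbert's Theorem~90; rescaling $Y_i\mapsto c_iY_i$ untwists the action to a pure permutation of the $Y_i$, whence $L(N)^G$ is rational over $L(M)^G$. This is the multiplicative no-name lemma.

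Granting the lemma, the forward implications are formal. For (i), assume $[M]^{fl}=0$ and take a flabby resolution $0\to M\to P\to F\to 0$; since $[F]=0$ there is a permutation lattice $P_1$ with $P_2:=F\oplus P_1$ permutation, and adding $P_1$ gives $0\to M\to P\oplus P_1\to P_2\to 0$ with permutation middle and quotient. The rationality lemma makes $L(P\oplus P_1)^G$ rational over $k(T)=L(M)^G$; on the other hand a permutation lattice $\oplus_i\bZ[G/H_i]$ is the character lattice of the quasi-split torus $\prod_i R_{L^{H_i}/k}(\bG_m)$, an open subvariety of affine space, so $L(P\oplus P_1)^G$ is rational over $k$. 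Comparing the two descriptions gives $k(T)(t_1,\dots,t_r)=k(s_1,\dots,s_d)$, i.e.\ $T$ is stably $k$-rational. The forward half of (ii) is the same bookkeeping applied to a common permutation extension of $M$ and $M'$ built from the shared flabby representative; and (iii) is obtained by repeating the computation when $F$ is merely a direct summand of a permutation lattice, so that one gets a retraction of function rings through a localized polynomial ring rather than an isomorphism --- precisely Saltman's module criterion \cite{Sal84} for retract rationality, of which (i) is the case $[M]^{fl}=0$.

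The main obstacle is the reverse implications: that the rationality property forces the lattice condition, equivalently that $[M]^{fl}$ is a genuine stable-birational invariant. Here I would invoke Theorem~\ref{thVos69}: a smooth $k$-compactification $X\supset T$ furnishes a flabby resolution $0\to\widehat{T}\to\widehat{Q}\to{\rm Pic}\,\overline{X}\to 0$, so $[M]^{fl}=[{\rm Pic}\,\overline{X}]$. It then suffices to show this similarity class is unchanged under stable birational equivalence. Two smooth compactifications are dominated by a common one through blow-ups, and each blow-up modifies ${\rm Pic}\,\overline{X}$ only by a permutation summand spanned by the Galois-permuted exceptional divisors, while multiplying $T$ by affine space again only adds permutation summands; hence $[{\rm Pic}\,\overline{X}]$ depends only on the stable birational class of $k(T)$. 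This yields the converses of (i) and (ii) together (stable rationality forces $[{\rm Pic}\,\overline{X}]=[{\rm Pic}\,\bP^N]=0$, and a stable isomorphism equates the two Picard classes). For the converse of (iii) I would use Saltman's argument: retract rationality lets one specialize the generic point to produce a $G$-equivariant splitting exhibiting ${\rm Pic}\,\overline{X}$ as a direct summand of a permutation lattice. The delicate points are exactly this blow-up/permutation bookkeeping for the Picard lattice and Saltman's specialization step for (iii); once the rationality lemma is available the forward directions are essentially automatic.
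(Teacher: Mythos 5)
The paper does not prove Theorem \ref{thEM73} at all: it is quoted, with precise references, from Endo--Miyata, Voskresenskii and Saltman, so there is no internal argument to compare yours against; what you have written is a reconstruction of those classical proofs, and its architecture is the standard one. The multiplicative no-name lemma is correctly set up (reduce to $P=\bZ[G/H]$, choose a $\bZ$-splitting, read the twisting as a cocycle in ${\rm Coind}_H^G\, L(M)^\times$, kill it by Shapiro plus Hilbert 90 for the Galois extension $L(M)/L(M)^H$ --- note you also need a final Speiser/Galois-descent step to convert the permuted variables $Y_i$ into invariant coordinates); the forward direction of (i) then follows exactly by your bookkeeping, and the converses of (i) and (ii) do follow from Theorem \ref{thVos69} together with stable birational invariance of the class $[{\rm Pic}\,\overline{X}]$. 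Part (iii) is, in both directions, essentially a citation of Saltman, which is also all the paper does.

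Two steps need repair before this is a proof. First, the forward half of (ii) is \emph{not} ``the same bookkeeping'': from $[F]=[F']$ you get $F\oplus Q_1\simeq F'\oplus Q_2=:F''$ with $Q_1,Q_2$ permutation, and the resulting sequences $0\to M\to P\oplus Q_1\to F''\to 0$ and $0\to M'\to P'\oplus Q_2\to F''\to 0$ have \emph{flabby} quotients, where the no-name lemma does not apply. The necessary device is the pullback lattice $N=(P\oplus Q_1)\times_{F''}(P'\oplus Q_2)$, which sits in the two exact sequences $0\to M\to N\to P'\oplus Q_2\to 0$ and $0\to M'\to N\to P\oplus Q_1\to 0$, both with permutation quotients; applying the lemma twice makes $L(N)^G$ purely transcendental over both $k(T)$ and $k(T')$. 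This is presumably the ``common permutation extension'' you allude to, but it is the crux of (ii) and must be made explicit. Second, the birational invariance of the similarity class of ${\rm Pic}\,\overline{X}$ should not be argued via a factorization of the birational map into blow-ups (that is weak factorization, which is neither available in general nor needed): one resolves the graph to obtain a smooth projective $Z$ with birational \emph{morphisms} $f\colon Z\to X$ and $g\colon Z\to X'$, and uses that for any birational morphism of smooth projective varieties one has ${\rm Pic}\,\overline{Z}\simeq f^*{\rm Pic}\,\overline{X}\oplus P_E$, where $P_E$ is the permutation lattice on the exceptional divisors. This requires resolution of singularities (characteristic $0$), or, over an arbitrary field as in the statement, the equivariant compactification results cited around Theorem \ref{thVos69}. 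With these two points fixed, your outline is the standard proof of the cited theorem.
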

For norm one tori $T=R^{(1)}_{K/k}(\bG_m)$, 
recall that $\widehat{T}=J_{G/H}$. 
Hence we have  
\begin{align*}
[J_{G/H}]^{fl}=0\,
\ \ \Rightarrow\ \ [J_{G/H}]^{fl}\ \textrm{is\ invertible}
\ \ \Rightarrow\ \  H^1(G,[J_{G/H}]^{fl})=0\,
\ \ \Rightarrow\ \  A(T)=0\ \textrm{and}\ \Sha(T)=0
\end{align*}
where the last implication holds over a global field $k$ 
(see also Colliot-Th\'{e}l\`{e}ne and Sansuc \cite[page 29]{CTS77}).  
The last conditions mean that 
$T$ has the weak approximation property and 
the Hasse norm principle holds for $K/k$ (see Section \ref{S1}). 
In particular, it follows from Theorem \ref{thS} that 
$H^1(G,[J_{G/H}]^{fl})=0$ and hence 
$A(T)=0$ and $\Sha(T)=0$ when $G=pTm\leq S_p$ is a transitive 
subgroup of $S_p$ of prime degree $p$ 
and $H\leq G$ with $[G:H]=p$ 
(see \cite[Lemma 2.17]{HY17} and 
also the first paragraph of Section \ref{S5}). 

\section{{Proof of Theorem \ref{thmain1-4}} and {Theorem \ref{thmain1-5}}}\label{S4}
We will give the proof of Theorem \ref{thmain1-4p} and 
Theorem \ref{thmain1-5p} which are detailed statements of 
Theorem \ref{thmain1-4} and Theorem \ref{thmain1-5} respectively: 
\begin{theorem}\label{thmain1-4p}
Let $k$ be a 
field, $T$ be an algebraic $k$-torus of dimension $4$ 
and $X$ be a smooth $k$-compactification of $T$. 
Among the $($at most$)$ 
$216$ cases of not retract rational algebraic $k$-tori $T$,
there exist $2$ $($resp. $20$, $194$$)$ cases of 
algebraic $k$-tori with 
$H^1(k,{\rm Pic}\, \overline{X})\simeq(\bZ/2\bZ)^{\oplus 2}$ 
$($resp. $H^1(k,{\rm Pic}\, \overline{X})\simeq\bZ/2\bZ$, 
$H^1(k,{\rm Pic}\, \overline{X})=0$$)$. 
Moreover, for the character module $\widehat{T}\simeq M_G$ of $T$ 
with $H^1(k,{\rm Pic}\,\overline{X})\simeq H^1(G,[M_G]^{fl})$, we have\\
{\rm (i)} $H^1(G,[M_G]^{fl})\simeq(\bZ/2\bZ)^{\oplus 2}$ 
if and only if the {\rm GAP ID} of $G$ is one of 
$(4,32,1,2)$ and $(4,33,3,1)$ 
where $M_G$ is an indecomposable $G$-lattice of rank $4$ and 
$G$ is isomorphic to 
$Q_8$  and $\SL_2(\bF_3)$ respectively;\\
{\rm (ii)} $H^1(G,[M_G]^{fl})\simeq\bZ/2\bZ$ 
if and only if\\
{\rm (ii-1)} the {\rm GAP ID} of $G$ is one of 
$(4,5,1,12)$, $(4,5,2,8)$, $(4,6,2,10)$, $(4,12,2,6)$, $(4,12,4,12)$, 
$(4,12,5,10)$, $(4,18,1,3)$, $(4,18,4,4)$, $(4,32,2,2)$, $(4,32,3,2)$, 
$(4,32,4,2)$, $(4,32,6,2)$, $(4,33,5,1)$, $(4,33,6,1)$, $(4,33,9,1)$ 
where $M_G$ is an indecomposable $G$-lattice of rank $4$ 
and $G$ is isomorphic to 
$V_4$, $(C_2)^3$, $(C_2)^3$, $C_4\times C_2$, $D_4$, 
$C_2\times D_4$, $C_4\times C_2$, $C_2\times D_4$, $Q_{16}$, $QD_8$, 
$(C_4\times C_2)\rtimes C_2$, $C_8\rtimes V_4$, 
$((C_4\times C_2)\rtimes C_2)\rtimes C_3$, 
$\GL_2(\bF_3)$, $\GL_2(\bF_3)\rtimes C_2$
respectively; or\\
{\rm (ii-2)} the {\rm GAP ID} of $G$ is one of  
$(4,4,3,6)$, $(4,5,1,9)$, $(4,6,2,9)$, $(4,24,1,5)$, $(4,25,2,4)$ 
where $M_G$ is a decomposable $G$-lattice of rank $4=3+1$ 
and $G$ is isomorphic to 
$V_4$, $V_4$, $(C_2)^3$, $A_4$, $C_2\times A_4$ respectively. 
\end{theorem}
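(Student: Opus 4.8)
The plan is to convert the computation of $H^1(k,{\rm Pic}\,\overline{X})$ into a purely group-theoretic invariant of the character lattice $M_G=\widehat{T}$ and then to evaluate that invariant case by case. Using the identification $H^1(k,{\rm Pic}\,\overline{X})\simeq H^1(G,[M_G]^{fl})$ recorded in the introduction, together with the implication ``$T$ retract $k$-rational $\Rightarrow [M_G]^{fl}$ invertible $\Rightarrow H^1(G,[M_G]^{fl})=0$'' (an invertible lattice is coflabby, so its $H^1$ over every subgroup vanishes, and $H^1$ is insensitive to permutation summands), the only tori that can contribute are the $216$ not retract rational ones. These $216$ $\bZ$-classes of finite subgroups of $\GL_4(\bZ)$ are precisely the list isolated by Hoshi and Yamasaki \cite{HY17}, which I would take as the input.

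First I would make $H^1(G,[M_G]^{fl})$ effectively computable. Choosing a flabby resolution $0\to M_G\to P\to F\to 0$ with $P$ permutation and $F$ flabby, Shapiro's lemma gives $H^1(G,P)=0$, so the long exact sequence yields $H^1(G,F)={\rm Ker}\{H^2(G,M_G)\xrightarrow{\iota}H^2(G,P)\}$. Restricting $\iota$ to a cyclic subgroup $\langle g\rangle$ and using that flabbiness gives $\widehat H^{-1}(\langle g\rangle,F)=0$, which by $2$-periodicity of cyclic cohomology equals $\widehat H^{1}(\langle g\rangle,F)=H^1(\langle g\rangle,F)$, one finds $H^2(\langle g\rangle,M_G)\hookrightarrow H^2(\langle g\rangle,P)$; this forces ${\rm Ker}\,\iota\subseteq\Sha^2_\omega(G,M_G)$, the reverse inclusion being the Colliot-Th\'el\`ene--Sansuc identification recorded above. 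Hence
\[
H^1(k,{\rm Pic}\,\overline{X})\simeq H^1(G,[M_G]^{fl})\simeq \Sha^2_\omega(G,M_G)={\rm Ker}\Big\{H^2(G,M_G)\xrightarrow{{\rm res}}\bigoplus_{g\in G}H^2(\langle g\rangle,M_G)\Big\},
\]
which depends only on the integral representation $G\to\GL_4(\bZ)$ and can be evaluated in GAP directly from the lattice data, with no need to build $F$.

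I would then run this computation over all $216$ lattices, record the isomorphism type of each $\Sha^2_\omega(G,M_G)$, and sort them into the buckets $(\bZ/2\bZ)^{\oplus 2}$, $\bZ/2\bZ$, and $0$. The decomposable cases (ii-2), where $M_G$ splits as a rank $3+1$ lattice, I would treat separately: the rank-$1$ summand is a twist of $\bZ$, and the cyclic-restriction argument shows $\Sha^2_\omega$ of any rank-$1$ lattice vanishes (a homomorphism $G\to\bQ/\bZ$ trivial on every cyclic subgroup is trivial), so the computation collapses to $\Sha^2_\omega$ of the rank-$3$ part. For $G\simeq V_4$ and $G\simeq A_4$ this rank-$3$ part is exactly the Chevalley module of the degree-$4$ norm one torus, giving $\bZ/2\bZ$ by Theorem \ref{thKun1} and a welcome consistency check. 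Matching each surviving lattice to its GAP ID and the abstract type of $G$ then completes the tabulation.

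The hard part will be the scale and bookkeeping rather than any single conceptual step: one must enumerate and correctly identify all $\bZ$-classes in $\GL_4(\bZ)$ (via CARAT / the crystallographic database), confirm which $216$ are not retract rational, compute $H^2(G,M_G)$ together with all cyclic restriction maps reliably, and verify that the outcome is independent of the auxiliary choices, since $[M_G]^{fl}$ is well defined only modulo permutation lattices while $\Sha^2_\omega$ is an honest invariant. A secondary check worth doing by hand is the pair $Q_8$ and $\SL_2(\bF_3)$: one should confirm that $M_G$ is genuinely indecomposable of rank $4$ and that $\Sha^2_\omega\simeq(\bZ/2\bZ)^{\oplus 2}$, as these are the only two tori attaining the larger group.
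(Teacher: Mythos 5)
Your proposal is correct and yields the same classification, but it takes a genuinely different computational route from the paper. The paper makes the identical reduction to the $216$ not retract rational lattices of \cite{HY17}, but then, for each lattice $M_G$, it explicitly constructs a low-rank representative $F$ of the flabby class $[M_G]^{fl}$ by a backtracking search (the function {\tt FlabbyResolutionLowRank} of \cite{HHY20}) and computes $H^1(G,F)$ directly (see Example \ref{exN4}); you instead bypass the construction of $F$ altogether, using the identity $H^1(G,[M_G]^{fl})\simeq\Sha^2_\omega(G,M_G)$ to reduce everything to $H^2(G,M_G)$ and its restrictions to cyclic subgroups. Your route computes an honest invariant of $M_G$ with no auxiliary choices (so well-definedness is automatic) and avoids the expensive search for a low-rank flabby representative; the paper's route is heavier up front but produces the flabby class itself, which is reusable data --- the same machinery drives the rank-$5$ theorem and the norm-one-torus computations in Sections \ref{S4} and \ref{S5} --- and makes the final $H^1$ computation cheap once $F$ is found. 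Three small points would tighten your argument. First, prove the inclusion $\Sha^2_\omega(G,M_G)\subseteq{\rm Ker}\,\iota$ lattice-theoretically rather than by citing \cite{CTS87}: it follows from $\Sha^2_\omega(G,P)=0$ for permutation $P$ (Mackey decomposition plus the fact that a character of a finite group vanishing on every cyclic subgroup is trivial), whereas the Colliot-Th\'el\`ene--Sansuc theorem as quoted in the paper assumes ${\rm char}\,k=0$, while Theorem \ref{thmain1-4p} is asserted over an arbitrary field. Second, in (ii-2) your parenthetical argument for the rank-one summand covers only the trivial twist; for the sign twist $\bZ^-$ associated to a character $\chi:G\to\{\pm 1\}$, the group $H^2(G,\bZ^-)$ is not ${\rm Hom}(G,\bQ/\bZ)$, and one should instead note that $\bZ^-$ is a direct summand of the permutation lattice $\bZ[G/\ker\chi]$, hence invertible, so its $\Sha^2_\omega$ vanishes. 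Third, in your consistency check beware that $G$ may act non-faithfully on the rank-three summand: e.g.\ for the GAP ID $(4,6,2,9)$ the group $G\simeq(C_2)^3$ acts on the rank-$3$ part through its quotient $V_4$, so that part is Kunyavskii's lattice for the quotient, not for $G$ itself; this is why $(C_2)^3$ can appear in (ii-2) even though no faithful rank-$3$ lattice for $(C_2)^3$ has nonzero $H^1$ in Example \ref{exN3}.
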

\begin{proof}
It follows from \cite[Theorem 1.9]{HY17} that 
among the $710$ cases of $4$-dimensional 
algebraic $k$-tori, 
there exist $216$ cases of algebraic $k$-tori which 
are not retract $k$-rational. 
Because if $T$ is retract $k$-rational, 
then $H^1(k,{\rm Pic}\,\overline{X})\simeq H^1(G,[M_G]^{fl})=0$, 
we should check only the $216$ cases. 
The GAP IDs of such $216$ groups $G\leq \GL_4(\bZ)$ with 
$[M_G]^{fl}$ is not invertible, are given in \cite[Tables 3, 4]{HY17} 
(see \cite[Chapter 3]{HY17} for the explanation of GAP ID). 
They are also given in 
\cite[Example 10.1]{HY17} as the lists {\tt N4} (resp. {\tt N31}) 
when $M_G$ is indecomposable (resp. decomposable with rank $4=3+1$) 
and available from\\ 
{\tt https://www.math.kyoto-u.ac.jp/\~{}yamasaki/Algorithm/MultInvField/NonInv.dat}.

Then we apply the function 
{\tt FlabbyResolutionLowRank($G$).actionF} 
(see also \cite[Algorithm 4.1]{HHY20}) 
which returns a suitable flabby class $F$ of $M_G$ $([F]=[M_G]^{fl})$
with low rank by using the backtracking techniques. 
The function {\tt H1} may compute the group $H^1(G,F)$ 
(see Example \ref{exN4}).  
The related functions are available from\\ 
{\tt https://www.math.kyoto-u.ac.jp/\~{}yamasaki/Algorithm/RatProbNorm1Tori/}.
\end{proof}
\begin{theorem}\label{thmain1-5p}
Let $k$ be a 
field, $T$ be an algebraic $k$-torus of dimension $5$ 
and $X$ be a smooth $k$-compactification of $T$. 
Among the $($at most$)$ 
$3003$ cases of not retract rational algebraic $k$-tori $T$,
there exist $11$ $($resp. $263$, $2729$$)$ cases of 
algebraic $k$-tori with 
$H^1(k,{\rm Pic}\, \overline{X})\simeq(\bZ/2\bZ)^{\oplus 2}$ 
$($resp. $H^1(k,{\rm Pic}\, \overline{X})\simeq\bZ/2\bZ$, 
$H^1(k,{\rm Pic}\, \overline{X})=0$$)$. 
Moreover, for the character module $\widehat{T}\simeq M_G$ of $T$ 
with $H^1(k,{\rm Pic}\,\overline{X})\simeq H^1(G,[M_G]^{fl})$, we have\\
{\rm (i)} $H^1(G,[M_G]^{fl})\simeq(\bZ/2\bZ)^{\oplus 2}$ 
if and only if\\
{\rm (i-1)} the {\rm CARAT ID} of $G$ is one of the $6$ triples 
$(5,31,26)$, $(5,31,27)$, $(5,664,2)$, $(5,669,2)$, $(5,670,2)$, $(5,773,4)$ 
where $M_G$ is an indecomposable $G$-lattice of rank $5$ and 
$G$ is isomorphic to 
$(C_2)^3$, $(C_2)^3$, $C_2\times Q_8$, $(C_4\times C_2)\rtimes C_2$, 
$(C_4\times C_2)\rtimes C_2$, $Q_8$ respectively; or\\
{\rm (i-2)} the {\rm CARAT ID} of $G$ is one of the $5$ triples 
$(5,664,1)$, $(5,773,3)$, $(5,774,3)$, $(5,691,1)$, $(5,730,1)$
where $M_G$ is a decomposable $G$-lattice of rank $5=4+1$ and 
$G$ is isomorphic to $C_2\times Q_8$, $Q_8$, $Q_8$, $\SL_2(\bF_3)$, 
$C_2\times \SL_2(\bF_3)$ respectively.\\
{\rm (ii)} $H^1(G,[M_G]^{fl})\simeq\bZ/2\bZ$ 
if and only if\\
{\rm (ii-1)} the {\rm CARAT ID} of $G$ is one of the $141$ triples 
as in Example \ref{exN5} 
where $M_G$ is an indecomposable $G$-lattice of rank $5$;\\
{\rm (ii-2)} the {\rm CARAT ID} of $G$ is one of the $73$ triples 
as in Example \ref{exN5} 
where $M_G$ is a decomposable $G$-lattice of rank $5=4+1$;\\
{\rm (ii-3)} the {\rm CARAT ID} of $G$ is one of the $36$ triples 
as in Example \ref{exN5} 
where $M_G$ is a decomposable $G$-lattice of rank $5=3+2$; or\\
{\rm (ii-4)} the {\rm CARAT ID} of $G$ is one of the $13$ triples 
as in Example \ref{exN5} 
where $M_G$ is a decomposable $G$-lattice of rank $5=3+1+1$.
\end{theorem}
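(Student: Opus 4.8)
The plan is to reduce the statement to a finite computation over an explicit list of groups and then carry out that computation with the flabby-resolution machinery, exactly as in the proof of Theorem~\ref{thmain1-4p}. First I would invoke the classification of Hoshi and Yamasaki \cite{HY17}: among the $6079$ $\bZ$-conjugacy classes of finite subgroups $G\leq \GL_5(\bZ)$, precisely $3003$ give rise to $G$-lattices $M_G$ whose flabby class $[M_G]^{fl}$ is not invertible, and for every retract $k$-rational torus one has $H^1(G,[M_G]^{fl})=0$ by Theorem~\ref{thEM73}(iii) together with the chain of implications in Section~\ref{S3}. Hence it suffices to compute $H^1(G,[M_G]^{fl})$ for these $3003$ cases, whose CARAT IDs are already tabulated in \cite{HY17}.

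For each such $G$ the next step is to produce an explicit flabby resolution $0\to M_G\to P\to F\to 0$ with $P$ permutation and $[F]=[M_G]^{fl}$, and then to evaluate $H^1(G,F)$. I would apply the function {\tt FlabbyResolutionLowRank($G$).actionF} of \cite{HHY20}, which returns a representative $F$ of the flabby class of as small a rank as possible, followed by the routine {\tt H1} that computes $H^1(G,F)$ directly from the integral representation. Since $H^1(G,F)\simeq H^1(G,[M_G]^{fl})\simeq H^1(k,{\rm Pic}\,\overline{X})$ by the discussion following Theorem~\ref{thVos69}, tabulating the output across all $3003$ groups yields the stated counts $11$, $263$, $2729$ (with $11+263+2729=3003$) and the CARAT-ID partition into indecomposable lattices and decomposable lattices of ranks $5=4+1$, $5=3+2$ and $5=3+1+1$ recorded in Example~\ref{exN5}.

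The main obstacle is computational rather than conceptual. The standard flabby resolution produced by Theorem~\ref{thEM} uses a permutation lattice $P$ of potentially large rank, so the resulting $F$ is unwieldy and the cohomology computation is only feasible after one has found a genuinely low-rank representative of $[M_G]^{fl}$; this is precisely what the backtracking search inside {\tt FlabbyResolutionLowRank} accomplishes, and in dimension $5$---where, unlike dimension $4$, no finite-group GAP library of $\GL_n(\bZ)$-classes is available and one must work throughout with CARAT---this search is the delicate and time-consuming part. A secondary point requiring care is the bookkeeping needed to certify decomposability: for the decomposable cases one must verify that $M_G$ genuinely splits as a direct sum of $G$-lattices of the claimed ranks and then match each summand's flabby class, so that the reported CARAT-ID partition is correct and every one of the $3003$ cases is accounted for exactly.
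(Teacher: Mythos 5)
Your proposal is correct and follows essentially the same route as the paper's proof: reduce to the $3003$ non-retract-rational cases via the classification in \cite[Theorem 1.12, Tables 12--15]{HY17} (retract rationality forcing $H^1(G,[M_G]^{fl})=0$), then compute $H^1(G,[M_G]^{fl})$ for each CARAT ID with {\tt FlabbyResolutionLowRank($G$).actionF} from \cite[Algorithm 4.1]{HHY20} and the function {\tt H1}, as recorded in Example~\ref{exN5}. No gap; the paper does exactly this computation.
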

\begin{proof}
The method is the same as in the proof of Theorem \ref{thmain1-4p}. 
By \cite[Theorem 1.12]{HY17}, among the $6079$ cases of $5$-dimensional 
algebraic $k$-tori, 
there exist $3003$ cases of algebraic $k$-tori which 
are not retract $k$-rational.  
The CARAT IDs of such $3003$ groups $G\leq \GL_5(\bZ)$ with 
$[M_G]^{fl}$ is not invertible, 
are given in \cite[Tables 12, 13, 14, 15]{HY17}.  
They are also given in 
\cite[Example 4.12 and Example 11.1]{HY17} 
as the lists {\tt N5}, {\tt N41}, {\tt N32}, {\tt N311}
when $M_G$ is indecomposable (resp. decomposable with rank $5=4+1$, 
$5=3+1$, $5=3+1+1$) 
and available from\\
{\tt https://www.math.kyoto-u.ac.jp/\~{}yamasaki/Algorithm/MultInvField/NonInv5.dat}. 

Then we apply the functions 
{\tt FlabbyResolutionLowRank($G$).actionF} 
in \cite[Algorithm 4.1]{HHY20} and {\tt H1} to get $H^1(G,[M_G]^{fl})$ 
(see Example \ref{exN5} and also the proof of Theorem \ref{thmain1-4p}). 
\end{proof}

\smallskip
\begin{example}[{Classification of $H^1(G,[M_{G}]^{fl})\neq 0$ for $G\leq {\rm GL}_3(\bZ)$}]\label{exN3}
~{}\vspace*{-4mm}\\
{\small 
\begin{verbatim}
gap> Read("FlabbyResolutionFromBase.gap");
gap> Read("NonInv.dat");
# N3 is the list of GAP IDs (Crystcat IDs) of indecomposable lattice of rank 3
# whose flabby class [M_G]^fl is not invertible [HY17, Example 4.12]
gap> N3; 
[ [ 3, 3, 1, 3 ], [ 3, 3, 3, 3 ], [ 3, 3, 3, 4 ], [ 3, 4, 3, 2 ], [ 3, 4, 4, 2 ],
  [ 3, 4, 6, 3 ], [ 3, 4, 7, 2 ], [ 3, 7, 1, 2 ], [ 3, 7, 2, 2 ], [ 3, 7, 2, 3 ],
  [ 3, 7, 3, 2 ], [ 3, 7, 3, 3 ], [ 3, 7, 4, 2 ], [ 3, 7, 5, 2 ], [ 3, 7, 5, 3 ] ]
gap> Length(N3); # there exist 15 not retract rational tori in dim=3 [HY17, Table 1]
15
gap> N3g:=List(N3,x->MatGroupZClass(x[1],x[2],x[3],x[4]));;
gap> List(N3g,StructureDescription);
[ "C2 x C2", "C2 x C2 x C2", "C2 x C2 x C2", "C4 x C2", "D8", "D8", "C2 x D8", 
"A4", "C2 x A4", "C2 x A4", "S4", "S4", "S4", "C2 x S4", "C2 x S4" ]
gap> N3gF:=List(N3g,x->FlabbyResolutionLowRank(x).actionF);;
gap> N3H1F:=List(N3gF,x->Filtered(H1(x),y->y>1)); # H1(F)
[ [ 2 ], [  ], [  ], [  ], [  ], [  ], [  ], [ 2 ], [  ], [  ], [  ], [  ], [  ], [  ], [  ] ]
gap> N3H1FC2:=Filtered([1..Length(N3gF)],x->N3H1F[x]=[2]);
[ 1, 8 ]
gap> List(N3H1FC2,x->N3[x]); # GAP ID's of F with H1(F)=C2
[ [ 3, 3, 1, 3 ], [ 3, 7, 1, 2 ] ]
gap> List(N3H1FC2,x->StructureDescription(N3g[x]));
[ "C2 x C2", "A4" ]
\end{verbatim}
}
\end{example}

\smallskip
\begin{example}[{Classification of $H^1(G,[M_{G}]^{fl})\neq 0$ for $G\leq {\rm GL}_4(\bZ)$}]\label{exN4}
~{}\vspace*{-4mm}\\
{\small
\begin{verbatim}
gap> Read("FlabbyResolutionFromBase.gap");
gap> Read("NonInv.dat");
# N4 is the list of GAP IDs (Crystcat IDs) of indecomposable lattice of rank 4
# whose flabby class [M_G]^fl is not invertible [HY17, Example 4.12]
gap> N4; 
[ [ 4, 5, 1, 12 ], [ 4, 5, 2, 5 ], [ 4, 5, 2, 8 ], [ 4, 5, 2, 9 ], [ 4, 6, 1, 6 ],
  [ 4, 6, 1, 11 ], [ 4, 6, 2, 6 ], [ 4, 6, 2, 10 ], [ 4, 6, 2, 12 ], [ 4, 6, 3, 4 ],
  [ 4, 6, 3, 7 ], [ 4, 6, 3, 8 ], [ 4, 12, 2, 5 ], [ 4, 12, 2, 6 ], [ 4, 12, 3, 11 ],
  [ 4, 12, 4, 10 ], [ 4, 12, 4, 11 ], [ 4, 12, 4, 12 ], [ 4, 12, 5, 8 ], [ 4, 12, 5, 9 ],
  [ 4, 12, 5, 10 ], [ 4, 12, 5, 11 ], [ 4, 13, 1, 5 ], [ 4, 13, 2, 5 ], [ 4, 13, 3, 5 ],
  [ 4, 13, 4, 5 ], [ 4, 13, 5, 4 ], [ 4, 13, 5, 5 ], [ 4, 13, 6, 5 ], [ 4, 13, 7, 9 ],
  [ 4, 13, 7, 10 ], [ 4, 13, 7, 11 ], [ 4, 13, 8, 5 ], [ 4, 13, 8, 6 ], [ 4, 13, 9, 4 ],
  [ 4, 13, 9, 5 ], [ 4, 13, 10, 4 ], [ 4, 13, 10, 5 ], [ 4, 18, 1, 3 ], [ 4, 18, 2, 4 ],
  [ 4, 18, 2, 5 ], [ 4, 18, 3, 5 ], [ 4, 18, 3, 6 ], [ 4, 18, 3, 7 ], [ 4, 18, 4, 4 ],
  [ 4, 18, 4, 5 ], [ 4, 18, 5, 5 ], [ 4, 18, 5, 6 ], [ 4, 18, 5, 7 ], [ 4, 19, 1, 2 ],
  [ 4, 19, 2, 2 ], [ 4, 19, 3, 2 ], [ 4, 19, 4, 3 ], [ 4, 19, 4, 4 ], [ 4, 19, 5, 2 ],
  [ 4, 19, 6, 2 ], [ 4, 22, 1, 1 ], [ 4, 22, 2, 1 ], [ 4, 22, 3, 1 ], [ 4, 22, 4, 1 ],
  [ 4, 22, 5, 1 ], [ 4, 22, 5, 2 ], [ 4, 22, 6, 1 ], [ 4, 22, 7, 1 ], [ 4, 22, 8, 1 ],
  [ 4, 22, 9, 1 ], [ 4, 22, 10, 1 ], [ 4, 22, 11, 1 ], [ 4, 24, 2, 4 ], [ 4, 24, 2, 6 ],
  [ 4, 24, 4, 4 ], [ 4, 24, 5, 4 ], [ 4, 24, 5, 6 ], [ 4, 25, 1, 3 ], [ 4, 25, 2, 3 ],
  [ 4, 25, 2, 5 ], [ 4, 25, 3, 3 ], [ 4, 25, 4, 3 ], [ 4, 25, 5, 3 ], [ 4, 25, 5, 5 ],
  [ 4, 25, 6, 3 ], [ 4, 25, 6, 5 ], [ 4, 25, 7, 3 ], [ 4, 25, 8, 3 ], [ 4, 25, 9, 3 ],
  [ 4, 25, 9, 5 ], [ 4, 25, 10, 3 ], [ 4, 25, 10, 5 ], [ 4, 25, 11, 3 ], [ 4, 25, 11, 5 ],
  [ 4, 29, 1, 1 ], [ 4, 29, 1, 2 ], [ 4, 29, 2, 1 ], [ 4, 29, 3, 1 ], [ 4, 29, 3, 2 ],
  [ 4, 29, 3, 3 ], [ 4, 29, 4, 1 ], [ 4, 29, 4, 2 ], [ 4, 29, 5, 1 ], [ 4, 29, 6, 1 ],
  [ 4, 29, 7, 1 ], [ 4, 29, 7, 2 ], [ 4, 29, 8, 1 ], [ 4, 29, 8, 2 ], [ 4, 29, 9, 1 ],
  [ 4, 32, 1, 2 ], [ 4, 32, 2, 2 ], [ 4, 32, 3, 2 ], [ 4, 32, 4, 2 ], [ 4, 32, 5, 2 ],
  [ 4, 32, 5, 3 ], [ 4, 32, 6, 2 ], [ 4, 32, 7, 2 ], [ 4, 32, 8, 2 ], [ 4, 32, 9, 4 ],
  [ 4, 32, 9, 5 ], [ 4, 32, 10, 2 ], [ 4, 32, 11, 2 ], [ 4, 32, 11, 3 ], [ 4, 32, 12, 2 ],
  [ 4, 32, 13, 3 ], [ 4, 32, 13, 4 ], [ 4, 32, 14, 3 ], [ 4, 32, 14, 4 ], [ 4, 32, 15, 2 ],
  [ 4, 32, 16, 2 ], [ 4, 32, 16, 3 ], [ 4, 32, 17, 2 ], [ 4, 32, 18, 2 ], [ 4, 32, 18, 3 ],
  [ 4, 32, 19, 2 ], [ 4, 32, 19, 3 ], [ 4, 32, 20, 2 ], [ 4, 32, 20, 3 ], [ 4, 32, 21, 2 ],
  [ 4, 32, 21, 3 ], [ 4, 33, 1, 1 ], [ 4, 33, 3, 1 ], [ 4, 33, 4, 1 ], [ 4, 33, 5, 1 ],
  [ 4, 33, 6, 1 ], [ 4, 33, 7, 1 ], [ 4, 33, 8, 1 ], [ 4, 33, 9, 1 ], [ 4, 33, 10, 1 ],
  [ 4, 33, 11, 1 ], [ 4, 33, 12, 1 ], [ 4, 33, 13, 1 ], [ 4, 33, 14, 1 ], [ 4, 33, 14, 2 ],
  [ 4, 33, 15, 1 ], [ 4, 33, 16, 1 ] ]
gap> Length(N4); # there exist 152 not retract rational tori in dim=4 [HY17, Table 4]
152
gap> N4g:=List(N4,x->MatGroupZClass(x[1],x[2],x[3],x[4]));;
gap> N4gF:=List(N4g,x->FlabbyResolutionLowRank(x).actionF);;
gap> N4H1F:=List(N4gF,x->Filtered(H1(x),y->y>1));;
gap> Collected(N4H1F);
[ [ [  ], 135 ], [ [ 2 ], 15 ], [ [ 2, 2 ], 2 ] ]

gap> N4H1FC2xC2:=Filtered([1..Length(N4H1F)],x->N4H1F[x]=[2,2]);
[ 106, 138 ]
gap> List(N4H1FC2xC2,x->N4[x]); # GAP ID's of F with H1(F)=C2xC2
[ [ 4, 32, 1, 2 ], [ 4, 33, 3, 1 ] ]
gap> List(N4H1FC2xC2,x->StructureDescription(N4g[x])); 
[ "Q8", "SL(2,3)" ]

gap> N4H1FC2:=Filtered([1..Length(N4H1F)],x->N4H1F[x]=[2]);
[ 1, 3, 8, 14, 18, 21, 39, 45, 107, 108, 109, 112, 140, 141, 144 ]
gap> List(N4H1FC2,x->N4[x]); # GAP ID's of F with H1(F)=C2
[ [ 4, 5, 1, 12 ], [ 4, 5, 2, 8 ], [ 4, 6, 2, 10 ], [ 4, 12, 2, 6 ], [ 4, 12, 4, 12 ],
  [ 4, 12, 5, 10 ], [ 4, 18, 1, 3 ], [ 4, 18, 4, 4 ], [ 4, 32, 2, 2 ], [ 4, 32, 3, 2 ],
  [ 4, 32, 4, 2 ], [ 4, 32, 6, 2 ], [ 4, 33, 5, 1 ], [ 4, 33, 6, 1 ], [ 4, 33, 9, 1 ] ]
gap> List(N4H1FC2,x->StructureDescription(N4g[x]));
[ "C2 x C2", "C2 x C2 x C2", "C2 x C2 x C2", "C4 x C2", "D8", "C2 x D8", "C4 x C2",
  "C2 x D8", "C8 : C2", "QD16", "(C4 x C2) : C2", "C8 : (C2 x C2)",
  "((C4 x C2) : C2) : C3", "GL(2,3)", "GL(2,3) : C2" ]

# N31 is the list of GAP IDs (Crystcat IDs) of decomposable lattice of rank 4=3+1 
# whose flabby class [M_G]^fl is not invertible [HY17, Example 4.12]
gap> N31;
[ [ 4, 4, 3, 6 ], [ 4, 4, 4, 4 ], [ 4, 4, 4, 6 ], [ 4, 5, 1, 9 ], [ 4, 5, 2, 4 ],
  [ 4, 5, 2, 7 ], [ 4, 6, 1, 4 ], [ 4, 6, 1, 8 ], [ 4, 6, 2, 4 ], [ 4, 6, 2, 8 ],
  [ 4, 6, 2, 9 ], [ 4, 6, 3, 3 ], [ 4, 6, 3, 6 ], [ 4, 7, 3, 2 ], [ 4, 7, 4, 3 ],
  [ 4, 7, 5, 2 ], [ 4, 7, 7, 2 ], [ 4, 12, 2, 4 ], [ 4, 12, 3, 7 ], [ 4, 12, 4, 6 ],
  [ 4, 12, 4, 8 ], [ 4, 12, 4, 9 ], [ 4, 12, 5, 6 ], [ 4, 12, 5, 7 ], [ 4, 13, 1, 3 ],
  [ 4, 13, 2, 4 ], [ 4, 13, 3, 4 ], [ 4, 13, 4, 3 ], [ 4, 13, 5, 3 ], [ 4, 13, 6, 3 ],
  [ 4, 13, 7, 6 ], [ 4, 13, 7, 7 ], [ 4, 13, 7, 8 ], [ 4, 13, 8, 3 ], [ 4, 13, 8, 4 ],
  [ 4, 13, 9, 3 ], [ 4, 13, 10, 3 ], [ 4, 24, 1, 5 ], [ 4, 24, 2, 3 ], [ 4, 24, 2, 5 ],
  [ 4, 24, 3, 5 ], [ 4, 24, 4, 3 ], [ 4, 24, 4, 5 ], [ 4, 24, 5, 3 ], [ 4, 24, 5, 5 ],
  [ 4, 25, 1, 2 ], [ 4, 25, 1, 4 ], [ 4, 25, 2, 4 ], [ 4, 25, 3, 2 ], [ 4, 25, 3, 4 ],
  [ 4, 25, 4, 4 ], [ 4, 25, 5, 2 ], [ 4, 25, 5, 4 ], [ 4, 25, 6, 2 ], [ 4, 25, 6, 4 ],
  [ 4, 25, 7, 2 ], [ 4, 25, 7, 4 ], [ 4, 25, 8, 2 ], [ 4, 25, 8, 4 ], [ 4, 25, 9, 4 ],
  [ 4, 25, 10, 2 ], [ 4, 25, 10, 4 ], [ 4, 25, 11, 2 ], [ 4, 25, 11, 4 ] ]
gap> Length(N31); # there exist 64 not retract rational tori in dim=4=3+1 [HY17, Table 3]
64
gap> N31g:=List(N31,x->MatGroupZClass(x[1],x[2],x[3],x[4]));;
gap> N31gF:=List(N31g,x->FlabbyResolutionLowRank(x).actionF);;
gap> N31H1F:=List(N31gF,x->Filtered(H1(x),y->y>1));;
gap> Collected(N31H1F);
[ [ [  ], 59 ], [ [ 2 ], 5 ] ]
gap> N31H1FC2:=Filtered([1..Length(N31H1F)],x->N31H1F[x]=[2]);
[ 1, 4, 11, 38, 48 ]

gap> List(N31H1FC2,x->N31[x]); # GAP ID's of F with H1(F)=C2
[ [ 4, 4, 3, 6 ], [ 4, 5, 1, 9 ], [ 4, 6, 2, 9 ], [ 4, 24, 1, 5 ], [ 4, 25, 2, 4 ] ]
gap> List(N31H1FC2,x->StructureDescription(N31g[x]));
[ "C2 x C2", "C2 x C2", "C2 x C2 x C2", "A4", "C2 x A4" ]
\end{verbatim}
}
\end{example}

\smallskip
\begin{example}[{Classification of $H^1(G,[M_{G}]^{fl})\neq 0$ for $G\leq {\rm GL}_5(\bZ)$}]\label{exN5}
~{}\vspace*{-4mm}\\
{\small 
\begin{verbatim}
gap> Read("FlabbyResolutionFromBase.gap");
gap> Read("caratnumber.gap");
gap> Read("NonInv5.dat");
# N5 is the list of CARAT IDs of indecomposable lattice of rank 5 
# whose flabby class [M_G]^fl is not invertible [HY17, Example 4.12]
gap> N5g:=List(N5,x->CaratMatGroupZClass(x[1],x[2],x[3]));;
gap> Length(N5g); # there exist 1141 not retract rational tori in dim=5 [HY17, Table 15]
1141
gap> N5gF:=List(N5g,x->FlabbyResolutionLowRank(x).actionF);;
gap> N5H1F:=List(N5gF,x->Filtered(H1(x),y->y>1));;
gap> Collected(N5H1F);
[ [ [  ], 994 ], [ [ 2 ], 141 ], [ [ 2, 2 ], 6 ] ]

gap> N5H1FC2xC2:=Filtered([1..Length(N5H1F)],x->N5H1F[x]=[2,2]);
[ 69, 70, 906, 913, 915, 1064 ]
gap> List(N5H1FC2xC2,x->N5[x]); # CARAT ID's of F with H1(F)=C2xC2
[ [ 5, 31, 26 ], [ 5, 31, 27 ], [ 5, 664, 2 ], [ 5, 669, 2 ], [ 5, 670, 2 ], [ 5, 773, 4 ] ]
gap> List(N5H1FC2xC2,x->StructureDescription(N5g[x]));
[ "C2 x C2 x C2", "C2 x C2 x C2", "C2 x Q8", "(C4 x C2) : C2", "(C4 x C2) : C2", "Q8" ]

gap> N5H1FC2:=Filtered([1..Length(N5H1F)],x->N5H1F[x]=[2]);
[ 3, 4, 5, 6, 8, 11, 19, 20, 27, 36, 37, 42, 43, 61, 63, 67, 71, 72, 74, 78, 79, 86, 88, 89,
  96, 99, 100, 103, 115, 116, 128, 129, 130, 131, 142, 143, 158, 159, 160, 173, 174, 178,
  179, 185, 186, 187, 188, 191, 193, 199, 200, 221, 222, 238, 242, 243, 253, 254, 288, 292,
  293, 316, 317, 318, 324, 327, 331, 333, 334, 337, 339, 348, 358, 362, 375, 376, 378, 389,
  401, 403, 404, 406, 407, 410, 414, 419, 423, 425, 440, 470, 480, 495, 511, 523, 540, 573,
  588, 590, 591, 592, 593, 595, 596, 597, 606, 680, 715, 723, 762, 852, 853, 854, 855, 908,
  909, 912, 916, 918, 921, 922, 948, 957, 961, 964, 970, 971, 973, 974, 976, 980, 982, 984,
  1037, 1060, 1065, 1114, 1115, 1116, 1117, 1129, 1130 ]
gap> List(N5H1FC2,x->N5[x]); # CARAT ID's of F with H1(F)=C2
[ [ 5, 18, 23 ], [ 5, 19, 17 ], [ 5, 20, 14 ], [ 5, 20, 17 ], [ 5, 21, 17 ], [ 5, 24, 23 ],
  [ 5, 25, 27 ], [ 5, 25, 28 ], [ 5, 26, 21 ], [ 5, 26, 40 ], [ 5, 26, 41 ], [ 5, 27, 14 ],
  [ 5, 27, 15 ], [ 5, 30, 24 ], [ 5, 30, 28 ], [ 5, 31, 18 ], [ 5, 31, 31 ], [ 5, 31, 32 ],
  [ 5, 31, 36 ], [ 5, 31, 44 ], [ 5, 31, 45 ], [ 5, 32, 36 ], [ 5, 32, 44 ], [ 5, 32, 51 ],
  [ 5, 39, 5 ], [ 5, 71, 19 ], [ 5, 71, 22 ], [ 5, 71, 25 ], [ 5, 72, 34 ], [ 5, 72, 36 ],
  [ 5, 73, 32 ], [ 5, 73, 34 ], [ 5, 73, 36 ], [ 5, 73, 37 ], [ 5, 75, 34 ], [ 5, 75, 36 ],
  [ 5, 76, 49 ], [ 5, 76, 50 ], [ 5, 76, 51 ], [ 5, 78, 12 ], [ 5, 78, 15 ], [ 5, 78, 28 ],
  [ 5, 78, 31 ], [ 5, 79, 12 ], [ 5, 79, 15 ], [ 5, 79, 17 ], [ 5, 79, 18 ], [ 5, 79, 31 ],
  [ 5, 79, 36 ], [ 5, 80, 12 ], [ 5, 80, 15 ], [ 5, 83, 15 ], [ 5, 83, 17 ], [ 5, 86, 9 ],
  [ 5, 87, 9 ], [ 5, 87, 11 ], [ 5, 88, 34 ], [ 5, 88, 36 ], [ 5, 93, 9 ], [ 5, 94, 9 ],
  [ 5, 94, 11 ], [ 5, 99, 23 ], [ 5, 99, 24 ], [ 5, 99, 25 ], [ 5, 100, 12 ], [ 5, 100, 23 ],
  [ 5, 100, 28 ], [ 5, 101, 17 ], [ 5, 101, 18 ], [ 5, 102, 9 ], [ 5, 102, 17 ],
  [ 5, 105, 5 ], [ 5, 109, 5 ], [ 5, 109, 14 ], [ 5, 112, 5 ], [ 5, 112, 7 ], [ 5, 113, 4 ],
  [ 5, 116, 20 ], [ 5, 118, 18 ], [ 5, 119, 4 ], [ 5, 119, 5 ], [ 5, 119, 10 ],
  [ 5, 119, 12 ], [ 5, 120, 5 ], [ 5, 120, 14 ], [ 5, 121, 13 ], [ 5, 122, 9 ],
  [ 5, 122, 15 ], [ 5, 127, 11 ], [ 5, 134, 9 ], [ 5, 136, 18 ], [ 5, 140, 23 ],
  [ 5, 142, 14 ], [ 5, 143, 23 ], [ 5, 148, 5 ], [ 5, 154, 15 ], [ 5, 160, 4 ],
  [ 5, 160, 7 ], [ 5, 161, 5 ], [ 5, 161, 7 ], [ 5, 162, 5 ], [ 5, 224, 9 ], [ 5, 227, 11 ],
  [ 5, 232, 14 ], [ 5, 242, 9 ], [ 5, 526, 11 ], [ 5, 534, 11 ], [ 5, 536, 13 ],
  [ 5, 546, 11 ], [ 5, 580, 2 ], [ 5, 604, 2 ], [ 5, 604, 4 ], [ 5, 605, 2 ], [ 5, 665, 4 ],
  [ 5, 666, 4 ], [ 5, 668, 2 ], [ 5, 670, 3 ], [ 5, 671, 2 ], [ 5, 672, 2 ], [ 5, 673, 2 ],
  [ 5, 704, 3 ], [ 5, 706, 8 ], [ 5, 708, 2 ], [ 5, 709, 3 ], [ 5, 713, 2 ], [ 5, 714, 2 ],
  [ 5, 715, 2 ], [ 5, 716, 2 ], [ 5, 717, 2 ], [ 5, 719, 2 ], [ 5, 720, 2 ], [ 5, 721, 2 ],
  [ 5, 763, 3 ], [ 5, 770, 2 ], [ 5, 774, 4 ], [ 5, 948, 1 ], [ 5, 948, 2 ], [ 5, 948, 3 ],
  [ 5, 948, 4 ], [ 5, 952, 1 ], [ 5, 952, 3 ] ]
gap> List(N5H1FC2,x->StructureDescription(N5g[x]));
[ "C2 x C2", "C2 x C2", "C2 x C2 x C2", "C2 x C2 x C2", "C2 x C2 x C2", "C2 x C2 x C2",
  "C2 x C2 x C2 x C2", "C2 x C2 x C2 x C2", "C2 x C2 x C2 x C2", "C2 x C2 x C2 x C2",
  "C2 x C2 x C2 x C2", "C2 x C2 x C2 x C2", "C2 x C2 x C2 x C2", "C2 x C2 x C2",
  "C2 x C2 x C2", "C2 x C2 x C2", "C2 x C2 x C2", "C2 x C2 x C2", "C2 x C2 x C2",
  "C2 x C2 x C2", "C2 x C2 x C2", "C2 x C2 x C2", "C2 x C2 x C2", "C2 x C2 x C2", "D8",
  "C2 x D8", "C2 x D8", "C2 x D8", "C2 x D8", "C2 x D8", "C2 x D8", "C2 x D8", "C2 x D8",
  "C2 x D8", "C2 x D8", "C2 x D8", "C2 x D8", "C2 x D8", "C2 x D8", "C2 x D8", "C2 x D8",
  "C2 x D8", "C2 x D8", "C2 x D8", "C2 x D8", "C2 x D8", "C2 x D8", "C2 x D8", "C2 x D8",
  "C2 x D8", "C2 x D8", "C4 x C2 x C2", "C4 x C2 x C2", "C4 x C2 x C2", "C4 x C2 x C2",
  "C4 x C2 x C2", "C2 x C2 x D8", "C2 x C2 x D8", "C2 x C2 x D8", "C2 x C2 x D8",
  "C2 x C2 x D8", "D8", "D8", "D8", "D8", "D8", "D8", "C4 x C2", "C4 x C2", "C4 x C2",
  "C4 x C2", "C4 : C4", "(C4 x C2) : C2", "(C4 x C2) : C2", "C4 x C2 x C2", "C4 x C2 x C2",
  "C4 x C4", "(C4 x C2) : C2", "(C4 x C2) : C2", "C2 x D8", "C2 x D8", "C2 x D8", "C2 x D8",
  "C2 x D8", "C2 x D8", "C2 x D8", "C2 x D8", "C2 x D8", "C4 x D8", "(C4 x C4) : C2",
  "(C2 x C2 x C2 x C2) : C2", "(C4 x C2 x C2) : C2", "(C2 x C2 x C2 x C2) : C2",
  "(C4 x C2 x C2) : C2", "C2 x C2 x D8", "D8 x D8", "C4 x C2", "C4 x C2", "C4 x C2",
  "C4 x C2", "C4 x C2", "C6 x C2", "D12", "D12", "C2 x C2 x S3", "C2 x C2 x A4", "C2 x S4",
  "C2 x S4", "C2 x C2 x S4", "A4", "C2 x A4", "C2 x A4", "C2 x A4", "C8 : C2", "C8 : C2",
  "(C4 x C2) : C2", "(C4 x C2) : C2", "(C4 x C2) : C2", "QD16", "QD16", "(C2 x C2 x C2) : C4",
  "(C2 x C2 x C2) : C4", "(C2 x C2 x C2) : (C2 x C2)", "(C2 x C2 x C2) : (C2 x C2)",
  "C8 : (C2 x C2)", "C8 : (C2 x C2)", "C8 : (C2 x C2)", "C8 : (C2 x C2)", "C8 : (C2 x C2)",
  "C2 x ((C4 x C2) : C2)", "C2 x (C8 : C2)", "C2 x QD16", "((C2 x C2 x C2) : (C2 x C2)) : C2",
  "C2 x (C8 : (C2 x C2))", "Q8", "C2 x A5", "C2 x A5", "C2 x A5", "C2 x A5", "A5", "A5" ]

# N41 is the list of CARAT IDs of decomposable lattice of rank 5=4+1 
# whose flabby class [M_G]^fl is not invertible [HY17, Example 4.12]
gap> N41g:=List(N41,x->CaratMatGroupZClass(x[1],x[2],x[3]));;
gap> Length(N41g); # there exist 768 not retract rational tori in dim=5=4+1 [HY17, Table 14]
768
gap> N41gF:=List(N41g,x->FlabbyResolutionLowRank(x).actionF);;
gap> N41H1F:=List(N41gF,x->Filtered(H1(x),y->y>1));;
gap> Collected(N41H1F);
[ [ [  ], 690 ], [ [ 2 ], 73 ], [ [ 2, 2 ], 5 ] ]

gap> N41H1FC2xC2:=Filtered([1..Length(N41H1F)],x->N41H1F[x]=[2,2]);
[ 589, 590, 591, 720, 721 ]
gap> List(N41H1FC2xC2,x->N41[x]);
[ [ 5, 664, 1 ], [ 5, 773, 3 ], [ 5, 774, 3 ], [ 5, 691, 1 ], [ 5, 730, 1 ] ]
gap> List(N41H1FC2xC2,x->StructureDescription(N41g[x]));
[ "C2 x Q8", "Q8", "Q8", "SL(2,3)", "C2 x SL(2,3)" ]

gap> N41H1FC2:=Filtered([1..Length(N41H1F)],x->N41H1F[x]=[2]);
[ 1, 2, 3, 4, 9, 10, 11, 12, 13, 14, 36, 37, 38, 39, 40, 41, 42, 74, 75, 76, 77, 93, 94, 95,
  96, 97, 112, 113, 114, 115, 116, 117, 118, 254, 255, 256, 257, 281, 282, 283, 284, 285,
  592, 593, 594, 595, 596, 597, 598, 599, 600, 601, 602, 603, 604, 605, 610, 611, 612, 613,
  614, 615, 616, 727, 728, 729, 730, 731, 732, 738, 739, 740, 741 ]
gap> List(N41H1FC2,x->N41[x]); # CARAT ID's of F with H1(F)=C2
[ [ 5, 18, 18 ], [ 5, 18, 21 ], [ 5, 19, 10 ], [ 5, 32, 23 ], [ 5, 20, 10 ], [ 5, 20, 13 ],
  [ 5, 25, 14 ], [ 5, 30, 14 ], [ 5, 31, 16 ], [ 5, 31, 29 ], [ 5, 21, 10 ], [ 5, 24, 18 ],
  [ 5, 24, 21 ], [ 5, 26, 19 ], [ 5, 31, 22 ], [ 5, 31, 25 ], [ 5, 32, 30 ], [ 5, 66, 5 ],
  [ 5, 83, 7 ], [ 5, 101, 4 ], [ 5, 102, 4 ], [ 5, 63, 12 ], [ 5, 65, 12 ], [ 5, 76, 31 ],
  [ 5, 99, 5 ], [ 5, 100, 5 ], [ 5, 48, 12 ], [ 5, 71, 8 ], [ 5, 72, 26 ], [ 5, 75, 26 ],
  [ 5, 78, 26 ], [ 5, 79, 26 ], [ 5, 88, 26 ], [ 5, 112, 3 ], [ 5, 160, 3 ], [ 5, 161, 3 ],
  [ 5, 162, 3 ], [ 5, 119, 3 ], [ 5, 120, 11 ], [ 5, 121, 11 ], [ 5, 122, 14 ],
  [ 5, 148, 3 ], [ 5, 665, 3 ], [ 5, 666, 3 ], [ 5, 667, 3 ], [ 5, 720, 1 ], [ 5, 672, 1 ],
  [ 5, 673, 1 ], [ 5, 674, 1 ], [ 5, 675, 1 ], [ 5, 721, 1 ], [ 5, 668, 1 ], [ 5, 669, 1 ],
  [ 5, 670, 1 ], [ 5, 671, 1 ], [ 5, 719, 1 ], [ 5, 713, 1 ], [ 5, 714, 1 ], [ 5, 715, 1 ],
  [ 5, 716, 1 ], [ 5, 717, 1 ], [ 5, 718, 1 ], [ 5, 770, 1 ], [ 5, 731, 1 ], [ 5, 732, 1 ],
  [ 5, 775, 1 ], [ 5, 733, 1 ], [ 5, 734, 1 ], [ 5, 776, 1 ], [ 5, 682, 1 ], [ 5, 780, 1 ],
  [ 5, 781, 1 ], [ 5, 783, 1 ] ]
gap> List(N41H1FC2,x->StructureDescription(N41g[x]));
[ "C2 x C2", "C2 x C2", "C2 x C2", "C2 x C2 x C2", "C2 x C2 x C2", "C2 x C2 x C2",
  "C2 x C2 x C2 x C2", "C2 x C2 x C2", "C2 x C2 x C2", "C2 x C2 x C2", "C2 x C2 x C2",
  "C2 x C2 x C2", "C2 x C2 x C2", "C2 x C2 x C2 x C2", "C2 x C2 x C2", "C2 x C2 x C2",
  "C2 x C2 x C2", "C4 x C2", "C4 x C2 x C2", "C4 x C2", "C4 x C2", "D8", "D8", "C2 x D8",
  "D8", "D8", "C2 x D8", "C2 x D8", "C2 x D8", "C2 x D8", "C2 x D8", "C2 x D8",
  "C2 x C2 x D8", "C4 x C2 x C2", "C4 x C2", "C4 x C2", "C4 x C2", "C2 x D8", "C2 x D8",
  "C2 x D8", "C2 x D8", "C2 x C2 x D8", "C8 : C2", "C8 : C2", "C8 : C2", "C2 x (C8 : C2)",
  "QD16", "QD16", "QD16", "QD16", "C2 x QD16", "(C4 x C2) : C2", "(C4 x C2) : C2",
  "(C4 x C2) : C2", "(C4 x C2) : C2", "C2 x ((C4 x C2) : C2)", "C8 : (C2 x C2)",
  "C8 : (C2 x C2)", "C8 : (C2 x C2)", "C8 : (C2 x C2)", "C8 : (C2 x C2)", "C8 : (C2 x C2)",
  "C2 x (C8 : (C2 x C2))", "((C4 x C2) : C2) : C3", "((C4 x C2) : C2) : C3",
  "C2 x (((C4 x C2) : C2) : C3)", "GL(2,3)", "GL(2,3)", "C2 x GL(2,3)", "C2 x (GL(2,3) : C2)",
  "(((C4 x C2) : C2) : C3) : C2", "(((C4 x C2) : C2) : C3) : C2",
  "(((C4 x C2) : C2) : C3) : C2" ]

# N32 is the list of CARAT IDs of decomposable lattice of rank 5=3+2 
# whose flabby class [M_G]^fl is not invertible [HY17, Example 4.12]
gap> N32g:=List(N32,x->CaratMatGroupZClass(x[1],x[2],x[3]));;
gap> Length(N32g); # there exist 849 not retract rational tori in dim=5=3+2 [HY17, Table 13]
849
gap> N32gF:=List(N32g,x->FlabbyResolutionLowRank(x).actionF);;
gap> N32H1F:=List(N32gF,x->Filtered(H1(x),y->y>1));;
gap> Collected(N32H1F);
[ [ [  ], 813 ], [ [ 2 ], 36 ] ]]

gap> N32H1FC2:=Filtered([1..Length(N32H1F)],x->N32H1F[x]=[2]);
[ 1, 2, 3, 4, 5, 6, 7, 8, 9, 10, 11, 12, 13, 14, 15, 16, 17, 18, 19, 20, 21, 
  22, 23, 24, 25, 578, 579, 580, 581, 582, 583, 584, 585, 586, 587, 588 ]
gap> List(N32H1FC2,x->N32[x]); # CARAT ID's of F with H1(F)=C2
[ [ 5, 14, 8 ], [ 5, 18, 19 ], [ 5, 24, 19 ], [ 5, 26, 20 ], [ 5, 31, 17 ], 
  [ 5, 32, 17 ], [ 5, 78, 8 ], [ 5, 78, 27 ], [ 5, 80, 8 ], [ 5, 86, 5 ], 
  [ 5, 93, 5 ], [ 5, 100, 11 ], [ 5, 102, 8 ], [ 5, 224, 4 ], [ 5, 227, 5 ], 
  [ 5, 228, 3 ], [ 5, 232, 4 ], [ 5, 232, 9 ], [ 5, 237, 3 ], [ 5, 242, 4 ], 
  [ 5, 242, 14 ], [ 5, 247, 3 ], [ 5, 247, 7 ], [ 5, 253, 4 ], 
  [ 5, 259, 3 ], [ 5, 520, 17 ], [ 5, 525, 2 ], [ 5, 560, 3 ], 
  [ 5, 566, 3 ], [ 5, 580, 1 ], [ 5, 590, 1 ], [ 5, 605, 1 ], [ 5, 620, 1 ], 
  [ 5, 629, 1 ], [ 5, 634, 1 ], [ 5, 634, 3 ] ]
gap> List(N32H1FC2,x->StructureDescription(N32g[x]));
[ "C2 x C2", "C2 x C2", "C2 x C2 x C2", "C2 x C2 x C2 x C2", "C2 x C2 x C2", 
  "C2 x C2 x C2", "C2 x D8", "C2 x D8", "C2 x D8", "C4 x C2 x C2", 
  "C2 x C2 x D8", "D8", "C4 x C2", "C6 x C2", "D12", "C6 x C2", "D12", 
  "D12", "C6 x C2 x C2", "C2 x C2 x S3", "C2 x C2 x S3", "C2 x C2 x S3", 
  "C2 x C2 x S3", "C2 x C2 x S3", "C2 x C2 x C2 x S3", "C2 x A4", 
  "C2 x C2 x A4", "C4 x A4", "A4 x D8", "A4", "C2 x A4 x S3", "C2 x A4", 
  "C3 x A4", "C6 x A4", "A4 x S3", "A4 x S3" ]

# N311 is the list of CARAT IDs of decomposable lattice of rank 5=3+1+1
# whose flabby class [M_G]^fl is not invertible [HY17, Example 4.12]
gap> N311g:=List(N311,x->CaratMatGroupZClass(x[1],x[2],x[3]));;
gap> Length(N311g); # there exist 245 not retract rational tori in dim=5=3+1+1 [HY17, Table 12]
245
gap> N311gF:=List(N311g,x->FlabbyResolutionLowRank(x).actionF);;
gap> N311H1F:=List(N311gF,x->Filtered(H1(x),y->y>1));;
gap> Collected(N311H1F);
[ [ [  ], 232 ], [ [ 2 ], 13 ] ]

gap> N311H1FC2:=Filtered([1..Length(N311H1F)],x->N311H1F[x]=[2]);
[ 1, 2, 3, 4, 5, 6, 7, 8, 9, 164, 165, 166, 167 ]
gap> List(N311H1FC2,x->N311[x]); # CARAT ID's of F with H1(F)=C2
[ [ 5, 11, 4 ], [ 5, 14, 4 ], [ 5, 18, 7 ], [ 5, 19, 5 ], [ 5, 21, 5 ], 
  [ 5, 24, 7 ], [ 5, 26, 3 ], [ 5, 31, 4 ], [ 5, 32, 10 ], [ 5, 502, 6 ], 
  [ 5, 505, 1 ], [ 5, 520, 16 ], [ 5, 525, 1 ] ]
gap> List(N311H1FC2,x->StructureDescription(N311g[x]));
[ "C2 x C2", "C2 x C2", "C2 x C2", "C2 x C2", "C2 x C2 x C2", 
  "C2 x C2 x C2", "C2 x C2 x C2 x C2", "C2 x C2 x C2", "C2 x C2 x C2", 
  "A4", "C2 x A4", "C2 x A4", "C2 x C2 x A4" ]
\end{verbatim}
}
\end{example}

\section{{Proof of Theorem \ref{thmain2}} and {Theorem \ref{thmain2M}}}\label{S5}

Let $T=R^{(1)}_{K/k}(\bG_m)$ be a norm one torus of $K/k$. 
We have the character module $\widehat{T}=J_{G/H}$ of $T$ 
and then 
$H^1(k,{\rm Pic}\,\overline{X})\simeq H^1(G,[J_{G/H}]^{fl})$ 
(see Section \ref{S2}). 
We may assume that 
$H$ is the stabilizer of one of the letters in $G$, 
i.e. $L=k(\theta_1,\ldots,\theta_n)$ and $K=k(\theta_i)$ for some 
$1\leq i\leq n$. 
In order to compute $H^1(G,[J_{G/H}]^{fl})$, 
we apply the functions 
{\tt Norm1TorusJ($n,m$)} and 
{\tt FlabbyResolutionLowRankFromGroup($G$,$nTm$).actionF} 
in \cite[Algorithm 4.1]{HHY20}. 
{\tt Norm1TorusJ($n,m$)} returns 
$J_{G/H}$ for $G=nTm\leq S_n$ 
and $H$ is the stabilizer of one of the letters in $G$ 
and {\tt FlabbyResolutionLowRankFromGroup($G$,$nTm$).actionF} 
returns a suitable
flabby class $F=[J_{G/H}]^{fl}$  
with low rank by using the backtracking techniques 
for $G=nTm\leq S_n$.\\

{\it Proof of Theorem \ref{thmain2}}. 

For $G=nTm$ $(2\leq n\leq 11)$, 
the computation is described in Example \ref{exH1F}. 

For $G=nTm$ $(12\leq n\leq 15)$, 
it needs much time and computer resources (memory) in computations. 
At present, we do not know the complete solutions for $n=12$. 

For $13\leq n\leq 15$, 
we wish to compute $H^1(G,[J_{G/H}]^{fl})$ for each of the cases 
$G=13Tm$ $(1\leq m\leq 9)$, 
$G=14Tm$ $(1\leq m\leq 63)$, 
$G=15Tm$ $(1\leq m\leq 104)$. 
This is achievable 
except for the each last two groups 
$G=13T8\simeq A_{13}$, $13T9\simeq S_{13}$, 
$14T62\simeq A_{14}$, $14T63\simeq S_{14}$, 
$15T103\simeq A_{15}$, $15T104\simeq S_{15}$ 
because of the computer resources reason (see Example \ref{exH1F13to15}). 
However, for the exceptional cases, 
we already know that $H^1(G,[J_{G/H}]^{fl})=0$
by Theorem \ref{thKV84} and Theorem \ref{thMac}.

The last assertion follows from Theorem \ref{thV}.\qed\\

{\it Proof of Theorem \ref{thmain2M}}. 

For $G=11T6\simeq M_{11}\leq S_{11}$ 
and $G=23T5\simeq M_{23}\leq S_{23}$, 
it follows from Theorem \ref{thS} that 
$H^1(G,[J_{G/H}]^{fl})=0$ (see also the paragraph after Theorem \ref{thEM73}). 
For $G=24T24680\simeq M_{24}\leq S_{24}$, 
we know that the Schur multiplier of $G$ vanishes: 
$M(G)\simeq H^3(G,\bZ)=0$ (see Mazet \cite{Maz82}). 
We know that the Mathieu groups are simple groups 
and a subgroup $H\leq M_{24}$ with $[M_{24}:H]=24$ 
is isomorphic to $M_{23}$ which is the stabilizer of one of the letters 
in $M_{24}$ (see e.g. \cite[Exercises 6.8.8]{DM96}). 
Hence it follows from 
$0=H^{ab}\simeq H^2(H,\bZ)\simeq H^2(G,\bZ[G/H])\to 
H^2(G,J_{G/H})\xrightarrow{\delta} H^3(G,\bZ)=0$ 
that $H^2(G,J_{G/H})=0$. 
Thus we have 
$H^1(G,[J_{G/H}]^{fl})\simeq\Sha^2_\omega(G,J_{G/H})=0$. 

For $G=12T295\simeq M_{12}\leq S_{12}$ and $G=22T38\simeq M_{22}\leq S_{22}$,  
the computation is described in Example \ref{exH1FM12M22}.
\qed\\

Some related functions for Example \ref{exH1F}, 
Example \ref{exH1F13to15} and Example \ref{exH1FM12M22} 
are available from\\ 
{\tt https://www.math.kyoto-u.ac.jp/\~{}yamasaki/Algorithm/RatProbNorm1Tori/}.

\smallskip
\begin{example}[{Computation of $H^1(G,[J_{G/H}]^{fl})$ where $G=nTm$ $(n\leq 11)$}]\label{exH1F}
~{}\vspace*{-4mm}\\
{\small 
\begin{verbatim}
gap> Read("FlabbyResolutionFromBase.gap");
gap> for n in [2..11] do for m in [1..NrTransitiveGroups(n)] do
> F:=FlabbyResolutionLowRankFromGroup(Norm1TorusJ(n,m),TransitiveGroup(n,m)).actionF;
> Print([[n,m],Length(F.1),Filtered(H1(F),x->x>1)],"\n");od;Print("\n");od;
[ [ 2, 1 ], 1, [  ] ]

[ [ 3, 1 ], 1, [  ] ]
[ [ 3, 2 ], 4, [  ] ]

[ [ 4, 1 ], 1, [  ] ]
[ [ 4, 2 ], 5, [ 2 ] ]
[ [ 4, 3 ], 7, [  ] ]
[ [ 4, 4 ], 9, [ 2 ] ]
[ [ 4, 5 ], 15, [  ] ]

[ [ 5, 1 ], 1, [  ] ]
[ [ 5, 2 ], 6, [  ] ]
[ [ 5, 3 ], 16, [  ] ]
[ [ 5, 4 ], 16, [  ] ]
[ [ 5, 5 ], 16, [  ] ]

[ [ 6, 1 ], 1, [  ] ]
[ [ 6, 2 ], 7, [  ] ]
[ [ 6, 3 ], 9, [  ] ]
[ [ 6, 4 ], 10, [ 2 ] ]
[ [ 6, 5 ], 21, [  ] ]
[ [ 6, 6 ], 10, [  ] ]
[ [ 6, 7 ], 19, [  ] ]
[ [ 6, 8 ], 19, [  ] ]
[ [ 6, 9 ], 27, [  ] ]
[ [ 6, 10 ], 27, [  ] ]
[ [ 6, 11 ], 19, [  ] ]
[ [ 6, 12 ], 10, [ 2 ] ]
[ [ 6, 13 ], 27, [  ] ]
[ [ 6, 14 ], 31, [  ] ]
[ [ 6, 15 ], 60, [  ] ]
[ [ 6, 16 ], 60, [  ] ]

[ [ 7, 1 ], 1, [  ] ]
[ [ 7, 2 ], 8, [  ] ]
[ [ 7, 3 ], 15, [  ] ]
[ [ 7, 4 ], 36, [  ] ]
[ [ 7, 5 ], 15, [  ] ]
[ [ 7, 6 ], 36, [  ] ]
[ [ 7, 7 ], 36, [  ] ]

[ [ 8, 1 ], 1, [  ] ]
[ [ 8, 2 ], 9, [ 2 ] ]
[ [ 8, 3 ], 17, [ 2, 2, 2 ] ]
[ [ 8, 4 ], 9, [ 2 ] ]
[ [ 8, 5 ], 9, [  ] ]
[ [ 8, 6 ], 11, [  ] ]
[ [ 8, 7 ], 21, [  ] ]
[ [ 8, 8 ], 11, [  ] ]
[ [ 8, 9 ], 21, [ 2 ] ]
[ [ 8, 10 ], 21, [  ] ]
[ [ 8, 11 ], 21, [ 2 ] ]
[ [ 8, 12 ], 25, [  ] ]
[ [ 8, 13 ], 19, [ 2 ] ]
[ [ 8, 14 ], 13, [ 2 ] ]
[ [ 8, 15 ], 43, [ 2 ] ]
[ [ 8, 16 ], 29, [  ] ]
[ [ 8, 17 ], 43, [  ] ]
[ [ 8, 18 ], 91, [  ] ]
[ [ 8, 19 ], 51, [ 2 ] ]
[ [ 8, 20 ], 29, [  ] ]
[ [ 8, 21 ], 49, [ 2 ] ]
[ [ 8, 22 ], 49, [ 2 ] ]
[ [ 8, 23 ], 31, [  ] ]
[ [ 8, 24 ], 31, [  ] ]
[ [ 8, 25 ], 49, [  ] ]
[ [ 8, 26 ], 67, [  ] ]
[ [ 8, 27 ], 29, [  ] ]
[ [ 8, 28 ], 83, [  ] ]
[ [ 8, 29 ], 99, [  ] ]
[ [ 8, 30 ], 67, [  ] ]
[ [ 8, 31 ], 49, [ 2 ] ]
[ [ 8, 32 ], 61, [ 2 ] ]
[ [ 8, 33 ], 99, [  ] ]
[ [ 8, 34 ], 123,[  ] ]
[ [ 8, 35 ], 99, [  ] ]
[ [ 8, 36 ], 49, [  ] ]
[ [ 8, 37 ], 49, [ 2 ] ]
[ [ 8, 38 ], 61, [ 2 ] ]
[ [ 8, 39 ], 211, [  ] ]
[ [ 8, 40 ], 115, [  ] ]
[ [ 8, 41 ], 123, [  ] ]
[ [ 8, 42 ], 123, [  ] ]
[ [ 8, 43 ], 91, [  ] ]
[ [ 8, 44 ], 211, [  ] ]
[ [ 8, 45 ], 123, [  ] ]
[ [ 8, 46 ], 123, [  ] ]
[ [ 8, 47 ], 123, [  ] ]
[ [ 8, 48 ], 483, [  ] ]
[ [ 8, 49 ], 539, [  ] ]
[ [ 8, 50 ], 539, [  ] ]

[ [ 9, 1 ], 1, [  ] ]
[ [ 9, 2 ], 10, [ 3 ] ]
[ [ 9, 3 ], 10, [  ] ]
[ [ 9, 4 ], 13, [  ] ]
[ [ 9, 5 ], 28, [ 3 ] ]
[ [ 9, 6 ], 31, [  ] ]
[ [ 9, 7 ], 31, [ 3 ] ]
[ [ 9, 8 ], 28, [  ] ]
[ [ 9, 9 ], 28, [ 3 ] ]
[ [ 9, 10 ], 70, [  ] ]
[ [ 9, 11 ], 70, [ 3 ] ]
[ [ 9, 12 ], 61, [  ] ]
[ [ 9, 13 ], 40, [  ] ]
[ [ 9, 14 ], 64, [ 3 ] ]
[ [ 9, 15 ], 64, [  ] ]
[ [ 9, 16 ], 34, [  ] ]
[ [ 9, 17 ], 31, [  ] ]
[ [ 9, 18 ], 70, [  ] ]
[ [ 9, 19 ], 64, [  ] ]
[ [ 9, 20 ], 61, [  ] ]
[ [ 9, 21 ], 70, [  ] ]
[ [ 9, 22 ], 40, [  ] ]
[ [ 9, 23 ], 88, [ 3 ] ]
[ [ 9, 24 ], 70, [  ] ]
[ [ 9, 25 ], 40, [  ] ]
[ [ 9, 26 ], 88, [  ] ]
[ [ 9, 27 ], 64, [  ] ]
[ [ 9, 28 ], 40, [  ] ]
[ [ 9, 29 ], 70, [  ] ]
[ [ 9, 30 ], 70, [  ] ]
[ [ 9, 31 ], 70, [  ] ]
[ [ 9, 32 ], 232, [  ] ]
[ [ 9, 33 ], 1744, [  ] ]
[ [ 9, 34 ], 1744, [  ] ]

[ [ 10, 1 ], 1, [  ] ]
[ [ 10, 2 ], 11, [  ] ]
[ [ 10, 3 ], 13, [  ] ]
[ [ 10, 4 ], 13, [  ] ]
[ [ 10, 5 ], 31, [  ] ]
[ [ 10, 6 ], 53, [  ] ]
[ [ 10, 7 ], 26, [ 2 ] ]
[ [ 10, 8 ], 36, [  ] ]
[ [ 10, 9 ], 63, [  ] ]
[ [ 10, 10 ], 63, [  ] ]
[ [ 10, 11 ], 31, [  ] ]
[ [ 10, 12 ], 31, [  ] ]
[ [ 10, 13 ], 36, [  ] ]
[ [ 10, 14 ], 36, [  ] ]
[ [ 10, 15 ], 51, [  ] ]
[ [ 10, 16 ], 51, [  ] ]
[ [ 10, 17 ], 83, [  ] ]
[ [ 10, 18 ], 83, [  ] ]
[ [ 10, 19 ], 83, [  ] ]
[ [ 10, 20 ], 83, [  ] ]
[ [ 10, 21 ], 63, [  ] ]
[ [ 10, 22 ], 31, [  ] ]
[ [ 10, 23 ], 51, [  ] ]
[ [ 10, 24 ], 61, [  ] ]
[ [ 10, 25 ], 61, [  ] ]
[ [ 10, 26 ], 46, [ 2 ] ]
[ [ 10, 27 ], 83, [  ] ]
[ [ 10, 28 ], 83, [  ] ]
[ [ 10, 29 ], 61, [  ] ]
[ [ 10, 30 ], 91, [  ] ]
[ [ 10, 31 ], 67, [  ] ]
[ [ 10, 32 ], 46, [ 2 ] ]
[ [ 10, 33 ], 83, [  ] ]
[ [ 10, 34 ], 61, [  ] ]
[ [ 10, 35 ], 91, [  ] ]
[ [ 10, 36 ], 61, [  ] ]
[ [ 10, 37 ], 61, [  ] ]
[ [ 10, 38 ], 61, [  ] ]
[ [ 10, 39 ], 61, [  ] ]
[ [ 10, 40 ], 83, [  ] ]
[ [ 10, 41 ], 83, [  ] ]
[ [ 10, 42 ], 83, [  ] ]
[ [ 10, 43 ], 83, [  ] ]
[ [ 10, 44 ], 378, [  ] ]
[ [ 10, 45 ], 378, [  ] ]

[ [ 11, 1 ], 1, [  ] ]
[ [ 11, 2 ], 12, [  ] ]
[ [ 11, 3 ], 45, [  ] ]
[ [ 11, 4 ], 100, [  ] ]
[ [ 11, 5 ], 56, [  ] ]
[ [ 11, 6 ], 100, [  ] ]
[ [ 11, 7 ], 100, [  ] ]
[ [ 11, 8 ], 100, [  ] ]
\end{verbatim}
}
\end{example}

\smallskip
\begin{example}[{Computation of $H^1(G,[J_{G/H}]^{fl})$ where $G=13Tm$ $(1\leq m\leq 9, m\neq 8,9)$, $G=14Tm$ $(1\leq m\leq 63, m\neq 62,63)$, $G=15Tm$ $(1\leq m\leq 104, m\neq 103, 104)$}]\label{exH1F13to15}
~{}\vspace*{-4mm}\\
{\small  
\begin{verbatim}
gap> Read("FlabbyResolutionFromBase.gap");
gap> for m in [1..NrTransitiveGroups(13)-2] do 
> F:=FlabbyResolutionLowRankFromGroup(Norm1TorusJ(13,m),TransitiveGroup(13,m)).actionF; 
> Print([[n,i],Length(F.1),Filtered(H1(F),x->x>1)],"\n");od;Print("\n");od;
[ [ 13, 1 ], 1, [  ] ]
[ [ 13, 2 ], 14, [  ] ]
[ [ 13, 3 ], 27, [  ] ]
[ [ 13, 4 ], 40, [  ] ]
[ [ 13, 5 ], 66, [  ] ]
[ [ 13, 6 ], 144, [  ] ]
[ [ 13, 7 ], 40, [  ] ]

gap> for m in [1..NrTransitiveGroups(14)-2] do 
> F:=FlabbyResolutionLowRankFromGroup(Norm1TorusJ(14,m),TransitiveGroup(14,m)).actionF; 
> Print([[14,m],Length(F.1),Filtered(H1(F),x->x>1)],"\n");od;
[ [ 14, 1 ], 1, [  ] ]
[ [ 14, 2 ], 15, [  ] ]
[ [ 14, 3 ], 17, [  ] ]
[ [ 14, 4 ], 31, [  ] ]
[ [ 14, 5 ], 31, [  ] ]
[ [ 14, 6 ], 50, [  ] ]
[ [ 14, 7 ], 57, [  ] ]
[ [ 14, 8 ], 101, [  ] ]
[ [ 14, 9 ], 78, [  ] ]
[ [ 14, 10 ], 64, [  ] ]
[ [ 14, 11 ], 86, [  ] ]
[ [ 14, 12 ], 115, [  ] ]
[ [ 14, 13 ], 115, [  ] ]
[ [ 14, 14 ], 129, [  ] ]
[ [ 14, 15 ], 129, [  ] ]
[ [ 14, 16 ], 31, [  ] ]
[ [ 14, 17 ], 92, [  ] ]
[ [ 14, 18 ], 92, [  ] ]
[ [ 14, 19 ], 31, [  ] ]
[ [ 14, 20 ], 115, [  ] ]
[ [ 14, 21 ], 78, [  ] ]
[ [ 14, 22 ], 171, [  ] ]
[ [ 14, 23 ], 171, [  ] ]
[ [ 14, 24 ], 171, [  ] ]
[ [ 14, 25 ], 171, [  ] ]
[ [ 14, 26 ], 129, [  ] ]
[ [ 14, 27 ], 99, [  ] ]
[ [ 14, 28 ], 99, [  ] ]
[ [ 14, 29 ], 78, [  ] ]
[ [ 14, 30 ], 92, [ 2 ] ]
[ [ 14, 31 ], 171, [  ] ]
[ [ 14, 32 ], 171, [  ] ]
[ [ 14, 33 ], 92, [  ] ]
[ [ 14, 34 ], 92, [  ] ]
[ [ 14, 35 ], 92, [  ] ]
[ [ 14, 36 ], 171, [  ] ]
[ [ 14, 37 ], 171, [  ] ]
[ [ 14, 38 ], 99, [  ] ]
[ [ 14, 39 ], 183, [  ] ]
[ [ 14, 40 ], 127, [  ] ]
[ [ 14, 41 ], 127,[  ] ]
[ [ 14, 42 ], 92, [  ] ]
[ [ 14, 43 ], 92, [  ] ]
[ [ 14, 44 ], 99, [  ] ]
[ [ 14, 45 ], 171, [  ] ]
[ [ 14, 46 ], 57, [  ] ]
[ [ 14, 47 ], 57, [  ] ]
[ [ 14, 48 ], 127, [  ] ]
[ [ 14, 49 ], 57, [  ] ]
[ [ 14, 50 ], 92, [  ] ]
[ [ 14, 51 ], 92, [  ] ]
[ [ 14, 52 ], 129, [  ] ]
[ [ 14, 53 ], 127, [  ] ]
[ [ 14, 54 ], 127, [  ] ]
[ [ 14, 55 ], 127, [  ] ]
[ [ 14, 56 ], 127, [  ] ]
[ [ 14, 57 ], 127, [  ] ]
[ [ 14, 58 ], 171, [  ] ]
[ [ 14, 59 ], 171, [  ] ]
[ [ 14, 60 ], 171, [  ] ]
[ [ 14, 61 ], 171, [  ] ]

gap> for m in [1..NrTransitiveGroups(15)-2] do 
> F:=FlabbyResolutionLowRankFromGroup(Norm1TorusJ(15,m),TransitiveGroup(15,m)).actionF; 
> Print([[15,m],Length(F.1),Filtered(H1(F),x->x>1)],"\n");od;
[ [ 15, 1 ], 1, [  ] ]
[ [ 15, 2 ], 16, [  ] ]
[ [ 15, 3 ], 14, [  ] ]
[ [ 15, 4 ], 21, [  ] ]
[ [ 15, 5 ], 21, [  ] ]
[ [ 15, 6 ], 39, [  ] ]
[ [ 15, 7 ], 17, [  ] ]
[ [ 15, 8 ], 36, [  ] ]
[ [ 15, 9 ], 79, [ 5 ] ]
[ [ 15, 10 ], 36, [  ] ]
[ [ 15, 11 ], 27, [  ] ]
[ [ 15, 12 ], 94, [  ] ]
[ [ 15, 13 ], 82, [  ] ]
[ [ 15, 14 ], 97, [ 5 ] ]
[ [ 15, 15 ], 51, [  ] ]
[ [ 15, 16 ], 36, [  ] ]
[ [ 15, 17 ], 127, [  ] ]
[ [ 15, 18 ], 97, [  ] ]
[ [ 15, 19 ], 124, [  ] ]
[ [ 15, 20 ], 81, [  ] ]
[ [ 15, 21 ], 66, [  ] ]
[ [ 15, 22 ], 39, [  ] ]
[ [ 15, 23 ], 27, [  ] ]
[ [ 15, 24 ], 36, [  ] ]
[ [ 15, 25 ], 79, [  ] ]
[ [ 15, 26 ], 96, [  ] ]
[ [ 15, 27 ], 127, [  ] ]
[ [ 15, 28 ], 81, [  ] ]
[ [ 15, 29 ], 27, [  ] ]
[ [ 15, 30 ], 94, [  ] ]
[ [ 15, 31 ], 169, [  ] ]
[ [ 15, 32 ], 154, [  ] ]
[ [ 15, 33 ], 111, [  ] ]
[ [ 15, 34 ], 186, [  ] ]
[ [ 15, 35 ], 201, [  ] ]
[ [ 15, 36 ], 96, [  ] ]
[ [ 15, 37 ], 199, [  ] ]
[ [ 15, 38 ], 124, [  ] ]
[ [ 15, 39 ], 94, [  ] ]
[ [ 15, 40 ], 169, [  ] ]
[ [ 15, 41 ], 186, [  ] ]
[ [ 15, 42 ], 201, [  ] ]
[ [ 15, 43 ], 201, [  ] ]
[ [ 15, 44 ], 111, [  ] ]
[ [ 15, 45 ], 201, [  ] ]
[ [ 15, 46 ], 186, [  ] ]
[ [ 15, 47 ], 156, [  ] ]
[ [ 15, 48 ], 169, [  ] ]
[ [ 15, 49 ], 199, [  ] ]
[ [ 15, 50 ], 94, [  ] ]
[ [ 15, 51 ], 169, [  ] ]
[ [ 15, 52 ], 201, [  ] ]
[ [ 15, 53 ], 456, [  ] ]
[ [ 15, 54 ], 201, [  ] ]
[ [ 15, 55 ], 201, [  ] ]
[ [ 15, 56 ], 186, [  ] ]
[ [ 15, 57 ], 124, [  ] ]
[ [ 15, 58 ], 199, [  ] ]
[ [ 15, 59 ], 124, [  ] ]
[ [ 15, 60 ], 169, [  ] ]
[ [ 15, 61 ], 471, [  ] ]
[ [ 15, 62 ], 471, [  ] ]
[ [ 15, 63 ], 456, [  ] ]
[ [ 15, 64 ], 201, [  ] ]
[ [ 15, 65 ], 199, [  ] ]
[ [ 15, 66 ], 199, [  ] ]
[ [ 15, 67 ], 124, [  ] ]
[ [ 15, 68 ], 199, [  ] ]
[ [ 15, 69 ], 456, [  ] ]
[ [ 15, 70 ], 471, [  ] ]
[ [ 15, 71 ], 111, [  ] ]
[ [ 15, 72 ], 156, [  ] ]
[ [ 15, 73 ], 199, [  ] ]
[ [ 15, 74 ], 199, [  ] ]
[ [ 15, 75 ], 124, [  ] ]
[ [ 15, 76 ], 471, [  ] ]
[ [ 15, 77 ], 471, [  ] ]
[ [ 15, 78 ], 456, [  ] ]
[ [ 15, 79 ], 201, [  ] ]
[ [ 15, 80 ], 201, [  ] ]
[ [ 15, 81 ], 111, [  ] ]
[ [ 15, 82 ], 199, [  ] ]
[ [ 15, 83 ], 471, [  ] ]
[ [ 15, 84 ], 201, [  ] ]
[ [ 15, 85 ], 201, [  ] ]
[ [ 15, 86 ], 201, [  ] ]
[ [ 15, 87 ], 201, [  ] ]
[ [ 15, 88 ], 471, [  ] ]
[ [ 15, 89 ], 471, [  ] ]
[ [ 15, 90 ], 471, [  ] ]
[ [ 15, 91 ], 471, [  ] ]
[ [ 15, 92 ], 124, [  ] ]
[ [ 15, 93 ], 471, [  ] ]
[ [ 15, 94 ], 199, [  ] ]
[ [ 15, 95 ], 124, [  ] ]
[ [ 15, 96 ], 199, [  ] ]
[ [ 15, 97 ], 199, [  ] ]
[ [ 15, 98 ], 124, [  ] ]
[ [ 15, 99 ], 199, [  ] ]
[ [ 15, 100 ], 199, [  ] ]
[ [ 15, 101 ], 124, [  ] ]
[ [ 15, 102 ], 199, [  ] ]
\end{verbatim}
}
\end{example}

\smallskip
\begin{example}[{Computation of $H^1(G,[J_{G/H}]^{fl})=0$ where $G=12T295\simeq M_{12}$ and $G=22T38\simeq M_{22}$}]\label{exH1FM12M22}
~{}\vspace*{-4mm}\\
{\small 
\begin{verbatim}
gap> Read("FlabbyResolutionFromBase.gap");

gap> G:=TransitiveGroup(12,295);
M(12)
gap> F:=FlabbyResolutionLowRankFromGroup(Norm1TorusJ(12,295),G).actionF;
<matrix group with 2 generators>
gap> [[12,295],Length(F.1),Filtered(H1(F),x->x>1)];
[ [ 12, 295 ], 814, [  ] ]

gap> G:=TransitiveGroup(22,38);
t22n38
gap> StructureDescription(G);
"M22"
gap> F:=FlabbyResolutionLowRankFromGroup(Norm1TorusJ(22,38),G).actionF;
<matrix group with 2 generators>
gap> [[22,38],Length(F.1),Filtered(H1(F),x->x>1)];
[ [ 22, 38 ], 672, [  ] ]
\end{verbatim}
}
\end{example}
%
\section{Proof of Theorem \ref{thmain3}}\label{S6}

Let $k$ be a number field, $K/k$ be a finite extension, 
$\bA_K^\times$ be the idele group of $K$ and 
$L/k$ be the Galois closure of $K/k$. 
Let $G={\rm Gal}(L/k)=nTm$ be a transitive subgroup of $S_n$ 
and $H={\rm Gal}(L/K)$ with $[G:H]=n$. 

For $x,y\in G$, we denote $[x,y]=x^{-1}y^{-1}xy$ the commutator of 
$x$ and $y$, and $[G,G]$ the commutator group of $G$. 
Let $V_k$ be the set of all places of $k$ 
and $G_v$ be the decomposition group of $G$ at $v\in V_k$. 

\begin{definition}[{Drakokhrust and Platonov \cite[page 350]{PD85a}, \cite[page 300]{DP87}}]
Let $k$ be a number field, 
$L\supset K\supset k$ be a tower of finite extensions 
where $L$ is normal over $k$. 

We call the group 
\begin{align*}
{\rm Obs}(K/k)=(N_{K/k}(\bA_K^\times)\cap k^\times)/N_{K/k}(K^\times)
\end{align*}
{\it the total obstruction to the Hasse norm principle for $K/k$} 
and 
\begin{align*}
{\rm Obs}_1(L/K/k)=\left(N_{K/k}(\bA_K^\times)\cap k^\times\right)/\left((N_{L/k}(\bA_L^\times)\cap k^\times)N_{K/k}(K^\times)\right)
\end{align*}
{\it the first obstruction to the Hasse norm principle for $K/k$ 
corresponding to the tower 
$L\supset K\supset k$}. 
\end{definition}

Note that (i) 
${\rm Obs}(K/k)=1$ if and only if 
the Hasse norm principle holds for $K/k$; 
and (ii) ${\rm Obs}_1(L/K/k)
={\rm Obs}(K/k)/(N_{L/k}(\bA_L^\times)\cap k^\times)$. 

Drakokhrust and Platonov gave a formula 
for computing the first obstruction ${\rm Obs}_1(L/K/k)$: 

\begin{theorem}[{Drakokhrust and Platonov \cite[page 350]{PD85a}, \cite[pages 789--790]{PD85b}, \cite[Theorem 1]{DP87}}]\label{thDP2}
Let $k$ be a number field, 
$L\supset K\supset k$ be a tower of finite extensions 
where 
$L$ is normal over $k$.  
Let $G={\rm Gal}(L/k)$ and $H={\rm Gal}(L/K)$. 
Then 
\begin{align*}
{\rm Obs}_1(L/K/k)\simeq 
{\rm Ker}\, \psi_1/\varphi_1({\rm Ker}\, \psi_2)
\end{align*}
where in the the commutative diagram 
\begin{align*}
\begin{CD}
H/[H,H] @>\psi_1 >> G/[G,G]\\
@AA\varphi_1 A @AA\varphi_2 A\\
\displaystyle{\bigoplus_{v\in V_k}\left(\bigoplus_{w\mid v} H_w/[H_w,H_w]\right)} @>\psi_2 >> 
\displaystyle{\bigoplus_{v\in V_k} G_v/[G_v,G_v]}, 
\end{CD}
\end{align*}
$\psi_1$, $\varphi_1$ and $\varphi_2$ are defined 
by the inclusions $H\subset G$, $H_w\subset H$ and $G_v\subset G$ respectively, and 
\begin{align*}
\psi_2(h[H_{w},H_{w}])=x^{-1}hx[G_v,G_v]
\end{align*}
for $h\in H_{w}=H\cap x^{-1}hx[G_v,G_v]$ $(x\in G)$.
\end{theorem}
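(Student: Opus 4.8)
The plan is to realize the claimed isomorphism concretely through a single reciprocity map, translating the idelic description of the first obstruction into the abelianized Galois groups of the diagram. Write $I = N_{K/k}(\bA_K^\times)\cap k^\times$, $P = N_{K/k}(K^\times)$ and $J = N_{L/k}(\bA_L^\times)\cap k^\times$, so that ${\rm Obs}_1(L/K/k) = I/(JP)$; here $J\subseteq I$ because $N_{L/k}=N_{K/k}\circ N_{L/K}$, and $P\subseteq I$ trivially. Global Artin reciprocity furnishes surjections $\rho_K\colon \bA_K^\times\to H/[H,H]$ and $\rho_k\colon \bA_k^\times\to G/[G,G]$ with kernels $K^\times N_{L/K}(\bA_L^\times)$ and $k^\times N_{L/k}(\bA_L^\times)$, and these factor as products of the local reciprocity maps $\rho_{K,w}\colon K_w^\times\to H_w/[H_w,H_w]$ and $\rho_{k,v}\colon k_v^\times\to G_v/[G_v,G_v]$.

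First I would record the two functorialities that force the diagram to commute: (i) local reciprocity is compatible with the local norm $N_{K_w/k_v}$ and with the inclusion $H_w=H\cap x^{-1}G_vx\hookrightarrow x^{-1}G_vx\cong G_v$, which is precisely the conjugation map $\psi_2$; and (ii) $\rho_k\circ N_{K/k}=\psi_1\circ\rho_K$, reflecting that the norm on idele class groups induces the inclusion map $H/[H,H]\to G/[G,G]$. Granting these, I would define $\Phi\colon I\to {\rm Ker}\,\psi_1/\varphi_1({\rm Ker}\,\psi_2)$ by choosing $b=(b_w)\in\bA_K^\times$ with $N_{K/k}(b)=a$, forming $\eta=(\rho_{K,w}(b_w))_w$, and setting $\Phi(a)=[\varphi_1(\eta)]=[\rho_K(b)]$. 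Since $a\in k^\times$, we get $\psi_1(\rho_K(b))=\rho_k(a)=1$, so $\varphi_1(\eta)\in{\rm Ker}\,\psi_1$ and $\Phi$ takes values where claimed.

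The heart of the argument is then four short checks, which I expect to be formal once (i)--(ii) are available. Independence of the choice of $b$ reduces to showing $\rho_K(c)\in\varphi_1({\rm Ker}\,\psi_2)$ for every $c\in{\rm Ker}(N_{K/k}\colon\bA_K^\times\to\bA_k^\times)$, which follows from (i) since the $v$-component of $\psi_2((\rho_{K,w}(c_w))_w)$ is $\rho_{k,v}((N_{K/k}c)_v)=0$. Surjectivity uses that $\rho_K$ is onto: lift $\xi\in{\rm Ker}\,\psi_1$ to $b$, note $N_{K/k}(b)\in{\rm Ker}\,\rho_k=k^\times N_{L/k}(\bA_L^\times)$, and correct $b$ by a suitable $N_{L/K}$-idele (which lies in ${\rm Ker}\,\rho_K$) to force $N_{K/k}(b)\in k^\times$ without disturbing $\rho_K(b)$. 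For the kernel, $JP\subseteq{\rm Ker}\,\Phi$ is seen by choosing $b\in N_{L/K}(\bA_L^\times)\cdot K^\times$; conversely, if $\Phi(a)=0$ then $\rho_K(b)=\varphi_1(\zeta)$ with $\zeta\in{\rm Ker}\,\psi_2$, and after lifting $\zeta$ to local components $c=(c_w)$ and writing $b=c\,\beta\,N_{L/K}(e)$ modulo ${\rm Ker}\,\rho_K$, the condition $\zeta\in{\rm Ker}\,\psi_2$ yields $N_{K/k}(c)\in N_{L/k}(\bA_L^\times)$, whence $a=\big(N_{K/k}(c)N_{L/k}(e)\big)N_{K/k}(\beta)\in JP$.

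The main obstacle I anticipate is the bookkeeping forced by the non-Galois extension $K/k$: each $v$ splits into several $w\mid v$ indexed by the double cosets $H\backslash G/G_v$, and the conjugating element $x$ built into $\psi_2$ must be threaded consistently through the local reciprocity isomorphisms so that $H_w=H\cap x^{-1}G_vx$ and $\psi_2(h[H_w,H_w])=x^{-1}hx[G_v,G_v]$ genuinely encode the local-to-global compatibility. Establishing functoriality (i)--(ii) at this precision -- in particular that the product-formula factorizations of $\rho_k$ and $\rho_K$ are compatible with summation over $w\mid v$ -- is where the real work lies; the diagram chase above is then routine.
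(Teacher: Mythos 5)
Your proposal is correct, and it follows essentially the same route as the proof this theorem carries in the literature: the paper under review does not prove Theorem \ref{thDP2} at all but imports it from \cite{PD85a}, \cite{PD85b}, \cite{DP87}, and the original Drakokhrust--Platonov argument is precisely the class-field-theoretic diagram chase you describe --- the global Artin maps $\rho_K$, $\rho_k$ with kernels $K^\times N_{L/K}(\bA_L^\times)$ and $k^\times N_{L/k}(\bA_L^\times)$, their factorization into local reciprocity maps indexed by the double cosets $H\backslash G/G_v$ (which is \cite[Lemma 1]{DP87}, i.e.\ Proposition \ref{propDP}~(i) of the paper), and norm functoriality. Your four verifications (well-definedness of $\Phi$, surjectivity, and the two inclusions identifying $\mathrm{Ker}\,\Phi$ with $(N_{L/k}(\bA_L^\times)\cap k^\times)N_{K/k}(K^\times)$) all go through as written, the only implicit point being that $\varphi_1(\mathrm{Ker}\,\psi_2)\subseteq \mathrm{Ker}\,\psi_1$, which follows from commutativity of the square because conjugation by $x$ is trivial on $G/[G,G]$.
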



Let $\psi_2^{v}$ be the restriction of $\psi_2$ to the subgroup 
$\bigoplus_{w\mid v} H_w/[H_w,H_w]$ with respect to $v\in V_k$ 
and $\psi_2^{\rm nr}$ (resp. $\psi_2^{\rm r}$) be 
the restriction of $\psi_2$ to the unramified (resp. the ramified) 
places $v$ of $k$. 
\begin{proposition}[{Drakokhrust and Platonov \cite{DP87}}]\label{propDP}
Let $k$, 
$L\supset K\supset k$, 
$G$ and $H$ be as in Theorem \ref{thDP2}.\\
{\rm (i)} $($\cite[Lemma 1]{DP87}$)$ 
Places $w_i\mid v$ of $K$ are in one-to-one correspondence 
with the set of double cosets in the decomposition 
$G=\cup_{i=1}^{r_v} Hx_iG_v$ where $H_{w_i}=H\cap x_iG_vx_i^{-1}$;\\
{\rm (ii)} $($\cite[Lemma 2]{DP87}$)$ 
If $G_{v_1}\leq G_{v_2}$, then $\varphi_1({\rm Ker}\,\psi_2^{v_1})\subset \varphi_1({\rm Ker}\,\psi_2^{v_2})$;\\
{\rm (iii)} $($\cite[Theorem 2]{DP87}$)$ 
$\varphi_1({\rm Ker}\,\psi_2^{\rm nr})=\Phi^G(H)/[H,H]$ 
where $\Phi^G(H)=\langle [h,x]\mid h\in H\cap xHx^{-1}, x\in G\rangle$;\\
{\rm (iv)} $($\cite[Lemma 8]{DP87}$)$ If $[K:k]=p^r$ $(r\geq 1)$ 
and ${\rm Obs}(K_p/k_p)=1$ where $k_p=L^{G_p}$, $K_p=L^{H_p}$, 
$G_p$ and $H_p\leq H\cap G_p$ are $p$-Sylow subgroups of $G$ and $H$ 
respectively, then ${\rm Obs}(K/k)=1$.
\end{proposition}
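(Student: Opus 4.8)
The plan is to treat the four assertions in turn; each rests on the combinatorics of the double coset space that appears implicitly in Theorem \ref{thDP2}, and part (iii) is where the real work lies. For (i) I would argue entirely within the Galois theory of the tower $L\supset K\supset k$. Fixing a place $w_0$ of $L$ over $v$, the group $G$ permutes the places of $L$ over $v$ transitively with stabiliser $G_v$, so these places are parametrised by $G/G_v$. The places of $K=L^H$ over $v$ are precisely the $H$-orbits on the places of $L$ over $v$, hence they correspond to the $H$-orbits on $G/G_v$, that is, to the double cosets in a decomposition $G=\bigcup_{i=1}^{r_v}Hx_iG_v$. For the place $w_i$ attached to $Hx_iG_v$, the decomposition group inside $H$ is the stabiliser of $x_iG_v$, namely $H_{w_i}=H\cap x_iG_vx_i^{-1}$. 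No cohomology is needed here, only transitivity of the action on places.

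For (ii) I would use the explicit description $\psi_2(h[H_w,H_w])=x^{-1}hx[G_v,G_v]$ to see that $\varphi_1({\rm Ker}\,\psi_2^{v})$ is generated, modulo $[H,H]$, by products of elements $h\in H\cap xG_vx^{-1}$. If $G_{v_1}\leq G_{v_2}$, then each intersection $H\cap xG_{v_1}x^{-1}$ is contained in $H\cap xG_{v_2}x^{-1}$, and the double coset decomposition attached to $v_2$ coarsens the one attached to $v_1$; tracking a kernel element of $\psi_2^{v_1}$ through $\varphi_1$ therefore exhibits it as a corresponding kernel element for $v_2$. The verification is a formal manipulation of cosets and commutators in the abelian group $H/[H,H]$.

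The heart of the matter, and the step I expect to be the main obstacle, is (iii). The essential input is the Chebotarev density theorem: an unramified place $v$ has cyclic decomposition group $G_v$, and conversely every cyclic subgroup $\langle g\rangle\leq G$ occurs as $G_v$ for infinitely many unramified $v$. Hence $\varphi_1({\rm Ker}\,\psi_2^{\rm nr})$ is the subgroup of $H/[H,H]$ generated by the images $\varphi_1({\rm Ker}\,\psi_2^{\langle g\rangle})$ as $g$ ranges over $G$. For a fixed cyclic $G_v=\langle g\rangle$ the groups $H_{w_i}=H\cap x_i\langle g\rangle x_i^{-1}$ are cyclic, so $H_{w_i}/[H_{w_i},H_{w_i}]=H_{w_i}$ and $\psi_2^{\langle g\rangle}$ sends $x_ig^{a}x_i^{-1}\in H_{w_i}$ to $g^{a}$; a kernel element is thus a tuple with $\sum_i a_i\equiv 0$, whose image under $\varphi_1$ is $\prod_i x_ig^{a_i}x_i^{-1}\,[H,H]$. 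Rewriting this product, under the relation $\sum_i a_i\equiv 0$, as a product of commutators $[h,x]$ with $h\in H\cap xHx^{-1}$ gives the inclusion $\varphi_1({\rm Ker}\,\psi_2^{\rm nr})\subseteq\Phi^G(H)/[H,H]$. For the reverse inclusion I would, given a generator $[h,x]$ with $h\in H\cap xHx^{-1}$, choose $g$ so that $\langle g\rangle$ contains $h$ and realise $[h,x]$ as the $\varphi_1$-image of an explicit kernel element for $\langle g\rangle$. Matching these two descriptions precisely is the delicate point.

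Finally, for (iv) I would first note that ${\rm Obs}(K/k)$ is annihilated by $[K:k]$: for $a\in N_{K/k}(\bA_K^\times)\cap k^\times$ one has $a^{[K:k]}=N_{K/k}(a)\in N_{K/k}(K^\times)$. Thus when $[K:k]=p^r$ the group ${\rm Obs}(K/k)$ is a finite abelian $p$-group. A standard restriction--corestriction (transfer) argument then finishes: with $G_p$ a $p$-Sylow of $G$ and $k_p=L^{G_p}$, $K_p=L^{H_p}$, the obstruction ${\rm Obs}(K/k)\simeq\Sha(R^{(1)}_{K/k}(\bG_m))$ of Theorem \ref{thOno} maps by restriction into ${\rm Obs}(K_p/k_p)$, and the composite of restriction with corestriction is multiplication by the prime-to-$p$ index $[G:G_p]$, hence an automorphism of the $p$-group ${\rm Obs}(K/k)$. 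Therefore restriction is injective, and the hypothesis ${\rm Obs}(K_p/k_p)=1$ forces ${\rm Obs}(K/k)=1$. The only genuine care is to check that restriction respects the local decompositions, so that it does land in ${\rm Obs}(K_p/k_p)$; this is compatible with the choice of $k_p$ and $K_p$ by construction.
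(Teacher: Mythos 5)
You should first be aware that the paper contains no proof of Proposition \ref{propDP}: all four parts are quoted from Drakokhrust and Platonov \cite{DP87} (their Lemma 1, Lemma 2, Theorem 2 and Lemma 8 respectively), so there is no argument of the authors to compare yours against, and your sketches can only be judged on their own merits. Parts (i) and (ii) are correct and routine, as you say. Part (iv) is essentially correct, but the one point you wave off as ``compatible \dots by construction'' is exactly where a real step is needed: restriction takes $H^1(k,T)$ into $H^1(k_p,T\times_kk_p)$, and $T\times_kk_p$ is the norm-one torus of the \'etale $k_p$-algebra $K\otimes_kk_p$, which a priori is a multinorm torus, not $R^{(1)}_{K_p/k_p}(\bG_m)$. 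It is here that the hypothesis $[K:k]=p^r$ enters a second time: the natural map $G_p/(G_p\cap H)\to G/H$ is injective, so $[G_p:G_p\cap H]\le p^r$, while a count of $p$-parts of orders gives $[G_p:G_p\cap H]\ge p^r$; hence $G=G_pH$, $H\cap G_p$ is a Sylow $p$-subgroup of $H$, $G/H\simeq G_p/H_p$ as $G_p$-sets, and $K\otimes_kk_p\simeq K_p$ is a field. Only after this observation does restriction land in ${\rm Obs}(K_p/k_p)\simeq\Sha(R^{(1)}_{K_p/k_p}(\bG_m))$, and then your annihilation-by-$p^r$ plus res--cores argument does go through.

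The genuine gap is in part (iii), at precisely the step you yourself call delicate. Your reverse inclusion is fine: for $h\in H\cap xHx^{-1}$ take $G_v=\langle h\rangle$ and the element $(h^{-1},\,x^{-1}hx)$ supported on the double cosets of $1$ and $x^{-1}$; it lies in ${\rm Ker}\,\psi_2^{\langle h\rangle}$ and $\varphi_1$ sends it to $[h,x]\,[H,H]$ (if the two double cosets coincide, $[h,x]\in[H,H]$ and there is nothing to prove). But the forward inclusion $\varphi_1({\rm Ker}\,\psi_2^{\rm nr})\subseteq\Phi^G(H)/[H,H]$ is the actual content of \cite[Theorem 2]{DP87}, and in your text it is only asserted (``rewriting this product \dots as a product of commutators''). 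Concretely, for cyclic $G_v=\langle g\rangle$ of order $N$ a kernel element is a tuple $(x_ig^{a_i}x_i^{-1})_i$ with $n_i\mid a_i$, where $H\cap x_i\langle g\rangle x_i^{-1}=\langle x_ig^{n_i}x_i^{-1}\rangle$, and $\sum_ia_i\equiv0\pmod N$; you must show $\prod_ix_ig^{a_i}x_i^{-1}\in\Phi^G(H)$. This requires (a) the lemma that two conjugates $u=xg^ax^{-1}$ and $w=yg^ay^{-1}$ of the \emph{same} power of $g$ which both lie in $H$ are congruent modulo $\Phi^G(H)$, because $u^{-1}w=[u,xy^{-1}]$ with $u\in H\cap(xy^{-1})H(xy^{-1})^{-1}$; and then (b) an arithmetic argument handling the different exponents: by (a) the class $\theta(a)$ of any conjugate of $g^a$ lying in $H$ is well defined modulo $\Phi^G(H)$, is multiplicative along each progression $n_i\bZ$, and satisfies $\theta(n_i)^{{\rm lcm}(n_i,n_j)/n_i}=\theta(n_j)^{{\rm lcm}(n_i,n_j)/n_j}$ and $\theta(n_i)^{N/n_i}=1$, and one checks (for instance prime by prime) that these relations force $\prod_i\theta(n_i)^{a_i/n_i}=1$ whenever $\sum_ia_i\equiv0\pmod N$. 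Neither (a) nor (b) appears in your proposal, and without them the equality in (iii) --- the statement on which the entire method of Section \ref{S6} rests --- is not established.
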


\begin{remark}
The inverse direction of Proposition \ref{propDP} (iv) 
does not hold in general. 
For example, if $n=8$, $G=8T13\simeq A_4\times C_2$ and 
there exists a place $v$ of $k$ such that $G_v\simeq V_4$, 
then ${\rm Obs}(K/k)=1$ but $G_2=8T3\simeq (C_2)^3$ 
and ${\rm Obs}(K_2/k_2)\neq 1$ may occur 
(see Theorem \ref{thmain3} and Table $2$).
\end{remark}

\begin{theorem}[{Drakokhrust and Platonov \cite[Theorem 3, Corollary 1]{DP87}}]\label{thDP87}
Let $k$, 
$L\supset K\supset k$, 
$G$ and $H$ be as in Theorem \ref{thDP2}. 
Let $H_i\leq G_i\leq G$ $(1\leq i\leq m)$, 
$H_i\leq H\cap G_i$, 
$k_i=L^{G_i}$ and $K_i=L^{H_i}$. 
If ${\rm Obs}(K_i/k_i)=1$ for all $1\leq i\leq m$ and 
\begin{align*}
\bigoplus_{i=1}^m \widehat{H}^{-3}(G_i,\bZ)\xrightarrow{\rm cores} 
\widehat{H}^{-3}(G,\bZ)
\end{align*}
is surjective, 
then ${\rm Obs}(K/k)={\rm Obs}_1(L/K/k)$. 
In particular, 
if $[K:k]=n$ is square-free, 
then ${\rm Obs}(K/k)={\rm Obs}_1(L/K/k)$.
\end{theorem}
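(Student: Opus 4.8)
The plan is to reduce the asserted equality of obstruction groups to the vanishing of a single natural map, and then to kill that map one corestriction generator at a time, feeding in Tate's Theorem~\ref{thTate} and the hypothesis ${\rm Obs}(K_i/k_i)=1$.

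First I would use the Remark following the definition of the obstructions. Writing $M:=N_{L/k}(\bA_L^\times)\cap k^\times$, the identity ${\rm Obs}_1(L/K/k)={\rm Obs}(K/k)/M$ says that ${\rm Obs}_1(L/K/k)$ is the quotient of ${\rm Obs}(K/k)=(N_{K/k}(\bA_K^\times)\cap k^\times)/N_{K/k}(K^\times)$ by the image of $M$. Since $N_{L/k}=N_{K/k}\circ N_{L/K}$ we have $M\subseteq N_{K/k}(\bA_K^\times)\cap k^\times$ and $N_{L/k}(L^\times)\subseteq N_{K/k}(K^\times)$, so the inclusion of $M$ induces a natural map $\nu:{\rm Obs}(L/k)=M/N_{L/k}(L^\times)\to{\rm Obs}(K/k)$ whose image is exactly $\ker\!\big({\rm Obs}(K/k)\twoheadrightarrow{\rm Obs}_1(L/K/k)\big)$. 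Thus the theorem is \emph{equivalent} to the statement $\nu=0$.

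Next, for each $i$ set $k_i=L^{G_i}$ and $K_i=L^{H_i}$, so that $k\subseteq k_i\subseteq K_i$ and $K\subseteq K_i$; transitivity of the norm then yields norm maps $N_{k_i/k}$ on the obstruction groups fitting into
\begin{CD}
\widehat{H}^{-3}(G_i,\bZ) @>\pi_i>> {\rm Obs}(L/k_i) @>\nu_i>> {\rm Obs}(K_i/k_i)\\
@VV{\rm cores}V @VV{N_{k_i/k}}V @VV{N_{k_i/k}}V\\
\widehat{H}^{-3}(G,\bZ) @>\pi>> {\rm Obs}(L/k) @>\nu>> {\rm Obs}(K/k)
\end{CD}
where $\pi,\pi_i$ are the surjections onto the cokernels supplied by Theorem~\ref{thTate} for the Galois extensions $L/k$ and $L/k_i$, and $\nu_i$ is the analogue of $\nu$ for the subtower $L\supseteq K_i\supseteq k_i$. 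Granting the commutativity of both squares, I would argue: for $\eta\in\widehat{H}^{-3}(G,\bZ)$ the surjectivity hypothesis lets me write $\eta=\sum_i{\rm cores}^G_{G_i}(\xi_i)$, and a diagram chase gives $\nu(\pi(\eta))=\sum_i N_{k_i/k}(\nu_i(\pi_i(\xi_i)))$; each inner term lies in ${\rm Obs}(K_i/k_i)$, which is trivial by hypothesis. As $\pi$ is surjective this forces $\nu=0$, hence ${\rm Obs}(K/k)={\rm Obs}_1(L/K/k)$.

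The main obstacle is the commutativity of the left square: this is the functoriality of the isomorphism in Theorem~\ref{thTate} under the base change from $k$ to $k_i$, matching ${\rm cores}$ on $\widehat{H}^{-3}(-,\bZ)$ with $N_{k_i/k}$ on the total obstruction, which I would deduce from the compatibility of the global fundamental classes under restriction and corestriction; the right square, by contrast, is a routine chase since all four arrows are induced by $N_{k_i/k}$ on the same idele groups. Finally, for the square-free clause I would take the $G_i$ to be $p$-Sylow subgroups $G_p\le G$ for every prime $p\mid|G|$, chosen so that $H\cap G_p$ is a $p$-Sylow of $H$; the corestriction $\bigoplus_p\widehat{H}^{-3}(G_p,\bZ)\to\widehat{H}^{-3}(G,\bZ)$ is then surjective by the standard restriction–corestriction argument on the finite group $\widehat{H}^{-3}(G,\bZ)$. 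For $p\nmid n$ one has $[G_p:H\cap G_p]=1$, so $K_p=k_p$ and the obstruction is trivial; for $p\mid n$ with $n$ square-free one gets $[G_p:H\cap G_p]=p$, whence $H\cap G_p$ is normal of index $p$ in the $p$-group $G_p$, $K_p/k_p$ is cyclic, and ${\rm Obs}(K_p/k_p)=1$ by Hasse. Thus the general statement applies and ${\rm Obs}(K/k)={\rm Obs}_1(L/K/k)$.
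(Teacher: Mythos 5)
The paper itself gives no proof of Theorem \ref{thDP87}: it is quoted directly from Drakokhrust and Platonov \cite{DP87}, so there is no internal argument to compare yours against; your proposal must be judged on its own terms, and on those terms it is correct. The opening reduction is exactly right: by remark (ii) after the definition of the obstructions, ${\rm Obs}_1(L/K/k)$ is the quotient of ${\rm Obs}(K/k)$ by the image of $M=N_{L/k}(\bA_L^\times)\cap k^\times$, and that image is precisely ${\rm im}\,\nu$, so the theorem is equivalent to $\nu=0$. Both of your squares are sound. The right one is indeed routine, using norm transitivity and the inclusion $K\subseteq K_i$ (which holds because $H_i\leq H$). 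The left one is the genuine content, and the mechanism you cite is the correct and standard one: the surjection of Theorem \ref{thTate} is cup product with the global fundamental class followed by the connecting map of $1\to L^\times\to\bA_L^\times\to C_L\to 1$; since $u_{L/k_i}={\rm res}^G_{G_i}(u_{L/k})$, the projection formula ${\rm cores}(\xi\cup{\rm res}\,u)={\rm cores}(\xi)\cup u$, the compatibility of connecting maps with corestriction, and the identification of corestriction on $\widehat{H}^0(\,\cdot\,,L^\times)$ with the field norm $N_{k_i/k}$ together give commutativity. The chase then kills $\nu\circ\pi$ term by term using ${\rm Obs}(K_i/k_i)=1$, and surjectivity of $\pi$ forces $\nu=0$. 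The square-free clause is also handled correctly: choosing the Sylow subgroup $G_p\leq G$ to contain a Sylow $p$-subgroup of $H$ forces $H\cap G_p$ to equal that Sylow subgroup, so $[G_p:H\cap G_p]$ is the $p$-part of $n$; thus $K_p/k_p$ is either trivial or cyclic of prime degree (an index-$p$ subgroup of a $p$-group is normal), Hasse's theorem applies, and the restriction--corestriction argument yields the surjectivity hypothesis prime by prime. In short, you have reconstructed a complete proof of a result the paper only cites, and you correctly isolated the one nontrivial ingredient (functoriality of Tate's isomorphism under corestriction) rather than treating it as formal.
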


We note that if $L/k$ is an unramified extension, 
then $A(T)=0$ and $H^1(G,[J_{G/H}]^{fl})\simeq \Sha(T)\simeq 
{\rm Obs}(K/k)$ where $T=R^{(1)}_{K/k}(\bG_m)$ 
(see Theorem \ref{thV} and Theorem \ref{thOno}). 
If, in addition, ${\rm Obs}(K/k)={\rm Obs}_1(L/K/k)$ 
(e.g. $[K:k]=6,10,14,15$; square-free, see Theorem \ref{thDP87}), 
then ${\rm Obs}(K/k)={\rm Obs}_1(L/K/k)=
{\rm Ker}\, \psi_1/\varphi_1({\rm Ker}\, \psi_2^{\rm nr})\simeq 
{\rm Ker}\, \psi_1/(\Phi^G(H)/[H,H])$ 
(see Proposition \ref{propDP} (iii)). 

\begin{theorem}[{Drakokhrust \cite[Theorem 1]{Dra89}, see also Opolka \cite[Satz 3]{Opo80} for the existence of $\widetilde{L}$}]\label{thDra89}
Let $k$, 
$L\supset K\supset k$, 
$G$ and $H$ be as in Theorem \ref{thDP2}. 
Assume that $\widetilde{L}\supset L\supset k$ is 
a tower of Galois extensions with 
$\widetilde{G}={\rm Gal}(\widetilde{L}/k)$ 
and $\widetilde{H}={\rm Gal}(\widetilde{L}/K)$ 
which correspond to a central extension 
$1\to A\to \widetilde{G}\to G\to 1$ with 
$A\cap[\widetilde{G},\widetilde{G}]\simeq M(G)=H^2(G,\bC^\times)$; 
the Schur multiplier of $G$ 
$($this is equivalent to 
the inflation 
$M(G)\to M(\widetilde{G})$ being the zero map, 
see {\rm Beyl and Tappe \cite[Proposition 2.13, page 85]{BT82}}$)$. 
Then 
${\rm Obs}(K/k)={\rm Obs}_1(\widetilde{L}/K/k)$. 
In particular, if $\widetilde{G}$ is a Schur cover of $G$, 
i.e. $A\simeq M(G)$, then ${\rm Obs}(K/k)={\rm Obs}_1(\widetilde{L}/K/k)$. 
\end{theorem}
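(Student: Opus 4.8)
The plan is to reduce the asserted equality of obstruction groups to a single norm inclusion, and then to extract that inclusion from the defining property of the central extension $1\to A\to\widetilde{G}\to G\to 1$ via Tate's cohomological description of the Hasse norm principle. First I would record the elementary reduction. Since $\widetilde{L}\supset L\supset K$, transitivity of the norm gives $N_{\widetilde{L}/k}(\bA_{\widetilde{L}}^\times)\subset N_{K/k}(\bA_K^\times)$, hence $N_{\widetilde{L}/k}(\bA_{\widetilde{L}}^\times)\cap k^\times\subset N_{K/k}(\bA_K^\times)\cap k^\times$, and the natural surjection ${\rm Obs}(K/k)\twoheadrightarrow{\rm Obs}_1(\widetilde{L}/K/k)$ (this is exactly the relation ${\rm Obs}_1(\widetilde{L}/K/k)={\rm Obs}(K/k)/(N_{\widetilde{L}/k}(\bA_{\widetilde{L}}^\times)\cap k^\times)$ noted in remark (ii) after the definition of the obstructions) has kernel the image of $N_{\widetilde{L}/k}(\bA_{\widetilde{L}}^\times)\cap k^\times$. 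Thus the theorem is equivalent to the inclusion
\[
N_{\widetilde{L}/k}(\bA_{\widetilde{L}}^\times)\cap k^\times\ \subset\ N_{K/k}(K^\times),
\]
that is, to the statement that every $a\in k^\times$ which is an everywhere-local norm from $\widetilde{L}$ is already a global norm from $K$.

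Next I would translate this inclusion into group cohomology over the Galois extension $\widetilde{L}/k$, writing $\widetilde{G}={\rm Gal}(\widetilde{L}/k)$, $\widetilde{H}={\rm Gal}(\widetilde{L}/K)$ and $A={\rm Gal}(\widetilde{L}/L)$, so that $A$ is central in $\widetilde{G}$ and, because $L\supset K$, we have $A\subset\widetilde{H}$ with $\widetilde{H}/A\simeq H$. Using Tate--Nakayama duality together with Tate's theorem (Theorem \ref{thTate}), both the total obstruction and the first obstruction become cokernels of corestriction maps on $\widehat{H}^{-3}(-,\bZ)\simeq H_2(-,\bZ)$; concretely the residual ``second obstruction'' ${\rm Ker}\big({\rm Obs}(K/k)\to{\rm Obs}_1(\widetilde{L}/K/k)\big)$ is governed by the Schur multiplier $M(G)\simeq\widehat{H}^{-3}(G,\bZ)$ modulo the images of the local multipliers $M(\widetilde{G}_v)$ and of $M(\widetilde{H})$. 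The goal is then to show that this second obstruction vanishes for the chosen $\widetilde{L}$.

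The decisive step is the central-extension hypothesis. Because the functorial map $M(\widetilde{G})\to M(G)$ is zero, the five-term homology sequence of $1\to A\to\widetilde{G}\to G\to 1$ (with $A$ central, so $[\widetilde{G},A]=1$) yields an injection $M(G)\hookrightarrow A$ whose image is exactly $A\cap[\widetilde{G},\widetilde{G}]$. Since $A\subset\widetilde{H}$ and $A\cap[\widetilde{G},\widetilde{G}]\subset\widetilde{H}\cap[\widetilde{G},\widetilde{G}]$, the whole of $M(G)$ is now visible inside ${\rm Ker}\,\widetilde{\psi}_1=(\widetilde{H}\cap[\widetilde{G},\widetilde{G}])/[\widetilde{H},\widetilde{H}]$ of the Drakokhrust--Platonov diagram (Theorem \ref{thDP2}) applied to the tower $\widetilde{L}\supset K\supset k$. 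I would then show that these classes are precisely the ones realized by $\widetilde{\varphi}_1({\rm Ker}\,\widetilde{\psi}_2)$ coming from the decomposition groups $\widetilde{G}_v$: passing from $L$ to $\widetilde{L}$ promotes the former second obstruction from $H_2$-level data of $G$ to $H_1$-level data of $\widetilde{G}$ that is split off by the local contributions, so that the relevant part of ${\rm Ker}\,\widetilde{\psi}_1$ is absorbed into $\widetilde{\varphi}_1({\rm Ker}\,\widetilde{\psi}_2)$ and the second obstruction dies. This gives the displayed norm inclusion, hence ${\rm Obs}(K/k)={\rm Obs}_1(\widetilde{L}/K/k)$; the final assertion is the special case $A\simeq M(G)$, where $\widetilde{G}$ is a Schur cover.

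The main obstacle will be the middle identification: making rigorous, through a compatible diagram chase linking the Tate--Nakayama isomorphisms, the Hochschild--Serre sequence for $A\to\widetilde{G}\to G$, and the local--global diagram of Theorem \ref{thDP2}, that the second obstruction is exactly a cokernel of the type $\bigoplus_v M(\widetilde{G}_v)\to M(G)$ and that the condition $A\cap[\widetilde{G},\widetilde{G}]\simeq M(G)$ forces this cokernel to be trivial. Particular care is needed because $M(G)$ lies inside $[\widetilde{G},\widetilde{G}]$ (so it contributes to ${\rm Ker}\,\widetilde{\psi}_1$ but not to $\widetilde{G}^{ab}$), and because one must track how the ramified and unramified places separately feed into the local term, for which Proposition \ref{propDP} (especially the identification $\widetilde{\varphi}_1({\rm Ker}\,\widetilde{\psi}_2^{\rm nr})=\Phi^{\widetilde{G}}(\widetilde{H})/[\widetilde{H},\widetilde{H}]$) is the essential bookkeeping tool.
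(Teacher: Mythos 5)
First, a point of order: the paper does not prove this statement at all. It is quoted from Drakokhrust \cite[Theorem 1]{Dra89}, with Opolka \cite[Satz 3]{Opo80} cited for the existence of $\widetilde{L}$, and the only commentary the paper adds is the remark immediately after the theorem that Drakokhrust in fact obtains ${\rm Obs}(K/k)\simeq {\rm Ker}\,\widetilde{\psi}_1/\widetilde{\varphi}_1({\rm Ker}\,\widetilde{\psi}_2)$ for suitably modified maps (i.e. $D=1$). So there is no proof in the paper to compare your attempt against; your proposal must stand on its own as a reconstruction of Drakokhrust's argument, and it does not yet do so.

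Two of your steps are correct and worth keeping: the reduction of the theorem to the single inclusion $N_{\widetilde{L}/k}(\bA_{\widetilde{L}}^\times)\cap k^\times\subset N_{K/k}(K^\times)$ is exactly right (it is just remark (ii) after the definition of the obstructions), and your use of the five-term homology sequence of the central extension is sound: centrality gives $[\widetilde{G},A]=1$, the vanishing of $M(\widetilde{G})\to M(G)$ forces the transgression $M(G)\to A$ to be injective with image $A\cap[\widetilde{G},\widetilde{G}]$, and $A\leq\widetilde{H}$ places this copy of $M(G)$ inside ${\rm Ker}\,\widetilde{\psi}_1=(\widetilde{H}\cap[\widetilde{G},\widetilde{G}])/[\widetilde{H},\widetilde{H}]$. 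But the entire content of the theorem is the step you state only in the conditional (``I would then show that these classes are precisely the ones realized by $\widetilde{\varphi}_1({\rm Ker}\,\widetilde{\psi}_2)$'') and then candidly label ``the main obstacle.'' No argument is supplied for it: you neither produce decomposition groups $\widetilde{G}_v$ whose contributions cover the image of $M(G)$ in ${\rm Ker}\,\widetilde{\psi}_1$, nor a duality-compatible diagram forcing the absorption. Moreover, the identification you propose to make rigorous --- that the kernel of ${\rm Obs}(K/k)\twoheadrightarrow{\rm Obs}_1(\widetilde{L}/K/k)$ is ``exactly a cokernel of the type $\bigoplus_v M(\widetilde{G}_v)\to M(G)$'' --- is doubtful as stated: Tate's cokernel formula (Theorem \ref{thTate}) applies to Galois extensions, whereas $K/k$ here is not Galois, and the whole point of the Platonov--Drakokhrust machinery (Theorem \ref{thDP2}, Proposition \ref{propDP}) is that in the non-Galois case the obstruction is computed from the diagram of abelianizations $H_w/[H_w,H_w]$ and $G_v/[G_v,G_v]$, not from a multiplier cokernel; and even granting such a description, the hypothesis $A\cap[\widetilde{G},\widetilde{G}]\simeq M(G)$ would not formally force that cokernel to vanish. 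In short: correct reduction and correct algebraic preliminaries, but the theorem itself is assumed where it must be proved; completing your plan would amount to reproducing the actual content of \cite[Theorem 1]{Dra89}, which is precisely what the paper's citation is there to replace.
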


Indeed, Drakokhrust \cite[Theorem 1]{Dra89} shows that 
${\rm Obs}(K/k)\simeq 
{\rm Ker}\, \widetilde{\psi}_1/\widetilde{\varphi}_1({\rm Ker}\, \widetilde{\psi}_2)$ where the maps $\widetilde{\psi}_1, \widetilde{\psi}_2$ and $\widetilde{\varphi}_1$ are defined as in 
\cite[page 31, the paragraph before Proposition 1]{Dra89}. 
The proof of \cite[Proposition 1]{Dra89} shows that 
this group is the same as ${\rm Obs}_1(\widetilde{L}/K/k)$ 
(see also \cite[Lemma 2, Lemma 3 and Lemma 4]{Dra89}).\\

We made the following functions 
of GAP (\cite{GAP}) which will be used 
in the proof of Theorem \ref{thmain3}. \\

{\tt FirstObstructionN($G,H$).ker} returns 
the list $[l_1, [l_2, l_3]]$ where 
$l_1$ is the abelian invariant of the numerator of the first obstruction 
${\rm Ker}\, \psi_1=\langle y_1,\ldots,y_t\rangle$ 
with respect to $G$, $H$ as in Theorem \ref{thDP2}, 
$l_2=[e_1,\ldots,e_m]$ 
is the abelian invariant of 
$H^{ab}=H/[H,H]=\langle x_1,\ldots,x_m\rangle$ with $e_i={\rm order}(x_i)$ 
and 
$l_3=[l_{3,1},\ldots,l_{3,t}]$, 
$l_{3,i}=[r_{i,1},\ldots,r_{i,m}]$ is the list with 
$y_i=x_1^{r_{i,1}}\cdots x_m^{r_{i,m}}$ 
for $H\leq G\leq S_n$.

{\tt FirstObstructionN($G$).ker} returns the same 
as {\tt FirstObstructionN($G,H$).ker} where $H={\rm Stab}_1(G)$ 
is the stabilizer of $1$ in $G\leq S_n$.\\

{\tt FirstObstructionDnr($G,H$).Dnr} returns 
the list $[l_1, [l_2, l_3]]$ where 
$l_1$ is the abelian invariant of the unramified part 
of the denominator of the first obstruction 
$\varphi_1({\rm Ker}\, \psi_2^{\rm nr})=\Phi^G(H)/[H,H]=\langle y_1,\ldots,y_t\rangle$ 
with respect to $G$, $H$ as in Proposition \ref{propDP} (iii), 
$l_2=[e_1,\ldots,e_m]$ 
is the abelian invariant of 
$H^{ab}=H/[H,H]=\langle x_1,\ldots,x_m\rangle$ with $e_i={\rm order}(x_i)$ 
and 
$l_3=[l_{3,1},\ldots,l_{3,t}]$, 
$l_{3,i}=[r_{i,1},\ldots,r_{i,m}]$ is the list with 
$y_i=x_1^{r_{i,1}}\cdots x_m^{r_{i,m}}$ 
for $H\leq G\leq S_n$.

{\tt FirstObstructionDnr($G$).Dnr} returns the same 
as {\tt FirstObstructionDnr($G,H$).Dnr} where $H={\rm Stab}_1(G)$ 
is the stabilizer of $1$ in $G\leq S_n$.\\

{\tt FirstObstructionDr($G,G_v,H$).Dr} returns 
the list $[l_1, [l_2, l_3]]$ where 
$l_1$ is the abelian invariant of the ramified part 
of the denominator of the first obstruction 
$\varphi_1({\rm Ker}\, \psi_2^v)=\langle y_1,\ldots,y_t\rangle$ 
with respect to $G$, $G_v$, $H$ as in Theorem \ref{thDP2}, 
$l_2=[e_1,\ldots,e_m]$ 
is the abelian invariant of 
$H^{ab}=H/[H,H]=\langle x_1,\ldots,x_m\rangle$ with $e_i={\rm order}(x_i)$ 
and 
$l_3=[l_{3,1},\ldots,l_{3,t}]$, 
$l_{3,i}=[r_{i,1},\ldots,r_{i,m}]$ is the list with 
$y_i=x_1^{r_{i,1}}\cdots x_m^{r_{i,m}}$ 
for $G_v, H\leq G\leq S_n$.

{\tt FirstObstructionDr($G,G_v$).Dr} returns the same 
as {\tt FirstObstructionDr($G,G_v,H$).Dr} where $H={\rm Stab}_1(G)$ 
is the stabilizer of $1$ in $G\leq S_n$.\\

{\tt SchurCoverG($G$).SchurCover} 
(resp. {\tt SchurCoverG($G$).epi}) 
returns 
one of the Schur covers $\widetilde{G}$ of $G$ 
(resp. the surjective map $\pi$) 
in a central extension 
$1\to A\to \widetilde{G}\xrightarrow{\pi} G\to 1$ 
with $A\simeq M(G)$; Schur multiplier of $G$ 
(see Karpilovsky \cite[page 16]{Kap87}). 
The Schur covers $\widetilde{G}$ are 
stem extensions, i.e. $A\leq Z(\widetilde{G})\cap [\widetilde{G},\widetilde{G}]$, of the maximal size. 
This function is based on the built-in function 
{\tt EpimorphismSchurCover} in GAP.\\

{\tt MinimalStemExtensions($G$)[$j$].MinimalStemExtension} 
(resp. {\tt MinimalStemExtensions($G$)[$j$].epi}) 
returns 
the $j$-th minimal stem extension $\overline{G}=\widetilde{G}/A^\prime$, 
i.e. $\overline{A}\leq Z(\overline{G})\cap [\overline{G},\overline{G}]$, 
of $G$ provided by the Schur cover $\widetilde{G}$ of $G$ 
via {\tt SchurCoverG($G$).SchurCover} 
where $A^\prime$ is the $j$-th maximal subgroup of $A=M(G)$
(resp. the surjective map $\overline{\pi}$) 
in the commutative diagram 
\begin{align*}
\begin{CD}
1 @>>> A=M(G) @>>> \widetilde{G}@>\pi>> G@>>> 1\\
  @. @VVV @VVV @|\\
1 @>>> \overline{A}=A/A^\prime @>>> \overline{G}=\widetilde{G}/A^\prime @>\overline{\pi}>> G@>>> 1
\end{CD}
\end{align*}
(see Robinson \cite[Exercises 11.4]{Rob96}). 
This function is based on the built-in function 
{\tt EpimorphismSchurCover} in GAP.\\

{\tt ResolutionNormalSeries(LowerCentralSeries($G$),} {\tt$n+1$)} 
(resp. {\tt ResolutionNormalSeries(DerivedSeries} {\tt ($G$),$n+1$)}, 
{\tt ResolutionFiniteGroup($G$,$n+1$)}) returns 
a free resolution $RG$ of $G$ 
when $G$ is nilpotent (resp. solvable, finite). 
This function is the built-in function of 
HAP (\cite{HAP}) in GAP (\cite{GAP}) .\\

{\tt ResHnZ($RG,RH,n$).HnGZ} (resp. {\tt ResHnZ($RG,RH,n$).HnHZ}) returns 
the abelian invariants of $H^n(G,\bZ)$ (resp. $H^n(H,\bZ)$) 
with respect to Smith normal form, 
for free resolutions $RG$ and $RH$ of $G$ and $H$ respectively.\\

{\tt ResHnZ($RG,RH,n$).Res} returns 
the list $L=[l_1,\ldots,l_s]$ where 
$H^n(G,\bZ)=\langle x_1,\ldots,x_s\rangle\xrightarrow{\rm res} 
H^n(H,\bZ)=\langle y_1,\ldots,y_t\rangle$, 
${\rm res}(x_i)=\prod_{j=1}^t y_j^{l_{i,j}}$ 
and $l_i=[l_{i,1},\ldots,l_{i,t}]$ 
for free resolutions $RG$ and $RH$ of $G$ and $H$ respectively.\\

{\tt ResHnZ($RG,RH,n$).Ker} returns 
the list $L=[l_1,[l_2,l_3]]$ 
where $l_1$ is the abelian invariant of 
${\rm Ker}\{H^n(G,\bZ)$ $\xrightarrow{\rm res}$ 
$H^n(H,\bZ)\}=\langle y_1,\ldots,y_t\rangle$, 
$l_2=[d_1,\ldots,d_s]$ is the abelian invariant of $H^n(G,\bZ)=\langle x_1,\ldots,x_s\rangle$ with $d_i={\rm ord}(x_i)$
and $l_3=[l_{3,1},\ldots,l_{3,t}]$, 
$l_{3,j}=[r_{j,1},\ldots,r_{j,s}]$ is the list with 
$y_j=x_1^{r_{j,1}}\cdots x_s^{r_{j,s}}$ 
for free resolutions $RG$ and $RH$ of $G$ and $H$ respectively.\\

{\tt ResHnZ($RG,RH,n$).Coker} returns 
the list $L=[l_1,[l_2,l_3]]$ 
where $l_1=[e_1,\ldots,e_t]$ is the abelian invariant of 
${\rm Coker}\{H^n(G,\bZ)$ $\xrightarrow{\rm res}$ 
$H^n(H,\bZ)\}=\langle \overline{y_1},\ldots,\overline{y_t}\rangle$ 
with $e_j={\rm ord}(\overline{y_j})$, 
$l_2=[d_1,\ldots,d_s]$ is the abelian invariant of $H^n(H,\bZ)=\langle x_1,\ldots,x_s\rangle$ with $d_i={\rm ord}(x_i)$ 
and $l_3=[l_{3,1},\ldots,l_{3,t}]$, 
$l_{3,j}=[r_{j,1},\ldots,r_{j,s}]$ is the list with 
$\overline{y_j}=\overline{x_1}^{r_{j,1}}\cdots \overline{x_s}^{r_{j,s}}$ 
for free resolutions $RG$ and $RH$ of $G$ and $H$ respectively.\\

{\tt KerResH3Z($G,H$)} returns the list $L=[l_1,[l_2,l_3]]$ 
where $l_1$ is the abelian invariant of 
${\rm Ker}\{H^3(G,\bZ)\xrightarrow{\rm res}\oplus_{i=1}^{m^\prime} H^3(G_i,\bZ)\}=\langle y_1,\ldots,y_t\rangle$ 
where $H_i\leq G_i\leq G$, $H_i\leq H\cap G_i$, $[G_i:H_i]=n$ 
and 
the action of $G_i$ on $\bZ[G_i/H_i]$ may be regarded as $nTm$ 
$(n\leq 15, n\neq 12)$ which is not in Table $1$, 
$l_2=[d_1,\ldots,d_s]$ is the abelian invariant of $H^3(G,\bZ)=\langle x_1,\ldots,x_s\rangle$ with $d_{i^\prime}={\rm ord}(x_{i^\prime})$ 
and $l_3=[l_{3,1},\ldots,l_{3,t}]$, 
$l_{3,j}=[r_{j,1},\ldots,r_{j,s}]$ is the list with 
$y_j=x_1^{r_{j,1}}\cdots x_s^{r_{j,s}}$ 
for groups $G$ and $H$ (cf. Theorem \ref{thDra89}). \\ 

The functions above are available from\\
{\tt https://www.math.kyoto-u.ac.jp/\~{}yamasaki/Algorithm/Norm1ToriHNP}.\\

{\it Proof of Theorem \ref{thmain3}.} 

Let $G={\rm Gal}(L/k)=nTm\leq S_n$ be 
the $m$-th transitive subgroup of $S_n$ and 
$H={\rm Gal}(L/K)\leq G$ with $[G:H]=n$. 
Let $V_k$ be the set of all places of $k$ 
and $G_v$ be the decomposition group of $G$ at $v\in V_k$. 

We split the proof into the following cases:\\
(1) $G=8Tm$ $(m=2,3,4,13,14,21,31,37,38)$,\\
(2) $G=8Tm$ $(m=9,11,15,19,22,32)$,\\
(3) $G=9Tm$ $(m=2,5,7,9,11,14,23)$,\\
(4) $G=10Tm$ $(m=7,26,32)$,\\ 
(5) $G=14T30$,\\
(6) $G=15Tm$ $(m=9,14)$. 

For the reader's convenience, 
we also give the GAP computations to the known cases: 
$G=4T2\simeq V_4, 4T4\simeq A_4, 6T4\simeq A_4, 6T12\simeq A_5$ 
(see Example 6.9 and Example 6.10). 

In order to prove the statement of the theorem, 
we may assume that 
$H={\rm Stab}_1(G)$ is the stabilizer of $1$ in $G$, 
i.e. $L=k(\theta_1,\ldots,\theta_n)$ and $K=L^H=k(\theta_1)$, 
without loss of generality 
except for the cases 
(2) $G=8Tm$ $(m=9,11,15,19,22,32)$ and $G=10T32\simeq S_6$ 
because 
the center $Z(G)$ and the commutator group $[G^\prime,G^\prime]$ 
where $G^\prime\leq G$ is a characteristic subgroup 
in the statement of the theorem, are characteristic subgroups of $G$, i.e. 
invariants under the automorphisms of $G$. 

For the cases (2) $G=8Tm$, 
by the assumption of the statement of the theorem, 
we may assume that 
$H={\rm Stab}_1(G)$ is the stabilizer of $1$ in $G$ 
because (the multi-set) $\{{\rm Orb}_{G^\prime}(i)\mid 1\leq i\leq n\}$ 
$(G^\prime\leq G)$ 
is invariant under the conjugacy actions of $G$, 
i.e. inner automorphisms of $G$. 

For the case $G=10T32\simeq S_6$, 
there exist exactly $10$ subgroups 
$H\leq G$ with $[G:H]=10$ which are conjugate in $G$.
Hence we may assume that $H={\rm Stab}_1(G)$ without loss of generality. 

By Theorem \ref{thV} and Theorem \ref{thmain2}, 
it is enough to give a necessary and sufficient condition for $\Sha(T)=0$.\\

{\rm (1)} $n=8$: $G=8Tm$ $(m=2,3,4,13,14,21,31,37,38)$. 
Applying the functions 
{\tt FirstObstructionN($G$)} and 
{\tt FirstObstructionDnr($G$)}, 
we have ${\rm Obs}_1(L/K/k)=1$ except for 
$G=8T21\simeq (C_2)^3\rtimes C_4$. 
For $G=8T21$, 
we obtain that ${\rm Obs}_1(L/K/k)\simeq\bZ/2\bZ$. 

(1-1) The case $G=8T3\simeq (C_2)^3$. 
This case follows from Theorem \ref{thTate} because $H=1$. 
See also Example \ref{ex8} 
and the second paragraph after Theorem \ref{thTate}. 

(1-2) The case $G=8T21\simeq (C_2)^3\rtimes C_4$. 
We have $H=H^{ab}\simeq C_2\times C_2$. 
Applying 
{\tt FirstObstructionN($G$)} and 
{\tt FirstObstructionDnr($G$)}, 
we obtain that 
${\rm Ker}\, \psi_1/\varphi_1({\rm Ker}\, \psi_2^{\rm nr})$ $\simeq$ $\bZ/2\bZ$. 
By Theorem \ref{thmain2}, we get 
${\rm Obs}(K/k)={\rm Obs}_1(L/K/k)$ 
when $L/k$ is unramified (see the paragraph after Theorem \ref{thDP87}). 
Use 
Theorem \ref{thDP87}. 
Applying the function 
{\tt KerResH3Z(G,H)}, 
we see that ${\rm Ker}\{H^3(G,\bZ)\xrightarrow{\rm res}\oplus_{i=1}^{m^\prime} H^3(G_i,\bZ)\}=0$ and hence 
$\oplus_{i=1}^{m^\prime} \widehat{H}^{-3}(G_i,\bZ)\xrightarrow{\rm cores} 
\widehat{H}^{-3}(G,\bZ)$ is surjective.  
It follows from Theorem \ref{thDP87} that 
${\rm Obs}(K/k)={\rm Obs}_1(L/K/k)$. 
Applying the function {\tt FirstObstructionDr($G,G^\prime$)} 
for all subgroups $G^\prime\leq G$, 
we find that ${\rm Obs}_1(L/K/k)$ $=$ $1$ 
if and only if there exists $v\in V_k$ such that $G_v=G$ 
(see Example \ref{ex8}).

(1-3) The case $G=8Tm$ $(m=2,4,13,14,37)$. 
Because ${\rm Obs}_1(K/k)=1$, we just apply Theorem \ref{thDra89}. 
We have the Schur multiplier $M(G)\simeq\bZ/2\bZ$ 
for $G=8Tm$ $(m=2,4,13,14,37)$. 

(1-3-1) The case $G=8T2\simeq C_4\times C_2$ (see also Theorem \ref{thTate} because $H=1$). 
Apply Theorem \ref{thDra89}. 
We obtain a Schur cover 
$1\to M(G)\simeq\bZ/2\bZ\to \widetilde{G}\xrightarrow{\pi} G\to 1$ 
with $\widetilde{G}\simeq (C_4\times C_2)\rtimes C_2$, 
$\widetilde{H}\simeq C_2$ 
and ${\rm Obs}(K/k)={\rm Obs}_1(\widetilde{L}/K/k)$. 
By Theorem \ref{thmain2}, 
${\rm Ker}\, \widetilde{\psi}_1/\widetilde{\varphi}_1({\rm Ker}\, \widetilde{\psi}_2^{\rm nr})\simeq \bZ/2\bZ$ 
(see the paragraph after Theorem \ref{thDP87}). 
By applying 
{\tt FirstObstructionDr($\widetilde{G},\widetilde{G}^\prime,\widetilde{H}$)} 
for all subgroups $\widetilde{G}^\prime\leq \widetilde{G}$, 
we obtain that ${\rm Obs}_1(\widetilde{L}/K/k)=1$ 
if and only if there exists $v\in V_k$ such that $\widetilde{G}_v=\widetilde{G}$ if and only if there exists $v\in V_k$ such that $G_v=G$ 
(see Example \ref{ex8}).

(1-3-2) The case $G=8T4\simeq D_4$ 
(see also Theorem \ref{thTate} because $H=1$). 
Apply Theorem \ref{thDra89}. 
We obtain a Schur cover 
$1\to M(G)\simeq\bZ/2\bZ\to \widetilde{G}\xrightarrow{\pi} G\to 1$ 
with $\widetilde{G}\simeq D_8$, $\widetilde{H}\simeq C_2$ 
and ${\rm Obs}(K/k)={\rm Obs}_1(\widetilde{L}/K/k)$. 
By Theorem \ref{thmain2}, 
${\rm Ker}\, \widetilde{\psi}_1/\widetilde{\varphi}_1({\rm Ker}\, \widetilde{\psi}_2^{\rm nr})\simeq \bZ/2\bZ$. 
By applying 
{\tt FirstObstructionDr($\widetilde{G},\widetilde{G}^\prime,\widetilde{H}$)} 
for all subgroups $\widetilde{G}^\prime\leq \widetilde{G}$, 
we obtain that ${\rm Obs}_1(\widetilde{L}/K/k)=1$ 
if and only if there exists $v\in V_k$ such that $D_4\leq\widetilde{G}_v$ if and only if there exists $v\in V_k$ such that $V_4\leq G_v$ 
(see Example \ref{ex8}).

(1-3-3) The case $G=8T13\simeq A_4\times C_2$. 
We have $H\simeq C_3$. 
Apply Theorem \ref{thDra89}. 
We obtain a Schur cover 
$1\to M(G)\simeq\bZ/2\bZ\to \widetilde{G}\xrightarrow{\pi} G\to 1$ 
with $\widetilde{G}\simeq ((C_4\times C_2)\rtimes C_2)\rtimes C_3$, 
$\widetilde{H}\simeq C_6$ 
and ${\rm Obs}(K/k)={\rm Obs}_1(\widetilde{L}/K/k)$. 
By Theorem \ref{thmain2}, 
${\rm Ker}\, \widetilde{\psi}_1/\widetilde{\varphi}_1({\rm Ker}\, \widetilde{\psi}_2^{\rm nr})\simeq \bZ/2\bZ$. 
By applying 
{\tt FirstObstructionDr($\widetilde{G},\widetilde{G}^\prime,\widetilde{H}$)} 
for all subgroups $\widetilde{G}^\prime\leq \widetilde{G}$, 
we obtain that ${\rm Obs}_1(\widetilde{L}/K/k)=1$ 
if and only if there exists $v\in V_k$ such that $V_4\leq G_v$ 
(see Example \ref{ex8}).

(1-3-4) The case $G=8T14\simeq S_4$. 
We have $H\simeq C_3$. 
Apply Theorem \ref{thDra89}. 
We obtain a Schur cover 
$1\to M(G)\simeq\bZ/2\bZ\to \widetilde{G}\xrightarrow{\pi} G\to 1$ 
with $\widetilde{G}\simeq GL_2(\bF_3)$ 
and $\widetilde{H}\simeq C_6$. 
By Theorem \ref{thmain2}, 
${\rm Ker}\, \widetilde{\psi}_1/\widetilde{\varphi}_1({\rm Ker}\, \widetilde{\psi}_2^{\rm nr})\simeq \bZ/2\bZ$. 
By applying 
{\tt FirstObstructionDr($\widetilde{G},\widetilde{G}^\prime,\widetilde{H}$)} 
for all subgroups $\widetilde{G}^\prime\leq \widetilde{G}$, 
we obtain that ${\rm Obs}_1(\widetilde{L}/K/k)=1$ 
if and only if there exists $v\in V_k$ such that $V_4\leq G_v$ 
(see Example \ref{ex8}).

(1-3-5) The case $G=8T37\simeq \PSL_3(\bF_2)\simeq \PSL_2(\bF_7)$. 
We have $H\simeq C_7\rtimes C_3$. 
We obtain a Schur cover 
$1\to M(G)\simeq\bZ/2\bZ\to \widetilde{G}\xrightarrow{\pi} G\to 1$ 
with $\widetilde{G}\simeq \SL_2(\bF_7)$, 
$\widetilde{H}\simeq C_2\times (C_7\rtimes C_3)$ 
and ${\rm Obs}(K/k)={\rm Obs}_1(\widetilde{L}/K/k)$. 
By Theorem \ref{thmain2}, 
${\rm Ker}\, \widetilde{\psi}_1/\widetilde{\varphi}_1({\rm Ker}\, \widetilde{\psi}_2^{\rm nr})\simeq \bZ/2\bZ$. 
By applying 
{\tt FirstObstructionDr($\widetilde{G},\widetilde{G}^\prime,\widetilde{H}$)} 
for all subgroups $\widetilde{G}^\prime\leq \widetilde{G}$, 
we obtain that ${\rm Obs}_1(\widetilde{L}/K/k)=1$ 
if and only if there exists $v\in V_k$ such that $V_4\leq G_v$ 
(see Example \ref{ex8}).

(1-4) The case $G=8Tm$ $(m=31,38)$. 
Applying 
{\tt FirstObstructionN($G$)} and 
{\tt FirstObstructionDnr($G$)}, we have ${\rm Obs}_1(L/K/k)=1$. 
For $G=8T31$ (resp. $G=8T38$), 
we have $M(G)\simeq(\bZ/2\bZ)^{\oplus 4}$ 
(resp. $M(G)\simeq(\bZ/2\bZ)^{\oplus 2}$). 
Hence we take a minimal stem extension 
$\overline{G}=\widetilde{G}/A^\prime$, 
i.e. $\overline{A}\leq Z(\overline{G})\cap [\overline{G},\overline{G}]$, 
of $G$ in the commutative diagram 
\begin{align*}
\begin{CD}
1 @>>> A=M(G) @>>> \widetilde{G}@>\pi>> G@>>> 1\\
  @. @VVV @VVV @|\\
1 @>>> \overline{A}=A/A^\prime @>>> \overline{G}=\widetilde{G}/A^\prime @>\overline{\pi}>> G@>>> 1
\end{CD}
\end{align*}
with $\overline{A}\simeq\bZ/2\bZ$ 
via the function 
{\tt MinimalStemExtensions($G$)[$j$].MinimalStemExtension}. 
Then we apply Theorem \ref{thDP87} 
instead of Theorem \ref{thDra89}. 

(1-4-1) The case $G=8T31\simeq ((C_2)^4\rtimes C_2)\rtimes C_2$. 
We have $H\simeq (C_2)^3$ and $M(G)\simeq(\bZ/2\bZ)^{\oplus 4}$. 
Applying the function 
{\tt MinimalStemExtensions($G$)[$j$].MinimalStemExtension}, 
we get the minimal stem extensions 
$\overline{G}_1,\ldots,\overline{G}_{15}$ of $G$. 
Use 
Theorem \ref{thDP87}. 
Applying 
{\tt KerResH3Z(G,H)}, 
we see that ${\rm Ker}\{H^3(\overline{G}_1,\bZ)\xrightarrow{\rm res}\oplus_{i=1}^{m^\prime} H^3(G_i,\bZ)\}=0$ but 
${\rm Ker}\{H^3(\overline{G}_j,\bZ)\xrightarrow{\rm res}\oplus_{i=1}^{m^\prime} H^3(G_i,\bZ)\}\simeq\bZ/2\bZ$ for $j\in J:=\{j\mid 2\leq j\leq 15\}$. 
Because 
$\oplus_{i=1}^{m^\prime} \widehat{H}^{-3}(G_i,\bZ)\xrightarrow{\rm cores} 
\widehat{H}^{-3}(\overline{G}_1,\bZ)$ is surjective, 
it follows from Theorem \ref{thDP87} that 
${\rm Obs}(K/k)={\rm Obs}_1(\overline{L}_1/K/k)$. 
We also checked that 
${\rm Ker}\, \overline{\psi}_1/\overline{\varphi}_1({\rm Ker}\, \overline{\psi}_1^{\rm nr})$ $\simeq$ $\bZ/2\bZ$ for $\overline{G}_1$ and 
${\rm Ker}\, \overline{\psi}_1/\overline{\varphi}_1({\rm Ker}\, \overline{\psi}_2^{\rm nr})=0$ for $\overline{G}_j$ $(j\in J)$. 
This implies that 
${\rm Obs}(K/k)\neq {\rm Obs}_1(\overline{L}_j/K/k)$ 
when $\overline{L}_j/k$ is unramified 
for $j\in J$. 
Apply 
{\tt FirstObstructionDr($\overline{G}_1,\overline{G}_1^\prime,\overline{H}_1$)} 
for all subgroups $\overline{G}_1^\prime\leq \overline{G}_1$. 
We find that ${\rm Obs}_1(\overline{L}_1/K/k)$ $=$ $1$ 
if and only if 
there exists $v\in V_k$ such that 
{\rm (i)} 
$V_4\leq G_v$ where $V_4\cap [G,G]=1$  
$($equivalently, $|{\rm Orb}_{V_4}(i)|=4$ for any $1\leq i\leq 8$ 
and $V_4\cap Z(G)=1$$)$, 
{\rm (ii)} 
$C_4\times C_2\leq G_v$ where 
$(C_4\times C_2)\cap [G,G]\simeq C_2$ 
$($equivalently, 
$C_4\times C_2$ is transitive in $S_8$$)$ 
or {\rm (iii)} 
$(C_2)^3\rtimes C_4\leq G_v$ 
(see Details \ref{det6.8} and Example \ref{ex8}).

(1-4-2) The case $G=8T38\simeq (((C_2)^4\rtimes C_2)\rtimes C_2)\rtimes C_3$. 
We have $H\simeq C_2\times A_4$ and $M(G)\simeq(\bZ/2\bZ)^{\oplus 2}$. 
Applying the function 
{\tt MinimalStemExtensions($G$)[$j$].MinimalStemExtension}, 
we get the minimal stem extensions 
$\overline{G}_1,\overline{G}_2,\overline{G}_{3}$ of $G$. 
Use 
Theorem \ref{thDP87}. 
Applying 
{\tt KerResH3Z(G,H)}, 
we see that ${\rm Ker}\{H^3(\overline{G}_{2},\bZ)\xrightarrow{\rm res}\oplus_{i=1}^{m^\prime} H^3(G_i,\bZ)\}=0$ but 
${\rm Ker}\{H^3(\overline{G}_j,\bZ)\xrightarrow{\rm res}\oplus_{i=1}^{m^\prime} H^3(G_i,\bZ)\}\simeq\bZ/2\bZ$ for $j\in J:=\{1,3\}$. 
We have that 
$\oplus_{i=1}^{m^\prime} \widehat{H}^{-3}(G_i,\bZ)\xrightarrow{\rm cores} 
\widehat{H}^{-3}(\overline{G}_{2},\bZ)$ is surjective. 
By Theorem \ref{thDP87}, 
${\rm Obs}(K/k)={\rm Obs}_1(\overline{L}_{2}/K/k)$. 
We also checked that 
${\rm Ker}\, \overline{\psi}_1/\overline{\varphi}_1({\rm Ker}\, \overline{\psi}_1^{\rm nr})$ $\simeq$ $\bZ/2\bZ$ for $\overline{G}_{2}$ and 
${\rm Ker}\, \overline{\psi}_1/\overline{\varphi}_1({\rm Ker}\, \overline{\psi}_2^{\rm nr})=0$ for $\overline{G}_j$ $(j\in J)$. 
This implies that 
${\rm Obs}(K/k)\neq {\rm Obs}_1(\overline{L}_j/K/k)$ 
when $\overline{L}_j/k$ is unramified 
for $j\in J$. 
Apply 
{\tt FirstObstructionDr($\overline{G}_2,\overline{G}_2^\prime,\overline{H}_2$)} 
for all subgroups $\overline{G}_2^\prime\leq \overline{G}_2$. 
We find that ${\rm Obs}_1(\overline{L}_2/K/k)$ $=$ $1$ 
if and only if 
there exists $v\in V_k$ such that 
{\rm (i)} 
$V_4\leq G_v$ where $V_4\cap [{\rm Syl}_2(G),{\rm Syl}_2(G)]=1$ 
with ${\rm Syl}_2(G)\lhd G$ 
$($equivalently, $|{\rm Orb}_{V_4}(i)|=4$ for any $1\leq i\leq 8$ 
and $V_4\cap Z(G)=1$$)$, 
{\rm (ii)} 
$C_4\times C_2\leq G_v$ where 
$(C_4\times C_2)\cap [{\rm Syl}_2(G),{\rm Syl}_2(G)]\simeq C_2$ 
$($equivalently, 
$C_4\times C_2$ is transitive in $S_8$$)$ 
or {\rm (iii)} 
$(C_2)^3\rtimes C_4\leq G_v$ 
(see Details \ref{det6.8} and Example \ref{ex8}).\\

{\rm (2) $n=8$: $G=8Tm$ $(m=9,11,15,19,22,32)$}. 
We assume that $H={\rm Stab}_1(G)$ by the assumption. 
Applying {\tt FirstObstructionN($G$)}, 
we have ${\rm Obs}_1(L/K/k)=1$. 
For the cases 
$G=8Tm$ $(m=9,11,15,19,22,32)$, we also find that 
$(\bZ/2\bZ)^{\oplus 2}\leq M(G)\leq (\bZ/2\bZ)^{\oplus 5}$. 
Hence we take a minimal stem extension 
$\overline{G}=\widetilde{G}/A^\prime$, 
i.e. $\overline{A}\leq Z(\overline{G})\cap [\overline{G},\overline{G}]$, 
of $G$ in the commutative diagram 
\begin{align*}
\begin{CD}
1 @>>> A=M(G) @>>> \widetilde{G}@>\pi>> G@>>> 1\\
  @. @VVV @VVV @|\\
1 @>>> \overline{A}=A/A^\prime @>>> \overline{G}=\widetilde{G}/A^\prime @>\overline{\pi}>> G@>>> 1
\end{CD}
\end{align*}
with $\overline{A}\simeq\bZ/2\bZ$ 
via the function 
{\tt MinimalStemExtensions($G$)[$j$].MinimalStemExtension}. 
Then we apply Theorem \ref{thDP87} 
instead of Theorem \ref{thDra89} as in the case (1-4). 

(2-1) The case $G=8T9\simeq D_4\times C_2$. 
We have $H\simeq C_2$. 
We obtain that the Schur multiplier $M(G)\simeq(\bZ/2\bZ)^{\oplus 3}$. 
By applying {\tt MinimalStemExtensions($G$)[$j$].MinimalStemExtension}, 
we obtain the minimal stem extensions 
$\overline{G}_1,\ldots,\overline{G}_7$ of $G$. 
Use 
Theorem \ref{thDP87}. 
Applying 
{\tt KerResH3Z(G,H)}, 
we see that ${\rm Ker}\{H^3(\overline{G}_2,\bZ)\xrightarrow{\rm res}\oplus_{i=1}^{m^\prime} H^3(G_i,\bZ)\}=0$ but 
${\rm Ker}\{H^3(\overline{G}_j,\bZ)\xrightarrow{\rm res}\oplus_{i=1}^{m^\prime} H^3(G_i,\bZ)\}=\bZ/2\bZ$ for $j\in J:=\{1,3,4,5,6,7\}$. 
Because 
$\oplus_{i=1}^{m^\prime} \widehat{H}^{-3}(G_i,\bZ)\xrightarrow{\rm cores} 
\widehat{H}^{-3}(\overline{G}_2,\bZ)$ is surjective, 
it follows from Theorem \ref{thDP87} that 
${\rm Obs}(K/k)={\rm Obs}_1(\overline{L}_2/K/k)$. 
We also checked that 
${\rm Ker}\, \overline{\psi}_1/\overline{\varphi}_1({\rm Ker}\, \overline{\psi}_2^{\rm nr})\simeq\bZ/2\bZ$ for $\overline{G}_2$ and 
${\rm Ker}\, \overline{\psi}_1/\overline{\varphi}_1({\rm Ker}\, \overline{\psi}_2^{\rm nr})=0$ for $\overline{G}_j$ $(j\in J)$. 
Hence 
${\rm Obs}(K/k)\neq {\rm Obs}_1(\overline{L}_j/K/k)$ 
when $\overline{L}_j/k$ is unramified 
for $j\in J$. 
Apply 
{\tt FirstObstructionDr($\overline{G}_2,\overline{G}_2^\prime,\overline{H}_2$)} 
for all subgroups $\overline{G}_2^\prime\leq \overline{G}_2$. 
We find that ${\rm Obs}_1(\overline{L}_2/K/k)$ $=$ $1$ 
if and only if 
there exists $v\in V_k$ such that 
{\rm (i)} $V_4\leq G_v$ where 
$|{\rm Orb}_{V_4}(i)|=4$ for any $1\leq i\leq 8$ 
and $V_4\cap [G,G]=1$; 
or {\rm (ii)} 
$C_4\times C_2\leq G_v$ 
(see Details \ref{det6.8} and Example \ref{ex8-2}).

(2-2) The case $G=8T11\simeq (C_4\times C_2)\rtimes C_2$.
We have $H\simeq C_2$ and $M(G)\simeq(\bZ/2\bZ)^{\oplus 3}$. 
Applying 
{\tt MinimalStemExtensions($G$)[$j$].MinimalStemExtension}. 
We get the minimal stem extensions 
$\overline{G}_1,\overline{G}_2,\overline{G}_3$ of $G$. 
Use 
Theorem \ref{thDP87}. 
Applying 
{\tt KerResH3Z(G,H)}, 
we see that ${\rm Ker}\{H^3(\overline{G}_2,\bZ)\xrightarrow{\rm res}\oplus_{i=1}^{m^\prime} H^3(G_i,\bZ)\}=0$ but 
${\rm Ker}\{H^3(\overline{G}_j,\bZ)\xrightarrow{\rm res}\oplus_{i=1}^{m^\prime} H^3(G_i,\bZ)\}=\bZ/2\bZ$ for $j\in J:=\{1,3\}$. 
Because 
$\oplus_{i=1}^{m^\prime} \widehat{H}^{-3}(G_i,\bZ)\xrightarrow{\rm cores} 
\widehat{H}^{-3}(\overline{G}_2,\bZ)$ is surjective, 
it follows from Theorem \ref{thDP87} that 
${\rm Obs}(K/k)={\rm Obs}_1(\overline{L}_2/K/k)$. 
We also checked that 
${\rm Ker}\, \overline{\psi}_1/\overline{\varphi}_1({\rm Ker}\, \overline{\psi}_2^{\rm nr})$ $\simeq$ $\bZ/2\bZ$ for $\overline{G}_2$ and 
${\rm Ker}\, \overline{\psi}_1/\overline{\varphi}_1({\rm Ker}\, \overline{\psi}_2^{\rm nr})=0$ for $\overline{G}_j$ $(j\in J)$. 
This implies that 
${\rm Obs}(K/k)\neq {\rm Obs}_1(\overline{L}_j/K/k)$ 
when $\overline{L}_j/k$ is unramified 
for $j\in J$. 
Apply 
{\tt FirstObstructionDr($\overline{G}_2,\overline{G}_2^\prime,\overline{H}_2$)} 
for all subgroups $\overline{G}_2^\prime\leq \overline{G}_2$. 
We find that ${\rm Obs}_1(\overline{L}_2/K/k)$ $=$ $1$ 
if and only if 
there exists $v\in V_k$ such that 
$C_4\times C_2\leq G_v$ where 
$C_4\times C_2$ is transitive in $S_8$ 
(see Details \ref{det6.8} and Example \ref{ex8-2}).


(2-3) The case $G=8T15\simeq C_8\rtimes V_4$.
We have $H\simeq V_4$ and $M(G)\simeq(\bZ/2\bZ)^{\oplus 2}$. 
Applying the function 
{\tt MinimalStemExtensions($G$)[$j$].MinimalStemExtension}, 
we get the minimal stem extensions 
$\overline{G}_1,\overline{G}_2,\overline{G}_3$ of $G$. 
Use 
Theorem \ref{thDP87}. 
Applying 
{\tt KerResH3Z(G,H)}, 
we see that ${\rm Ker}\{H^3(\overline{G}_1,\bZ)\xrightarrow{\rm res}\oplus_{i=1}^{m^\prime} H^3(G_i,\bZ)\}=0$ but 
${\rm Ker}\{H^3(\overline{G}_j,\bZ)\xrightarrow{\rm res}\oplus_{i=1}^{m^\prime} H^3(G_i,\bZ)\}=\bZ/2\bZ$ for $j\in J:=\{2,3\}$. 
Because 
$\oplus_{i=1}^{m^\prime} \widehat{H}^{-3}(G_i,\bZ)\xrightarrow{\rm cores} 
\widehat{H}^{-3}(\overline{G}_1,\bZ)$ is surjective, 
it follows from Theorem \ref{thDP87} that 
${\rm Obs}(K/k)={\rm Obs}_1(\overline{L}_1/K/k)$. 
We also checked that 
${\rm Ker}\, \overline{\psi}_1/\overline{\varphi}_1({\rm Ker}\, \overline{\psi}_1^{\rm nr})$ $\simeq$ $\bZ/2\bZ$ for $\overline{G}_1$ and 
${\rm Ker}\, \overline{\psi}_1/\overline{\varphi}_1({\rm Ker}\, \overline{\psi}_2^{\rm nr})=0$ for $\overline{G}_j$ $(j\in J)$. 
This implies that 
${\rm Obs}(K/k)\neq {\rm Obs}_1(\overline{L}_j/K/k)$ 
when $\overline{L}_j/k$ is unramified 
for $j\in J$. 
Apply 
{\tt FirstObstructionDr($\overline{G}_1,\overline{G}_1^\prime,\overline{H}_1$)} 
for all subgroups $\overline{G}_1^\prime\leq \overline{G}_1$. 
We find that ${\rm Obs}_1(\overline{L}_1/K/k)$ $=$ $1$ 
if and only if 
there exists $v\in V_k$ such that 
{\rm (i)} 
$V_4\leq G_v$ where 
$|{\rm Orb}_{V_4}(i)|=2$ for any $1\leq i\leq 8$ 
and $V_4\cap [G,G]=1$ 
$($equivalently, 
$|{\rm Orb}_{V_4}(i)|=2$ for any $1\leq i\leq 8$ 
and $V_4$ is not in $A_8$$)$ 
or {\rm (ii)} 
$C_4\times C_2\leq G_v$ where 
$(C_4\times C_2)\cap [G,G]\simeq C_2$ 
$($equivalently, 
$C_4\times C_2$ is transitive in $S_8$$)$ 
(see Details \ref{det6.8} and Example \ref{ex8-2}).

(2-4) The case $G=8T19\simeq (C_2)^3\rtimes C_4$.
We have $H\simeq C_4$ and $M(G)\simeq(\bZ/2\bZ)^{\oplus 2}$. 
Applying the function 
{\tt MinimalStemExtensions($G$)[$j$].MinimalStemExtension}, 
we get the minimal stem extensions 
$\overline{G}_1,\overline{G}_2,\overline{G}_3$ of $G$. 
Use 
Theorem \ref{thDP87}. 
Applying 
{\tt KerResH3Z(G,H)}, 
we see that ${\rm Ker}\{H^3(\overline{G}_3,\bZ)\xrightarrow{\rm res}\oplus_{i=1}^{m^\prime} H^3(G_i,\bZ)\}=0$ but 
${\rm Ker}\{H^3(\overline{G}_j,\bZ)\xrightarrow{\rm res}\oplus_{i=1}^{m^\prime} H^3(G_i,\bZ)\}=\bZ/2\bZ$ for $j\in J:=\{1,2\}$. 
Because 
$\oplus_{i=1}^{m^\prime} \widehat{H}^{-3}(G_i,\bZ)\xrightarrow{\rm cores} 
\widehat{H}^{-3}(\overline{G}_3,\bZ)$ is surjective, 
it follows from Theorem \ref{thDP87} that 
${\rm Obs}(K/k)={\rm Obs}_1(\overline{L}_3/K/k)$. 
We also checked that 
${\rm Ker}\, \overline{\psi}_1/\overline{\varphi}_1({\rm Ker}\, \overline{\psi}_1^{\rm nr})$ $\simeq$ $\bZ/2\bZ$ for $\overline{G}_3$ and 
${\rm Ker}\, \overline{\psi}_1/\overline{\varphi}_1({\rm Ker}\, \overline{\psi}_2^{\rm nr})=0$ for $\overline{G}_j$ $(j\in J)$. 
This implies that 
${\rm Obs}(K/k)\neq {\rm Obs}_1(\overline{L}_j/K/k)$ 
when $\overline{L}_j/k$ is unramified 
for $j\in J$. 
Apply 
{\tt FirstObstructionDr($\overline{G}_3,\overline{G}_3^\prime,\overline{H}_3$)} 
for all subgroups $\overline{G}_3^\prime\leq \overline{G}_3$. 
We find that ${\rm Obs}_1(\overline{L}_3/K/k)$ $=$ $1$ 
if and only if 
there exists $v\in V_k$ such that 
{\rm (i)} 
$V_4\leq G_v$ where 
$V_4\cap Z(G)=1$ and 
$V_4\cap Z^2(G)\simeq C_2$ 
with the upper central series $1\leq Z(G)\leq Z^2(G)\leq G$ of $G$ 
$($equivalently, 
$|{\rm Orb}_{V_4}(i)|=4$ for any $1\leq i\leq 8$ 
and $V_4\cap Z(G)=1$$)$; 
or {\rm (ii)} 
$C_4\times C_2\leq G_v$
where 
$C_4\times C_2$ is not transitive in $S_8$ 
or 
$[G,G]\leq C_4\times C_2$ 
(see Details \ref{det6.8} and Example \ref{ex8-2}).

(2-5) The case $G=8T22\simeq (C_2)^3\rtimes V_4$.
We have $H\simeq V_4$ and $M(G)\simeq(\bZ/2\bZ)^{\oplus 5}$. 
Applying the function 
{\tt MinimalStemExtensions($G$)[$j$].MinimalStemExtension}, 
we get the minimal stem extensions 
$\overline{G}_1,\ldots,\overline{G}_{31}$ of $G$. 
Use 
Theorem \ref{thDP87}. 
Applying 
{\tt KerResH3Z(G,H)}, 
we see that ${\rm Ker}\{H^3(\overline{G}_{16},\bZ)\xrightarrow{\rm res}\oplus_{i=1}^{m^\prime} H^3(G_i,\bZ)\}=0$ but 
${\rm Ker}\{H^3(\overline{G}_j,\bZ)\xrightarrow{\rm res}\oplus_{i=1}^{m^\prime} H^3(G_i,\bZ)\}\neq 0$ for $j\in J:=\{j\mid 1\leq j\leq 31, j\neq 16\}$. 
Because 
$\oplus_{i=1}^{m^\prime} \widehat{H}^{-3}(G_i,\bZ)\xrightarrow{\rm cores} 
\widehat{H}^{-3}(\overline{G}_{16},\bZ)$ is surjective, 
it follows from Theorem \ref{thDP87} that 
${\rm Obs}(K/k)={\rm Obs}_1(\overline{L}_{16}/K/k)$. 
We also checked that 
${\rm Ker}\, \overline{\psi}_1/\overline{\varphi}_1({\rm Ker}\, \overline{\psi}_1^{\rm nr})$ $\simeq$ $\bZ/2\bZ$ for $\overline{G}_{16}$ and 
${\rm Ker}\, \overline{\psi}_1/\overline{\varphi}_1({\rm Ker}\, \overline{\psi}_2^{\rm nr})=0$ for $\overline{G}_j$ $(j\in J)$. 
This implies that 
${\rm Obs}(K/k)\neq {\rm Obs}_1(\overline{L}_j/K/k)$ 
when $\overline{L}_j/k$ is unramified 
for $j\in J$. 
Apply 
{\tt FirstObstructionDr($\overline{G}_{16},\overline{G}_{16}^\prime,\overline{H}_{16}$)} 
for all subgroups $\overline{G}_{16}^\prime\leq \overline{G}_{16}$. 
We find that ${\rm Obs}_1(\overline{L}_{16}/K/k)$ $=$ $1$ 
if and only if 
there exists $v\in V_k$ such that 
{\rm (i)} 
$V_4\leq G_v$ where 
$|{\rm Orb}_{V_4}(i)|=4$ for any $1\leq i\leq 8$ 
and $V_4\cap Z(G)=1$ 
or {\rm (ii)} 
$C_4\times C_2\leq G_v$ where 
$C_4\times C_2$ is transitive in $S_8$. 
(see Details \ref{det6.8} and Example \ref{ex8-2}).

(2-6) The case $G=8T32\simeq ((C_2)^3\rtimes V_4)\rtimes C_3$. 
We have $H\simeq A_4$ and $M(G)\simeq(\bZ/2\bZ)^{\oplus 3}$. 
Applying the function 
{\tt MinimalStemExtensions($G$)[$j$].MinimalStemExtension}, 
we get the minimal stem extensions 
$\overline{G}_1,\ldots,\overline{G}_{7}$ of $G$. 
Use 
Theorem \ref{thDP87}. 
Applying 
{\tt KerResH3Z(G,H)}, 
we see that ${\rm Ker}\{H^3(\overline{G}_{5},\bZ)\xrightarrow{\rm res}\oplus_{i=1}^{m^\prime} H^3(G_i,\bZ)\}=0$ but 
${\rm Ker}\{H^3(\overline{G}_j,\bZ)\xrightarrow{\rm res}\oplus_{i=1}^{m^\prime} H^3(G_i,\bZ)\}\simeq\bZ/2\bZ$ for $j\in J:=\{j\mid 2\leq j\leq 7\}$. 
Because 
$\oplus_{i=1}^{m^\prime} \widehat{H}^{-3}(G_i,\bZ)\xrightarrow{\rm cores} 
\widehat{H}^{-3}(\overline{G}_{5},\bZ)$ is surjective, 
it follows from Theorem \ref{thDP87} that 
${\rm Obs}(K/k)={\rm Obs}_1(\overline{L}_{5}/K/k)$. 
We also checked that 
${\rm Ker}\, \overline{\psi}_1/\overline{\varphi}_1({\rm Ker}\, \overline{\psi}_1^{\rm nr})$ $\simeq$ $\bZ/2\bZ$ for $\overline{G}_{5}$ and 
${\rm Ker}\, \overline{\psi}_1/\overline{\varphi}_1({\rm Ker}\, \overline{\psi}_2^{\rm nr})=0$ for $\overline{G}_j$ $(j\in J)$. 
This implies that 
${\rm Obs}(K/k)\neq {\rm Obs}_1(\overline{L}_j/K/k)$ 
when $\overline{L}_j/k$ is unramified 
for $j\in J$. 
Apply 
{\tt FirstObstructionDr($\overline{G}_1,\overline{G}_1^\prime,\overline{H}_1$)} 
for all subgroups $\overline{G}_1^\prime\leq \overline{G}_1$. 
We find that ${\rm Obs}_1(\overline{L}_1/K/k)$ $=$ $1$ 
if and only if 
there exists $v\in V_k$ such that 
{\rm (i)} 
$V_4\leq G_v$ where 
$|{\rm Orb}_{V_4}(i)|=4$ for any $1\leq i\leq 8$ 
and $V_4\cap Z(G)=1$ 
or {\rm (ii)} 
$C_4\times C_2\leq G_v$ where 
$C_4\times C_2$ is transitive in $S_8$ 
(see Details \ref{det6.8} and Example \ref{ex8-2}).

{\rm (3)} $n=9$: $G=9Tm$ $(m=2,5,7,9,11,14,23)$. 
Applying {\tt FirstObstructionN($G$)}, 
we have ${\rm Obs}_1(L/K/k)=1$ for each cases. 
We apply Theorem \ref{thDra89} for 
$m=2,5,9,11,14,23$. 
We see that $M(G)\simeq \bZ/3\bZ$ for $m=2,5,9,11,14,23$ 
and $M(G)\simeq (\bZ/3\bZ)^{\oplus 2}$ for $m=7$. 
Then for $m=7$ we apply Theorem \ref{thDP87} 
instead of Theorem \ref{thDra89} as in the case (2) $n=8$. 

(3-1) The case $G=9T2\simeq (C_3)^2$ (see also Theorem \ref{thTate} because $H=1$). 
Apply Theorem \ref{thDra89}. 
We obtain a Schur cover 
$1\to M(G)\simeq\bZ/3\bZ\to \widetilde{G}\xrightarrow{\pi} G\to 1$ 
with $\widetilde{G}\simeq (C_3)^2\rtimes C_3$ and $\widetilde{H}\simeq C_3$
and ${\rm Obs}(K/k)={\rm Obs}_1(\widetilde{L}/K/k)$. 
By Theorem \ref{thmain2}, 
${\rm Ker}\, \widetilde{\psi}_1/\widetilde{\varphi}_1({\rm Ker}\, \widetilde{\psi}_2^{\rm nr})\simeq \bZ/3\bZ$. 
By applying 
{\tt FirstObstructionDr($\widetilde{G},\widetilde{G}^\prime,\widetilde{H}$)} 
for all subgroups $\widetilde{G}^\prime\leq \widetilde{G}$, 
we obtain that ${\rm Obs}_1(\widetilde{L}/K/k)=1$ 
if and only if there exists $v\in V_k$ such that $\widetilde{G}_v=\widetilde{G}$ if and only if there exists $v\in V_k$ such that $G_v=G$ 
(see Example \ref{ex9}).  

(3-2) The case $G=9T5\simeq (C_3)^2\rtimes C_2$.
We have $H\simeq C_2$.  
Apply Theorem \ref{thDra89} as in the case (2) $n=8$. 
We obtain a Schur cover 
$1\to M(G)\simeq\bZ/3\bZ\to \widetilde{G}\xrightarrow{\pi} G\to 1$ 
with $\widetilde{G}\simeq ((C_3)^2\rtimes C_3)\rtimes C_2$ and $\widetilde{H}\simeq C_6$ 
and ${\rm Obs}(K/k)={\rm Obs}_1(\widetilde{L}/K/k)$. 
By Theorem \ref{thmain2}, 
${\rm Ker}\, \widetilde{\psi}_1/\widetilde{\varphi}_1({\rm Ker}\, \widetilde{\psi}_2^{\rm nr})\simeq \bZ/3\bZ$. 
By applying 
{\tt FirstObstructionDr($\widetilde{G},\widetilde{G}^\prime,\widetilde{H}$)} 
for all subgroups $\widetilde{G}^\prime\leq \widetilde{G}$, 
we obtain that ${\rm Obs}_1(\widetilde{L}/K/k)=1$ 
if and only if there exists $v\in V_k$ such that $(C_3)^2\rtimes C_3\leq \widetilde{G}_v$ if and only if there exists $v\in V_k$ such that $(C_3)^2\leq G_v$ 
(see Example \ref{ex9}). 

(3-3) The case $G=9T7\simeq (C_3)^2\rtimes C_3$. 
We have $H\simeq C_3$ and $M(G)\simeq(\bZ/3\bZ)^{\oplus 2}$. 
Applying the function 
{\tt MinimalStemExtensions($G$)[$j$].MinimalStemExtension}, 
we get the minimal stem extensions 
$\overline{G}_1,\ldots,\overline{G}_{4}$ of $G$. 
Use 
Theorem \ref{thDP87}. 
Applying 
{\tt KerResH3Z(G,H)}, 
we obtain that ${\rm Ker}\{H^3(\overline{G}_{1},\bZ)\xrightarrow{\rm res}\oplus_{i=1}^{m^\prime} H^3(G_i,\bZ)\}=0$ but 
${\rm Ker}\{H^3(\overline{G}_j,\bZ)\xrightarrow{\rm res}\oplus_{i=1}^{m^\prime} H^3(G_i,\bZ)\}\simeq\bZ/3\bZ$ for $j\in J:=\{2,3,4\}$. 
Because 
$\oplus_{i=1}^{m^\prime} \widehat{H}^{-3}(G_i,\bZ)\xrightarrow{\rm cores} 
\widehat{H}^{-3}(\overline{G}_{1},\bZ)$ is surjective, 
it follows from Theorem \ref{thDP87} that 
${\rm Obs}(K/k)={\rm Obs}_1(\overline{L}_{1}/K/k)$. 
We also have  
${\rm Ker}\, \overline{\psi}_1/\overline{\varphi}_1({\rm Ker}\, \overline{\psi}_1^{\rm nr})$ $\simeq$ $\bZ/2\bZ$ for $\overline{G}_{1}$ and 
${\rm Ker}\, \overline{\psi}_1/\overline{\varphi}_1({\rm Ker}\, \overline{\psi}_2^{\rm nr})=0$ for $\overline{G}_j$ $(j\in J)$. 
This implies that 
${\rm Obs}(K/k)\neq {\rm Obs}_1(\overline{L}_j/K/k)$ 
when $\overline{L}_j/k$ is unramified 
for $j\in J$. 
Apply 
{\tt FirstObstructionDr($\overline{G}_1,\overline{G}_1^\prime,\overline{H}_1$)} 
for all subgroups $\overline{G}_1^\prime\leq \overline{G}_1$. 
We find that ${\rm Obs}_1(\overline{L}_1/K/k)$ $=$ $1$ 
if and only if 
there exists $v\in V_k$ such that 
$(C_3)^2\leq \overline{G}_v$ 
if and only if there exists $v\in V_k$ such that $(C_3)^2\leq G_v$ 
(see Example \ref{ex9}). 

(3-4) The case $G=9T9\simeq (C_3)^2\rtimes C_4$.
We have $H\simeq C_4$. 
Apply Theorem \ref{thDra89}. 
We obtain a Schur cover 
$1\to M(G)\simeq\bZ/3\bZ\to \widetilde{G}\xrightarrow{\pi} G\to 1$ 
with $\widetilde{G}\simeq ((C_3)^2\rtimes C_3)\rtimes C_4$, 
$\widetilde{H}\simeq C_{12}$ 
and ${\rm Obs}(K/k)={\rm Obs}_1(\widetilde{L}/K/k)$. 
By Theorem \ref{thmain2}, 
${\rm Ker}\, \widetilde{\psi}_1/\widetilde{\varphi}_1({\rm Ker}\, \widetilde{\psi}_2^{\rm nr})\simeq \bZ/3\bZ$. 
By applying 
{\tt FirstObstructionDr($\widetilde{G},\widetilde{G}^\prime,\widetilde{H}$)} 
for all subgroups $\widetilde{G}^\prime\leq \widetilde{G}$, 
we obtain that ${\rm Obs}_1(\widetilde{L}/K/k)=1$ 
if and only if there exists $v\in V_k$ such that $(C_3)^2\rtimes C_3\leq \widetilde{G}_v$ if and only if there exists $v\in V_k$ such that $(C_3)^2\leq G_v$ 
(see Example \ref{ex9}). 

(3-5) The case $G=9T11\simeq (C_3)^2\rtimes C_6$.
We have $H\simeq C_6$. 
Apply Theorem \ref{thDra89}. 
We obtain a Schur cover 
$1\to M(G)\simeq\bZ/3\bZ\to \widetilde{G}\xrightarrow{\pi} G\to 1$ 
with $\widetilde{G}\simeq ((C_3)^3\rtimes C_3)\rtimes C_2$, 
$\widetilde{H}\simeq C_6\times C_3$ 
and ${\rm Obs}(K/k)={\rm Obs}_1(\widetilde{L}/K/k)$. 
By Theorem \ref{thmain2}, 
${\rm Ker}\, \widetilde{\psi}_1/\widetilde{\varphi}_1({\rm Ker}\, \widetilde{\psi}_2^{\rm nr})\simeq \bZ/3\bZ$. 
By applying 
{\tt FirstObstructionDr($\widetilde{G},\widetilde{G}^\prime,\widetilde{H}$)} 
for all subgroups $\widetilde{G}^\prime\leq \widetilde{G}$, 
we obtain that ${\rm Obs}_1(\widetilde{L}/K/k)=1$ 
if and only if 
there exists $v\in V_k$ such that $(C_3)^2\leq G_v$ 
(see Example \ref{ex9}). 

(3-6) The case $G=9T14\simeq (C_3)^2\rtimes Q_8$. 
We have $H\simeq Q_8$. 
Apply Theorem \ref{thDra89}. 
We obtain a Schur cover 
$1\to M(G)\simeq\bZ/3\bZ\to \widetilde{G}\xrightarrow{\pi} G\to 1$ 
with $\widetilde{G}\simeq (((C_3)^2\rtimes C_3)\rtimes Q_8)\rtimes C_3$, 
$\widetilde{H}\simeq C_3\times \SL_2(\bF_3)$ 
and ${\rm Obs}(K/k)={\rm Obs}_1(\widetilde{L}/K/k)$. 
By Theorem \ref{thmain2}, 
${\rm Ker}\, \widetilde{\psi}_1/\widetilde{\varphi}_1({\rm Ker}\, \widetilde{\psi}_2^{\rm nr})\simeq \bZ/3\bZ$. 
By applying 
{\tt FirstObstructionDr($\widetilde{G},\widetilde{G}^\prime,\widetilde{H}$)} 
for all subgroups $\widetilde{G}^\prime\leq \widetilde{G}$, 
we obtain that ${\rm Obs}_1(\widetilde{L}/K/k)=1$ 
if and only if 
there exists $v\in V_k$ such that $(C_3)^2\rtimes C_3\leq \widetilde{G}_v$ 
if and only if 
there exists $v\in V_k$ such that $(C_3)^2\leq G_v$ 
(see Example \ref{ex9}). 

(3-7) The case $G=9T23\simeq ((C_3)^2\rtimes Q_8)\rtimes C_3$. 
We have $H\simeq \SL_2(\bF_3)$. 
Apply Theorem \ref{thDra89}. 
We obtain a Schur cover 
$1\to M(G)\simeq\bZ/3\bZ\to \widetilde{G}\xrightarrow{\pi} G\to 1$ 
with $\widetilde{G}\simeq (((C_3)^2\rtimes C_3)\rtimes Q_8)\rtimes C_3$, 
$\widetilde{H}\simeq C_3\times \SL_2(\bF_3)$ 
and ${\rm Obs}(K/k)={\rm Obs}_1(\widetilde{L}/K/k)$. 
By Theorem \ref{thmain2}, 
${\rm Ker}\, \widetilde{\psi}_1/\widetilde{\varphi}_1({\rm Ker}\, \widetilde{\psi}_2^{\rm nr})\simeq \bZ/3\bZ$. 
By applying 
{\tt FirstObstructionDr($\widetilde{G},\widetilde{G}^\prime,\widetilde{H}$)} 
for all subgroups $\widetilde{G}^\prime\leq \widetilde{G}$, 
we obtain that ${\rm Obs}_1(\widetilde{L}/K/k)=1$ 
if and only if 
there exists $v\in V_k$ such that $(C_3)^2\leq G_v$ 
(see Example \ref{ex9}). \\

{\rm (4)} $n=10$: $G\simeq 10T7\simeq A_5$, 
$G\simeq 10T26\simeq A_6$ and 
$G\simeq 10T32\simeq S_6$. 
By Theorem \ref{thDP87}, we have 
${\rm Obs}(K/k)={\rm Obs}_1(L/K/k)$. 

{\rm (4-1)} The case $G=10T7\simeq A_5$.  
We have $H\simeq S_3$ and $H^{ab}\simeq C_2$. 
It follows from Theorem \ref{thmain2} that 
${\rm Ker}\, \psi_1\simeq {\rm Ker}\, \psi_1/\varphi_1({\rm Ker}\, \psi_2^{\rm nr})\simeq\bZ/2\bZ$. 
Apply {\tt FirstObstructionDr($G,G^\prime$)} for all subgroups 
$G^\prime\leq G$. 
We get that ${\rm Obs}_1(L/K/k)=1$ if and only if 
there exists $v\in V_k$ such that 
$V_4\leq G_v$ 
(see Example \ref{ex10}).

{\rm (4-2)} The case $G=10T26\simeq A_6$.   
We have $H\simeq (C_3)^2\rtimes C_4$ and $H^{ab}\simeq C_4$. 
Applying {\tt FirstObstructionN($G$)} 
and {\tt FirstObstructionDnr($G$)}, 
we obtain that 
${\rm Ker}\, \psi_1\simeq\bZ/4\bZ$ and 
${\rm Ker}\, \psi_1/\varphi_1({\rm Ker}\, \psi_2^{\rm nr})\simeq\bZ/2\bZ$. 
Apply {\tt FirstObstructionDr($G,G^\prime$)} for all subgroups 
$G^\prime\leq G$. 
We get that ${\rm Obs}_1(L/K/k)=1$ if and only if 
there exists $v\in V_k$ such that 
$D_4\leq G_v$ 
(see Example \ref{ex10}). 

{\rm (4-3)} The case $G=10T32\simeq S_6$.   
We have $H\simeq (S_3)^2\rtimes C_2$ and $H^{ab}\simeq C_2\times C_2$. 
Applying {\tt FirstObstructionN($G$)} 
and {\tt FirstObstructionDnr($G$)}, 
we obtain that 
${\rm Ker}\, \psi_1\simeq{\rm Ker}\, \psi_1/\varphi_1({\rm Ker}\, \psi_2^{\rm nr})\simeq\bZ/2\bZ$. 
We also apply the function 
{\tt FirstObstructionDr($G,G^\prime$)} 
for all subgroups $G^\prime\leq G$. 
We obtain that ${\rm Obs}_1(L/K/k)=1$ if and only if 
there exists $v\in V_k$ such that 
{\rm (i)} 
$V_4\leq G_v$ where 
$N_{\widetilde{G}}(V_4)\simeq C_8\rtimes (C_2\times C_2)$ 
for the normalizer $N_{\widetilde{G}}(V_4)$ of $V_4$ in $\widetilde{G}$ 
with the normalizer $\widetilde{G}=N_{S_{10}}(G)\simeq {\rm Aut}(G)$ of $G$ in $S_{10}$
$($equivalently, $|{\rm Orb}_{V_4}(i)|=2$ for any $1\leq i\leq 10$$)$ or  
{\rm (ii)} $D_4\leq G_v$ where $D_4\leq [G,G]\simeq A_6$ 
(see Details \ref{det6.8} and Example \ref{ex10}).\\ 

{\rm (5)} $n=14$: $G=14T30\simeq \PSL_2(\bF_{13})$. 
By Theorem \ref{thDP87}, we obtain that 
${\rm Obs}(K/k)={\rm Obs}_1(L/K/k)$.  
We have $H\simeq C_{13}\rtimes C_6$ and $H^{ab}\simeq C_6$. 
Applying {\tt FirstObstructionN($G$)} 
and {\tt FirstObstructionDnr($G$)}, 
we obtain that 
${\rm Ker}\, \psi_1\simeq\bZ/6\bZ$ and 
${\rm Ker}\, \psi_1/\varphi_1({\rm Ker}\, \psi_2^{\rm nr})\simeq\bZ/2\bZ$. 
Apply {\tt FirstObstructionDr($G,G^\prime$)} for all subgroups 
$G^\prime\leq G$. 
We get that ${\rm Obs}_1(L/K/k)=1$ if and only if 
there exists $v\in V_k$ such that 
$V_4\leq G_v$ 
(see Example \ref{ex14}).\\ 

{\rm (6)} $n=15$: $G=15T9\simeq (C_5)^2\rtimes C_3$ and 
$G\simeq 15T14\simeq (C_5)^2\rtimes S_3$. 
By Theorem \ref{thDP87}, we obtain that 
${\rm Obs}(K/k)={\rm Obs}_1(L/K/k)$. 

(6-1) The case $G=15T9\simeq (C_5)^2\rtimes C_3$.   
We have $H\simeq H^{ab}\simeq C_5$. 
Applying {\tt FirstObstructionN($G$)} 
and {\tt FirstObstructionDnr($G$)}, 
we have 
${\rm Ker}\, \psi_1\simeq
{\rm Ker}\, \psi_1/\varphi_1({\rm Ker}\, \psi_2^{\rm nr})\simeq\bZ/5\bZ$. 
Apply {\tt FirstObstructionDr($G,G^\prime$)} for all subgroups 
$G^\prime\leq G$. 
We get that ${\rm Obs}_1(L/K/k)=1$ if and only if 
there exists $v\in V_k$ such that 
$(C_5)^2\leq G_v$ 
(see Example \ref{ex15}). 

(6-2) The case $G=15T14\simeq (C_5)^2\rtimes S_3$. 
We have $H\simeq H^{ab}\simeq C_{10}$. 
Applying {\tt FirstObstructionN($G$)} 
and {\tt FirstObstructionDnr($G$)}, 
we obtain that 
${\rm Ker}\, \psi_1\simeq
{\rm Ker}\, \psi_1/\varphi_1({\rm Ker}\, \psi_2^{\rm nr})\simeq\bZ/5\bZ$. 
We apply the function 
{\tt FirstObstructionDr($G,G^\prime$)} for all subgroups 
$G^\prime\leq G$. 
We get that ${\rm Obs}_1(L/K/k)=1$ if and only if 
there exists $v\in V_k$ such that 
$(C_5)^2\leq G_v$ 
(see Example \ref{ex15}). 
\qed
\begin{remark}
By the proof of Theorem \ref{thmain3}, 
for $n=8$ (resp. $n=9)$, 
there exists $\widetilde{L}\supset L$ with $[\widetilde{L}:L]=2$ 
(resp. $[\widetilde{L}:L]=3$) 
such that ${\rm Obs}(K/k)={\rm Obs}_1(\widetilde{L}/K/k)$ 
although ${\rm Obs}(K/k)\neq {\rm Obs}_1(L/K/k)$ 
when $L/k$ is unramified 
and ${\rm Obs}_1(L/K/k)=1$ except for the case $G=8T21$ with 
${\rm Obs}_1(L/K/k)\simeq \bZ/2\bZ$.
\end{remark}

\begin{details}[{The cases (1-4) $G=8T31$, $8T38$, 
(2) $G=8Tm$ $(m=9,11,15,19,22,32)$ and 
(4-3) $G=10T32$ in Theorem \ref{thmain3}}]\label{det6.8}
We take generators of $G$ and $H={\rm Stab}_1(G)$ in Theorem \ref{thmain3} 
(Table $2$) 
and give more details for the cases {\rm (1-4)} $G=8T31$, $8T38$,  
{\rm (2)} $G=8Tm$ $(m=9,11,15,19,22,32)$ 
and (4-3) $G=10T32$. 

(1-4) 
For  
$G=8T31\simeq ((C_2)^4\rtimes C_2)\rtimes C_2$ 
and $G^\prime=8T38\simeq (((C_2)^4)\rtimes C_2)\rtimes C_2)\rtimes C_3$, 
we take $G=\langle g_1,g_2,g_3\rangle={\rm Syl}_2(G^\prime)\lhd 
G^\prime=\langle g_1,g_2,g_3^\prime\rangle$ 
and $H={\rm Stab}_1(G)=\langle g_1,(2,6),(3,7)\rangle\simeq (C_2)^3
\leq H^\prime={\rm Stab}_1(G^\prime)=\langle g_1,(2,6),(2,8,3)(4,7,6)\rangle\simeq C_2\times A_4$ 
where 
$g_1=(4,8)$, 
$g_2=(1,8)(2,3)(4,5)(6,7)$, 
$g_3=(1,3)(2,8)(4,6)(5,7)$, 
$g_3^\prime=(1,2,3)(5,6,7)$. 
There exist $61$ subgroups $V_4\leq G$ 
and $14$ of them satisfy $|{\rm Orb}_{V_4}(i)|=4$ ($1\leq i\leq 8$): 
\begin{align*}
V_4^{(1)}&=\{1,\sigma_1, \sigma_2, \sigma_3\},\ 
V_4^{(2)}=\{1,\sigma_1, \sigma_6, \sigma_7\},\ 
V_4^{(3)}=\{1,\tau_1, \tau_2, \tau_6\},\ 
V_4^{(4)}=\{1,\tau_1, \sigma_4, \tau_5\},\\
V_4^{(5)}&=\{1,\sigma_2, \tau_4, \sigma_7\},\ 
V_4^{(6)}=\{1,\tau_2, \sigma_4, \sigma_5)\},\ 
V_4^{(7)}=\{1,\sigma_3, \tau_4, \sigma_6\},\ 
V_4^{(8)}=\{1,\sigma_5, \tau_5, \tau_6\},\\ 
V_4^{(9)}&=\{1,\sigma_1, \tau_3, \tau_4\},\ 
V_4^{(10)}=\{1,\tau_1, \tau_3, \sigma_5)\},\ 
V_4^{(11)}=\{1,\sigma_2, \tau_3, \sigma_6\},\\
V_4^{(12)}&=\{1,\tau_2, \tau_3, \tau_5\},\ 
V_4^{(13)}=\{1,\sigma_3, \tau_3, \sigma_7\},\ 
V_4^{(14)}=\{1,\sigma_4, \tau_3, \tau_6\}
\end{align*}
where 
$\sigma_1=(1,2)(3,4)(5,6)(7,8)$, 
$\sigma_2=(1,3)(2,4)(5,7)(6,8)$, 
$\sigma_3=(1,4)(2,3)(5,8)(6,7)$, 
$\sigma_4=(1,4)(2,7)(3,6)(5,8)$, 
$\sigma_5=(1,6)(2,5)(3,4)(7,8)$,
$\sigma_6=(1,7)(2,8)(3,5)(4,6)$, 
$\sigma_7=(1,8)(2,7)(3,6)(4,5)$, 
$\tau_1=(1,2)(3,8)(4,7)(5,6)$, 
$\tau_2=(1,3)(2,8)(4,6)(5,7)$, 
$\tau_3=(1,5)(2,6)(3,7)(4,8)$, 
$\tau_4=(1,6)(2,5)(3,8)(4,7)$, 
$\tau_5=(1,7)(2,4)(3,5)(6,8)$, 
$\tau_6=(1,8)(2,3)(4,5)(6,7)$. 
Note that $Z(G)=\langle\tau_3\rangle\simeq C_2$ 
and the first $8$ groups 
$V_4^{(i)}$ $(1 \leq i\leq 8)$ satisfy 
$V_4\cap Z(G)=1$ as appearing in Theorem  \ref{thmain3} (Table $2$)  and 
the last  $6$ groups $V_4^{(i)}$ $(9\leq i\leq 14)$ 
satisfy $V_4\cap Z(G)\simeq C_2$. 
On the other hand, 
there exist $15$ subgroups $C_4\times C_2\leq G$: 
\begin{align*}
G^{(1)}&=\langle (1,4,5,8)(2,3,6,7), \sigma_1, \tau_3\rangle,\\
G^{(2)}&=\langle (1,8,5,4)(2,3,6,7), \tau_1, \tau_3\rangle,\\
G^{(3)}&=\langle (1,8,5,4)(2,3,6,7), (1,2,5,6)(3,4,7,8), \tau_3\rangle,\\
G^{(4)}&=\langle (1,6,5,2)(3,8,7,4), \tau_6, \tau_3\rangle,\\
G^{(5)}&=\langle (1,4,5,8)(2,3,6,7), (1,2,5,6)(3,8,7,4), \tau_3\rangle,\\
G^{(6)}&=\langle (1,6,5,2)(3,4,7,8), \sigma_3, \tau_3\rangle,\\
G^{(7)}&=\langle (1,2)(3,8,7,4)(5,6), (1,5)(2,6), (3,7)(4,8)\rangle,\\
G^{(8)}&=\langle (1,6,5,2)(3,8)(4,7), (1,5)(2,6), (3,7)(4,8)\rangle,\\
G^{(9)}&=\langle (1,6,5,2)(3,8,7,4), (1,5)(2,6), (3,7)(4,8)\rangle,\\
G^{(10)}&=\langle (1,7)(2,4,6,8)(3,5), (1,5)(3,7), (2,6)(4,8)\rangle,\\
G^{(11)}&=\langle (1,7,5,3)(2,8)(4,6), (1,5)(3,7), (2,6)(4,8)\rangle,\\
G^{(12)}&=\langle (1,7,5,3)(2,4,6,8), (1,5) (3,7), (2,6)(4,8)\rangle,\\
G^{(13)}&=\langle (1,4,5,8)(2,3)(6,7), (1,5)(4,8), (2,6)(3,7)\rangle,\\
G^{(14)}&=\langle (1,4)(2,3,6,7)(5,8), (1,5)(4,8), (2,6)(3,7)\rangle,\\
G^{(15)}&=\langle (1,4,5,8)(2,3,6,7), (1,5)(4,8), (2,6)(3,7)\rangle.
\end{align*}
Note that $[G,G]=\langle(1,5)(4,8), (2,6)(4,8), (2,6)(3,7)\rangle\simeq 
(C_2)^3$ and 
the first $6$ groups $G^{(i)}$ $(1\leq i\leq 6)$ 
satisfy $G^{(i)}\cap [G,G]=\langle\tau_3\rangle\simeq C_2$ 
and are transitive in $S_8$ 
as appearing in Theorem  \ref{thmain3} (Table $2$) 
and the last $9$ groups $G^{(i)}$ $(7\leq i\leq 15)$ 
satisfy $G^{(i)}\cap [G,G]\simeq V_4$ 
and are not transitive in $S_8$. 
Also, 
there exist $3$ subgroups $(C_2)^3\rtimes V_4\leq G$. 
They are all transitive in $S_8$ (see Example \ref{ex8}). 

(2-1) 
For  $G=8T9\simeq D_4\times C_2$, 
we take $G=\langle g_1,g_2,g_3,g_4\rangle$ 
and $H={\rm Stab}_1(G)=\langle g_4\rangle\simeq C_2$ 
where 
$g_1=(1,8)(2,3)(4,5)(6,7)$, 
$g_2=(1,3)(2,8)(4,6)(5,7)$, 
$g_3=(1,5)(2,6)(3,7)(4,8)$, 
$g_4=(4,5)(6,7)$. 
There exist $13$ subgroups $V_4\leq G$ and $8$ of them 
satisfy $|{\rm Orb}_{V_4}(i)|=4$ $(1\leq i\leq 8)$: 
\begin{align*}
&V_4^{(1)}=\{1,\sigma_1,\sigma_3,\sigma_6\},\ 
V_4^{(2)}=\{1,\sigma_1,\sigma_4,\sigma_5\},\ 
V_4^{(3)}=\{1,\sigma_2,\sigma_3,\sigma_5\},\ 
V_4^{(4)}=\{1,\sigma_2,\sigma_4,\sigma_6\},\\ 
&V_4^{(5)}=\{1,\sigma_1,\sigma_2,\tau_3\},\ 
V_4^{(6)}=\{1,\sigma_3,\sigma_4,\tau_3\},\ 
V_4^{(7)}=\{1,\sigma_5,\sigma_6,\tau_3\},\ 
V_4^{(8)}=\{1,\tau_1,\tau_2,\tau_3\}
\end{align*}
where 
$\sigma_1=(1,2)(3,8)(4,7)(5,6)$, $\sigma_2=(1,3)(2,8)(4,6)(5,7)$, 
$\sigma_3=(1,4)(2,7)(3,6)(5,8)$, $\sigma_4=(1,5)(2,6)(3,7)(4,8)$, 
$\sigma_5=\sigma_1 \sigma_4=(1,6)(2,5)(3,4)(7,8)$, 
$\sigma_6=\sigma_1 \sigma_3=(1,7)(2,4)(3,5)(6,8)$, 
$\tau_1=(1,2)(3,8)(4,6)(5,7)$, 
$\tau_2=(1,3)(2,8)$ $(4,7)(5,6)$, 
$\tau_3=(1,8)(2,3)(4,5)(6,7)$. 
Note that $[G,G]=\langle\tau_3\rangle\simeq C_2$ and 
the first half $V_4^{(i)}$ $(1\leq i\leq 4)$ 
satisfy $V_4^{(i)}\cap [G,G]=1$ 
as appearing in Theorem  \ref{thmain3} (Table $2$) 
although  
the second half $V_4^{(i)}$ $(5\leq i\leq 8)$ 
satisfy $[G,G]\leq V_4^{(i)}$. 
On the other hand, 
there exists the unique subgroup $C_4\times C_2\leq G$ 
(see Example \ref{ex8-2}). 

(2-2) 
For $G=8T11\simeq (C_4\times C_2)\rtimes C_2$, 
we take $G=\langle g_1,g_2,g_3\rangle$ 
and $H={\rm Stab}_1(G)=\langle (2,6)(4,8)\rangle\simeq C_2$ 
where 
$g_1=(1,5)(3,7)$, 
$g_2=(1,3,5,7)(2,4,6,8)$, 
$g_3=(1,4,5,8)(2,3,6,7)$. 
There exist $3$ subgroups 
$C_4\times C_2\leq G$: 
\begin{align*}
&G^{(1)}=\langle g_2, (1,2)(3,4)(5,6)(7,8)\rangle,\\ 
&G^{(2)}=\langle g_2, (1,8)(2,3)(4,5)(6,7)\rangle,\\ 
&G^{(3)}=\langle g_2, (2,6)(4,8)\rangle.
\end{align*}
The first two groups $G^{(1)}$ and $G^{(2)}$ 
are transitive in $S_8$ as appearing in Theorem  \ref{thmain3} (Table $2$) 
although  
the last one $G^{(3)}$ is not transitive in $S_8$ (see Example \ref{ex8-2}).

(2-3) 
For $G=8T15\simeq C_8\rtimes V_4$, 
we take $G=\langle g_1,g_2,g_3\rangle$ 
and $H={\rm Stab}_1(G)=\langle (2,8)(3,7)(4,6), (2,4)(3,7)(6,8)\rangle
\simeq V_4$ 
where 
$g_1=(1,2,3,4,5,6,7,8)$,  
$g_2=(1,5)(3,7)$, 
$g_3=(1,6)(2,5)(3,4)(7,8)$.
There exist $15$ subgroups $V_4\leq G$ 
and $5$ of them satisfy $|{\rm Orb}_{V_4}(i)|=2$ ($1\leq i\leq 8$): 
\begin{align*}
V_4^{(1)}&=\{1,\sigma_1,\tau_1,\tau_2\},\ 
V_4^{(2)}=\{1,\sigma_2,\tau_3,\tau_4\},\ 
V_4^{(3)}=\{1,\sigma_2,\tau_5,\tau_6\},\ 
V_4^{(4)}=\{1,\sigma_1,\tau_7,\tau_8\},\\ 
V_4^{(5)}&=\{1,\sigma_1,\sigma_2,\sigma_1\sigma_2\}
\end{align*}
where 
$\sigma_1=(1,5)(3,7)$, 
$\sigma_2=(2,6)(4,8)$, 
$\tau_1=(2,4)(3,7)(6,8)$, 
$\tau_2=(1,5)(2,4)(6,8)$, 
$\tau_3=(1,3)(4,8)(5,7)$, 
$\tau_4=(1,3)(2,6)(5,7)$, 
$\tau_5=(1,7)(3,5)(4,8)$, 
$\tau_6=(1,7)(2,6)(3,5)$, 
$\tau_7=(2,8)(3,7)(4,6)$, 
$\tau_8=(1,5)(2,8)(4,6)$. 
Note that $[G,G]=\langle (1,3,5,7),(2,4,6,8)\rangle\simeq C_4$ 
and 
the first four groups $V_4^{(i)}$ $(1\leq i\leq 4)$  
satisfy that $V_4^{(i)}\cap [G,G]=1$ 
and $V_4$ is not in $A_8$ 
as appearing in Theorem  \ref{thmain3} (Table $2$) 
although  
the last one $V_4^{(5)}$ satisfy that 
$V_4^{(5)}\cap [G,G]\simeq C_2$ 
and $V_4$ is in $A_8$. 
On the other hand, 
there exist $3$ subgroups $C_4\times C_2\leq G$: 
\begin{align*}
G^{(1)}&=\langle (1,3,5,7)(2,8,6,4), (1,2)(3,8)(4,7)(5,6)\rangle,\\ 
G^{(2)}&=\langle (1,3,5,7)(2,8,6,4), (1,4)(2,3)(5,8)(6,7)\rangle,\\ 
G^{(3)}&=\langle (1,3,5,7)(2,4,6,8),(2,6)(4,8)\rangle.
\end{align*}
The first two groups $G^{(i)}$ $(i=1,2)$ 
satisfy $G^{(i)}\cap [G,G]\simeq C_2$ $(i=1,2)$ 
which is transitive in $S_8$ 
as appearing in Theorem  \ref{thmain3} (Table $2$) 
although 
the last one $G^{(3)}$ satisfy 
$G^{(i)}\cap [G,G]\simeq C_4$ 
which is not transitive in $S_8$ (see Example \ref{ex8-2}). 

(2-4) 
For $G=8T19\simeq (C_2)^3\rtimes C_4$, 
we take $G=\langle g_1,g_2,g_3,g_4\rangle$ 
and $H={\rm Stab}_1(G)=\langle (2,8)(4,5,6,7)\rangle\simeq C_4$ 
where 
$g_1=(1,8)(2,3)(4,5)(6,7)$, 
$g_2=(1,3)(2,8)(4,6)(5,7)$, 
$g_3=(1,5)(2,6)(3,7)(4,8)$, 
$g_4=(1,3)(4,5,6,7)$. 
There exist $13$ subgroups $V_4\leq G$ 
and $8$ of them satisfy $|{\rm Orb}_{V_4}(i)|=4$ ($1\leq i\leq 8$): 
\begin{align*}
V_4^{(1)}&=\{1,\sigma_1,\sigma_2,\sigma_5\},\ 
V_4^{(2)}=\{1,\sigma_1,\sigma_3,\sigma_4\},\ 
V_4^{(3)}=\{1,\sigma_2,\sigma_3,\sigma_6\},\ 
V_4^{(4)}=\{1,\sigma_4,\sigma_5,\sigma_6\},\\
V_4^{(5)}&=\{1,\tau_1,\tau_2,\tau_3\},\ 
V_4^{(6)}=\{1,\tau_2,\sigma_1,\sigma_6\},\ 
V_4^{(7)}=\{1,\tau_2,\sigma_2,\sigma_4\},\ 
V_4^{(8)}=\{1,\tau_2,\sigma_3,\sigma_5\}
\end{align*} 
where 
$\sigma_1=(1,2)(3,8)(4,7)(5,6)$, 
$\sigma_2=(1,4)(2,7)(3,6)(5,8)$, 
$\sigma_3=(1,5)(2,6)(3,7)(4,8)$, 
$\sigma_4=(1,6)(2,5)(3,4)(7,8)$, 
$\sigma_5=(1,7)(2,4)(3,5)(6,8)$, 
$\sigma_6=(1,8)(2,3)(4,5)(6,7)$, 
$\tau_1=(1,2)(3,8)(4,5)(6,7)$, 
$\tau_2=(1,3)(2,8)(4,6)(5,7)$, 
$\tau_3=(1,8)(2,3)(4,7)(5,6)$. 
Note that $Z(G)=\langle \tau_2\rangle\simeq C_2$ 
and 
the first four groups $V_4^{(i)}$ $(1\leq i\leq 4)$  
satisfy $V_4^{(i)}\cap Z(G)=1$ and $V_4^{(i)}\cap Z^2(G)\simeq C_2$ 
with the upper central series $1\leq Z(G)\leq Z^2(G)\leq G$ of $G$ 
as appearing in Theorem  \ref{thmain3} (Table $2$) 
although  
the last four groups $V_4^{(i)}$ $(5\leq i\leq 8)$ 
satisfy $V_4^{(i)}\cap Z(G)\simeq C_2$. 
On the other hand, 
there exist $5$ subgroups $C_4\times C_2\leq G$: 
\begin{align*}
G^{(1)}&=\langle (2,8)(4,5,6,7), (1,3)(2,8)\rangle,\\ 
G^{(2)}&=\langle (1,2,3,8)(5,7), (4,6)(5,7)\rangle,\\ 
G^{(3)}&=\langle (1,5,3,7)(2,6,8,4),(1,8)(2,3)(4,5)(6,7)\rangle,\\ 
G^{(4)}&=\langle (1,4,2,5)(4,6,8,7),(1,3)(2,8)(4,6)(5,7)\rangle,\\ 
G^{(5)}&=\langle (1,4,8,7)(2,5,3,6),(1,3)(2,8)(4,6)(5,7)\rangle.
\end{align*}
Note that $[G,G]=\langle (1,3)(2,8)(4,6)(5,7),(1,8)(2,3)(4,5)(6,7)
\rangle\simeq V_4$ and the first two groups $G^{(1)}$ and $G^{(2)}$ 
are not transitive in $S_8$ and the third one  
$G^{(3)}$ is transitive in $S_8$ 
which satisfy $[G,G]\leq G^{(3)}$
as appearing in Theorem  \ref{thmain3} (Table $2$) 
although 
the last two $G^{(4)}$ and $G^{(5)}$ are transitive in $S_8$ 
which satisfy $G^{(i)}\cap [G,G] \simeq C_2$ $(4\leq i\leq 5)$ 
(see Example \ref{ex8-2}).

(2-5), (2-6)  For  
$G=8T22\simeq (C_2)^3\rtimes V_4$ 
and $G^\prime=8T32\simeq ((C_2)^3\rtimes V_4)\rtimes C_3$, 
we take $G=\langle g_1,g_2,g_3,g_4,g_5\rangle$, 
$H={\rm Stab}_1(G)=\langle g_4,g_5\rangle\simeq V_4$, 
$G^\prime=\langle g_1,g_2,g_3,g_4^\prime,g_5^\prime\rangle$ 
and 
$H^\prime={\rm Stab}_1(G^\prime)=\langle g_5^\prime, 
(2,3,8)(4,7,5)\rangle\simeq A_4$ 
where 
$g_1=(1,8)(2,3)(4,5)(6,7)$, 
$g_2=(1,3)(2,8)(4,6)(5,7)$, 
$g_3=(1,5)(2,6)(3,7)(4,8)$, 
$g_4=(2,3)(4,5)$, 
$g_5=(2,3)(6,7)$, 
$g_4^\prime=(1,2,3)(4,6,5)$, 
$g_5^\prime=(2,5)(3,4)$.
There exist $33$ subgroups $V_4\leq G$ 
and $14$ of them satisfy $|{\rm Orb}_{V_4}(i)|=4$ ($1\leq i\leq 8$): 
\begin{align*}
V_4^{(1)}&=\{1,\sigma_1, \sigma_3, \sigma_5\},\ 
V_4^{(2)}=\{1,\sigma_1, \sigma_4, \sigma_6\},\ 
V_4^{(3)}=\{1,\tau_1, \tau_2, \tau_5\},\ 
V_4^{(4)}=\{1,\tau_1, \tau_4, \sigma_5\},\\ 
V_4^{(5)}&=\{1,\tau_3, \tau_2, \sigma_5\},\ 
V_4^{(6)}=\{1,\tau_3, \tau_4, \tau_5\},\ 
V_4^{(7)}=\{1,\sigma_2, \sigma_3, \sigma_6\},\ 
V_4^{(8)}=\{1,\sigma_2, \sigma_4, \sigma_5\},\\ 
V_4^{(9)}&=\{1,\sigma_1, \sigma_2, \tau_6\},\ 
V_4^{(10)}=\{1,\tau_1, \tau_3, \tau_6\},\ 
V_4^{(11)}=\{1,\sigma_3, \sigma_4, \tau_6\},\\ 
V_4^{(12)}&=\{1,\tau_2, \tau_3, \tau_6\},\ 
V_4^{(13)}=\{1,\sigma_5, \sigma_6, \tau_6\},\ 
V_4^{(14)}=\{1,\sigma_5, \tau_5, \tau_6\}
\end{align*}
where
$\sigma_1=(1,2)(3,8)(4,6)(5,7)$, 
$\sigma_2=(1,3)(2,8)(4,7)(5,6)$, 
$\sigma_3=(1,4)(2,6)(3,7)(5,8)$, 
$\sigma_4=(1,5)(2,7)(3,6)(4,8)$, 
$\sigma_5=(1,6)(2,4)(3,5)(7,8)$, 
$\sigma_6=(1,7)(2,5)(3,4)(6,8)$, 
$\tau_1=(1,2)(3,8)(4,7)(5,6)$, 
$\tau_2=(1,3)(2,8)(4,6)(5,7)$
$\tau_3=(1,4)(2,7)(3,6)(5,8)$, 
$\tau_4=(1,5)(2,6)(3,7)(4,8)$, 
$\tau_5=(1,7)(2,4)(3,5)(6,8)$, 
$\tau_6=(1,8)(2,3)(4,5)(6,7)$. 
Note that $Z(G)=\langle\tau_6\rangle\simeq C_2$ 
and the first $8$ groups $V_4^{(i)}$ $(1\leq i\leq 8)$ 
satisfy $V_4\cap Z(G)=1$ as appearing in Theorem  \ref{thmain3} (Table $2$) and 
the last $6$ groups  
$V_4^{(i)}$ $(9\leq i\leq 14)$ 
satisfy $V_4\cap Z(G)\simeq C_2$. 
On the other hand, 
there exist $9$ subgroups $C_4\times C_2\leq G$:
\begin{align*}
G^{(1)}&=\langle (1,4,8,5)(2,6,3,7), \sigma_1, \tau_6\rangle,\\
G^{(2)}&=\langle (1,6,8,7)(2,5,3,4), \tau_1, \tau_6\rangle,\\
G^{(3)}&=\langle (1,2,8,3)(4,6,5,7), \sigma_3, \tau_6\rangle,\\ 
G^{(4)}&=\langle (1,4,8,5)(2,6,3,7), (1,2,8,3)(4,6,5,7), \tau_6\rangle,\\
G^{(5)}&=\langle (1,2,8,3)(4,7,5,6), \tau_3, \tau_6\rangle,\\ 
G^{(6)}&=\langle (1,2,8,3)(4,7,5,6), \sigma_5, \tau_6\rangle,\\
G^{(7)}&=\langle (1,2,8,3)(4,6,5,7), (4,5)(6,7), \tau_6\rangle,\\
G^{(8)}&=\langle (1,4,8,5)(2,6,3,7), (2,3)(6,7), \tau_6\rangle,\\
G^{(9)}&=\langle (1,6,8,7)(2,5,3,4), (2,3)(4,5), \tau_6\rangle.
\end{align*}
The first $6$ groups $G^{(i)}$ $(1\leq i\leq 6)$ are 
transitive in $S_8$ as appearing in Theorem  \ref{thmain3} (Table $2$) 
and the last $3$ groups $G^{(i)}$ $(7\leq i\leq 9)$
are not transitive in $S_8$ (see Example \ref{ex8-2}).

(4-3) 
For  $G=10T32\simeq S_6$, 
we take $G=\langle g_1,g_2,g_3,g_4\rangle$ and 
$H={\rm Stab}_1(G)=\langle g_4,(3,10)(6,9)(7,8), 
(2,4)(3,7)$ $(6,9)(8,10)\rangle\simeq (S_3)^2\rtimes C_2$ 
where 
$g_1=(1,2,10)(3,4,5)(6,7,8)$, 
$g_2=(1,3,2,6)(4,5,8,7)$, 
$g_3=(1,2)(4,7)(5,8)(9,10)$, 
$g_4=(3,6)(4,7)(5,8)$. 
There exist $165$ subgroups $V_4$ of $G$ 
and $45=({10 \atop 2})$ groups 
$\langle (a,b)(c,d)(e,f),(a,b)(g,h)(i,j)\rangle$ $\simeq$ $V_4$ 
$(\{a,b,c,d,e,f,g,h,i,j\}$ $=$ $\{1,2,3,4,5,6,7,8,9,10\})$ 
of them satisfy 
$N_{\widetilde{G}}(V_4)\simeq C_8\rtimes (C_2\times C_2)$ 
where $\widetilde{G}=N_{S_{10}}(G)\simeq {\rm Aut}(G)$ is 
the normalizer of $G$ in $S_{10}$ 
$($equivalently, $|{\rm Orb}_{V_4}(i)|=2$ for any $1\leq i\leq 10$$)$ 
as appearing in Theorem \ref{thmain3} (Table $2$). 
There exist $180$ subgroups $D_4$ of $G$ 
and $45$ groups of them satisfy $D_4\leq [G,G]\simeq A_6$ 
(see Example \ref{ex10}). 
\end{details}

\smallskip
\begin{example}[$G=4T2\simeq V_4$ and $G=4T4\simeq A_4$]\label{ex4}
~{}\vspace*{-2mm}\\

Case $G=4T2\simeq V_4$. 
{\small 
\begin{verbatim}
gap> Read("HNP.gap");
gap> G:=TransitiveGroup(4,2); # G=4T2=V4
E(4) = 2[x]2
gap> H:=Stabilizer(G,1); # H=1
Group(())
gap> FirstObstructionN(G).ker; # Obs1N=1
[ [  ], [ [  ], [  ] ] ]
gap> SchurMultPcpGroup(G); # M(G)=C2: Schur multiplier of G
[ 2 ]
gap> ScG:=SchurCoverG(G);
rec( SchurCover := Group([ (2,3), (1,2)(3,4) ]), 
  epi := [ (2,3), (1,2)(3,4) ] -> [ (1,2)(3,4), (1,4)(2,3) ], Tid := [ 4, 3 ] )
gap> StructureDescription(TransitiveGroup(4,3));
"D8"
gap> tG:=ScG.SchurCover; # tG=G~=D4 is a Schur cover of G 
Group([ (2,3), (1,2)(3,4) ])
gap> tH:=PreImage(ScG.epi,H); # tH=H~=C2
Group([ (1,4)(2,3) ])
gap> FirstObstructionN(tG,tH).ker; # Obs1N~=C2
[ [ 2 ], [ [ 2 ], [ [ 1 ] ] ] ]
gap> FirstObstructionDnr(tG,tH).Dnr; # Obs1Dnr~=1
[ [  ], [ [ 2 ], [  ] ] ]
gap> tGs:=AllSubgroups(tG);
[ Group(()), Group([ (2,3) ]), Group([ (1,4) ]), Group([ (1,4)(2,3) ]), 
  Group([ (1,2)(3,4) ]), Group([ (1,3)(2,4) ]), Group([ (1,4), (2,3) ]), 
  Group([ (1,2)(3,4), (1,4)(2,3) ]), Group([ (1,3,4,2), (1,4)(2,3) ]), 
  Group([ (1,4), (2,3), (1,2)(3,4) ]) ]
gap> List(tGs,StructureDescription);
[ "1", "C2", "C2", "C2", "C2", "C2", "C2 x C2", "C2 x C2", "C4", "D8" ]
gap> List(tGs,x->FirstObstructionDr(tG,x,tH).Dr[1]);
[ [  ], [  ], [  ], [  ], [  ], [  ], [  ], [  ], [  ], [ 2 ] ]
gap> List(tGs,x->StructureDescription(Image(ScG.epi,x)));
[ "1", "C2", "C2", "1", "C2", "C2", "C2", "C2", "C2", "C2 x C2" ]
\end{verbatim}
}~\\\vspace*{-4mm}

Case $G=4T4\simeq A_4$.
{\small 
\begin{verbatim}
gap> Read("HNP.gap");
gap> G:=TransitiveGroup(4,4); # G=4T4
A4
gap> H:=Stabilizer(G,1); # H=C3
Group([ (2,3,4) ])
gap> FirstObstructionN(G).ker; # Obs1N=1
[ [  ], [ [ 3 ], [  ] ] ]
gap> SchurMultPcpGroup(G); # M(G)=C2: Schur multiplier of G
[ 2 ]
gap> ScG:=SchurCoverG(G);
rec( SchurCover := Group([ (1,2)(3,5,7,4,6,8), (1,3,6,2,4,5)(7,8) ]), 
  epi := [ (1,2)(3,5,7,4,6,8), (1,3,6,2,4,5)(7,8) ] -> [ (1,2,3), (2,3,4) ], 
  Tid := [ 8, 12 ] )
gap> StructureDescription(TransitiveGroup(8,12));
"SL(2,3)"
gap> tG:=ScG.SchurCover; # tG=G~=SL(2,3) is a Schur cover of G 
Group([ (1,2)(3,5,7,4,6,8), (1,3,6,2,4,5)(7,8) ])
gap> tH:=PreImage(ScG.epi,H); # tH=H~=C6
Group([ (1,4,6)(2,3,5), (1,2)(3,4)(5,6)(7,8) ])
gap> StructureDescription(tH);
"C6"
gap> FirstObstructionN(tG,tH).ker; # Obs1N~=C2
[ [ 2 ], [ [ 6 ], [ 3 ] ] ]
gap> FirstObstructionDnr(tG,tH).Dnr; # Obs1Dnr~=1
[ [  ], [ [ 6 ], [  ] ] ]
gap> tGs:=AllSubgroups(tG);;
gap> Length(tGs);
15
gap> List(tGs,StructureDescription);
[ "1", "C2", "C3", "C3", "C3", "C3", "C4", "C4", "C4", "C6", "C6", "C6", 
  "C6", "Q8", "SL(2,3)" ]
gap> List(tGs,x->FirstObstructionDr(tG,x,tH).Dr[1]);
[ [  ], [  ], [  ], [  ], [  ], [  ], [  ], [  ], [  ], [  ], [  ], [  ], 
  [  ], [ 2 ], [ 2 ] ]
gap> List(tGs,x->StructureDescription(Image(ScG.epi,x)));
[ "1", "1", "C3", "C3", "C3", "C3", "C2", "C2", "C2", "C3", 
  "C3", "C3", "C3", "C2 x C2", "A4" ]
\end{verbatim}
}
\end{example}

\smallskip
\begin{example}[$G=6T4\simeq A_4$ and $G=6T12\simeq A_5$]\label{ex6}
~{}\vspace*{-2mm}\\

Case $G=6T4\simeq A_4$. 
{\small 
\begin{verbatim}
gap> Read("HNP.gap");
gap> G:=TransitiveGroup(6,4); # G=6T4=A4
A_4(6) = [2^2]3
gap> H:=Stabilizer(G,1); # H=C2
Group([ (2,5)(3,6) ])
gap> FirstObstructionN(G).ker; # Obs1N=C2
[ [ 2 ], [ [ 2 ], [ [ 1 ] ] ] ]
gap> FirstObstructionDnr(G).Dnr; # Obs1Dnr=1
[ [  ], [ [ 2 ], [  ] ] ]
gap> Gs:=AllSubgroups(G);;
gap> Length(Gs);
10
gap> List(Gs,StructureDescription);
[ "1", "C2", "C2", "C2", "C3", "C3", "C3", "C3", "C2 x C2", "A4" ]
gap> List(Gs,x->FirstObstructionDr(G,x).Dr[1]);
[ [  ], [  ], [  ], [  ], [  ], [  ], [  ], [  ], [ 2 ], [ 2 ] ]
\end{verbatim}
}~\\\vspace*{-4mm}

Case $G=6T12\simeq A_5$.  
{\small 
\begin{verbatim}
gap> Read("HNP.gap");
gap> G:=TransitiveGroup(6,12); # G=6T12=A5
L(6) = PSL(2,5) = A_5(6)
gap> H:=Stabilizer(G,1); # H=D5
Group([ (2,4,3,6,5), (3,6)(4,5) ])
gap> StructureDescription(H);
"D10"
gap> FirstObstructionN(G).ker; # Obs1N=C2
[ [ 2 ], [ [ 2 ], [ [ 1 ] ] ] ]
gap> FirstObstructionDnr(G).Dnr; # Obs1Dnr=1
[ [  ], [ [ 2 ], [  ] ] ]
gap> Gs:=AllSubgroups(G);;
gap> Length(Gs);
59
gap> GsHNPfalse:=Filtered(Gs,x->FirstObstructionDr(G,x).Dr[1]=[]);;
gap> GsHNPtrue:=Filtered(Gs,x->FirstObstructionDr(G,x).Dr[1]=[2]);;
gap> List([GsHNPfalse,GsHNPtrue],Length);
[ 48, 11 ]                                                    ^
gap> Collected(List(GsHNPfalse,x->StructureDescription(x)));
[ [ "1", 1 ], [ "C2", 15 ], [ "C3", 10 ], [ "C5", 6 ], [ "D10", 6 ], 
  [ "S3", 10 ] ]
gap> Collected(List(GsHNPtrue,x->StructureDescription(x)));
[ [ "A4", 5 ], [ "A5", 1 ], [ "C2 x C2", 5 ] ]
\end{verbatim}
}
\end{example}

\smallskip
\begin{example}[$G=8Tm$ $(m=2,3,4,13,14,21,31,37,38)$]\label{ex8}
~{}\vspace*{-2mm}\\

(1-1) $G=8T3\simeq (C_2)^3$. 
{\small 
\begin{verbatim}
gap> Read("HNP.gap");
gap> G:=TransitiveGroup(8,3); # G=8T3=C2xC2xC2
E(8)=2[x]2[x]2
gap> H:=Stabilizer(G,1); # H=1
Group(())
gap> FirstObstructionN(G).ker; # Obs1N=1
[ [  ], [ [  ], [  ] ] ]
gap> SchurMultPcpGroup(G); # M(G)=C2xC2xC2: Schur multiplier of G
[ 2, 2, 2 ]
gap> ScG:=SchurCoverG(G);
rec( SchurCover := Group([ (2,3)(4,6)(9,10)(11,13)(12,15)(14,16), (1,2)(3,5)
  (4,7)(6,8)(10,11)(12,14), (2,4)(3,6)(10,12)(11,14) ]), 
  epi := [ (2,3)(4,6)(9,10)(11,13)(12,15)(14,16), 
      (1,2)(3,5)(4,7)(6,8)(10,11)(12,14), (2,4)(3,6)(10,12)(11,14) ] -> 
    [ (1,5)(2,6)(3,7)(4,8), (1,3)(2,8)(4,6)(5,7), (1,8)(2,3)(4,5)(6,7) ] )
gap> tG:=ScG.SchurCover; # tG=G~ is a Schur cover of G 
Group([ (2,3)(4,6)(9,10)(11,13)(12,15)(14,16), (1,2)(3,5)(4,7)(6,8)(10,11)
(12,14), (2,4)(3,6)(10,12)(11,14) ])
gap> tH:=PreImage(ScG.epi,H); # tH=H~=C2xC2xC2
Group([ (1,5)(2,3)(4,6)(7,8)(9,13)(10,11)(12,14)(15,16), (9,15)(10,12)(11,14)
(13,16), (1,7)(2,4)(3,6)(5,8) ])
gap> IdSmallGroup(tG);
[ 64, 73 ]
gap> StructureDescription(tG);
"(C2 x C2 x D8) : C2"
gap> StructureDescription(tH);
"C2 x C2 x C2"
gap> FirstObstructionN(tG,tH).ker; # Obs1N~=C2xC2xC2
[ [ 2, 2, 2 ], [ [ 2, 2, 2 ], [ [ 1, 0, 0 ], [ 0, 1, 0 ], [ 0, 0, 1 ] ] ] ]
gap> FirstObstructionDnr(tG,tH).Dnr; # Obs1Dnr~=1
[ [  ], [ [ 2, 2, 2 ], [  ] ] ]
gap> tGs:=AllSubgroups(tG);;
gap> Length(tGs);
317
gap> Collected(List(tGs,x->FirstObstructionDr(tG,x,tH).Dr));
[ [ [ [  ], [ [ 2, 2, 2 ], [  ] ] ], 213 ], 
  [ [ [ 2 ], [ [ 2, 2, 2 ], [ [ 0, 0, 1 ] ] ] ], 29 ], 
  [ [ [ 2 ], [ [ 2, 2, 2 ], [ [ 0, 1, 0 ] ] ] ], 29 ], 
  [ [ [ 2 ], [ [ 2, 2, 2 ], [ [ 0, 1, 1 ] ] ] ], 5 ], 
  [ [ [ 2 ], [ [ 2, 2, 2 ], [ [ 1, 0, 0 ] ] ] ], 29 ], 
  [ [ [ 2 ], [ [ 2, 2, 2 ], [ [ 1, 0, 1 ] ] ] ], 5 ], 
  [ [ [ 2 ], [ [ 2, 2, 2 ], [ [ 1, 1, 0 ] ] ] ], 5 ], 
  [ [ [ 2 ], [ [ 2, 2, 2 ], [ [ 1, 1, 1 ] ] ] ], 1 ], 
  [ [ [ 2, 2, 2 ], [ [ 2, 2, 2 ], [ [ 1, 0, 0 ], 
  [ 0, 1, 0 ], [ 0, 0, 1 ] ] ] ], 1 ] ]
gap> tGsHNPfalse0:=Filtered(tGs,x->FirstObstructionDr(tG,x,tH).Dr[1]=[]);;
gap> Length(tGsHNPfalse0);
213
gap> tGsHNPtrue0:=Filtered(tGs,x->FirstObstructionDr(tG,x,tH).Dr[1]=[2,2,2]);;
gap> Length(tGsHNPtrue0);
1
gap> Collected(List(tGsHNPfalse0,x->StructureDescription(Image(ScG.epi,x))));
[ [ "1", 16 ], [ "C2", 197 ] ]
gap> Collected(List(tGsHNPtrue0,x->StructureDescription(Image(ScG.epi,x))));
[ [ "C2 x C2 x C2", 1 ] ]
\end{verbatim}
}~\\\vspace*{-4mm}

(1-2) $G=8T21\simeq (C_2)^3\rtimes C_4$. 
{\small 
\begin{verbatim}
gap> Read("HNP.gap");
gap> G:=TransitiveGroup(8,21); # G=8T21=(C2xC2xC2):C4
1/2[2^4]E(4)=[1/4.dD(4)^2]2
gap> H:=Stabilizer(G,1); # H=C2xC2
Group([ (2,6)(4,8), (3,7)(4,8) ])
gap> FirstObstructionN(G).ker; # Obs1N=C2
[ [ 2 ], [ [ 2, 2 ], [ [ 1, 0 ] ] ] ]
gap> FirstObstructionDnr(G).Dnr; # Obs1Dnr=1 => Obs=Obs1=C2 if unramified 
[ [  ], [ [ 2, 2 ], [  ] ] ]
gap> KerResH3Z(G,H); # Obs=Obs1
[ [  ], [ [ 2, 2 ], [  ] ] ]
gap> Gs:=AllSubgroups(G);;
gap> Length(Gs);
50
gap> GsHNPfalse:=Filtered(Gs,x->FirstObstructionDr(G,x,H).Dr[1]=[]);;
gap> Length(GsHNPfalse);
49
gap> GsHNPtrue:=Filtered(Gs,x->FirstObstructionDr(G,x,H).Dr[1]=[2]);;
gap> Length(GsHNPtrue);
1
gap> Collected(List(GsHNPfalse,StructureDescription));
[ [ "(C4 x C2) : C2", 2 ], [ "1", 1 ], [ "C2", 11 ], [ "C2 x C2", 13 ], 
  [ "C2 x C2 x C2", 2 ], [ "C2 x D8", 1 ], [ "C4", 10 ], [ "C4 x C2", 5 ], 
  [ "D8", 4 ] ]
gap> Collected(List(GsHNPtrue,StructureDescription));
[ [ "(C2 x C2 x C2) : C4", 1 ] ]
\end{verbatim}
}~\\\vspace*{-4mm}

(1-3-1) $G=8T2\simeq C_4\times C_2$. 
{\small 
\begin{verbatim}
gap> Read("HNP.gap");
gap> G:=TransitiveGroup(8,2); # G=8T2=C4xC2
4[x]2
gap> H:=Stabilizer(G,1); # H=1
Group(())
gap> FirstObstructionN(G).ker; # Obs1N=1
[ [  ], [ [  ], [  ] ] ]
gap> SchurMultPcpGroup(G); # M(G)=C2: Schur multiplier of G
[ 2 ]
gap> ScG:=SchurCoverG(G);
rec( SchurCover := Group([ (2,4)(3,6), (1,2,5,3)(4,7,6,8) ]), 
  Tid := [ 8, 10 ], epi := [ (2,4)(3,6), (1,2,5,3)(4,7,6,8) ] -> 
    [ (1,5)(2,6)(3,7)(4,8), (1,2,3,8)(4,5,6,7) ] )
gap> tG:=ScG.SchurCover; # tG=G~=(C4xC2):C2 is a Schur cover of G 
Group([ (2,4)(3,6), (1,2,5,3)(4,7,6,8) ])
gap> StructureDescription(TransitiveGroup(8,10));
"(C4 x C2) : C2"
gap> tH:=PreImage(ScG.epi,H); # tH=H~=C2
Group([ (1,8)(2,4)(3,6)(5,7) ])
gap> FirstObstructionN(tG,tH).ker; # Obs1N~=C2
[ [ 2 ], [ [ 2 ], [ [ 1 ] ] ] ]
gap> FirstObstructionDnr(tG,tH).Dnr; # Obs1Dnr~=1
[ [  ], [ [ 2 ], [  ] ] ]
gap> tGs:=AllSubgroups(tG);;
gap> Length(tGs);
23
gap> tGsHNPfalse:=Filtered(tGs,x->FirstObstructionDr(tG,x,tH).Dr[1]=[]);;
gap> Length(tGsHNPfalse);
22
gap> tGsHNPtrue:=Filtered(tGs,x->FirstObstructionDr(tG,x,tH).Dr[1]=[2]);;
gap> Length(tGsHNPtrue);
1
gap> Collected(List(tGsHNPfalse,x->StructureDescription(Image(ScG.epi,x))));
[ [ "1", 2 ], [ "C2", 9 ], [ "C2 x C2", 5 ], [ "C4", 6 ] ]
gap> Collected(List(tGsHNPtrue,x->StructureDescription(Image(ScG.epi,x))));
[ [ "C4 x C2", 1 ] ]
\end{verbatim}
}~\\\vspace*{-4mm}

(1-3-2) $G=8T4\simeq D_4$. 
{\small 
\begin{verbatim}
gap> Read("HNP.gap");
gap> G:=TransitiveGroup(8,4); # G=8T4=D4
D_8(8)=[4]2
gap> H:=Stabilizer(G,1); # H=1
Group(())
gap> FirstObstructionN(G).ker; # Obs1N=1
[ [  ], [ [  ], [  ] ] ]
gap> SchurMultPcpGroup(G); # M(G)=C2: Schur multiplier of G
[ 2 ]
gap> ScG:=SchurCoverG(G);
rec( SchurCover := Group([ (2,3)(4,5)(6,7), (1,2,4,6,8,7,5,3) ]), 
  Tid := [ 8, 6 ], epi := [ (2,3)(4,5)(6,7), (1,2,4,6,8,7,5,3) ] -> 
    [ (1,6)(2,5)(3,4)(7,8), (1,2,3,8)(4,5,6,7) ] )
gap> tG:=ScG.SchurCover; # tG=G~=D8 is a Schur cover of G
Group([ (2,3)(4,5)(6,7), (1,2,4,6,8,7,5,3) ])
gap> StructureDescription(tG);
"D16"
gap> tH:=PreImage(ScG.epi,H); # tH=H~=C2
Group([ (1,8)(2,7)(3,6)(4,5) ])
gap> FirstObstructionN(tG,tH).ker; # Obs1N~=C2
[ [ 2 ], [ [ 2 ], [ [ 1 ] ] ] ]
gap> FirstObstructionDnr(tG,tH).Dnr; # Obs1Dnr~=1
[ [  ], [ [ 2 ], [  ] ] ]
gap> tGs:=AllSubgroups(tG);;
gap> Length(tGs);
19
gap> tGsHNPfalse:=Filtered(tGs,x->FirstObstructionDr(tG,x,tH).Dr[1]=[]);;
gap> Length(tGsHNPfalse);
16
gap> tGsHNPtrue:=Filtered(tGs,x->FirstObstructionDr(tG,x,tH).Dr[1]=[2]);;
gap> Length(tGsHNPtrue);
3
gap> Collected(List(tGsHNPfalse,StructureDescription));
[ [ "1", 1 ], [ "C2", 9 ], [ "C2 x C2", 4 ], [ "C4", 1 ], [ "C8", 1 ] ]
gap> Collected(List(tGsHNPtrue,StructureDescription));
[ [ "D16", 1 ], [ "D8", 2 ] ]
gap> Collected(List(tGsHNPfalse,x->StructureDescription(Image(ScG.epi,x))));
[ [ "1", 2 ], [ "C2", 13 ], [ "C4", 1 ] ]
gap> Collected(List(tGsHNPtrue,x->StructureDescription(Image(ScG.epi,x))));
[ [ "C2 x C2", 2 ], [ "D8", 1 ] ]
\end{verbatim}
}~\\\vspace*{-4mm}

(1-3-3) $G=8T13\simeq A_4\times C_2$. 
{\small 
\begin{verbatim}
gap> Read("HNP.gap");
gap> G:=TransitiveGroup(8,13); # G=8T13=A4xC2
E(8):3=A(4)[x]2
gap> H:=Stabilizer(G,1); # H=C3
Group([ (2,3,8)(4,7,5) ])
gap> FirstObstructionN(G).ker; # Obs1N=1
[ [  ], [ [ 3 ], [  ] ] ]
gap> SchurMultPcpGroup(G); # M(G)=C2: Schur multiplier of G
[ 2 ]
gap> ScG:=SchurCoverG(G);
rec( SchurCover := Group([ (1,2,3)(4,6,7)(5,8,9)(10,14,15)(11,16,17)(12,18,19)
(13,20,21)(22,23,24), (2,4)(3,5)(6,10)(7,11)(8,12)(9,13)(15,18)(16,22)(17,20)
(19,23) ]), Tid := [ 24, 21 ],  epi := [ (1,2,3)(4,6,7)(5,8,9)(10,14,15)
(11,16,17)(12,18,19)(13,20,21)(22,23,24), (2,4)(3,5)(6,10)(7,11)(8,12)(9,13)
(15,18)(16,22)(17,20)(19,23) ] -> [ (2,8,3)(4,5,7), (1,5)(2,6)(3,7)(4,8) ] )
gap> tG:=ScG.SchurCover; # tG=G~ is a Schur cover of G
Group([ (1,2,3)(4,6,7)(5,8,9)(10,14,15)(11,16,17)(12,18,19)(13,20,21)(22,23,24), 
(2,4)(3,5)(6,10)(7,11)(8,12)(9,13)(15,18)(16,22)(17,20)(19,23) ])
gap> tH:=PreImage(ScG.epi,H); # tH=H~=C6
Group([ (1,3,2)(4,7,6)(5,9,8)(10,15,14)(11,17,16)(12,19,18)(13,21,20)(22,24,23), 
(1,24)(2,22)(3,23)(4,16)(5,19)(6,17)(7,11)(8,12)(9,18)(10,20)(13,15)(14,21) ])
gap> StructureDescription(TransitiveGroup(24,21));
"((C4 x C2) : C2) : C3"
gap> StructureDescription(tH);
"C6"
gap> FirstObstructionN(tG,tH).ker; # Obs1N~=C2
[ [ 2 ], [ [ 6 ], [ [ 3 ] ] ] ]
gap> FirstObstructionDnr(tG,tH).Dnr; # Obs1Dnr~=1
[ [  ], [ [ 6 ], [  ] ] ]
gap> tGs:=AllSubgroups(tG);;
gap> Length(tGs);
37
gap> tGsHNPfalse:=Filtered(tGs,x->FirstObstructionDr(tG,x,tH).Dr[1]=[]);;
gap> Length(tGsHNPfalse);
30
gap> tGsHNPtrue:=Filtered(tGs,x->FirstObstructionDr(tG,x,tH).Dr[1]=[2]);;
gap> Length(tGsHNPtrue);
7
gap> Collected(List(tGsHNPfalse,x->StructureDescription(Image(ScG.epi,x))));
[ [ "1", 2 ], [ "C2", 13 ], [ "C2 x C2", 3 ], [ "C3", 8 ], [ "C6", 4 ] ]
gap> Collected(List(tGsHNPtrue,x->StructureDescription(Image(ScG.epi,x))));
[ [ "A4", 1 ], [ "C2 x A4", 1 ], [ "C2 x C2", 4 ], [ "C2 x C2 x C2", 1 ] ]
gap> pi:=NaturalHomomorphismByNormalSubgroup(G,Centre(G));
[ (1,8)(2,3)(4,5)(6,7), (1,3)(2,8)(4,6)(5,7), (1,5)(2,6)(3,7)(4,8), 
(1,2,3)(4,6,5) ] -> [ f3, f2, f2*f3, f1*f2*f3 ]
gap> Collected(List(tGsHNPtrue,x->StructureDescription(Image(pi,Image(ScG.epi,x)))));
[ [ "A4", 2 ], [ "C2 x C2", 5 ] ]
gap> Collected(List(tGsHNPfalse,x->StructureDescription(Image(pi,Image(ScG.epi,x)))));
[ [ "1", 3 ], [ "C2", 15 ], [ "C3", 12 ] ]
gap> Collected(List(tGsHNPtrue,x->StructureDescription(Image(pi,Image(ScG.epi,x)))));
[ [ "A4", 2 ], [ "C2 x C2", 5 ] ]
\end{verbatim}
}~\\\vspace*{-4mm}

(1-3-4) $G=8T14\simeq S_4$. 
{\small 
\begin{verbatim}
gap> Read("HNP.gap");
gap> G:=TransitiveGroup(8,14); # G=8T14=S4
S(4)[1/2]2=1/2(S_4[x]2)
gap> H:=Stabilizer(G,1); # H=C3
Group([ (2,8,3)(4,7,6) ])
gap> FirstObstructionN(G).ker; # Obs1N=C3
[ [ 3 ], [ [ 3 ], [ [ 1 ] ] ] ]
gap> FirstObstructionDnr(G).Dnr; # Osb1Dr=C3
[ [ 3 ], [ [ 3 ], [ [ 1 ] ] ] ]
gap> SchurMultPcpGroup(G); # M(G)=C2: Schur multiplier of G
[ 2 ]
gap> ScG:=SchurCoverG(G); 
rec( SchurCover := Group([ (2,4)(3,6)(5,8), (1,2,5,7,4,3)(6,8) ]), Tid := [ 8, 23 ],
  epi := [ (2,4)(3,6)(5,8), (1,2,5,7,4,3)(6,8) ] -> 
    [ (1,4)(2,6)(3,7)(5,8), (2,8,3)(4,7,6) ] )
gap> tG:=ScG.SchurCover; # tG=G~=SL(2,3) is a Schur cover of G
Group([ (2,4)(3,6)(5,8), (1,2,5,7,4,3)(6,8) ])
gap> tH:=PreImage(ScG.epi,H); # tH=H~=C6
Group([ (1,4,5)(2,3,7), (1,7)(2,4)(3,5)(6,8) ])
gap> StructureDescription(tG); 
"GL(2,3)"
gap> StructureDescription(tH); 
"C6"
gap> FirstObstructionN(tG,tH).ker; # Obs1N~=C6
[ [ 6 ], [ [ 6 ], [ [ 1 ] ] ] ]
gap> FirstObstructionDnr(tG,tH).Dnr; # Obs1Dnr~=C3
[ [ 3 ], [ [ 6 ], [ [ 2 ] ] ] ]
gap> tGs:=AllSubgroups(tG);;
gap> Length(tGs);
55
gap> tGsHNPfalse1:=Filtered(tGs,x->FirstObstructionDr(tG,x,tH).Dr[1]=[]);;
gap> tGsHNPfalse2:=Filtered(tGs,x->FirstObstructionDr(tG,x,tH).Dr[1]=[3]);;
gap> tGsHNPtrue1:=Filtered(tGs,x->FirstObstructionDr(tG,x,tH).Dr[1]=[2]);;
gap> tGsHNPtrue2:=Filtered(tGs,x->FirstObstructionDr(tG,x,tH).Dr[1]=[6]);;
gap> List([tGsHNPfalse1,tGsHNPfalse2,tGsHNPtrue1,tGsHNPtrue2],Length);
[ 26, 20, 7, 2 ]
gap> Sum(last);
55
gap> Collected(List(tGsHNPfalse1,x->StructureDescription(Image(ScG.epi,x))));
[ [ "1", 2 ], [ "C2", 21 ], [ "C4", 3 ] ]
gap> Collected(List(tGsHNPfalse2,x->StructureDescription(Image(ScG.epi,x))));
[ [ "C3", 8 ], [ "S3", 12 ] ]
gap> Collected(List(tGsHNPtrue1,x->StructureDescription(Image(ScG.epi,x))));
[ [ "C2 x C2", 4 ], [ "D8", 3 ] ]
gap> Collected(List(tGsHNPtrue2,x->StructureDescription(Image(ScG.epi,x))));
[ [ "A4", 1 ], [ "S4", 1 ] ]
\end{verbatim}
}~\\\vspace*{-4mm}

(1-3-5) $G=8T37\simeq \PSL_3(\bF_2)\simeq \PSL_2(\bF_7)$. 
{\small 
\begin{verbatim}
gap> Read("HNP.gap");
gap> G:=TransitiveGroup(8,37); # G=8T37=PSL(3,2)=PSL(2,7)
L(8)=PSL(2,7)
gap> H:=Stabilizer(G,1); # H=C7:C3
Group([ (2,3,6)(5,8,7), (3,7,8)(4,5,6) ])
gap> StructureDescription(H);
"C7 : C3"
gap> FirstObstructionN(G).ker; # Obs1N=C3
[ [ 3 ], [ [ 3 ], [ [ 1 ] ] ] ]
gap> FirstObstructionDnr(G).Dnr; # Obs1Dnr=C3
[ [ 3 ], [ [ 3 ], [ [ 1 ] ] ] ]
gap> SchurMultPcpGroup(G); # M(G)=C2: Schur multiplier of G
[ 2 ]
gap> ScG:=SchurCoverG(G);
rec( SchurCover := Group([ (1,2,4,8)(3,6,9,12)(5,10,14,11)(7,13,15,16), 
   (1,3,7,4,9,15)(2,5,11,8,14,10)(6,12)(13,16) ]), Tid := [ 16, 715 ], 
  epi := [ (1,2,4,8)(3,6,9,12)(5,10,14,11)(7,13,15,16), 
      (1,3,7,4,9,15)(2,5,11,8,14,10)(6,12)(13,16) ] -> 
    [ (1,2)(3,5)(4,7)(6,8), (2,6,7)(3,5,4) ] )
gap> tG:=ScG.SchurCover; # tG=G=SL(2,7) is a Schur cover of G
Group([ (1,2,4,8)(3,6,9,12)(5,10,14,11)(7,13,15,16), (1,3,7,4,9,15)
(2,5,11,8,14,10)(6,12)(13,16) ])
gap> tH:=PreImage(ScG.epi,H); # tH=H~=C2x(C7:C3)
Group([ (1,11,16)(3,5,15)(4,10,13)(7,9,14), (1,13,14,4,16,5)(2,10,7,8,11,15)
(3,9)(6,12) ])
gap> StructureDescription(tG);
"SL(2,7)"
gap> StructureDescription(tH);
"C2 x (C7 : C3)"
gap> FirstObstructionN(tG,tH).ker; # Obs1N~=C6
[ [ 6 ], [ [ 6 ], [ [ 1 ] ] ] ]
gap> FirstObstructionDnr(tG,tH).Dnr; # Obs1Dnr~=C3
[ [ 3 ], [ [ 6 ], [ [ 2 ] ] ] ]
gap> tGs:=AllSubgroups(tG);;
gap> Length(tGs);
224
gap> tGsHNPfalse1:=Filtered(tGs,x->FirstObstructionDr(tG,x,tH).Dr[1]=[]);;
gap> tGsHNPfalse2:=Filtered(tGs,x->FirstObstructionDr(tG,x,tH).Dr[1]=[3]);;
gap> tGsHNPtrue1:=Filtered(tGs,x->FirstObstructionDr(tG,x,tH).Dr[1]=[2]);;
gap> tGsHNPtrue2:=Filtered(tGs,x->FirstObstructionDr(tG,x,tH).Dr[1]=[6]);;
gap> List([tGsHNPfalse1,tGsHNPfalse2,tGsHNPtrue1,tGsHNPtrue2],Length);
[ 60, 100, 35, 29 ]
gap> Sum(last);
224
gap> Collected(List(tGsHNPfalse1,x->StructureDescription(Image(ScG.epi,x))));
[ [ "1", 2 ], [ "C2", 21 ], [ "C4", 21 ], [ "C7", 16 ] ]
gap> Collected(List(tGsHNPfalse2,x->StructureDescription(Image(ScG.epi,x))));
[ [ "C3", 56 ], [ "C7 : C3", 16 ], [ "S3", 28 ] ]
gap> Collected(List(tGsHNPtrue1,x->StructureDescription(Image(ScG.epi,x))));
[ [ "C2 x C2", 14 ], [ "D8", 21 ] ]
gap> Collected(List(tGsHNPtrue2,x->StructureDescription(Image(ScG.epi,x))));
[ [ "A4", 14 ], [ "PSL(3,2)", 1 ], [ "S4", 14 ] ]
\end{verbatim}
}~\\\vspace*{-4mm}

(1-4-1) $G=8T31\simeq ((C_2)^4\rtimes C_2)\rtimes C_2$. 
{\small 
\begin{verbatim}
gap> Read("HNP.gap");
gap> G:=TransitiveGroup(8,31); # G=8T31=(C2^4:C2):C2
[2^4]E(4)
gap> GeneratorsOfGroup(G);
[ (4,8), (1,8)(2,3)(4,5)(6,7), (1,3)(2,8)(4,6)(5,7) ]
gap> H:=Stabilizer(G,1); # H=C2xC2xC2
Group([ (4,8), (2,6), (3,7) ])
gap> FirstObstructionN(G).ker; # Obs1N=C2xC2
[ [ 2, 2 ], [ [ 2, 2, 2 ], [ [ 1, 0, 1 ], [ 0, 1, 1 ] ] ] ]
gap> FirstObstructionDnr(G).Dnr; # Obs1Dnr=C2xC2
[ [ 2, 2 ], [ [ 2, 2, 2 ], [ [ 1, 0, 1 ], [ 0, 1, 1 ] ] ] ]
gap> SchurMultPcpGroup(G); # M(G)=C2xC2xC2: Schur multiplier of G
[ 2, 2, 2, 2 ]
gap> cGs:=MinimalStemExtensions(G);; # 15 minimal stem extensions
gap> for cG in cGs do
> bG:=cG.MinimalStemExtension;
> bH:=PreImage(cG.epi,H);
> Print(KerResH3Z(bG,bH));
> od;
[ [  ], [ [ 2, 2, 2, 2, 2 ], [  ] ] ]
[ [ 2 ], [ [ 2, 2, 2, 2 ], [ [ 0, 0, 1, 0 ] ] ] ]
[ [ 2 ], [ [ 2, 2, 2 ], [ [ 1, 0, 0 ] ] ] ]
[ [ 2 ], [ [ 2, 2, 2, 2 ], [ [ 0, 0, 1, 0 ] ] ] ]
[ [ 2 ], [ [ 2, 2, 2, 2 ], [ [ 0, 0, 1, 0 ] ] ] ]
[ [ 2 ], [ [ 2, 2, 2, 2 ], [ [ 0, 0, 0, 1 ] ] ] ]
[ [ 2 ], [ [ 2, 2, 2 ], [ [ 0, 0, 1 ] ] ] ]
[ [ 2 ], [ [ 2, 2, 2, 2 ], [ [ 0, 1, 0, 1 ] ] ] ]
[ [ 2 ], [ [ 2, 2, 2 ], [ [ 1, 0, 1 ] ] ] ]
[ [ 2 ], [ [ 2, 2, 2, 2 ], [ [ 0, 0, 1, 1 ] ] ] ]
[ [ 2 ], [ [ 2, 2, 4 ], [ [ 0, 0, 2 ] ] ] ]
[ [ 2 ], [ [ 2, 2, 2, 2 ], [ [ 0, 0, 1, 1 ] ] ] ]
[ [ 2 ], [ [ 2, 2, 2 ], [ [ 0, 1, 1 ] ] ] ]
[ [ 2 ], [ [ 2, 2, 2 ], [ [ 1, 1, 1 ] ] ] ]
[ [ 2 ], [ [ 2, 2, 2 ], [ [ 1, 1, 1 ] ] ] ]
gap> for cG in cGs do
> bG:=cG.MinimalStemExtension;
> bH:=PreImage(cG.epi,H);
> Print(FirstObstructionN(bG,bH).ker[1]);
> Print(FirstObstructionDnr(bG,bH).Dnr[1]);
> Print("\n");
> od;
[ 2, 2, 2 ][ 2, 2 ]
[ 2, 2 ][ 2, 2 ]
[ 2, 2 ][ 2, 2 ]
[ 2, 2 ][ 2, 2 ]
[ 2, 2 ][ 2, 2 ]
[ 2, 2 ][ 2, 2 ]
[ 2, 2 ][ 2, 2 ]
[ 2, 2 ][ 2, 2 ]
[ 2, 2 ][ 2, 2 ]
[ 2, 2 ][ 2, 2 ]
[ 2, 2 ][ 2, 2 ]
[ 2, 2 ][ 2, 2 ]
[ 2, 2 ][ 2, 2 ]
[ 2, 2 ][ 2, 2 ]
[ 2, 2 ][ 2, 2 ]
gap> cG:=cGs[1];;
gap> bG:=cG.MinimalStemExtension; # bG=G- is a minimal stem extension of G
<permutation group of size 128 with 7 generators>
gap> bH:=PreImage(cG.epi,H); # bH=H-
<permutation group of size 16 with 4 generators>
gap> FirstObstructionN(bG,bH).ker; # Obs1N-=C2xC2xC2
[ [ 2, 2, 2 ], 
  [ [ 2, 2, 2, 2 ], [ [ 1, 0, 1, 0 ], [ 0, 1, 1, 0 ], [ 0, 0, 0, 1 ] ] ] ]
gap> FirstObstructionDnr(bG,bH).Dnr; # Obs1Dnr-=C2xC2
[ [ 2, 2 ], [ [ 2, 2, 2, 2 ], [ [ 1, 0, 1, 0 ], [ 0, 1, 1, 0 ] ] ] ]
gap> bGs:=AllSubgroups(bG);;
gap> Length(bGs);
896
gap> bGsHNPfalse:=Filtered(bGs,x->Filtered(FirstObstructionDr(bG,x,bH).Dr[2][2],
> y->y[4]=1)=[]);;
gap> Length(bGsHNPfalse);
855
gap> bGsHNPtrue:=Filtered(bGs,x->Filtered(FirstObstructionDr(bG,x,bH).Dr[2][2],
> y->y[4]=1)<>[]);;
gap> Length(bGsHNPtrue);
41
gap> Collected(List(bGsHNPfalse,x->StructureDescription(Image(cG.epi,x))));
[ [ "(C2 x C2 x C2 x C2) : C2", 19 ], [ "(C4 x C2) : C2", 45 ], [ "1", 2 ], 
  [ "C2", 73 ], [ "C2 x C2", 241 ], [ "C2 x C2 x C2", 154 ], 
  [ "C2 x C2 x C2 x C2", 17 ], [ "C2 x D8", 57 ], [ "C4", 54 ], 
  [ "C4 x C2", 45 ], [ "D8", 146 ], [ "Q8", 2 ] ]
gap> Collected(List(bGsHNPtrue,x->StructureDescription(Image(cG.epi,x))));
[ [ "((C2 x C2 x C2 x C2) : C2) : C2", 1 ], 
  [ "(C2 x C2 x C2) : (C2 x C2)", 1 ], [ "(C2 x C2 x C2) : C4", 11 ], 
  [ "(C4 x C2) : C2", 6 ], [ "C2 x C2", 8 ], [ "C2 x C2 x C2", 2 ], 
  [ "C2 x D8", 6 ], [ "C4 x C2", 6 ] ]
gap> GsHNPfalse:=Set(bGsHNPfalse,x->Image(cG.epi,x));;
gap> Length(GsHNPfalse);
192
gap> GsHNPtrue:=Set(bGsHNPtrue,x->Image(cG.epi,x));;
gap> Length(GsHNPtrue);
33
gap> Intersection(GsHNPfalse,GsHNPtrue);
[  ]
gap> GsHNPtrueMin:=Filtered(GsHNPtrue,x->Length(Filtered(GsHNPtrue,
> y->IsSubgroup(x,y)))=1);
[ Group([ (), (1,5)(2,6), (2,6)(4,8), (2,6)(3,7), (1,6)(2,5)(3,8)(4,7), (1,3)
  (2,8,6,4)(5,7) ]), Group([ (), (1,5)(2,6), (2,6)(4,8), (2,6)(3,7), (1,3)
  (2,8)(4,6)(5,7), (1,8,5,4)(2,3)(6,7) ]), Group([ (), (1,5)(2,6), (2,6)
  (4,8), (2,6)(3,7), (1,8,5,4)(2,3,6,7), (1,3)(2,8,6,4)(5,7) ]), Group([ (1,3)
  (2,4)(5,7)(6,8), (), (1,4)(2,3)(5,8)(6,7) ]), Group([ (1,7,5,3)
  (2,8,6,4), (), (1,8,5,4)(2,7,6,3), (1,5)(2,6)(3,7)(4,8) ]), Group([ (1,7)
  (2,8)(3,5)(4,6), (), (1,8)(2,7)(3,6)(4,5) ]), Group([ (1,3)(2,8)(4,6)
  (5,7), (), (1,8)(2,3)(4,5)(6,7) ]), Group([ (1,7,5,3)(2,4,6,8), (), (1,8,5,
   4)(2,3,6,7), (1,5)(2,6)(3,7)(4,8) ]), Group([ (1,7)(2,4)(3,5)
  (6,8), (), (1,4)(2,7)(3,6)(5,8) ]), Group([ (1,3)(2,4)(5,7)(6,8), (), (1,8,
   5,4)(2,3,6,7), (1,5)(2,6)(3,7)(4,8) ]), Group([ (1,7,5,3)(2,8,6,4), (), (1,
   4)(2,7)(3,6)(5,8), (1,5)(2,6)(3,7)(4,8) ]), Group([ (1,3)(2,8)(4,6)
  (5,7), (), (1,8,5,4)(2,7,6,3), (1,5)(2,6)(3,7)(4,8) ]), Group([ (1,7,5,3)
  (2,4,6,8), (), (1,4)(2,3)(5,8)(6,7), (1,5)(2,6)(3,7)(4,8) ]), Group([ (1,3)
  (2,4)(5,7)(6,8), (), (1,8)(2,7)(3,6)(4,5) ]), Group([ (1,3)(2,8)(4,6)
  (5,7), (), (1,4)(2,7)(3,6)(5,8) ]), Group([ (1,7)(2,8)(3,5)(4,6), (), (1,4)
  (2,3)(5,8)(6,7) ]), Group([ (1,7)(2,4)(3,5)(6,8), (), (1,8)(2,3)(4,5)
  (6,7) ]) ]
gap> Length(GsHNPtrueMin);
17
gap> List(GsHNPtrueMin,IdSmallGroup);
[ [ 32, 6 ], [ 32, 6 ], [ 32, 6 ], [ 4, 2 ], [ 8, 2 ], [ 4, 2 ], [ 4, 2 ], 
  [ 8, 2 ], [ 4, 2 ], [ 8, 2 ], [ 8, 2 ], [ 8, 2 ], [ 8, 2 ], [ 4, 2 ], 
  [ 4, 2 ], [ 4, 2 ], [ 4, 2 ] ]
gap> Collected(List(GsHNPfalse,x->Filtered(GsHNPtrueMin,y->IsSubgroup(x,y))));
[ [ [  ], 192 ] ]
gap> Gs:=AllSubgroups(G);;
gap> Length(Gs);
225
gap> GsC2xC2:=Filtered(Gs,x->IdSmallGroup(x)=[4,2]);;
gap> Length(GsC2xC2);
61
gap> GsC4xC2:=Filtered(Gs,x->IdSmallGroup(x)=[8,2]);;
gap> Length(GsC4xC2);
15
gap> Gs32_6:=Filtered(Gs,x->IdSmallGroup(x)=[32,6]);;
gap> Length(Gs32_6);
3
gap> GsHNPfalseC2xC2:=Filtered(GsHNPfalse,x->IdSmallGroup(x)=[4,2]);
[ Group([ (3,7)(4,8), (3,7) ]), Group([ (2,6)(4,8), (2,6)(4,8), (2,6) ]), 
  Group([ (2,6)(3,7), (2,6)(3,7)(4,8) ]), Group([ (1,5)(4,8), (1,5)
  (4,8), (4,8) ]), Group([ (1,5)(3,7), (4,8) ]), Group([ (1,5)(2,6)
  (4,8), (1,5)(2,6) ]), Group([ (1,5)(2,6)(3,7)(4,8), (1,5)(2,6)(3,7)
  (4,8), (4,8) ]), Group([ (2,6)(3,7), (2,6)(3,7), (2,6) ]), Group([ (2,6)
  (4,8), (2,6)(4,8), (3,7) ]), Group([ (1,5)(3,7), (1,5)(3,7), (3,7) ]), 
  Group([ (3,7), (1,5)(4,8) ]), Group([ (1,5)(2,6), (3,7) ]), Group([ (1,5)
  (2,6)(3,7)(4,8), (1,5)(2,6)(3,7)(4,8), (3,7) ]), Group([ (3,7)
  (4,8), (2,6) ]), Group([ (3,7)(4,8), (2,6)(3,7) ]), Group([ (1,2)(3,4)(5,6)
  (7,8), (3,7)(4,8), (3,7)(4,8) ]), Group([ (3,7)(4,8), (1,5) ]), 
  Group([ (1,5)(4,8), (1,5)(4,8), (3,7)(4,8) ]), Group([ (1,5)(2,6)(3,7)
  (4,8), (1,5)(2,6)(3,7)(4,8), (3,7)(4,8) ]), Group([ (1,5)(2,6)(4,8), (3,7)
  (4,8) ]), Group([ (1,6)(2,5)(3,8)(4,7), (3,7)(4,8), (3,7)(4,8) ]), 
  Group([ (1,5)(2,6), (1,5) ]), Group([ (1,5)(4,8), (2,6) ]), Group([ (1,5)
  (3,7), (1,5)(3,7), (2,6) ]), Group([ (1,5)(2,6)(3,7)(4,8), (1,5)(2,6)(3,7)
  (4,8), (2,6) ]), Group([ (1,3)(2,8)(4,6)(5,7), (2,6)(4,8), (2,6)(4,8) ]), 
  Group([ (2,6)(4,8), (2,6)(4,8), (1,5) ]), Group([ (1,5)(4,8), (1,5)
  (4,8), (1,5)(2,6) ]), Group([ (1,5)(2,6)(3,7)(4,8), (1,5)(2,6)(3,7)
  (4,8), (2,6)(4,8) ]), Group([ (2,6)(4,8), (2,6)(4,8), (1,5)(2,6)(3,7) ]), 
  Group([ (1,7)(2,4)(3,5)(6,8), (2,6)(4,8) ]), Group([ (2,6)(3,7), (1,4)(2,3)
  (5,8)(6,7) ]), Group([ (2,6)(3,7), (2,6)(3,7), (1,5) ]), Group([ (1,5)(2,6)
  (3,7)(4,8), (1,5)(4,8), (2,6)(3,7) ]), Group([ (2,6)(3,7), (2,6)(3,7), (1,5)
  (2,6) ]), Group([ (2,6)(3,7), (2,6)(3,7), (1,5)(2,6)(4,8) ]), Group([ (1,8)
  (2,3)(4,5)(6,7), (2,6)(3,7) ]), Group([ (1,5)(2,6)(3,7)(4,8), (1,5)(2,6)
  (3,7)(4,8), (1,5) ]), Group([ (1,5)(4,8), (1,5)(4,8), (2,6)(3,7)(4,8) ]), 
  Group([ (1,5)(3,7), (1,5)(3,7), (2,6)(3,7)(4,8) ]), Group([ (1,5)
  (2,6), (1,5)(2,6), (2,6)(3,7)(4,8) ]), Group([ (1,2)(3,4)(5,6)(7,8), (1,5)
  (2,6), (1,5)(2,6) ]), Group([ (1,6)(2,5)(3,8)(4,7), (1,5)(2,6)(3,7)
  (4,8), (1,5)(2,6)(3,7)(4,8) ]), Group([ (1,6)(2,5)(3,8)(4,7), (1,5)
  (2,6), (1,5)(2,6) ]), Group([ (1,2)(3,8)(4,7)(5,6), (1,5)(2,6)(3,7)
  (4,8), (1,5)(2,6)(3,7)(4,8) ]), Group([ (1,3)(2,4)(5,7)(6,8), (1,5)
  (3,7), (1,5)(3,7) ]), Group([ (1,3)(2,4)(5,7)(6,8), (1,5)(2,6)(3,7)
  (4,8), (1,5)(2,6)(3,7)(4,8) ]), Group([ (1,3)(2,8)(4,6)(5,7), (1,5)
  (3,7), (1,5)(3,7) ]), Group([ (1,3)(2,8)(4,6)(5,7), (1,5)(2,6)(3,7)
  (4,8), (1,5)(2,6)(3,7)(4,8) ]), Group([ (1,8)(2,3)(4,5)(6,7), (1,5)
  (4,8) ]), Group([ (1,8)(2,7)(3,6)(4,5), (1,5)(2,6)(3,7)(4,8) ]), 
  Group([ (1,5)(4,8), (1,4)(2,7)(3,6)(5,8), (1,5)(4,8) ]), Group([ (1,5)(2,6)
  (3,7)(4,8), (1,4)(2,7)(3,6)(5,8), (1,5)(2,6)(3,7)(4,8) ]) ]
gap> Length(GsHNPfalseC2xC2);
53
gap> GsHNPtrueC2xC2:=Filtered(GsHNPtrue,x->IdSmallGroup(x)=[4,2]);
[ Group([ (1,3)(2,4)(5,7)(6,8), (), (1,4)(2,3)(5,8)(6,7) ]), Group([ (1,7)
  (2,8)(3,5)(4,6), (), (1,8)(2,7)(3,6)(4,5) ]), Group([ (1,3)(2,8)(4,6)
  (5,7), (), (1,8)(2,3)(4,5)(6,7) ]), Group([ (1,7)(2,4)(3,5)(6,8), (), (1,4)
  (2,7)(3,6)(5,8) ]), Group([ (1,3)(2,4)(5,7)(6,8), (), (1,8)(2,7)(3,6)
  (4,5) ]), Group([ (1,3)(2,8)(4,6)(5,7), (), (1,4)(2,7)(3,6)(5,8) ]), 
  Group([ (1,7)(2,8)(3,5)(4,6), (), (1,4)(2,3)(5,8)(6,7) ]), Group([ (1,7)
  (2,4)(3,5)(6,8), (), (1,8)(2,3)(4,5)(6,7) ]) ]
gap> Length(GsHNPtrueC2xC2);
8
gap> Collected(List(GsHNPfalseC2xC2,x->List(Orbits(x),Length)));
[ [ [ 2, 2 ], 6 ], [ [ 2, 2, 2 ], 16 ], [ [ 2, 2, 2, 2 ], 13 ], 
  [ [ 2, 2, 4 ], 1 ], [ [ 2, 4, 2 ], 5 ], [ [ 4, 2, 2 ], 6 ], [ [ 4, 4 ], 6 ] ]
gap> Collected(List(GsHNPtrueC2xC2,x->List(Orbits(x),Length)));
[ [ [ 4, 4 ], 8 ] ]
gap> GsHNPfalse44C2xC2:=Filtered(GsHNPfalseC2xC2,
> x->List(Orbits(x,[1..8]),Length)=[4,4]);
[ Group([ (1,6)(2,5)(3,8)(4,7), (1,5)(2,6)(3,7)(4,8), (1,5)(2,6)(3,7)
  (4,8) ]), Group([ (1,2)(3,8)(4,7)(5,6), (1,5)(2,6)(3,7)(4,8), (1,5)(2,6)
  (3,7)(4,8) ]), Group([ (1,3)(2,4)(5,7)(6,8), (1,5)(2,6)(3,7)(4,8), (1,5)
  (2,6)(3,7)(4,8) ]), Group([ (1,3)(2,8)(4,6)(5,7), (1,5)(2,6)(3,7)
  (4,8), (1,5)(2,6)(3,7)(4,8) ]), Group([ (1,8)(2,7)(3,6)(4,5), (1,5)(2,6)
  (3,7)(4,8) ]), Group([ (1,5)(2,6)(3,7)(4,8), (1,4)(2,7)(3,6)(5,8), (1,5)
  (2,6)(3,7)(4,8) ]) ]
gap> List(GsHNPtrueC2xC2,Elements);
[ [ (), (1,2)(3,4)(5,6)(7,8), (1,3)(2,4)(5,7)(6,8), (1,4)(2,3)(5,8)(6,7) ], 
  [ (), (1,2)(3,4)(5,6)(7,8), (1,7)(2,8)(3,5)(4,6), (1,8)(2,7)(3,6)(4,5) ], 
  [ (), (1,2)(3,8)(4,7)(5,6), (1,3)(2,8)(4,6)(5,7), (1,8)(2,3)(4,5)(6,7) ], 
  [ (), (1,2)(3,8)(4,7)(5,6), (1,4)(2,7)(3,6)(5,8), (1,7)(2,4)(3,5)(6,8) ], 
  [ (), (1,3)(2,4)(5,7)(6,8), (1,6)(2,5)(3,8)(4,7), (1,8)(2,7)(3,6)(4,5) ], 
  [ (), (1,3)(2,8)(4,6)(5,7), (1,4)(2,7)(3,6)(5,8), (1,6)(2,5)(3,4)(7,8) ], 
  [ (), (1,4)(2,3)(5,8)(6,7), (1,6)(2,5)(3,8)(4,7), (1,7)(2,8)(3,5)(4,6) ], 
  [ (), (1,6)(2,5)(3,4)(7,8), (1,7)(2,4)(3,5)(6,8), (1,8)(2,3)(4,5)(6,7) ] ]
gap> List(GsHNPfalseC2xC2,Elements);
[ [ (), (4,8), (3,7), (3,7)(4,8) ], [ (), (4,8), (2,6), (2,6)(4,8) ], 
  [ (), (4,8), (2,6)(3,7), (2,6)(3,7)(4,8) ], [ (), (4,8), (1,5), (1,5)(4,8) ], 
  [ (), (4,8), (1,5)(3,7), (1,5)(3,7)(4,8) ], 
  [ (), (4,8), (1,5)(2,6), (1,5)(2,6)(4,8) ], 
  [ (), (4,8), (1,5)(2,6)(3,7), (1,5)(2,6)(3,7)(4,8) ], 
  [ (), (3,7), (2,6), (2,6)(3,7) ], 
  [ (), (3,7), (2,6)(4,8), (2,6)(3,7)(4,8) ], 
  [ (), (3,7), (1,5), (1,5)(3,7) ], 
  [ (), (3,7), (1,5)(4,8), (1,5)(3,7)(4,8) ], 
  [ (), (3,7), (1,5)(2,6), (1,5)(2,6)(3,7) ], 
  [ (), (3,7), (1,5)(2,6)(4,8), (1,5)(2,6)(3,7)(4,8) ], 
  [ (), (3,7)(4,8), (2,6), (2,6)(3,7)(4,8) ], 
  [ (), (3,7)(4,8), (2,6)(4,8), (2,6)(3,7) ], 
  [ (), (3,7)(4,8), (1,2)(3,4)(5,6)(7,8), (1,2)(3,8)(4,7)(5,6) ], 
  [ (), (3,7)(4,8), (1,5), (1,5)(3,7)(4,8) ], 
  [ (), (3,7)(4,8), (1,5)(4,8), (1,5)(3,7) ], 
  [ (), (3,7)(4,8), (1,5)(2,6), (1,5)(2,6)(3,7)(4,8) ], 
  [ (), (3,7)(4,8), (1,5)(2,6)(4,8), (1,5)(2,6)(3,7) ], 
  [ (), (3,7)(4,8), (1,6)(2,5)(3,4)(7,8), (1,6)(2,5)(3,8)(4,7) ], 
  [ (), (2,6), (1,5), (1,5)(2,6) ], 
  [ (), (2,6), (1,5)(4,8), (1,5)(2,6)(4,8) ], 
  [ (), (2,6), (1,5)(3,7), (1,5)(2,6)(3,7) ], 
  [ (), (2,6), (1,5)(3,7)(4,8), (1,5)(2,6)(3,7)(4,8) ], 
  [ (), (2,6)(4,8), (1,3)(2,4)(5,7)(6,8), (1,3)(2,8)(4,6)(5,7) ], 
  [ (), (2,6)(4,8), (1,5), (1,5)(2,6)(4,8) ], 
  [ (), (2,6)(4,8), (1,5)(4,8), (1,5)(2,6) ], 
  [ (), (2,6)(4,8), (1,5)(3,7), (1,5)(2,6)(3,7)(4,8) ], 
  [ (), (2,6)(4,8), (1,5)(3,7)(4,8), (1,5)(2,6)(3,7) ], 
  [ (), (2,6)(4,8), (1,7)(2,4)(3,5)(6,8), (1,7)(2,8)(3,5)(4,6) ], 
  [ (), (2,6)(3,7), (1,4)(2,3)(5,8)(6,7), (1,4)(2,7)(3,6)(5,8) ], 
  [ (), (2,6)(3,7), (1,5), (1,5)(2,6)(3,7) ], 
  [ (), (2,6)(3,7), (1,5)(4,8), (1,5)(2,6)(3,7)(4,8) ], 
  [ (), (2,6)(3,7), (1,5)(3,7), (1,5)(2,6) ], 
  [ (), (2,6)(3,7), (1,5)(3,7)(4,8), (1,5)(2,6)(4,8) ], 
  [ (), (2,6)(3,7), (1,8)(2,3)(4,5)(6,7), (1,8)(2,7)(3,6)(4,5) ], 
  [ (), (2,6)(3,7)(4,8), (1,5), (1,5)(2,6)(3,7)(4,8) ], 
  [ (), (2,6)(3,7)(4,8), (1,5)(4,8), (1,5)(2,6)(3,7) ], 
  [ (), (2,6)(3,7)(4,8), (1,5)(3,7), (1,5)(2,6)(4,8) ], 
  [ (), (2,6)(3,7)(4,8), (1,5)(3,7)(4,8), (1,5)(2,6) ], 
  [ (), (1,2)(3,4)(5,6)(7,8), (1,5)(2,6), (1,6)(2,5)(3,4)(7,8) ], 
  [ (), (1,2)(3,4)(5,6)(7,8), (1,5)(2,6)(3,7)(4,8), (1,6)(2,5)(3,8)(4,7) ], 
  [ (), (1,2)(3,8)(4,7)(5,6), (1,5)(2,6), (1,6)(2,5)(3,8)(4,7) ], 
  [ (), (1,2)(3,8)(4,7)(5,6), (1,5)(2,6)(3,7)(4,8), (1,6)(2,5)(3,4)(7,8) ], 
  [ (), (1,3)(2,4)(5,7)(6,8), (1,5)(3,7), (1,7)(2,4)(3,5)(6,8) ], 
  [ (), (1,3)(2,4)(5,7)(6,8), (1,5)(2,6)(3,7)(4,8), (1,7)(2,8)(3,5)(4,6) ], 
  [ (), (1,3)(2,8)(4,6)(5,7), (1,5)(3,7), (1,7)(2,8)(3,5)(4,6) ], 
  [ (), (1,3)(2,8)(4,6)(5,7), (1,5)(2,6)(3,7)(4,8), (1,7)(2,4)(3,5)(6,8) ], 
  [ (), (1,4)(2,3)(5,8)(6,7), (1,5)(4,8), (1,8)(2,3)(4,5)(6,7) ], 
  [ (), (1,4)(2,3)(5,8)(6,7), (1,5)(2,6)(3,7)(4,8), (1,8)(2,7)(3,6)(4,5) ], 
  [ (), (1,4)(2,7)(3,6)(5,8), (1,5)(4,8), (1,8)(2,7)(3,6)(4,5) ], 
  [ (), (1,4)(2,7)(3,6)(5,8), (1,5)(2,6)(3,7)(4,8), (1,8)(2,3)(4,5)(6,7) ] ]
gap> List(GsHNPfalse44C2xC2,Elements);
[ [ (), (1,2)(3,4)(5,6)(7,8), (1,5)(2,6)(3,7)(4,8), (1,6)(2,5)(3,8)(4,7) ], 
  [ (), (1,2)(3,8)(4,7)(5,6), (1,5)(2,6)(3,7)(4,8), (1,6)(2,5)(3,4)(7,8) ], 
  [ (), (1,3)(2,4)(5,7)(6,8), (1,5)(2,6)(3,7)(4,8), (1,7)(2,8)(3,5)(4,6) ], 
  [ (), (1,3)(2,8)(4,6)(5,7), (1,5)(2,6)(3,7)(4,8), (1,7)(2,4)(3,5)(6,8) ], 
  [ (), (1,4)(2,3)(5,8)(6,7), (1,5)(2,6)(3,7)(4,8), (1,8)(2,7)(3,6)(4,5) ], 
  [ (), (1,4)(2,7)(3,6)(5,8), (1,5)(2,6)(3,7)(4,8), (1,8)(2,3)(4,5)(6,7) ] ]
gap> ZG:=Centre(G);
Group([ (1,5)(2,6)(3,7)(4,8) ])
gap> List(GsHNPtrueC2xC2,x->Intersection(x,ZG));
[ Group(()), Group(()), Group(()), Group(()), Group(()), Group(()), 
  Group(()), Group(()) ]
gap> List(GsHNPfalse44C2xC2,x->Intersection(x,ZG));
[ Group([ (1,5)(2,6)(3,7)(4,8) ]), Group([ (1,5)(2,6)(3,7)(4,8) ]), 
  Group([ (1,5)(2,6)(3,7)(4,8) ]), Group([ (1,5)(2,6)(3,7)(4,8) ]), 
  Group([ (1,5)(2,6)(3,7)(4,8) ]), Group([ (1,5)(2,6)(3,7)(4,8) ]) ]
gap> DG:=DerivedSubgroup(G);
Group([ (1,5)(4,8), (2,6)(4,8), (3,7)(4,8) ])
gap> StructureDescription(DG);
"C2 x C2 x C2"
gap> Collected(List(GsHNPfalseC2xC2,x->Order(Intersection(DG,x))));
[ [ 2, 46 ], [ 4, 7 ] ]
gap> Collected(List(GsHNPtrueC2xC2,x->Order(Intersection(DG,x))));
[ [ 1, 8 ] ]
gap> GsHNPfalseC4xC2:=Filtered(GsHNPfalse,x->IdSmallGroup(x)=[8,2]);
[ Group([ (1,2)(3,4,7,8)(5,6), (1,5)(2,6)(3,7)(4,8), (1,5)(2,6)(3,7)
  (4,8), (1,5)(2,6) ]), Group([ (1,2,5,6)(3,8)(4,7), (1,5)(2,6)(3,7)
  (4,8), (1,5)(2,6)(3,7)(4,8), (3,7)(4,8) ]), Group([ (1,2,5,6)
  (3,8,7,4), (1,5)(2,6)(3,7)(4,8), (1,5)(2,6)(3,7)(4,8), (3,7)(4,8) ]), 
  Group([ (1,3)(2,8,6,4)(5,7), (1,5)(2,6)(3,7)(4,8), (1,5)(2,6)(3,7)
  (4,8), (2,6)(4,8) ]), Group([ (1,3,5,7)(2,4)(6,8), (1,5)(2,6)(3,7)
  (4,8), (1,5)(2,6)(3,7)(4,8), (1,5)(3,7) ]), Group([ (1,7,5,3)
  (2,8,6,4), (1,5)(2,6)(3,7)(4,8), (1,5)(2,6)(3,7)(4,8), (2,6)(4,8) ]), 
  Group([ (1,5)(2,6)(3,7)(4,8), (1,8,5,4)(2,3)(6,7), (1,5)(4,8), (2,6)
  (3,7) ]), Group([ (1,4)(2,7,6,3)(5,8), (1,5)(4,8), (2,6)(3,7) ]), 
  Group([ (1,5)(4,8), (1,8,5,4)(2,3,6,7), (1,5)(2,6)(3,7)(4,8) ]) ]
gap> Length(GsHNPfalseC4xC2);
9
gap> GsHNPtrueC4xC2:=Filtered(GsHNPtrue,x->IdSmallGroup(x)=[8,2]);
[ Group([ (1,7,5,3)(2,8,6,4), (), (1,8,5,4)(2,7,6,3), (1,5)(2,6)(3,7)
  (4,8) ]), Group([ (1,7,5,3)(2,4,6,8), (), (1,8,5,4)(2,3,6,7), (1,5)(2,6)
  (3,7)(4,8) ]), Group([ (1,3)(2,4)(5,7)(6,8), (), (1,8,5,4)(2,3,6,7), (1,5)
  (2,6)(3,7)(4,8) ]), Group([ (1,7,5,3)(2,8,6,4), (), (1,4)(2,7)(3,6)
  (5,8), (1,5)(2,6)(3,7)(4,8) ]), Group([ (1,3)(2,8)(4,6)(5,7), (), (1,8,5,4)
  (2,7,6,3), (1,5)(2,6)(3,7)(4,8) ]), Group([ (1,7,5,3)(2,4,6,8), (), (1,4)
  (2,3)(5,8)(6,7), (1,5)(2,6)(3,7)(4,8) ]) ]
gap> Length(GsHNPtrueC4xC2);
6
gap> Collected(List(GsHNPfalseC4xC2,x->List(Orbits(x),Length)));
[ [ [ 4, 4 ], 9 ] ]
gap> Collected(List(GsHNPtrueC4xC2,x->List(Orbits(x),Length)));
[ [ [ 8 ], 6 ] ]
gap> Collected(List(GsHNPfalseC4xC2,x->Order(Intersection(DG,x))));
[ [ 4, 9 ] ]
gap> Collected(List(GsHNPtrueC4xC2,x->Order(Intersection(DG,x))));
[ [ 2, 6 ] ]
\end{verbatim}
}~\\\vspace*{-4mm}

(1-4-2) $G=8T38\simeq (((C_2)^4\rtimes C_2)\rtimes C_2)\rtimes C_3$. 
{\small 
\begin{verbatim}
gap> Read("HNP.gap");
gap> G:=TransitiveGroup(8,38); # G=8T38=((C2^4:C2):C2):C3
[2^4]A(4)
gap> GeneratorsOfGroup(G);
[ (4,8), (1,8)(2,3)(4,5)(6,7), (1,2,3)(5,6,7) ]
gap> H:=Stabilizer(G,1); # H=C2xA4
Group([ (4,8), (2,6), (2,8,3)(4,7,6) ])
gap> FirstObstructionN(G).ker; # Obs1N=1
[ [  ], [ [ 6 ], [  ] ] ]
gap> SchurMultPcpGroup(G); # M(G)=C2xC2: Schur multiplier of G
[ 2, 2 ]
gap> cGs:=MinimalStemExtensions(G);; # 3 minimal stem extensions
gap> for cG in cGs do
> bG:=cG.MinimalStemExtension;
> bH:=PreImage(cG.epi,H);
> Print(KerResH3Z(bG,bH));
> Print("\n");
> od;
[ [ 2 ], [ [ 2 ], [ [ 1 ] ] ] ]
[ [  ], [ [ 2, 2 ], [  ] ] ]
[ [ 2 ], [ [ 2 ], [ [ 1 ] ] ] ]
gap> for cG in cGs do
> bG:=cG.MinimalStemExtension;
> bH:=PreImage(cG.epi,H);
> Print(FirstObstructionN(bG,bH).ker[1]);
> Print(FirstObstructionDnr(bG,bH).Dnr[1]);
> Print("\n");
> od;
[  ][  ]
[ 2 ][  ]
[  ][  ]
gap> cG:=cGs[2];;
gap> bG:=cG.MinimalStemExtension; # bG=G- is a minimal stem extension of G
<permutation group of size 384 with 8 generators>
gap> bH:=PreImage(cG.epi,H); # bH=H-
<permutation group of size 48 with 3 generators>
gap> FirstObstructionN(bG,bH).ker; # Obs1N-=C2
[ [ 2 ], [ [ 2, 6 ], [ [ 1, 0 ] ] ] ]
gap> FirstObstructionDnr(bG,bH).Dnr; # Obs1Dnr-=1
[ [  ], [ [ 2, 6 ], [  ] ] ]
gap> bGs:=AllSubgroups(bG);;
gap> Length(bGs);
1002
gap> bGsHNPfalse:=Filtered(bGs,x->FirstObstructionDr(bG,x,bH).Dr[1]=[]);;
gap> Length(bGsHNPfalse);
951
gap> bGsHNPtrue:=Filtered(bGs,x->FirstObstructionDr(bG,x,bH).Dr[1]=[2]);;
gap> Length(bGsHNPtrue);
51
gap> Collected(List(bGsHNPfalse,x->StructureDescription(Image(cG.epi,x))));
[ [ "(C2 x C2 x C2 x C2) : C2", 3 ], [ "(C4 x C2) : C2", 45 ], [ "1", 2 ], 
  [ "A4", 8 ], [ "C2", 57 ], [ "C2 x A4", 36 ], [ "C2 x C2", 193 ], 
  [ "C2 x C2 x A4", 20 ], [ "C2 x C2 x C2", 138 ], 
  [ "C2 x C2 x C2 x C2", 17 ], [ "C2 x D8", 9 ], [ "C3", 32 ], [ "C4", 54 ], 
  [ "C4 x C2", 45 ], [ "C6", 144 ], [ "C6 x C2", 80 ], [ "D8", 42 ], 
  [ "Q8", 10 ], [ "SL(2,3)", 16 ] ]
gap> Collected(List(bGsHNPtrue,x->StructureDescription(Image(cG.epi,x))));
[ [ "(((C2 x C2 x C2 x C2) : C2) : C2) : C3", 1 ], 
  [ "((C2 x C2 x C2 x C2) : C2) : C2", 1 ], 
  [ "((C2 x C2 x C2) : (C2 x C2)) : C3", 1 ], 
  [ "(C2 x C2 x C2) : (C2 x C2)", 1 ], [ "(C2 x C2 x C2) : C4", 3 ], 
  [ "(C4 x C2) : C2", 6 ], [ "A4", 8 ], [ "C2 x A4", 8 ], [ "C2 x C2", 8 ], 
  [ "C2 x C2 x C2", 2 ], [ "C2 x D8", 6 ], [ "C4 x C2", 6 ] ]
gap> GsHNPfalse:=Set(bGsHNPfalse,x->Image(cG.epi,x));;
gap> Length(GsHNPfalse);
300
gap> GsHNPtrue:=Set(bGsHNPtrue,x->Image(cG.epi,x));;
gap> Length(GsHNPtrue);
51
gap> Intersection(GsHNPfalse,GsHNPtrue);
[  ]
gap> GsHNPtrueMin:=Filtered(GsHNPtrue,x->Length(Filtered(GsHNPtrue,
> y->IsSubgroup(x,y)))=1);
[ Group([ (1,2)(3,4)(5,6)(7,8), (1,3)(2,8,6,4)(5,7), (), (2,6)(4,8), (1,5)
  (3,7), (1,5)(4,8) ]), Group([ (1,3)(2,4)(5,7)(6,8), (1,8,5,4)(2,3)
  (6,7), (), (1,5)(4,8), (2,6)(3,7), (3,7)(4,8) ]), Group([ (1,4)(2,3)(5,8)
  (6,7), (1,2)(3,8,7,4)(5,6), (), (3,7)(4,8), (1,5)(2,6), (2,6)(4,8) ]), 
  Group([ (1,3)(2,4)(5,7)(6,8), (1,4)(2,3)(5,8)(6,7), () ]), Group([ (1,2)
  (3,4)(5,6)(7,8), (1,7,5,3)(2,8,6,4), (), (1,5)(2,6)(3,7)(4,8) ]), 
  Group([ (1,7)(2,8)(3,5)(4,6), (1,8)(2,7)(3,6)(4,5), () ]), Group([ (1,3)
  (2,8)(4,6)(5,7), (1,8)(2,3)(4,5)(6,7), () ]), Group([ (1,2)(3,8)(4,7)
  (5,6), (1,7,5,3)(2,4,6,8), (), (1,5)(2,6)(3,7)(4,8) ]), Group([ (1,7)(2,4)
  (3,5)(6,8), (1,4)(2,7)(3,6)(5,8), () ]), Group([ (1,3)(2,4)(5,7)
  (6,8), (1,8,5,4)(2,3,6,7), (), (1,5)(2,6)(3,7)(4,8) ]), Group([ (1,8)(2,3)
  (4,5)(6,7), (1,2,5,6)(3,4,7,8), (), (1,5)(2,6)(3,7)(4,8) ]), Group([ (1,3)
  (2,8)(4,6)(5,7), (1,4,5,8)(2,3,6,7), (), (1,5)(2,6)(3,7)(4,8) ]), 
  Group([ (1,4)(2,3)(5,8)(6,7), (1,2,5,6)(3,8,7,4), (), (1,5)(2,6)(3,7)
  (4,8) ]), Group([ (1,3)(2,4)(5,7)(6,8), (1,8)(2,7)(3,6)(4,5), () ]), 
  Group([ (1,3)(2,8)(4,6)(5,7), (1,4)(2,7)(3,6)(5,8), () ]), Group([ (1,6)
  (2,5)(3,8)(4,7), (1,7)(2,8)(3,5)(4,6), () ]), Group([ (1,6)(2,5)(3,4)
  (7,8), (1,7)(2,4)(3,5)(6,8), () ]) ]
gap> Length(GsHNPtrueMin);
17
gap> List(GsHNPtrueMin,IdSmallGroup);
[ [ 32, 6 ], [ 32, 6 ], [ 32, 6 ], [ 4, 2 ], [ 8, 2 ], [ 4, 2 ], [ 4, 2 ], 
  [ 8, 2 ], [ 4, 2 ], [ 8, 2 ], [ 8, 2 ], [ 8, 2 ], [ 8, 2 ], [ 4, 2 ], 
  [ 4, 2 ], [ 4, 2 ], [ 4, 2 ] ]
gap> Collected(List(GsHNPfalse,x->Filtered(GsHNPtrueMin,y->IsSubgroup(x,y))));
[ [ [  ], 300 ] ]
gap> Gs:=AllSubgroups(G);;
gap> Length(Gs);
351
gap> GsC2xC2:=Filtered(Gs,x->IdSmallGroup(x)=[4,2]);;
gap> Length(GsC2xC2);
61
gap> GsC4xC2:=Filtered(Gs,x->IdSmallGroup(x)=[8,2]);;
gap> Length(GsC4xC2);
15
gap> Gs32_6:=Filtered(Gs,x->IdSmallGroup(x)=[32,6]);;
gap> Length(Gs32_6);
3
gap> GsHNPfalseC2xC2:=Filtered(GsHNPfalse,x->IdSmallGroup(x)=[4,2]);
[ Group([ (3,7), (4,8) ]), Group([ (2,6), (4,8) ]), Group([ (), (4,8), (2,6)
  (3,7) ]), Group([ (4,8), (1,5)(4,8) ]), Group([ (4,8), (1,5)(3,7) ]), 
  Group([ (1,5)(2,6), (4,8) ]), Group([ (4,8), (1,5)(2,6)(3,7)(4,8) ]), 
  Group([ (), (2,6), (3,7) ]), Group([ (), (3,7), (2,6)(4,8) ]), 
  Group([ (), (1,5), (3,7) ]), Group([ (3,7), (1,5)(4,8) ]), 
  Group([ (3,7), (1,5)(2,6) ]), Group([ (), (3,7), (1,5)(2,6)(4,8) ]), 
  Group([ (), (2,6), (3,7)(4,8) ]), Group([ (), (2,6)(4,8), (3,7)(4,8) ]), 
  Group([ (1,2)(3,4)(5,6)(7,8), (), (3,7)(4,8) ]), Group([ (3,7)
  (4,8), (1,5) ]), Group([ (3,7)(4,8), (1,5)(4,8) ]), Group([ (), (3,7)
  (4,8), (1,5)(2,6) ]), Group([ (1,5)(2,6)(4,8), (3,7)(4,8) ]), Group([ (1,6)
  (2,5)(3,8)(4,7), (), (3,7)(4,8) ]), Group([ (), (1,5), (2,6) ]), 
  Group([ (1,5)(4,8), (2,6) ]), Group([ (2,6), (1,5)(3,7) ]), 
  Group([ (), (2,6), (1,5)(3,7)(4,8) ]), Group([ (1,3)(2,4)(5,7)
  (6,8), (), (2,6)(4,8) ]), Group([ (), (1,5), (2,6)(4,8) ]), 
  Group([ (), (2,6)(4,8), (1,5)(2,6) ]), Group([ (), (2,6)(4,8), (1,5)
  (3,7) ]), Group([ (2,6)(4,8), (1,5)(2,6)(3,7) ]), Group([ (1,7)(2,8)(3,5)
  (4,6), (), (2,6)(4,8) ]), Group([ (1,4)(2,3)(5,8)(6,7), (), (2,6)(3,7) ]), 
  Group([ (), (1,5), (2,6)(3,7) ]), Group([ (), (1,5)(4,8), (2,6)(3,7) ]), 
  Group([ (), (1,5)(2,6), (2,6)(3,7) ]), Group([ (1,5)(3,7)(4,8), (2,6)
  (3,7) ]), Group([ (1,8)(2,3)(4,5)(6,7), (), (2,6)(3,7) ]), Group([ (2,6)
  (3,7)(4,8), (1,5) ]), Group([ (), (1,5)(4,8), (2,6)(3,7)(4,8) ]), 
  Group([ (), (1,5)(3,7), (2,6)(3,7)(4,8) ]), Group([ (), (1,5)(2,6), (2,6)
  (3,7)(4,8) ]), Group([ (1,2)(3,4)(5,6)(7,8), (), (1,5)(2,6) ]), 
  Group([ (1,2)(3,4)(5,6)(7,8), (), (1,5)(2,6)(3,7)(4,8) ]), Group([ (1,2)
  (3,8)(4,7)(5,6), (), (1,5)(2,6) ]), Group([ (1,2)(3,8)(4,7)(5,6), (), (1,5)
  (2,6)(3,7)(4,8) ]), Group([ (1,3)(2,4)(5,7)(6,8), (), (1,5)(3,7) ]), 
  Group([ (1,3)(2,4)(5,7)(6,8), (), (1,5)(2,6)(3,7)(4,8) ]), Group([ (1,3)
  (2,8)(4,6)(5,7), (), (1,5)(3,7) ]), Group([ (1,3)(2,8)(4,6)(5,7), (), (1,5)
  (2,6)(3,7)(4,8) ]), Group([ (1,4)(2,3)(5,8)(6,7), (), (1,5)(4,8) ]), 
  Group([ (1,4)(2,3)(5,8)(6,7), (), (1,5)(2,6)(3,7)(4,8) ]), Group([ (1,8)
  (2,7)(3,6)(4,5), (), (1,5)(4,8) ]), Group([ (1,8)(2,3)(4,5)(6,7), (), (1,5)
  (2,6)(3,7)(4,8) ]) ]
gap> Length(GsHNPfalseC2xC2);
53
gap> GsHNPtrueC2xC2:=Filtered(GsHNPtrue,x->IdSmallGroup(x)=[4,2]);
[ Group([ (1,3)(2,4)(5,7)(6,8), (1,4)(2,3)(5,8)(6,7), () ]), Group([ (1,7)
  (2,8)(3,5)(4,6), (1,8)(2,7)(3,6)(4,5), () ]), Group([ (1,3)(2,8)(4,6)
  (5,7), (1,8)(2,3)(4,5)(6,7), () ]), Group([ (1,7)(2,4)(3,5)(6,8), (1,4)(2,7)
  (3,6)(5,8), () ]), Group([ (1,3)(2,4)(5,7)(6,8), (1,8)(2,7)(3,6)
  (4,5), () ]), Group([ (1,3)(2,8)(4,6)(5,7), (1,4)(2,7)(3,6)(5,8), () ]), 
  Group([ (1,6)(2,5)(3,8)(4,7), (1,7)(2,8)(3,5)(4,6), () ]), Group([ (1,6)
  (2,5)(3,4)(7,8), (1,7)(2,4)(3,5)(6,8), () ]) ]
gap> Length(GsHNPtrueC2xC2);
8
gap> Collected(List(GsHNPfalseC2xC2,x->List(Orbits(x),Length)));
[ [ [ 2, 2 ], 6 ], [ [ 2, 2, 2 ], 16 ], [ [ 2, 2, 2, 2 ], 13 ], 
  [ [ 2, 2, 4 ], 1 ], [ [ 2, 4, 2 ], 5 ], [ [ 4, 2, 2 ], 6 ], [ [ 4, 4 ], 6 ] ]
gap> Collected(List(GsHNPtrueC2xC2,x->List(Orbits(x),Length)));
[ [ [ 4, 4 ], 8 ] ]
gap> GsHNPfalse44C2xC2:=Filtered(GsHNPfalseC2xC2,
> x->List(Orbits(x,[1..8]),Length)=[4,4]);
[ Group([ (1,2)(3,4)(5,6)(7,8), (), (1,5)(2,6)(3,7)(4,8) ]), Group([ (1,2)
  (3,8)(4,7)(5,6), (), (1,5)(2,6)(3,7)(4,8) ]), Group([ (1,3)(2,4)(5,7)
  (6,8), (), (1,5)(2,6)(3,7)(4,8) ]), Group([ (1,3)(2,8)(4,6)(5,7), (), (1,5)
  (2,6)(3,7)(4,8) ]), Group([ (1,4)(2,3)(5,8)(6,7), (), (1,5)(2,6)(3,7)
  (4,8) ]), Group([ (1,8)(2,3)(4,5)(6,7), (), (1,5)(2,6)(3,7)(4,8) ]) ]
gap> List(GsHNPtrueC2xC2,Elements);
[ [ (), (1,2)(3,4)(5,6)(7,8), (1,3)(2,4)(5,7)(6,8), (1,4)(2,3)(5,8)(6,7) ], 
  [ (), (1,2)(3,4)(5,6)(7,8), (1,7)(2,8)(3,5)(4,6), (1,8)(2,7)(3,6)(4,5) ], 
  [ (), (1,2)(3,8)(4,7)(5,6), (1,3)(2,8)(4,6)(5,7), (1,8)(2,3)(4,5)(6,7) ], 
  [ (), (1,2)(3,8)(4,7)(5,6), (1,4)(2,7)(3,6)(5,8), (1,7)(2,4)(3,5)(6,8) ], 
  [ (), (1,3)(2,4)(5,7)(6,8), (1,6)(2,5)(3,8)(4,7), (1,8)(2,7)(3,6)(4,5) ], 
  [ (), (1,3)(2,8)(4,6)(5,7), (1,4)(2,7)(3,6)(5,8), (1,6)(2,5)(3,4)(7,8) ], 
  [ (), (1,4)(2,3)(5,8)(6,7), (1,6)(2,5)(3,8)(4,7), (1,7)(2,8)(3,5)(4,6) ], 
  [ (), (1,6)(2,5)(3,4)(7,8), (1,7)(2,4)(3,5)(6,8), (1,8)(2,3)(4,5)(6,7) ] ]
gap> List(GsHNPfalseC2xC2,Elements);
[ [ (), (4,8), (3,7), (3,7)(4,8) ], [ (), (4,8), (2,6), (2,6)(4,8) ], 
  [ (), (4,8), (2,6)(3,7), (2,6)(3,7)(4,8) ], [ (), (4,8), (1,5), (1,5)(4,8) ],  
  [ (), (4,8), (1,5)(3,7), (1,5)(3,7)(4,8) ], 
  [ (), (4,8), (1,5)(2,6), (1,5)(2,6)(4,8) ], 
  [ (), (4,8), (1,5)(2,6)(3,7), (1,5)(2,6)(3,7)(4,8) ], 
  [ (), (3,7), (2,6), (2,6)(3,7) ], 
  [ (), (3,7), (2,6)(4,8), (2,6)(3,7)(4,8) ], 
  [ (), (3,7), (1,5), (1,5)(3,7) ], 
  [ (), (3,7), (1,5)(4,8), (1,5)(3,7)(4,8) ], 
  [ (), (3,7), (1,5)(2,6), (1,5)(2,6)(3,7) ], 
  [ (), (3,7), (1,5)(2,6)(4,8), (1,5)(2,6)(3,7)(4,8) ], 
  [ (), (3,7)(4,8), (2,6), (2,6)(3,7)(4,8) ], 
  [ (), (3,7)(4,8), (2,6)(4,8), (2,6)(3,7) ], 
  [ (), (3,7)(4,8), (1,2)(3,4)(5,6)(7,8), (1,2)(3,8)(4,7)(5,6) ], 
  [ (), (3,7)(4,8), (1,5), (1,5)(3,7)(4,8) ], 
  [ (), (3,7)(4,8), (1,5)(4,8), (1,5)(3,7) ], 
  [ (), (3,7)(4,8), (1,5)(2,6), (1,5)(2,6)(3,7)(4,8) ], 
  [ (), (3,7)(4,8), (1,5)(2,6)(4,8), (1,5)(2,6)(3,7) ], 
  [ (), (3,7)(4,8), (1,6)(2,5)(3,4)(7,8), (1,6)(2,5)(3,8)(4,7) ], 
  [ (), (2,6), (1,5), (1,5)(2,6) ], 
  [ (), (2,6), (1,5)(4,8), (1,5)(2,6)(4,8) ], 
  [ (), (2,6), (1,5)(3,7), (1,5)(2,6)(3,7) ], 
  [ (), (2,6), (1,5)(3,7)(4,8), (1,5)(2,6)(3,7)(4,8) ], 
  [ (), (2,6)(4,8), (1,3)(2,4)(5,7)(6,8), (1,3)(2,8)(4,6)(5,7) ], 
  [ (), (2,6)(4,8), (1,5), (1,5)(2,6)(4,8) ], 
  [ (), (2,6)(4,8), (1,5)(4,8), (1,5)(2,6) ], 
  [ (), (2,6)(4,8), (1,5)(3,7), (1,5)(2,6)(3,7)(4,8) ], 
  [ (), (2,6)(4,8), (1,5)(3,7)(4,8), (1,5)(2,6)(3,7) ], 
  [ (), (2,6)(4,8), (1,7)(2,4)(3,5)(6,8), (1,7)(2,8)(3,5)(4,6) ], 
  [ (), (2,6)(3,7), (1,4)(2,3)(5,8)(6,7), (1,4)(2,7)(3,6)(5,8) ], 
  [ (), (2,6)(3,7), (1,5), (1,5)(2,6)(3,7) ], 
  [ (), (2,6)(3,7), (1,5)(4,8), (1,5)(2,6)(3,7)(4,8) ], 
  [ (), (2,6)(3,7), (1,5)(3,7), (1,5)(2,6) ], 
  [ (), (2,6)(3,7), (1,5)(3,7)(4,8), (1,5)(2,6)(4,8) ], 
  [ (), (2,6)(3,7), (1,8)(2,3)(4,5)(6,7), (1,8)(2,7)(3,6)(4,5) ], 
  [ (), (2,6)(3,7)(4,8), (1,5), (1,5)(2,6)(3,7)(4,8) ], 
  [ (), (2,6)(3,7)(4,8), (1,5)(4,8), (1,5)(2,6)(3,7) ], 
  [ (), (2,6)(3,7)(4,8), (1,5)(3,7), (1,5)(2,6)(4,8) ], 
  [ (), (2,6)(3,7)(4,8), (1,5)(3,7)(4,8), (1,5)(2,6) ], 
  [ (), (1,2)(3,4)(5,6)(7,8), (1,5)(2,6), (1,6)(2,5)(3,4)(7,8) ], 
  [ (), (1,2)(3,4)(5,6)(7,8), (1,5)(2,6)(3,7)(4,8), (1,6)(2,5)(3,8)(4,7) ], 
  [ (), (1,2)(3,8)(4,7)(5,6), (1,5)(2,6), (1,6)(2,5)(3,8)(4,7) ], 
  [ (), (1,2)(3,8)(4,7)(5,6), (1,5)(2,6)(3,7)(4,8), (1,6)(2,5)(3,4)(7,8) ], 
  [ (), (1,3)(2,4)(5,7)(6,8), (1,5)(3,7), (1,7)(2,4)(3,5)(6,8) ], 
  [ (), (1,3)(2,4)(5,7)(6,8), (1,5)(2,6)(3,7)(4,8), (1,7)(2,8)(3,5)(4,6) ], 
  [ (), (1,3)(2,8)(4,6)(5,7), (1,5)(3,7), (1,7)(2,8)(3,5)(4,6) ], 
  [ (), (1,3)(2,8)(4,6)(5,7), (1,5)(2,6)(3,7)(4,8), (1,7)(2,4)(3,5)(6,8) ], 
  [ (), (1,4)(2,3)(5,8)(6,7), (1,5)(4,8), (1,8)(2,3)(4,5)(6,7) ], 
  [ (), (1,4)(2,3)(5,8)(6,7), (1,5)(2,6)(3,7)(4,8), (1,8)(2,7)(3,6)(4,5) ], 
  [ (), (1,4)(2,7)(3,6)(5,8), (1,5)(4,8), (1,8)(2,7)(3,6)(4,5) ], 
  [ (), (1,4)(2,7)(3,6)(5,8), (1,5)(2,6)(3,7)(4,8), (1,8)(2,3)(4,5)(6,7) ] ]
gap> List(GsHNPfalse44C2xC2,Elements);
[ [ (), (1,2)(3,4)(5,6)(7,8), (1,5)(2,6)(3,7)(4,8), (1,6)(2,5)(3,8)(4,7) ], 
  [ (), (1,2)(3,8)(4,7)(5,6), (1,5)(2,6)(3,7)(4,8), (1,6)(2,5)(3,4)(7,8) ], 
  [ (), (1,3)(2,4)(5,7)(6,8), (1,5)(2,6)(3,7)(4,8), (1,7)(2,8)(3,5)(4,6) ], 
  [ (), (1,3)(2,8)(4,6)(5,7), (1,5)(2,6)(3,7)(4,8), (1,7)(2,4)(3,5)(6,8) ], 
  [ (), (1,4)(2,3)(5,8)(6,7), (1,5)(2,6)(3,7)(4,8), (1,8)(2,7)(3,6)(4,5) ], 
  [ (), (1,4)(2,7)(3,6)(5,8), (1,5)(2,6)(3,7)(4,8), (1,8)(2,3)(4,5)(6,7) ] ]
gap> ZG:=Centre(G);
Group([ (1,5)(2,6)(3,7)(4,8) ])
gap> List(GsHNPtrueC2xC2,x->Intersection(x,ZG));
[ Group(()), Group(()), Group(()), Group(()), Group(()), Group(()), 
  Group(()), Group(()) ]
gap> List(GsHNPfalse44C2xC2,x->Intersection(x,ZG));
[ Group([ (1,5)(2,6)(3,7)(4,8) ]), Group([ (1,5)(2,6)(3,7)(4,8) ]), 
  Group([ (1,5)(2,6)(3,7)(4,8) ]), Group([ (1,5)(2,6)(3,7)(4,8) ]), 
  Group([ (1,5)(2,6)(3,7)(4,8) ]), Group([ (1,5)(2,6)(3,7)(4,8) ]) ]
gap> Syl2G:=SylowSubgroup(G,2);
Group([ (4,8), (1,2)(3,8)(4,7)(5,6), (1,8)(2,3)(4,5)(6,7), (1,5)(2,6), (1,5)
(4,8), (1,5)(2,6)(3,7)(4,8) ])
gap> IsNormal(G,Syl2G);
true
gap> DSyl2G:=DerivedSubgroup(Syl2G);
Group([ (2,6)(3,7), (1,5)(4,8), (2,6)(4,8) ])
gap> StructureDescription(DSyl2G);
"C2 x C2 x C2"
gap> Collected(List(GsHNPfalseC2xC2,x->Order(Intersection(DSyl2G,x))));
[ [ 2, 46 ], [ 4, 7 ] ]
gap> Collected(List(GsHNPtrueC2xC2,x->Order(Intersection(DSyl2G,x))));
[ [ 1, 8 ] ]
gap> GsHNPfalseC4xC2:=Filtered(GsHNPfalse,x->IdSmallGroup(x)=[8,2]);
[ Group([ (1,2)(3,8,7,4)(5,6), (), (3,7)(4,8), (1,5)(2,6) ]), 
  Group([ (1,6,5,2)(3,4)(7,8), (), (1,5)(2,6), (3,7)(4,8) ]), 
  Group([ (1,6,5,2)(3,8,7,4), (), (3,7)(4,8), (1,5)(2,6) ]), Group([ (1,3)
  (2,8,6,4)(5,7), (2,6)(4,8), (1,5)(2,6)(3,7)(4,8) ]), Group([ (1,7,5,3)(2,8)
  (4,6), (1,5)(3,7), (1,5)(2,6)(3,7)(4,8) ]), Group([ (1,3,5,7)
  (2,8,6,4), (), (2,6)(4,8), (1,5)(3,7) ]), Group([ (1,8,5,4)(2,3)
  (6,7), (), (1,5)(4,8), (2,6)(3,7) ]), Group([ (1,4)(2,3,6,7)(5,8), (2,6)
  (3,7), (1,5)(2,6)(3,7)(4,8) ]), Group([ (1,8,5,4)(2,7,6,3), (), (1,5)
  (4,8), (2,6)(3,7) ]) ]
gap> Length(GsHNPfalseC4xC2);
9
gap> GsHNPtrueC4xC2:=Filtered(GsHNPtrue,x->IdSmallGroup(x)=[8,2]);
[ Group([ (1,2)(3,4)(5,6)(7,8), (1,7,5,3)(2,8,6,4), (), (1,5)(2,6)(3,7)
  (4,8) ]), Group([ (1,2)(3,8)(4,7)(5,6), (1,7,5,3)(2,4,6,8), (), (1,5)(2,6)
  (3,7)(4,8) ]), Group([ (1,3)(2,4)(5,7)(6,8), (1,8,5,4)(2,3,6,7), (), (1,5)
  (2,6)(3,7)(4,8) ]), Group([ (1,8)(2,3)(4,5)(6,7), (1,2,5,6)
  (3,4,7,8), (), (1,5)(2,6)(3,7)(4,8) ]), Group([ (1,3)(2,8)(4,6)
  (5,7), (1,4,5,8)(2,3,6,7), (), (1,5)(2,6)(3,7)(4,8) ]), Group([ (1,4)(2,3)
  (5,8)(6,7), (1,2,5,6)(3,8,7,4), (), (1,5)(2,6)(3,7)(4,8) ]) ]
gap> Length(GsHNPtrueC4xC2);
6
gap> Collected(List(GsHNPfalseC4xC2,x->List(Orbits(x),Length)));
[ [ [ 4, 4 ], 9 ] ]
gap> Collected(List(GsHNPtrueC4xC2,x->List(Orbits(x),Length)));
[ [ [ 8 ], 6 ] ]
gap> Collected(List(GsHNPfalseC4xC2,x->Order(Intersection(DSyl2G,x))));
[ [ 4, 9 ] ]
gap> Collected(List(GsHNPtrueC4xC2,x->Order(Intersection(DSyl2G,x))));
[ [ 2, 6 ] ]
gap> Syl2G=TransitiveGroup(8,31);
true
\end{verbatim}
}
\end{example}

\smallskip
\begin{example}[$G=8Tm$ $(m=9,11,15,19,22,32)$]\label{ex8-2}
~{}\vspace*{-2mm}\\

(2-1) $G=8T9\simeq D_4\times C_2$. 
{\small 
\begin{verbatim}
gap> Read("HNP.gap");
gap> G:=TransitiveGroup(8,9); # G=8T9=D4xC2
E(8):2=D(4)[x]2
gap> GeneratorsOfGroup(G); # H=C2
[ (1,8)(2,3)(4,5)(6,7), (1,3)(2,8)(4,6)(5,7), (1,5)(2,6)(3,7)(4,8), 
  (4,5)(6,7) ]
gap> H:=Stabilizer(G,1); # H=C2
Group([ (4,5)(6,7) ])
gap> FirstObstructionN(G).ker; # Obs1N=1
[ [  ], [ [ 2 ], [  ] ] ]
gap> SchurMultPcpGroup(G); # M(G)=C2xC2xC2: Schur multiplicer of G
[ 2, 2, 2 ]
gap> cGs:=MinimalStemExtensions(G);; # 7 minimal stem extensions
gap> for cG in cGs do
> bG:=cG.MinimalStemExtension;
> bH:=PreImage(cG.epi,H);
> Print(KerResH3Z(bG,bH));
> Print("\n");
> od;
[ [ 2 ], [ [ 2, 2, 2 ], [ [ 0, 1, 1 ] ] ] ]
[ [  ], [ [ 2, 2, 2, 2 ], [  ] ] ]
[ [ 2 ], [ [ 2, 2 ], [ [ 0, 1 ] ] ] ]
[ [ 2 ], [ [ 2, 2, 2, 2 ], [ [ 0, 0, 0, 1 ] ] ] ]
[ [ 2 ], [ [ 2, 2 ], [ [ 1, 1 ] ] ] ]
[ [ 2 ], [ [ 2, 2, 2 ], [ [ 0, 0, 1 ] ] ] ]
[ [ 2 ], [ [ 2, 2 ], [ [ 1, 0 ] ] ] ]
gap> for cG in cGs do
> bG:=cG.MinimalStemExtension;
> bH:=PreImage(cG.epi,H);
> Print(FirstObstructionN(bG,bH).ker[1]);
> Print(FirstObstructionDnr(bG,bH).Dnr[1]);
> Print("\n");
> od;
[ 2 ][ 2 ]
[ 2 ][  ]
[ 2 ][ 2 ]
[ 2 ][ 2 ]
[ 2 ][ 2 ]
[ 2 ][ 2 ]
[ 2 ][ 2 ]
gap> cG:=cGs[2];
rec( MinimalStemExtension := Group([ (2,3)(4,6), (1,2)(3,5)(4,7)(6,8), (2,4)
  (3,6) ]), Tid := [ 8, 18 ], 
  epi := [ (2,3)(4,6), (1,2)(3,5)(4,7)(6,8), (2,4)(3,6) ] -> 
    [ (4,5)(6,7), (1,5)(2,6)(3,7)(4,8), (1,3)(2,8)(4,6)(5,7) ] )
gap> bG:=cG.MinimalStemExtension; # bG=G- is a minimal stem extension of G
Group([ (2,3)(4,6), (1,2)(3,5)(4,7)(6,8), (2,4)(3,6) ])
gap> bH:=PreImage(cG.epi,H); # bH=H-
Group([ (2,3)(4,6), (1,7)(2,4)(3,6)(5,8) ])
gap> FirstObstructionN(bG,bH).ker; # Obs1N-=C2
[ [ 2 ], [ [ 2, 2 ], [ [ 0, 1 ] ] ] ]
gap> FirstObstructionDnr(bG,bH).Dnr; # Obs1Dnr-=1
[ [  ], [ [ 2, 2 ], [  ] ] ]
gap> bGs:=AllSubgroups(bG);;
gap> Length(bGs);
106
gap> bGsHNPfalse:=Filtered(bGs,x->FirstObstructionDr(bG,x,bH).Dr[1]=[]);;
gap> Length(bGsHNPfalse);
99
gap> bGsHNPtrue:=Filtered(bGs,x->FirstObstructionDr(bG,x,bH).Dr[1]=[2]);;
gap> Length(bGsHNPtrue);
7
gap> Collected(List(bGsHNPfalse,x->StructureDescription(Image(cG.epi,x))));
[ [ "1", 2 ], [ "C2", 29 ], [ "C2 x C2", 41 ], [ "C2 x C2 x C2", 9 ], 
  [ "C4", 6 ], [ "D8", 12 ] ]
gap> Collected(List(bGsHNPtrue,x->StructureDescription(Image(cG.epi,x))));
[ [ "C2 x C2", 4 ], [ "C2 x C2 x C2", 1 ], [ "C2 x D8", 1 ], [ "C4 x C2", 1 ] 
 ]
gap> GsHNPfalse:=Set(bGsHNPfalse,x->Image(cG.epi,x));;
gap> Length(GsHNPfalse);
28
gap> GsHNPtrue:=Set(bGsHNPtrue,x->Image(cG.epi,x));;
gap> Length(GsHNPtrue);
7
gap> Intersection(GsHNPfalse,GsHNPtrue);
[  ]
gap> GsHNPtrueMin:=Filtered(GsHNPtrue,x->Length(Filtered(GsHNPtrue,
> y->IsSubgroup(x,y)))=1);
[ Group([ (), (1,8)(2,3)(4,5)(6,7), (1,3)(2,8)(4,6)(5,7), (1,4,8,5)
  (2,7,3,6) ]), Group([ (1,2)(3,8)(4,7)(5,6), (), (1,4)(2,7)(3,6)(5,8) ]), 
  Group([ (1,2)(3,8)(4,7)(5,6), (), (1,5)(2,6)(3,7)(4,8) ]), Group([ (1,3)
  (2,8)(4,6)(5,7), (), (1,4)(2,7)(3,6)(5,8) ]), Group([ (1,3)(2,8)(4,6)
  (5,7), (), (1,5)(2,6)(3,7)(4,8) ]) ]
gap> List(GsHNPtrueMin,IdSmallGroup);
[ [ 8, 2 ], [ 4, 2 ], [ 4, 2 ], [ 4, 2 ], [ 4, 2 ] ]
gap> Length(GsHNPtrueMin);
5
gap> Collected(List(GsHNPfalse,x->Filtered(GsHNPtrueMin,y->IsSubgroup(x,y))));
[ [ [  ], 28 ] ]
gap> Gs:=AllSubgroups(G);;
gap> Length(Gs);
35
gap> GsC2xC2:=Filtered(Gs,x->IdSmallGroup(x)=[4,2]);;
gap> Length(GsC2xC2);
13
gap> GsC4xC2:=Filtered(Gs,x->IdSmallGroup(x)=[8,2]);;
gap> Length(GsC4xC2);
1
gap> GsHNPfalseC2xC2:=Filtered(GsHNPfalse,x->IdSmallGroup(x)=[4,2]);
[ Group([ (1,2)(3,8)(4,7)(5,6), (4,5)(6,7) ]), Group([ (1,3)(2,8)(4,6)
  (5,7), (4,5)(6,7) ]), Group([ (), (1,8)(2,3)(4,5)(6,7), (4,5)(6,7) ]), 
  Group([ (1,3)(2,8)(4,6)(5,7), (1,8)(2,3) ]), Group([ (1,2)(3,8)(4,6)
  (5,7), (1,8)(2,3)(4,5)(6,7) ]), Group([ (1,3)(2,8)(4,6)(5,7), (1,8)(2,3)
  (4,5)(6,7) ]), Group([ (1,2)(3,8)(4,7)(5,6), (1,8)(2,3) ]), Group([ (1,5)
  (2,6)(3,7)(4,8), (1,8)(2,3)(4,5)(6,7) ]), Group([ (), (1,8)(2,3)(4,5)
  (6,7), (1,7)(2,4)(3,5)(6,8) ]) ]
gap> Length(GsHNPfalseC2xC2);
9
gap> GsHNPtrueC2xC2:=Filtered(GsHNPtrue,x->IdSmallGroup(x)=[4,2]);
[ Group([ (1,2)(3,8)(4,7)(5,6), (), (1,4)(2,7)(3,6)(5,8) ]), Group([ (1,2)
  (3,8)(4,7)(5,6), (), (1,5)(2,6)(3,7)(4,8) ]), Group([ (1,3)(2,8)(4,6)
  (5,7), (), (1,4)(2,7)(3,6)(5,8) ]), Group([ (1,3)(2,8)(4,6)(5,7), (), (1,5)
  (2,6)(3,7)(4,8) ]) ]
gap> Length(GsHNPtrueC2xC2);
4
gap> Collected(List(GsHNPfalseC2xC2,x->List(Orbits(x),Length)));
[ [ [ 2, 2, 2, 2 ], 1 ], [ [ 2, 2, 4 ], 2 ], [ [ 4, 2, 2 ], 2 ], 
  [ [ 4, 4 ], 4 ] ]
gap> Collected(List(GsHNPtrueC2xC2,x->List(Orbits(x),Length)));
[ [ [ 4, 4 ], 4 ] ]
gap> GsHNPfalse44C2xC2:=Filtered(GsHNPfalseC2xC2,x->List(Orbits(x,[1..8]),
> Length)=[4,4]);
[ Group([ (1,2)(3,8)(4,6)(5,7), (1,8)(2,3)(4,5)(6,7) ]), Group([ (1,3)(2,8)
  (4,6)(5,7), (1,8)(2,3)(4,5)(6,7) ]), Group([ (1,5)(2,6)(3,7)(4,8), (1,8)
  (2,3)(4,5)(6,7) ]), Group([ (), (1,8)(2,3)(4,5)(6,7), (1,7)(2,4)(3,5)
  (6,8) ]) ]
gap> List(GsHNPtrueC2xC2,Elements);
[ [ (), (1,2)(3,8)(4,7)(5,6), (1,4)(2,7)(3,6)(5,8), (1,7)(2,4)(3,5)(6,8) ], 
  [ (), (1,2)(3,8)(4,7)(5,6), (1,5)(2,6)(3,7)(4,8), (1,6)(2,5)(3,4)(7,8) ], 
  [ (), (1,3)(2,8)(4,6)(5,7), (1,4)(2,7)(3,6)(5,8), (1,6)(2,5)(3,4)(7,8) ], 
  [ (), (1,3)(2,8)(4,6)(5,7), (1,5)(2,6)(3,7)(4,8), (1,7)(2,4)(3,5)(6,8) ] ]
gap> List(GsHNPfalseC2xC2,Elements);
[ [ (), (4,5)(6,7), (1,2)(3,8)(4,6)(5,7), (1,2)(3,8)(4,7)(5,6) ], 
  [ (), (4,5)(6,7), (1,3)(2,8)(4,6)(5,7), (1,3)(2,8)(4,7)(5,6) ], 
  [ (), (4,5)(6,7), (1,8)(2,3), (1,8)(2,3)(4,5)(6,7) ], 
  [ (), (1,2)(3,8)(4,6)(5,7), (1,3)(2,8)(4,6)(5,7), (1,8)(2,3) ], 
  [ (), (1,2)(3,8)(4,6)(5,7), (1,3)(2,8)(4,7)(5,6), (1,8)(2,3)(4,5)(6,7) ], 
  [ (), (1,2)(3,8)(4,7)(5,6), (1,3)(2,8)(4,6)(5,7), (1,8)(2,3)(4,5)(6,7) ], 
  [ (), (1,2)(3,8)(4,7)(5,6), (1,3)(2,8)(4,7)(5,6), (1,8)(2,3) ], 
  [ (), (1,4)(2,7)(3,6)(5,8), (1,5)(2,6)(3,7)(4,8), (1,8)(2,3)(4,5)(6,7) ], 
  [ (), (1,6)(2,5)(3,4)(7,8), (1,7)(2,4)(3,5)(6,8), (1,8)(2,3)(4,5)(6,7) ] ]
gap> List(GsHNPfalse44C2xC2,Elements);
[ [ (), (1,2)(3,8)(4,6)(5,7), (1,3)(2,8)(4,7)(5,6), (1,8)(2,3)(4,5)(6,7) ], 
  [ (), (1,2)(3,8)(4,7)(5,6), (1,3)(2,8)(4,6)(5,7), (1,8)(2,3)(4,5)(6,7) ], 
  [ (), (1,4)(2,7)(3,6)(5,8), (1,5)(2,6)(3,7)(4,8), (1,8)(2,3)(4,5)(6,7) ], 
  [ (), (1,6)(2,5)(3,4)(7,8), (1,7)(2,4)(3,5)(6,8), (1,8)(2,3)(4,5)(6,7) ] ]
gap> DG:=DerivedSubgroup(G);
Group([ (1,8)(2,3)(4,5)(6,7) ])
gap> List(GsHNPtrueC2xC2,x->Intersection(x,DG));
[ Group(()), Group(()), Group(()), Group(()) ]
gap> List(GsHNPfalse44C2xC2,x->Intersection(x,DG));
[ Group([ (1,8)(2,3)(4,5)(6,7) ]), Group([ (1,8)(2,3)(4,5)(6,7) ]), 
  Group([ (1,8)(2,3)(4,5)(6,7) ]), Group([ (1,8)(2,3)(4,5)(6,7) ]) ]
\end{verbatim}
}~\\\vspace*{-4mm}

(2-2) $G=8T11\simeq (C_4\times C_2)\rtimes C_2$. 
{\small 
\begin{verbatim}
gap> Read("HNP.gap");
gap> G:=TransitiveGroup(8,11); # G=8T11=(C4xC2):C2
1/2[2^3]E(4)=Q_8:2
gap> GeneratorsOfGroup(G);
[ (1,5)(3,7), (1,3,5,7)(2,4,6,8), (1,4,5,8)(2,3,6,7) ]
gap> H:=Stabilizer(G,1); # H=C2
Group([ (2,6)(4,8) ])
gap> FirstObstructionN(G).ker; # Obs1N=1
[ [  ], [ [ 2 ], [  ] ] ]
gap> SchurMultPcpGroup(G); # M(G)=C2xC2: Schur multiplicer of G
[ 2, 2 ]
gap> cGs:=MinimalStemExtensions(G);; # 3 minimal stem extensions
gap> for cG in cGs do
> bG:=cG.MinimalStemExtension;
> bH:=PreImage(cG.epi,H);
> Print(KerResH3Z(bG,bH));
> Print("\n");
> od;
[ [ 2 ], [ [ 2, 2, 2 ], [ [ 0, 0, 1 ] ] ] ]
[ [  ], [ [ 2, 2 ], [  ] ] ]
[ [ 2 ], [ [ 2, 2 ], [ [ 0, 1 ] ] ] ]
gap> for cG in cGs do
> bG:=cG.MinimalStemExtension;
> bH:=PreImage(cG.epi,H);
> Print(FirstObstructionN(bG,bH).ker[1]);
> Print(FirstObstructionDnr(bG,bH).Dnr[1]);
> Print("\n");
> od;
[ 2 ][ 2 ]
[ 2 ][  ]
[ 2 ][ 2 ]
gap> cG:=cGs[2];
rec( MinimalStemExtension := <permutation group of size 32 with 5 generators>,
  epi := [ (1,5,6,16)(2,9,10,22)(3,12,13,25)(4,14,15,26)(7,18,19,29)(8,20,21,
        30)(11,23,24,31)(17,27,28,32), (1,3,6,13)(2,7,10,19)(4,12,15,25)(5,11,
        16,24)(8,18,21,29)(9,17,22,28)(14,23,26,31)(20,27,30,32), 
      (1,2)(3,7)(4,21)(5,22)(6,10)(8,15)(9,16)(11,28)(12,29)(13,19)(14,20)(17,
        24)(18,25)(23,27)(26,30)(31,32) ] -> 
    [ (1,4,5,8)(2,3,6,7), (1,3,5,7)(2,4,6,8), (2,6)(4,8) ] )
gap> bG:=cG.MinimalStemExtension; # bG=G- is a minimal stem extension of G
<permutation group of size 32 with 5 generators>
gap> bH:=PreImage(cG.epi,H); # bH=H-
Group([ (1,2)(3,7)(4,21)(5,22)(6,10)(8,15)(9,16)(11,28)(12,29)(13,19)(14,20)
(17,24)(18,25)(23,27)(26,30)(31,32), (1,26)(2,30)(3,31)(4,5)(6,14)(7,32)(8,9)
(10,20)(11,12)(13,23)(15,16)(17,18)(19,27)(21,22)(24,25)(28,29) ])
gap> FirstObstructionN(bG,bH).ker; # Obs1N-=C2
[ [ 2 ], [ [ 2, 2 ], [ [ 0, 1 ] ] ] ]
gap> FirstObstructionDnr(bG,bH).Dnr; # Obs1Dnr-=1
[ [  ], [ [ 2, 2 ], [  ] ] ]
gap> bGs:=AllSubgroups(bG);;
gap> Length(bGs);
58
gap> bGsHNPfalse:=Filtered(bGs,x->FirstObstructionDr(bG,x,bH).Dr[1]=[]);;
gap> Length(bGsHNPfalse);
55
gap> bGsHNPtrue:=Filtered(bGs,x->FirstObstructionDr(bG,x,bH).Dr[1]=[2]);;
gap> Length(bGsHNPtrue);
3
gap> Collected(List(bGsHNPfalse,x->StructureDescription(Image(cG.epi,x))));
[ [ "1", 2 ], [ "C2", 17 ], [ "C2 x C2", 11 ], [ "C4", 12 ], [ "C4 x C2", 5 ],
  [ "D8", 7 ], [ "Q8", 1 ] ]
gap> Collected(List(bGsHNPtrue,x->StructureDescription(Image(cG.epi,x))));
[ [ "(C4 x C2) : C2", 1 ], [ "C4 x C2", 2 ] ]
gap> GsHNPfalse:=Set(bGsHNPfalse,x->Image(cG.epi,x));;
gap> Length(GsHNPfalse);
20
gap> GsHNPtrue:=Set(bGsHNPtrue,x->Image(cG.epi,x));;
gap> Length(GsHNPtrue);
3
gap> Intersection(GsHNPfalse,GsHNPtrue);
[  ]
gap> GsHNPtrueMin:=Filtered(GsHNPtrue,x->Length(Filtered(GsHNPtrue,
> y->IsSubgroup(x,y)))=1);
[ Group([ (), (1,5)(2,6)(3,7)(4,8), (1,4,5,8)(2,3,6,7), (1,3,5,7)
  (2,4,6,8) ]), Group([ (), (1,5)(2,6)(3,7)(4,8), (1,3,5,7)(2,4,6,8), (1,8)
  (2,3)(4,5)(6,7) ]) ]
gap> Length(GsHNPtrueMin);
2
gap> List(GsHNPtrueMin,IdSmallGroup);
[ [ 8, 2 ], [ 8, 2 ] ]
gap> Collected(List(GsHNPfalse,x->Filtered(GsHNPtrueMin,y->IsSubgroup(x,y))));
[ [ [  ], 20 ] ]
gap> Gs:=AllSubgroups(G);;
gap> Length(Gs);
23
gap> GsC4xC2:=Filtered(Gs,x->IdSmallGroup(x)=[8,2]);;
gap> Length(GsC4xC2);
3
gap> GsHNPfalseC4xC2:=Filtered(GsHNPfalse,x->IdSmallGroup(x)=[8,2]);
[ Group([ (), (1,5)(2,6)(3,7)(4,8), (1,3,5,7)(2,4,6,8), (2,6)(4,8) ]) ]
gap> Length(GsHNPfalseC4xC2);
1
gap> GsHNPtrueC4xC2:=Filtered(GsHNPtrue,x->IdSmallGroup(x)=[8,2]);
[ Group([ (), (1,5)(2,6)(3,7)(4,8), (1,4,5,8)(2,3,6,7), (1,3,5,7)
  (2,4,6,8) ]), Group([ (), (1,5)(2,6)(3,7)(4,8), (1,3,5,7)(2,4,6,8), (1,8)
  (2,3)(4,5)(6,7) ]) ]
gap> Length(GsHNPtrueC4xC2);
2
gap> Collected(List(GsHNPfalseC4xC2,x->List(Orbits(x),Length)));
[ [ [ 4, 4 ], 1 ] ]
gap> Collected(List(GsHNPtrueC4xC2,x->List(Orbits(x),Length)));
[ [ [ 8 ], 2 ] ]
gap> List(GsHNPfalseC4xC2,Elements);
[ [ (), (2,6)(4,8), (1,3,5,7)(2,4,6,8), (1,3,5,7)(2,8,6,4), (1,5)(3,7), 
      (1,5)(2,6)(3,7)(4,8), (1,7,5,3)(2,4,6,8), (1,7,5,3)(2,8,6,4) ] ]
gap> List(GsHNPtrueC4xC2,Elements);
[ [ (), (1,2)(3,4)(5,6)(7,8), (1,3,5,7)(2,4,6,8), (1,4,5,8)(2,3,6,7), 
      (1,5)(2,6)(3,7)(4,8), (1,6)(2,5)(3,8)(4,7), (1,7,5,3)(2,8,6,4), 
      (1,8,5,4)(2,7,6,3) ], 
  [ (), (1,2,5,6)(3,4,7,8), (1,3,5,7)(2,4,6,8), (1,4)(2,7)(3,6)(5,8), 
      (1,5)(2,6)(3,7)(4,8), (1,6,5,2)(3,8,7,4), (1,7,5,3)(2,8,6,4), 
      (1,8)(2,3)(4,5)(6,7) ] ]
\end{verbatim}
}~\\\vspace*{-4mm}

(2-3) $G=8T15\simeq C_8\rtimes V_4$. 
{\small 
\begin{verbatim}
gap> Read("HNP.gap");
gap> G:=TransitiveGroup(8,15); # G=8T15=C8:V4
[1/4.cD(4)^2]2
gap> GeneratorsOfGroup(G);
[ (1,2,3,4,5,6,7,8), (1,5)(3,7), (1,6)(2,5)(3,4)(7,8) ]
gap> H:=Stabilizer(G,1); # H=V4
Group([ (2,8)(3,7)(4,6), (2,4)(3,7)(6,8) ])
gap> FirstObstructionN(G).ker; # Obs1N=1
[ [  ], [ [ 2, 2 ], [  ] ] ]
gap> SchurMultPcpGroup(G); # M(G)=C2xC2: Schur multiplicer of G
[ 2, 2 ]
gap> cGs:=MinimalStemExtensions(G);; # 3 minimal stem extensions
gap> for cG in cGs do
> bG:=cG.MinimalStemExtension;
> bH:=PreImage(cG.epi,H);
> Print(KerResH3Z(bG,bH));
> Print("\n");
> od;
[ [  ], [ [ 2, 2, 2, 2 ], [  ] ] ]
[ [ 2 ], [ [ 2, 4 ], [ [ 0, 2 ] ] ] ]
[ [ 2 ], [ [ 2, 2, 2 ], [ [ 0, 1, 0 ] ] ] ]
gap> for cG in cGs do
> bG:=cG.MinimalStemExtension;
> bH:=PreImage(cG.epi,H);
> Print(FirstObstructionN(bG,bH).ker[1]);
> Print(FirstObstructionDnr(bG,bH).Dnr[1]);
> Print("\n");
> od;
[ 2 ][  ]
[  ][  ]
[  ][  ]
gap> cG:=cGs[1];
rec( MinimalStemExtension := <permutation group of size 64 with 3 generators>,
  epi := [ (3,4)(5,6)(7,9)(8,10)(11,13)(12,14)(15,18)(17,19)(20,23)(22,24)(25,
        27)(28,30), (1,2)(3,7)(4,9)(5,11)(6,13)(8,12)(10,14)(15,20)(16,21)(17,
        25)(18,23)(19,27)(22,28)(24,30)(26,29)(31,32), 
      (1,3,8,17,26,19,10,4)(2,5,12,22,29,24,14,6)(7,15,25,31,27,18,9,16)(11,
        20,28,32,30,23,13,21) ] -> 
    [ (2,8)(3,7)(4,6), (2,6)(4,8), (1,2,3,4,5,6,7,8) ] )
gap> bG:=cG.MinimalStemExtension; # bG=G- is a minimal stem extension of G
<permutation group of size 64 with 3 generators>
gap> bH:=PreImage(cG.epi,H); # bH=H-
<permutation group of size 8 with 3 generators>
gap> FirstObstructionN(bG,bH).ker; # Obs1N-=C2
[ [ 2 ], [ [ 2, 2, 2 ], [ [ 0, 0, 1 ] ] ] ]
gap> FirstObstructionDnr(bG,bH).Dnr; # Obs1Dnr-=1
[ [  ], [ [ 2, 2, 2 ], [  ] ] ]
gap> bGs:=AllSubgroups(bG);;
gap> Length(bGs);
225
gap> bGsHNPfalse:=Filtered(bGs,x->FirstObstructionDr(bG,x,bH).Dr[1]=[]);;
gap> Length(bGsHNPfalse);
174
gap> bGsHNPtrue:=Filtered(bGs,x->FirstObstructionDr(bG,x,bH).Dr[1]=[2]);;
gap> Length(bGsHNPtrue);
51
gap> Collected(List(bGsHNPfalse,x->StructureDescription(Image(cG.epi,x))));
[ [ "1", 2 ], [ "C2", 45 ], [ "C2 x C2", 55 ], [ "C4", 12 ], [ "C4 x C2", 5 ],
  [ "C8", 6 ], [ "C8 : C2", 1 ], [ "D16", 10 ], [ "D8", 35 ], [ "Q8", 1 ], 
  [ "QD16", 2 ] ]
gap> Collected(List(bGsHNPtrue,x->StructureDescription(Image(cG.epi,x))));
[ [ "(C4 x C2) : C2", 1 ], [ "C2 x C2", 20 ], [ "C2 x C2 x C2", 18 ], 
  [ "C2 x D8", 9 ], [ "C4 x C2", 2 ], [ "C8 : (C2 x C2)", 1 ] ]
gap> GsHNPfalse:=Set(bGsHNPfalse,x->Image(cG.epi,x));;
gap> Length(GsHNPfalse);
47
gap> GsHNPtrue:=Set(bGsHNPtrue,x->Image(cG.epi,x));;
gap> Length(GsHNPtrue);
11
gap> Intersection(GsHNPfalse,GsHNPtrue);
[  ]
gap> GsHNPtrueMin:=Filtered(GsHNPtrue,x->Length(Filtered(GsHNPtrue,
> y->IsSubgroup(x,y)))=1);
[ Group([ (1,5)(3,7), (1,5)(2,4)(6,8) ]), Group([ (2,6)(4,8), (2,6)
  (4,8), (1,3)(4,8)(5,7) ]), Group([ (2,6)(4,8), (2,6)(4,8), (1,7)(2,6)
  (3,5) ]), Group([ (1,5)(3,7), (2,8)(3,7)(4,6) ]), Group([ (1,5)(2,6)(3,7)
  (4,8), (1,2)(3,8)(4,7)(5,6), (1,3,5,7)(2,8,6,4), (1,7,5,3)(2,4,6,8) ]), 
  Group([ (1,5)(2,6)(3,7)(4,8), (1,8)(2,7)(3,6)(4,5), (1,7,5,3)
  (2,4,6,8), (1,3,5,7)(2,8,6,4) ]) ]
gap> Length(GsHNPtrueMin);
6
gap> List(GsHNPtrueMin,IdSmallGroup);
[ [ 4, 2 ], [ 4, 2 ], [ 4, 2 ], [ 4, 2 ], [ 8, 2 ], [ 8, 2 ] ]
gap> Collected(List(GsHNPfalse,x->Filtered(GsHNPtrueMin,y->IsSubgroup(x,y))));
[ [ [  ], 47 ] ]
gap> Gs:=AllSubgroups(G);;
gap> Length(Gs);
58
gap> GsC2xC2:=Filtered(Gs,x->IdSmallGroup(x)=[4,2]);;
gap> Length(GsC2xC2);
15
gap> GsC4xC2:=Filtered(Gs,x->IdSmallGroup(x)=[8,2]);;
gap> Length(GsC4xC2);
3
gap> GsHNPfalseC2xC2:=Filtered(GsHNPfalse,x->IdSmallGroup(x)=[4,2]);
[ Group([ (2,6)(4,8), (2,4)(3,7)(6,8) ]), Group([ (2,4)(3,7)(6,8), (1,5)(2,8)
  (4,6) ]), Group([ (1,5)(2,6)(3,7)(4,8), (1,5)(3,7), (2,6)(4,8) ]), 
  Group([ (2,6)(4,8), (2,6)(4,8), (1,5)(2,4)(6,8) ]), Group([ (1,5)(2,6)(3,7)
  (4,8), (1,5)(2,6)(3,7)(4,8), (2,8)(3,7)(4,6) ]), Group([ (1,5)(2,6)(3,7)
  (4,8), (1,2)(3,8)(4,7)(5,6) ]), Group([ (1,5)(3,7), (1,5)(3,7), (1,3)(4,8)
  (5,7) ]), Group([ (1,5)(2,6)(3,7)(4,8), (1,5)(2,6)(3,7)(4,8), (1,3)(4,8)
  (5,7) ]), Group([ (1,5)(3,7), (1,5)(3,7), (1,7)(2,6)(3,5) ]), Group([ (1,5)
  (2,6)(3,7)(4,8), (1,3)(2,6)(5,7), (1,7)(3,5)(4,8) ]), Group([ (1,5)(2,6)
  (3,7)(4,8), (1,8)(2,7)(3,6)(4,5), (1,5)(2,6)(3,7)(4,8) ]) ]
gap> Length(GsHNPfalseC2xC2);
11
gap> GsHNPtrueC2xC2:=Filtered(GsHNPtrue,x->IdSmallGroup(x)=[4,2]);
[ Group([ (1,5)(3,7), (1,5)(2,4)(6,8) ]), Group([ (2,6)(4,8), (2,6)
  (4,8), (1,3)(4,8)(5,7) ]), Group([ (2,6)(4,8), (2,6)(4,8), (1,7)(2,6)
  (3,5) ]), Group([ (1,5)(3,7), (2,8)(3,7)(4,6) ]) ]
gap> Length(GsHNPtrueC2xC2);
4
gap> Collected(List(GsHNPfalseC2xC2,x->List(Orbits(x),Length)));
[ [ [ 2, 2, 2, 2 ], 1 ], [ [ 2, 4 ], 1 ], [ [ 2, 4, 2 ], 2 ], [ [ 4, 2 ], 3 ],
  [ [ 4, 2, 2 ], 2 ], [ [ 4, 4 ], 2 ] ]
gap> Collected(List(GsHNPtrueC2xC2,x->List(Orbits(x),Length)));
[ [ [ 2, 2, 2, 2 ], 4 ] ]
gap> GsHNPfalse2222C2xC2:=Filtered(GsHNPfalseC2xC2,
> x->List(Orbits(x,[1..8]),Length)=[2,2,2,2]);
[ Group([ (1,5)(2,6)(3,7)(4,8), (1,5)(3,7), (2,6)(4,8) ]) ]
gap> List(GsHNPtrueC2xC2,Elements);
[ [ (), (2,4)(3,7)(6,8), (1,5)(3,7), (1,5)(2,4)(6,8) ], 
  [ (), (2,6)(4,8), (1,3)(4,8)(5,7), (1,3)(2,6)(5,7) ], 
  [ (), (2,6)(4,8), (1,7)(3,5)(4,8), (1,7)(2,6)(3,5) ], 
  [ (), (2,8)(3,7)(4,6), (1,5)(3,7), (1,5)(2,8)(4,6) ] ]
gap> List(GsHNPfalseC2xC2,Elements);
[ [ (), (2,4)(3,7)(6,8), (2,6)(4,8), (2,8)(3,7)(4,6) ], 
  [ (), (2,4)(3,7)(6,8), (1,5)(2,6)(3,7)(4,8), (1,5)(2,8)(4,6) ], 
  [ (), (2,6)(4,8), (1,5)(3,7), (1,5)(2,6)(3,7)(4,8) ], 
  [ (), (2,6)(4,8), (1,5)(2,4)(6,8), (1,5)(2,8)(4,6) ], 
  [ (), (2,8)(3,7)(4,6), (1,5)(2,4)(6,8), (1,5)(2,6)(3,7)(4,8) ], 
  [ (), (1,2)(3,8)(4,7)(5,6), (1,5)(2,6)(3,7)(4,8), (1,6)(2,5)(3,4)(7,8) ], 
  [ (), (1,3)(4,8)(5,7), (1,5)(3,7), (1,7)(3,5)(4,8) ], 
  [ (), (1,3)(4,8)(5,7), (1,5)(2,6)(3,7)(4,8), (1,7)(2,6)(3,5) ], 
  [ (), (1,3)(2,6)(5,7), (1,5)(3,7), (1,7)(2,6)(3,5) ], 
  [ (), (1,3)(2,6)(5,7), (1,5)(2,6)(3,7)(4,8), (1,7)(3,5)(4,8) ], 
  [ (), (1,4)(2,3)(5,8)(6,7), (1,5)(2,6)(3,7)(4,8), (1,8)(2,7)(3,6)(4,5) ] ]
gap> List(GsHNPfalse2222C2xC2,Elements);
[ [ (), (2,6)(4,8), (1,5)(3,7), (1,5)(2,6)(3,7)(4,8) ] ]
gap> DG:=DerivedSubgroup(G);
Group([ (1,5)(2,6)(3,7)(4,8), (1,3,5,7)(2,4,6,8) ])
gap> List(GsHNPtrueC2xC2,x->Intersection(x,DG));
[ Group(()), Group(()), Group(()), Group(()) ]
gap> List(GsHNPfalse2222C2xC2,x->Intersection(x,DG));
[ Group([ (1,5)(2,6)(3,7)(4,8) ]) ]
gap> A8:=AlternatingGroup(8);
Alt( [ 1 .. 8 ] )
gap> List(GsHNPtrueC2xC2,x->IsSubgroup(A8,x));
[ false, false, false, false ]
gap> List(GsHNPfalse2222C2xC2,x->IsSubgroup(A8,x));
[ true ]
gap> GsHNPfalseC4xC2:=Filtered(GsHNPfalse,x->IdSmallGroup(x)=[8,2]);
[ Group([ (1,5)(2,6)(3,7)(4,8), (2,6)(4,8), (1,7,5,3)(2,8,6,4) ]) ]
gap> Length(GsHNPfalseC4xC2);
1
gap> GsHNPtrueC4xC2:=Filtered(GsHNPtrue,x->IdSmallGroup(x)=[8,2]);
[ Group([ (1,5)(2,6)(3,7)(4,8), (1,2)(3,8)(4,7)(5,6), (1,3,5,7)
  (2,8,6,4), (1,7,5,3)(2,4,6,8) ]), Group([ (1,5)(2,6)(3,7)(4,8), (1,8)(2,7)
  (3,6)(4,5), (1,7,5,3)(2,4,6,8), (1,3,5,7)(2,8,6,4) ]) ]
gap> Length(GsHNPtrueC4xC2);
2
gap> Collected(List(GsHNPfalseC4xC2,x->List(Orbits(x),Length)));
[ [ [ 4, 4 ], 1 ] ]
gap> Collected(List(GsHNPtrueC4xC2,x->List(Orbits(x),Length)));
[ [ [ 8 ], 2 ] ]
gap> Collected(List(GsHNPfalseC4xC2,x->StructureDescription(Intersection(DG,x))));
[ [ "C4", 1 ] ]
gap> Collected(List(GsHNPtrueC4xC2,x->StructureDescription(Intersection(DG,x))));
[ [ "C2", 2 ] ]
\end{verbatim}
}~\\\vspace*{-4mm}

(2-4) $G=8T19\simeq (C_2)^3\rtimes C_4$. 
{\small 
\begin{verbatim}
gap> Read("HNP.gap");
gap> G:=TransitiveGroup(8,19); # G=8T19=(C2xC2xC2):C4
E(8):4=[1/4.eD(4)^2]2
gap> GeneratorsOfGroup(G);
[ (1,8)(2,3)(4,5)(6,7), (1,3)(2,8)(4,6)(5,7), (1,5)(2,6)(3,7)(4,8), 
  (1,3)(4,5,6,7) ]
gap> H:=Stabilizer(G,1); # H=C4
Group([ (2,8)(4,5,6,7) ])
gap> FirstObstructionN(G).ker; # Obs1N=1
[ [  ], [ [ 4 ], [  ] ] ]
gap> SchurMultPcpGroup(G); # M(G)=C2xC2: Schur multiplicer of G
[ 2, 2 ]
gap> cGs:=MinimalStemExtensions(G);; # 3 minimal stem extensions
gap> for cG in cGs do
> bG:=cG.MinimalStemExtension;
> bH:=PreImage(cG.epi,H);
> Print(KerResH3Z(bG,bH));
> Print("\n");
> od;
[ [ 2 ], [ [ 2, 4 ], [ [ 0, 2 ] ] ] ]
[ [ 2 ], [ [ 2, 2 ], [ [ 1, 0 ] ] ] ]
[ [  ], [ [ 2, 2 ], [  ] ] ]
gap> for cG in cGs do
> bG:=cG.MinimalStemExtension;
> bH:=PreImage(cG.epi,H);
> Print(FirstObstructionN(bG,bH).ker[1]);
> Print(FirstObstructionDnr(bG,bH).Dnr[1]);
> Print("\n");
> od;
[ 2 ][ 2 ]
[ 2 ][ 2 ]
[ 2 ][  ]
gap> cG:=cGs[3];
rec( MinimalStemExtension := Group([ (2,3,5,4)(6,9)(7,10)(11,13,12,14), (1,2)
  (3,6)(4,7)(5,8)(9,11)(10,12)(13,15)(14,16) ]), Tid := [ 16, 163 ], 
  epi := [ (2,3,5,4)(6,9)(7,10)(11,13,12,14), 
      (1,2)(3,6)(4,7)(5,8)(9,11)(10,12)(13,15)(14,16) ] -> 
    [ (2,8)(4,7,6,5), (1,5)(2,6)(3,7)(4,8) ] )
gap> bG:=cG.MinimalStemExtension; # bG=G- is a minimal stem extension of G
Group([ (2,3,5,4)(6,9)(7,10)(11,13,12,14), (1,2)(3,6)(4,7)(5,8)(9,11)(10,12)
(13,15)(14,16) ])
gap> bH:=PreImage(cG.epi,H); # bH=H-
Group([ (2,4,5,3)(6,9)(7,10)(11,14,12,13), (1,15)(2,13)(3,12)(4,11)(5,14)
(6,10)(7,9)(8,16) ])
gap> FirstObstructionN(bG,bH).ker; # Obs1N-=C2
[ [ 2 ], [ [ 2, 4 ], [ [ 1, 2 ] ] ] ]
gap> FirstObstructionDnr(bG,bH).Dnr; # Obs1Dnr-=1
[ [  ], [ [ 2, 4 ], [  ] ] ]
gap> bGs:=AllSubgroups(bG);;
gap> Length(bGs);
105
gap> bGsHNPfalse:=Filtered(bGs,x->FirstObstructionDr(bG,x,bH).Dr[1]=[]);;
gap> Length(bGsHNPfalse);
86
gap> bGsHNPtrue:=Filtered(bGs,x->FirstObstructionDr(bG,x,bH).Dr[1]=[2]);;
gap> Length(bGsHNPtrue);
19
gap> Collected(List(bGsHNPfalse,x->StructureDescription(Image(cG.epi,x))));
[ [ "(C4 x C2) : C2", 1 ], [ "1", 2 ], [ "C2", 25 ], [ "C2 x C2", 21 ], 
  [ "C2 x C2 x C2", 1 ], [ "C4", 22 ], [ "C4 x C2", 2 ], [ "D8", 12 ] ]
gap> Collected(List(bGsHNPtrue,x->StructureDescription(Image(cG.epi,x))));
[ [ "(C2 x C2 x C2) : C4", 1 ], [ "(C4 x C2) : C2", 1 ], [ "C2 x C2", 4 ], 
  [ "C2 x C2 x C2", 1 ], [ "C2 x D8", 1 ], [ "C4 x C2", 11 ] ]
gap> GsHNPfalse:=Set(bGsHNPfalse,x->Image(cG.epi,x));;
gap> Length(GsHNPfalse);
39
gap> GsHNPtrue:=Set(bGsHNPtrue,x->Image(cG.epi,x));;
gap> Length(GsHNPtrue);
11
gap> Intersection(GsHNPfalse,GsHNPtrue);
[  ]
gap> GsHNPtrueMin:=Filtered(GsHNPtrue,x->Length(Filtered(GsHNPtrue,
> y->IsSubgroup(x,y)))=1);
[ Group([ (1,3)(4,7,6,5), (1,3)(2,8)(4,6)(5,7), (4,6)(5,7) ]), 
  Group([ (1,8,3,2)(5,7), (1,3)(2,8)(4,6)(5,7), (1,3)(2,8) ]), 
  Group([ (1,7,3,5)(2,4,8,6), (1,2)(3,8)(4,7)(5,6), (), (1,3)(2,8)(4,6)
  (5,7) ]), Group([ (1,4)(2,7)(3,6)(5,8), (1,2)(3,8)(4,7)(5,6), () ]), 
  Group([ (1,6)(2,5)(3,4)(7,8), (1,2)(3,8)(4,7)(5,6), () ]), Group([ (1,5)
  (2,6)(3,7)(4,8), (1,8)(2,3)(4,5)(6,7), () ]), Group([ (1,7)(2,4)(3,5)
  (6,8), (1,8)(2,3)(4,5)(6,7), () ]) ]
gap> Length(GsHNPtrueMin);
7
gap> List(GsHNPtrueMin,IdSmallGroup);
[ [ 8, 2 ], [ 8, 2 ], [ 8, 2 ], [ 4, 2 ], [ 4, 2 ], [ 4, 2 ], [ 4, 2 ] ]
gap> Gs:=AllSubgroups(G);;
gap> Length(Gs);
50
gap> GsC2xC2:=Filtered(Gs,x->IdSmallGroup(x)=[4,2]);
[ Group([ (1,2)(3,8)(4,7)(5,6), (1,8)(2,3)(4,5)(6,7) ]), Group([ (1,4)(2,7)
  (3,6)(5,8), (1,2)(3,8)(4,7)(5,6) ]), Group([ (1,6)(2,5)(3,4)(7,8), (1,2)
  (3,8)(4,7)(5,6) ]), Group([ (1,5)(2,6)(3,7)(4,8), (1,8)(2,3)(4,5)(6,7) ]), 
  Group([ (1,7)(2,4)(3,5)(6,8), (1,8)(2,3)(4,5)(6,7) ]), Group([ (1,4)(2,7)
  (3,6)(5,8), (1,3)(2,8)(4,6)(5,7) ]), Group([ (1,5)(2,6)(3,7)(4,8), (1,3)
  (2,8)(4,6)(5,7) ]), Group([ (4,6)(5,7), (1,2)(3,8)(4,7)(5,6) ]), 
  Group([ (4,6)(5,7), (1,8)(2,3)(4,5)(6,7) ]), Group([ (1,3)(2,8), (1,8)(2,3)
  (4,5)(6,7) ]), Group([ (1,3)(2,8), (1,2)(3,8)(4,7)(5,6) ]), Group([ (4,6)
  (5,7), (1,3)(2,8)(4,6)(5,7) ]), Group([ (1,8)(2,3)(4,7)(5,6), (1,3)(2,8)
  (4,6)(5,7) ]) ]
gap> Length(GsC2xC2);
13
gap> GsC4xC2:=Filtered(Gs,x->IdSmallGroup(x)=[8,2]);
[ Group([ (1,2)(3,8)(4,7)(5,6), (1,8)(2,3)(4,5)(6,7), (1,5,3,7)(2,6,8,4) ]), 
  Group([ (2,8)(4,7,6,5), (4,6)(5,7), (1,3)(2,8)(4,6)(5,7) ]), 
  Group([ (1,8,3,2)(4,6), (1,3)(2,8), (1,3)(2,8)(4,6)(5,7) ]), 
  Group([ (1,4,2,5)(3,6,8,7), (1,2)(3,8)(4,5)(6,7), (1,3)(2,8)(4,6)(5,7) ]), 
  Group([ (1,5,8,6)(2,4,3,7), (1,8)(2,3)(4,7)(5,6), (1,3)(2,8)(4,6)(5,7) ]) ]
gap> Length(GsC4xC2);
5
gap> GsHNPfalseC2xC2:=Filtered(GsHNPfalse,x->IdSmallGroup(x)=[4,2]);
[ Group([ (1,2)(3,8)(4,5)(6,7), (4,6)(5,7), (4,6)(5,7) ]), Group([ (1,3)(2,8)
  (4,6)(5,7), (1,3)(2,8), (4,6)(5,7) ]), Group([ (1,8)(2,3)(4,7)(5,6), (4,6)
  (5,7), (4,6)(5,7) ]), Group([ (1,2)(3,8)(4,5)(6,7), (1,3)(2,8), (1,3)
  (2,8) ]), Group([ (1,2)(3,8)(4,5)(6,7), (1,3)(2,8)(4,6)(5,7), (1,3)(2,8)
  (4,6)(5,7) ]), Group([ (1,8)(2,3)(4,7)(5,6), (1,3)(2,8), (1,3)(2,8) ]), 
  Group([ (1,2)(3,8)(4,7)(5,6), (1,3)(2,8)(4,6)(5,7), (1,3)(2,8)(4,6)
  (5,7) ]), Group([ (1,4)(2,7)(3,6)(5,8), (1,3)(2,8)(4,6)(5,7) ]), 
  Group([ (1,7)(2,4)(3,5)(6,8), (1,3)(2,8)(4,6)(5,7) ]) ]
gap> Length(GsHNPfalseC2xC2);
9
gap> GsHNPtrueC2xC2:=Filtered(GsHNPtrue,x->IdSmallGroup(x)=[4,2]);
[ Group([ (1,4)(2,7)(3,6)(5,8), (1,2)(3,8)(4,7)(5,6), () ]), Group([ (1,6)
  (2,5)(3,4)(7,8), (1,2)(3,8)(4,7)(5,6), () ]), Group([ (1,5)(2,6)(3,7)
  (4,8), (1,8)(2,3)(4,5)(6,7), () ]), Group([ (1,7)(2,4)(3,5)(6,8), (1,8)(2,3)
  (4,5)(6,7), () ]) ]
gap> Length(GsHNPtrueC2xC2);
4
gap> Collected(List(GsHNPfalseC2xC2,x->List(Orbits(x),Length)));
[ [ [ 2, 2, 2, 2 ], 1 ], [ [ 2, 2, 4 ], 2 ], [ [ 4, 2, 2 ], 2 ], 
  [ [ 4, 4 ], 4 ] ]
gap> Collected(List(GsHNPtrueC2xC2,x->List(Orbits(x),Length)));
[ [ [ 4, 4 ], 4 ] ]
gap> GsHNPfalse44C2xC2:=Filtered(GsHNPfalseC2xC2,
> x->List(Orbits(x,[1..8]),Length)=[4,4]);
[ Group([ (1,2)(3,8)(4,5)(6,7), (1,3)(2,8)(4,6)(5,7), (1,3)(2,8)(4,6)
  (5,7) ]), Group([ (1,2)(3,8)(4,7)(5,6), (1,3)(2,8)(4,6)(5,7), (1,3)(2,8)
  (4,6)(5,7) ]), Group([ (1,4)(2,7)(3,6)(5,8), (1,3)(2,8)(4,6)(5,7) ]), 
  Group([ (1,7)(2,4)(3,5)(6,8), (1,3)(2,8)(4,6)(5,7) ]) ]
gap> List(GsHNPtrueC2xC2,Elements);
[ [ (), (1,2)(3,8)(4,7)(5,6), (1,4)(2,7)(3,6)(5,8), (1,7)(2,4)(3,5)(6,8) ], 
  [ (), (1,2)(3,8)(4,7)(5,6), (1,5)(2,6)(3,7)(4,8), (1,6)(2,5)(3,4)(7,8) ], 
  [ (), (1,4)(2,7)(3,6)(5,8), (1,5)(2,6)(3,7)(4,8), (1,8)(2,3)(4,5)(6,7) ], 
  [ (), (1,6)(2,5)(3,4)(7,8), (1,7)(2,4)(3,5)(6,8), (1,8)(2,3)(4,5)(6,7) ] ]
gap> List(GsHNPfalseC2xC2,Elements);
[ [ (), (4,6)(5,7), (1,2)(3,8)(4,5)(6,7), (1,2)(3,8)(4,7)(5,6) ], 
  [ (), (4,6)(5,7), (1,3)(2,8), (1,3)(2,8)(4,6)(5,7) ], 
  [ (), (4,6)(5,7), (1,8)(2,3)(4,5)(6,7), (1,8)(2,3)(4,7)(5,6) ], 
  [ (), (1,2)(3,8)(4,5)(6,7), (1,3)(2,8), (1,8)(2,3)(4,5)(6,7) ], 
  [ (), (1,2)(3,8)(4,5)(6,7), (1,3)(2,8)(4,6)(5,7), (1,8)(2,3)(4,7)(5,6) ], 
  [ (), (1,2)(3,8)(4,7)(5,6), (1,3)(2,8), (1,8)(2,3)(4,7)(5,6) ], 
  [ (), (1,2)(3,8)(4,7)(5,6), (1,3)(2,8)(4,6)(5,7), (1,8)(2,3)(4,5)(6,7) ], 
  [ (), (1,3)(2,8)(4,6)(5,7), (1,4)(2,7)(3,6)(5,8), (1,6)(2,5)(3,4)(7,8) ], 
  [ (), (1,3)(2,8)(4,6)(5,7), (1,5)(2,6)(3,7)(4,8), (1,7)(2,4)(3,5)(6,8) ] ]
gap> List(GsHNPfalse44C2xC2,Elements);
[ [ (), (1,2)(3,8)(4,5)(6,7), (1,3)(2,8)(4,6)(5,7), (1,8)(2,3)(4,7)(5,6) ], 
  [ (), (1,2)(3,8)(4,7)(5,6), (1,3)(2,8)(4,6)(5,7), (1,8)(2,3)(4,5)(6,7) ], 
  [ (), (1,3)(2,8)(4,6)(5,7), (1,4)(2,7)(3,6)(5,8), (1,6)(2,5)(3,4)(7,8) ], 
  [ (), (1,3)(2,8)(4,6)(5,7), (1,5)(2,6)(3,7)(4,8), (1,7)(2,4)(3,5)(6,8) ] ]
gap> ZG:=Centre(G);
Group([ (1,3)(2,8)(4,6)(5,7) ])
gap> List(GsHNPtrueC2xC2,x->Intersection(x,ZG));
[ Group(()), Group(()), Group(()), Group(()) ]
gap> List(GsHNPfalse44C2xC2,x->Intersection(x,ZG));
[ Group([ (1,3)(2,8)(4,6)(5,7) ]), Group([ (1,3)(2,8)(4,6)(5,7) ]), 
  Group([ (1,3)(2,8)(4,6)(5,7) ]), Group([ (1,3)(2,8)(4,6)(5,7) ]) ]
gap> UcsG:=UpperCentralSeries(G);
[ Group([ (1,3)(2,8)(4,6)(5,7), (4,6)(5,7), (1,8)(2,3)(4,5)(6,7), (2,8)
  (4,7,6,5), (1,5)(2,6)(3,7)(4,8) ]), Group([ (1,3)(2,8)(4,6)(5,7), (4,6)
  (5,7), (1,8)(2,3)(4,5)(6,7) ]), Group([ (1,3)(2,8)(4,6)(5,7) ]), Group(()) ]
gap> Collected(List(GsHNPfalseC2xC2,x->List(UcsG,y->Order(Intersection(y,x)))));
[ [ [ 4, 2, 2, 1 ], 2 ], [ [ 4, 4, 1, 1 ], 4 ], [ [ 4, 4, 2, 1 ], 3 ] ]
gap> Collected(List(GsHNPtrueC2xC2,x->List(UcsG,y->Order(Intersection(y,x)))));
[ [ [ 4, 2, 1, 1 ], 4 ] ]
gap> GsHNPfalseC4xC2:=Filtered(GsHNPfalse,x->IdSmallGroup(x)=[8,2]);
[ Group([ (1,4,2,5)(3,6,8,7), (1,2)(3,8)(4,5)(6,7), (), (1,3)(2,8)(4,6)
  (5,7) ]), Group([ (1,7,8,4)(2,6,3,5), (1,8)(2,3)(4,7)(5,6), (), (1,3)(2,8)
  (4,6)(5,7) ]) ]
gap> Length(GsHNPfalseC4xC2);
2
gap> GsHNPtrueC4xC2:=Filtered(GsHNPtrue,x->IdSmallGroup(x)=[8,2]);
[ Group([ (1,3)(4,7,6,5), (1,3)(2,8)(4,6)(5,7), (4,6)(5,7) ]), 
  Group([ (1,8,3,2)(5,7), (1,3)(2,8)(4,6)(5,7), (1,3)(2,8) ]), 
  Group([ (1,7,3,5)(2,4,8,6), (1,2)(3,8)(4,7)(5,6), (), (1,3)(2,8)(4,6)
  (5,7) ]) ]
gap> Length(GsHNPtrueC4xC2);
3
gap> Collected(List(GsHNPfalseC4xC2,x->List(Orbits(x),Length)));
[ [ [ 8 ], 2 ] ]
gap> Collected(List(GsHNPtrueC4xC2,x->List(Orbits(x),Length)));
[ [ [ 2, 2, 4 ], 1 ], [ [ 4, 2, 2 ], 1 ], [ [ 8 ], 1 ] ]
gap> GsHNPtrue8C4xC2:=Filtered(GsHNPtrueC4xC2,x->List(Orbits(x,[1..8]),Length)=[8]);
[ Group([ (1,7,3,5)(2,4,8,6), (1,2)(3,8)(4,7)(5,6), (), (1,3)(2,8)(4,6)
  (5,7) ]) ]
gap> List(GsHNPfalseC4xC2,Elements);
[ [ (), (1,2)(3,8)(4,5)(6,7), (1,3)(2,8)(4,6)(5,7), (1,4,2,5)(3,6,8,7), 
      (1,5,2,4)(3,7,8,6), (1,6,2,7)(3,4,8,5), (1,7,2,6)(3,5,8,4), 
      (1,8)(2,3)(4,7)(5,6) ], 
  [ (), (1,2)(3,8)(4,5)(6,7), (1,3)(2,8)(4,6)(5,7), (1,4,8,7)(2,5,3,6), 
      (1,5,8,6)(2,4,3,7), (1,6,8,5)(2,7,3,4), (1,7,8,4)(2,6,3,5), 
      (1,8)(2,3)(4,7)(5,6) ] ]
gap> List(GsHNPtrueC4xC2,Elements);
[ [ (), (4,6)(5,7), (2,8)(4,5,6,7), (2,8)(4,7,6,5), (1,3)(4,5,6,7), 
      (1,3)(4,7,6,5), (1,3)(2,8), (1,3)(2,8)(4,6)(5,7) ], 
  [ (), (4,6)(5,7), (1,2,3,8)(5,7), (1,2,3,8)(4,6), (1,3)(2,8), 
      (1,3)(2,8)(4,6)(5,7), (1,8,3,2)(5,7), (1,8,3,2)(4,6) ], 
  [ (), (1,2)(3,8)(4,7)(5,6), (1,3)(2,8)(4,6)(5,7), (1,4,3,6)(2,7,8,5), 
      (1,5,3,7)(2,6,8,4), (1,6,3,4)(2,5,8,7), (1,7,3,5)(2,4,8,6), 
      (1,8)(2,3)(4,5)(6,7) ] ]
gap> DG:=DerivedSubgroup(G);
Group([ (1,3)(2,8)(4,6)(5,7), (1,8)(2,3)(4,5)(6,7) ])
gap> List(GsHNPfalseC4xC2,x->Intersection(x,DG));
[ Group([ (1,3)(2,8)(4,6)(5,7) ]), Group([ (1,3)(2,8)(4,6)(5,7) ]) ]
gap> List(GsHNPtrue8C4xC2,x->Intersection(x,DG));
[ Group([ (1,3)(2,8)(4,6)(5,7), (1,8)(2,3)(4,5)(6,7) ]) ]
\end{verbatim}
}~\\\vspace*{-4mm}

(2-5) $G=8T22\simeq (C_2)^3\rtimes V_4$. 
{\small 
\begin{verbatim}
gap> Read("HNP.gap");
gap> G:=TransitiveGroup(8,22); # G=8T22=(C2xC2xC2):V4
E(8):D_4=[2^3]2^2
gap> GeneratorsOfGroup(G);
[ (1,8)(2,3)(4,5)(6,7), (1,3)(2,8)(4,6)(5,7), (1,5)(2,6)(3,7)(4,8), 
  (2,3)(4,5), (2,3)(6,7) ]
gap> H:=Stabilizer(G,1); # H=V4
Group([ (2,3)(4,5), (2,3)(6,7) ])
gap> FirstObstructionN(G).ker; # Obs1N=1
[ [  ], [ [ 2, 2 ], [  ] ] ]
gap> SchurMultPcpGroup(G); # M(G)=C2^5: Schur multiplicer of G
[ 2, 2, 2, 2, 2 ]
gap> cGs:=MinimalStemExtensions(G);; # 31 minimal stem extensions
gap> for cG in cGs do
> bG:=cG.MinimalStemExtension;
> bH:=PreImage(cG.epi,H);
> Print(KerResH3Z(bG,bH));
> Print("\n");
> od;
[ [ 2 ], [ [ 2, 2, 2, 2, 2, 2 ], [ [ 0, 0, 0, 1, 0, 0 ] ] ] ]
[ [ 2 ], [ [ 2, 2, 2, 2, 2 ], [ [ 0, 1, 0, 0, 0 ] ] ] ]
[ [ 2 ], [ [ 2, 2, 2, 2, 2 ], [ [ 0, 0, 0, 1, 1 ] ] ] ]
[ [ 2, 2 ], [ [ 2, 2, 2, 2, 4 ], [ [ 0, 0, 0, 1, 0 ], [ 0, 0, 0, 0, 2 ] ] ] ]
[ [ 2, 2, 2 ], 
  [ [ 2, 2, 2, 2 ], [ [ 0, 1, 0, 0 ], [ 0, 0, 1, 0 ], [ 0, 0, 0, 1 ] ] ] ]
[ [ 2 ], [ [ 2, 2, 2, 2, 4 ], [ [ 1, 1, 0, 1, 0 ] ] ] ]
[ [ 2 ], [ [ 2, 2, 2, 2, 2 ], [ [ 1, 1, 0, 1, 0 ] ] ] ]
[ [ 2 ], [ [ 2, 2, 2, 2, 2, 2 ], [ [ 0, 0, 0, 1, 0, 0 ] ] ] ]
[ [ 2, 2 ], [ [ 2, 2, 2, 2, 2 ], [ [ 0, 1, 0, 0, 0 ], [ 0, 0, 1, 0, 0 ] ] ] ]
[ [ 2 ], [ [ 2, 2, 2, 2, 2, 2 ], [ [ 0, 0, 1, 0, 0, 1 ] ] ] ]
[ [ 2 ], [ [ 2, 2, 2, 2, 4 ], [ [ 0, 1, 0, 1, 2 ] ] ] ]
[ [ 2 ], [ [ 2, 2, 2, 2, 2, 2 ], [ [ 0, 0, 1, 1, 0, 1 ] ] ] ]
[ [ 2, 2 ], [ [ 2, 2, 2, 2, 4 ], [ [ 0, 0, 1, 1, 0 ], [ 0, 0, 0, 0, 2 ] ] ] ]
[ [ 2 ], [ [ 2, 2, 2, 2 ], [ [ 0, 0, 1, 1 ] ] ] ]
[ [ 2 ], [ [ 2, 2, 2, 2 ], [ [ 1, 0, 1, 1 ] ] ] ]
[ [  ], [ [ 2, 2, 2, 2, 2 ], [  ] ] ]
[ [ 2 ], [ [ 2, 2, 2, 2, 2 ], [ [ 0, 0, 0, 0, 1 ] ] ] ]
[ [ 2 ], [ [ 2, 2, 2, 2 ], [ [ 0, 1, 0, 0 ] ] ] ]
[ [ 2 ], [ [ 2, 2, 2, 2 ], [ [ 0, 0, 0, 1 ] ] ] ]
[ [ 2, 2 ], [ [ 2, 2, 2, 2, 2 ], [ [ 0, 1, 0, 0, 0 ], [ 0, 0, 0, 1, 0 ] ] ] ]
[ [ 2, 2, 2 ], 
  [ [ 2, 2, 4, 4 ], [ [ 1, 0, 0, 0 ], [ 0, 0, 2, 0 ], [ 0, 0, 0, 2 ] ] ] ]
[ [ 2 ], [ [ 2, 2, 2, 2, 4 ], [ [ 0, 0, 1, 1, 0 ] ] ] ]
[ [ 2 ], [ [ 2, 2, 2, 2 ], [ [ 1, 0, 1, 0 ] ] ] ]
[ [ 2 ], [ [ 2, 2, 2, 2, 2 ], [ [ 0, 1, 0, 0, 0 ] ] ] ]
[ [ 2, 2 ], [ [ 2, 2, 2, 2 ], [ [ 1, 0, 0, 0 ], [ 0, 0, 0, 1 ] ] ] ]
[ [ 2 ], [ [ 2, 2, 2, 2 ], [ [ 1, 0, 1, 0 ] ] ] ]
[ [ 2 ], [ [ 2, 2, 2, 2 ], [ [ 1, 0, 0, 1 ] ] ] ]
[ [ 2 ], [ [ 2, 2, 2, 2, 2, 2 ], [ [ 0, 0, 0, 1, 0, 1 ] ] ] ]
[ [ 2, 2 ], [ [ 2, 2, 2, 2, 2 ], [ [ 0, 0, 1, 0, 1 ], [ 0, 0, 0, 1, 1 ] ] ] ]
[ [ 2 ], [ [ 2, 2, 2, 2, 2 ], [ [ 0, 0, 1, 0, 0 ] ] ] ]
[ [ 2 ], [ [ 2, 2, 2, 2 ], [ [ 0, 1, 1, 0 ] ] ] ]
gap> for cG in cGs do
> bG:=cG.MinimalStemExtension;
> bH:=PreImage(cG.epi,H);
> Print(FirstObstructionN(bG,bH).ker[1]);
> Print(FirstObstructionDnr(bG,bH).Dnr[1]);
> Print("\n");
> od;
[ 2 ][ 2 ]
[  ][  ]
[  ][  ]
[ 2 ][ 2 ]
[ 2 ][ 2 ]
[  ][  ]
[  ][  ]
[ 2 ][ 2 ]
[ 2 ][ 2 ]
[  ][  ]
[  ][  ]
[ 2 ][ 2 ]
[ 2 ][ 2 ]
[  ][  ]
[  ][  ]
[ 2 ][  ]
[ 2 ][ 2 ]
[  ][  ]
[  ][  ]
[ 2 ][ 2 ]
[ 2 ][ 2 ]
[  ][  ]
[  ][  ]
[ 2 ][ 2 ]
[ 2 ][ 2 ]
[  ][  ]
[  ][  ]
[ 2 ][ 2 ]
[ 2 ][ 2 ]
[  ][  ]
[  ][  ]
gap> cG:=cGs[16];;
gap> bG:=cG.MinimalStemExtension; # bG=G- is a minimal stem extension of G
<permutation group of size 64 with 6 generators>
gap> bH:=PreImage(cG.epi,H); # bH=H-
<permutation group of size 8 with 3 generators>
gap> FirstObstructionN(bG,bH).ker; # Obs1N-=C2
[ [ 2 ], [ [ 2, 2, 2 ], [ [ 0, 0, 1 ] ] ] ]
gap> FirstObstructionDnr(bG,bH).Dnr; # Obs1Dnr-=1
[ [  ], [ [ 2, 2, 2 ], [  ] ] ]
gap> bGs:=AllSubgroups(bG);;
gap> Length(bGs);
321
gap> bGsHNPfalse:=Filtered(bGs,x->FirstObstructionDr(bG,x,bH).Dr[1]=[]);;
gap> Length(bGsHNPfalse);
292
gap> bGsHNPtrue:=Filtered(bGs,x->FirstObstructionDr(bG,x,bH).Dr[1]=[2]);;
gap> Length(bGsHNPtrue);
29
gap> Collected(List(bGsHNPfalse,x->StructureDescription(Image(cG.epi,x))));
[ [ "1", 2 ], [ "C2", 49 ], [ "C2 x C2", 101 ], [ "C2 x C2 x C2", 28 ], 
  [ "C2 x D8", 19 ], [ "C4", 18 ], [ "C4 x C2", 15 ], [ "D8", 58 ], 
  [ "Q8", 2 ] ]
gap> Collected(List(bGsHNPtrue,x->StructureDescription(Image(cG.epi,x))));
[ [ "(C2 x C2 x C2) : (C2 x C2)", 1 ], [ "(C4 x C2) : C2", 6 ], 
  [ "C2 x C2", 8 ], [ "C2 x C2 x C2", 2 ], [ "C2 x D8", 6 ], 
  [ "C4 x C2", 6 ] ]
gap> GsHNPfalse:=Set(bGsHNPfalse,x->Image(cG.epi,x));;
gap> Length(GsHNPfalse);
81
gap> GsHNPtrue:=Set(bGsHNPtrue,x->Image(cG.epi,x));;
gap> Length(GsHNPtrue);
29
gap> Intersection(GsHNPfalse,GsHNPtrue);
[  ]
gap> GsHNPtrueMin:=Filtered(GsHNPtrue,x->Length(Filtered(GsHNPtrue,
> y->IsSubgroup(x,y)))=1);
[ Group([ (), (1,8)(2,3)(4,5)(6,7), (1,2)(3,8)(4,6)(5,7), (1,4,8,5)
  (2,6,3,7) ]), Group([ (1,4)(2,6)(3,7)(5,8), (1,6)(2,4)(3,5)(7,8), () ]), 
  Group([ (1,5)(2,7)(3,6)(4,8), (1,7)(2,5)(3,4)(6,8), () ]), 
  Group([ (), (1,8)(2,3)(4,5)(6,7), (1,6,8,7)(2,5,3,4), (1,2)(3,8)(4,7)
  (5,6) ]), Group([ (1,2)(3,8)(4,7)(5,6), (1,7)(2,4)(3,5)(6,8), () ]), 
  Group([ (1,2)(3,8)(4,7)(5,6), (1,6)(2,5)(3,4)(7,8), () ]), 
  Group([ (), (1,8)(2,3)(4,5)(6,7), (1,4)(2,6)(3,7)(5,8), (1,2,8,3)
  (4,6,5,7) ]), Group([ (), (1,8)(2,3)(4,5)(6,7), (1,4,8,5)(2,6,3,7), (1,2,8,
   3)(4,6,5,7) ]), Group([ (), (1,8)(2,3)(4,5)(6,7), (1,2,8,3)
  (4,7,5,6), (1,4)(2,7)(3,6)(5,8) ]), Group([ (), (1,8)(2,3)(4,5)
  (6,7), (1,2,8,3)(4,7,5,6), (1,6)(2,4)(3,5)(7,8) ]), Group([ (1,3)(2,8)(4,6)
  (5,7), (1,6)(2,5)(3,4)(7,8), () ]), Group([ (1,3)(2,8)(4,6)(5,7), (1,7)
  (2,4)(3,5)(6,8), () ]), Group([ (1,4)(2,6)(3,7)(5,8), (1,7)(2,5)(3,4)
  (6,8), () ]), Group([ (1,5)(2,7)(3,6)(4,8), (1,6)(2,4)(3,5)(7,8), () ]) ]
gap> Length(GsHNPtrueMin);
14
gap> List(GsHNPtrueMin,IdSmallGroup);
[ [ 8, 2 ], [ 4, 2 ], [ 4, 2 ], [ 8, 2 ], [ 4, 2 ], [ 4, 2 ], [ 8, 2 ], 
  [ 8, 2 ], [ 8, 2 ], [ 8, 2 ], [ 4, 2 ], [ 4, 2 ], [ 4, 2 ], [ 4, 2 ] ]
gap> Collected(List(GsHNPfalse,x->Filtered(GsHNPtrueMin,y->IsSubgroup(x,y))));
[ [ [  ], 81 ] ]
gap> Gs:=AllSubgroups(G);;
gap> Length(Gs);
110
gap> GsC2xC2:=Filtered(Gs,x->IdSmallGroup(x)=[4,2]);;
gap> Length(GsC2xC2);
33
gap> GsC4xC2:=Filtered(Gs,x->IdSmallGroup(x)=[8,2]);;
gap> Length(GsC4xC2);
9
gap> GsHNPfalseC2xC2:=Filtered(GsHNPfalse,x->IdSmallGroup(x)=[4,2]);
[ Group([ (4,5)(6,7), (2,3)(4,5) ]), Group([ (4,5)(6,7), (1,2)(3,8)(4,7)
  (5,6) ]), Group([ (4,5)(6,7), (1,3)(2,8)(4,6)(5,7), () ]), Group([ (4,5)
  (6,7), (1,8)(6,7), () ]), Group([ (), (1,8)(2,3)(4,5)(6,7), (4,5)(6,7) ]), 
  Group([ (1,4)(2,6)(3,7)(5,8), (1,4)(2,7)(3,6)(5,8) ]), Group([ (1,5)(2,7)
  (3,6)(4,8), (1,5)(2,6)(3,7)(4,8), () ]), Group([ (1,8)(2,3), (1,8)
  (6,7) ]), Group([ (2,3)(6,7), (1,8)(2,3)(4,5)(6,7) ]), Group([ (2,3)
  (4,5), (1,6)(2,5)(3,4)(7,8), () ]), Group([ (2,3)(4,5), (1,7)(2,4)(3,5)
  (6,8), () ]), Group([ (2,3)(4,5), (1,8)(2,3)(4,5)(6,7) ]), Group([ (1,8)
  (2,3), (2,3)(4,5) ]), Group([ (1,8)(2,3), (1,3)(2,8)(4,6)(5,7), () ]), 
  Group([ (), (1,8)(2,3)(4,5)(6,7), (1,2)(3,8)(4,6)(5,7) ]), 
  Group([ (), (1,8)(2,3)(4,5)(6,7), (1,2)(3,8)(4,7)(5,6) ]), Group([ (1,8)
  (2,3), (1,2)(3,8)(4,7)(5,6), () ]), Group([ (1,4)(2,6)(3,7)(5,8), (1,5)
  (2,6)(3,7)(4,8) ]), Group([ (), (1,8)(2,3)(4,5)(6,7), (1,4)(2,6)(3,7)
  (5,8) ]), Group([ (1,4)(2,7)(3,6)(5,8), (1,8)(2,3)(4,5)(6,7) ]), 
  Group([ (1,5)(2,7)(3,6)(4,8), (1,4)(2,7)(3,6)(5,8) ]), Group([ (1,8)
  (6,7), (1,7)(2,4)(3,5)(6,8), () ]), Group([ (), (1,8)(2,3)(4,5)(6,7), (1,6)
  (2,4)(3,5)(7,8) ]), Group([ (), (1,8)(2,3)(4,5)(6,7), (1,6)(2,5)(3,4)
  (7,8) ]), Group([ (1,8)(6,7), (1,6)(2,5)(3,4)(7,8), () ]) ]
gap> Length(GsHNPfalseC2xC2);
25
gap> GsHNPtrueC2xC2:=Filtered(GsHNPtrue,x->IdSmallGroup(x)=[4,2]);
[ Group([ (1,4)(2,6)(3,7)(5,8), (1,6)(2,4)(3,5)(7,8), () ]), Group([ (1,5)
  (2,7)(3,6)(4,8), (1,7)(2,5)(3,4)(6,8), () ]), Group([ (1,2)(3,8)(4,7)
  (5,6), (1,7)(2,4)(3,5)(6,8), () ]), Group([ (1,2)(3,8)(4,7)(5,6), (1,6)
  (2,5)(3,4)(7,8), () ]), Group([ (1,3)(2,8)(4,6)(5,7), (1,6)(2,5)(3,4)
  (7,8), () ]), Group([ (1,3)(2,8)(4,6)(5,7), (1,7)(2,4)(3,5)(6,8), () ]), 
  Group([ (1,4)(2,6)(3,7)(5,8), (1,7)(2,5)(3,4)(6,8), () ]), Group([ (1,5)
  (2,7)(3,6)(4,8), (1,6)(2,4)(3,5)(7,8), () ]) ]
gap> Length(GsHNPtrueC2xC2);
8
gap> Collected(List(GsHNPfalseC2xC2,x->List(Orbits(x),Length)));
[ [ [ 2, 2, 2 ], 4 ], [ [ 2, 2, 2, 2 ], 3 ], [ [ 2, 2, 4 ], 2 ], 
  [ [ 2, 4, 2 ], 4 ], [ [ 4, 2, 2 ], 6 ], [ [ 4, 4 ], 6 ] ]
gap> Collected(List(GsHNPtrueC2xC2,x->List(Orbits(x),Length)));
[ [ [ 4, 4 ], 8 ] ]
gap> GsHNPfalse44C2xC2:=Filtered(GsHNPfalseC2xC2,
> x->List(Orbits(x,[1..8]),Length)=[4,4]);
[ Group([ (), (1,8)(2,3)(4,5)(6,7), (1,2)(3,8)(4,6)(5,7) ]), 
  Group([ (), (1,8)(2,3)(4,5)(6,7), (1,2)(3,8)(4,7)(5,6) ]), 
  Group([ (), (1,8)(2,3)(4,5)(6,7), (1,4)(2,6)(3,7)(5,8) ]), Group([ (1,4)
  (2,7)(3,6)(5,8), (1,8)(2,3)(4,5)(6,7) ]), Group([ (), (1,8)(2,3)(4,5)
  (6,7), (1,6)(2,4)(3,5)(7,8) ]), Group([ (), (1,8)(2,3)(4,5)(6,7), (1,6)
  (2,5)(3,4)(7,8) ]) ]
gap> List(GsHNPtrueC2xC2,Elements);
[ [ (), (1,2)(3,8)(4,6)(5,7), (1,4)(2,6)(3,7)(5,8), (1,6)(2,4)(3,5)(7,8) ], 
  [ (), (1,2)(3,8)(4,6)(5,7), (1,5)(2,7)(3,6)(4,8), (1,7)(2,5)(3,4)(6,8) ], 
  [ (), (1,2)(3,8)(4,7)(5,6), (1,4)(2,7)(3,6)(5,8), (1,7)(2,4)(3,5)(6,8) ], 
  [ (), (1,2)(3,8)(4,7)(5,6), (1,5)(2,6)(3,7)(4,8), (1,6)(2,5)(3,4)(7,8) ], 
  [ (), (1,3)(2,8)(4,6)(5,7), (1,4)(2,7)(3,6)(5,8), (1,6)(2,5)(3,4)(7,8) ], 
  [ (), (1,3)(2,8)(4,6)(5,7), (1,5)(2,6)(3,7)(4,8), (1,7)(2,4)(3,5)(6,8) ], 
  [ (), (1,3)(2,8)(4,7)(5,6), (1,4)(2,6)(3,7)(5,8), (1,7)(2,5)(3,4)(6,8) ], 
  [ (), (1,3)(2,8)(4,7)(5,6), (1,5)(2,7)(3,6)(4,8), (1,6)(2,4)(3,5)(7,8) ] ]
gap> List(GsHNPfalseC2xC2,Elements);
[ [ (), (4,5)(6,7), (2,3)(6,7), (2,3)(4,5) ], 
  [ (), (4,5)(6,7), (1,2)(3,8)(4,6)(5,7), (1,2)(3,8)(4,7)(5,6) ], 
  [ (), (4,5)(6,7), (1,3)(2,8)(4,6)(5,7), (1,3)(2,8)(4,7)(5,6) ], 
  [ (), (4,5)(6,7), (1,8)(6,7), (1,8)(4,5) ], 
  [ (), (4,5)(6,7), (1,8)(2,3), (1,8)(2,3)(4,5)(6,7) ], 
  [ (), (2,3)(6,7), (1,4)(2,6)(3,7)(5,8), (1,4)(2,7)(3,6)(5,8) ], 
  [ (), (2,3)(6,7), (1,5)(2,6)(3,7)(4,8), (1,5)(2,7)(3,6)(4,8) ], 
  [ (), (2,3)(6,7), (1,8)(6,7), (1,8)(2,3) ], 
  [ (), (2,3)(6,7), (1,8)(4,5), (1,8)(2,3)(4,5)(6,7) ], 
  [ (), (2,3)(4,5), (1,6)(2,4)(3,5)(7,8), (1,6)(2,5)(3,4)(7,8) ], 
  [ (), (2,3)(4,5), (1,7)(2,4)(3,5)(6,8), (1,7)(2,5)(3,4)(6,8) ], 
  [ (), (2,3)(4,5), (1,8)(6,7), (1,8)(2,3)(4,5)(6,7) ], 
  [ (), (2,3)(4,5), (1,8)(4,5), (1,8)(2,3) ], 
  [ (), (1,2)(3,8)(4,6)(5,7), (1,3)(2,8)(4,6)(5,7), (1,8)(2,3) ], 
  [ (), (1,2)(3,8)(4,6)(5,7), (1,3)(2,8)(4,7)(5,6), (1,8)(2,3)(4,5)(6,7) ], 
  [ (), (1,2)(3,8)(4,7)(5,6), (1,3)(2,8)(4,6)(5,7), (1,8)(2,3)(4,5)(6,7) ], 
  [ (), (1,2)(3,8)(4,7)(5,6), (1,3)(2,8)(4,7)(5,6), (1,8)(2,3) ], 
  [ (), (1,4)(2,6)(3,7)(5,8), (1,5)(2,6)(3,7)(4,8), (1,8)(4,5) ], 
  [ (), (1,4)(2,6)(3,7)(5,8), (1,5)(2,7)(3,6)(4,8), (1,8)(2,3)(4,5)(6,7) ], 
  [ (), (1,4)(2,7)(3,6)(5,8), (1,5)(2,6)(3,7)(4,8), (1,8)(2,3)(4,5)(6,7) ], 
  [ (), (1,4)(2,7)(3,6)(5,8), (1,5)(2,7)(3,6)(4,8), (1,8)(4,5) ], 
  [ (), (1,6)(2,4)(3,5)(7,8), (1,7)(2,4)(3,5)(6,8), (1,8)(6,7) ], 
  [ (), (1,6)(2,4)(3,5)(7,8), (1,7)(2,5)(3,4)(6,8), (1,8)(2,3)(4,5)(6,7) ], 
  [ (), (1,6)(2,5)(3,4)(7,8), (1,7)(2,4)(3,5)(6,8), (1,8)(2,3)(4,5)(6,7) ], 
  [ (), (1,6)(2,5)(3,4)(7,8), (1,7)(2,5)(3,4)(6,8), (1,8)(6,7) ] ]
gap> List(GsHNPfalse44C2xC2,Elements);
[ [ (), (1,2)(3,8)(4,6)(5,7), (1,3)(2,8)(4,7)(5,6), (1,8)(2,3)(4,5)(6,7) ], 
  [ (), (1,2)(3,8)(4,7)(5,6), (1,3)(2,8)(4,6)(5,7), (1,8)(2,3)(4,5)(6,7) ], 
  [ (), (1,4)(2,6)(3,7)(5,8), (1,5)(2,7)(3,6)(4,8), (1,8)(2,3)(4,5)(6,7) ], 
  [ (), (1,4)(2,7)(3,6)(5,8), (1,5)(2,6)(3,7)(4,8), (1,8)(2,3)(4,5)(6,7) ], 
  [ (), (1,6)(2,4)(3,5)(7,8), (1,7)(2,5)(3,4)(6,8), (1,8)(2,3)(4,5)(6,7) ], 
  [ (), (1,6)(2,5)(3,4)(7,8), (1,7)(2,4)(3,5)(6,8), (1,8)(2,3)(4,5)(6,7) ] ]
gap> ZG:=Centre(G);
Group([ (1,8)(2,3)(4,5)(6,7) ])
gap> List(GsHNPtrueC2xC2,x->Intersection(x,ZG));
[ Group(()), Group(()), Group(()), Group(()), Group(()), Group(()), 
  Group(()), Group(()) ]
gap> List(GsHNPfalse44C2xC2,x->Intersection(x,ZG));
[ Group([ (1,8)(2,3)(4,5)(6,7) ]), Group([ (1,8)(2,3)(4,5)(6,7) ]), 
  Group([ (1,8)(2,3)(4,5)(6,7) ]), Group([ (1,8)(2,3)(4,5)(6,7) ]), 
  Group([ (1,8)(2,3)(4,5)(6,7) ]), Group([ (1,8)(2,3)(4,5)(6,7) ]) ]
gap> GsHNPfalseC4xC2:=Filtered(GsHNPfalse,x->IdSmallGroup(x)=[8,2]);
[ Group([ (), (1,8)(2,3)(4,5)(6,7), (4,5)(6,7), (1,2,8,3)(4,6,5,7) ]), 
  Group([ (2,3)(6,7), (1,4,8,5)(2,6,3,7), (1,8)(2,3)(4,5)(6,7) ]), 
  Group([ (), (1,8)(2,3)(4,5)(6,7), (1,6,8,7)(2,5,3,4), (2,3)(4,5) ]) ]
gap> Length(GsHNPfalseC4xC2);
3
gap> GsHNPtrueC4xC2:=Filtered(GsHNPtrue,x->IdSmallGroup(x)=[8,2]);
[ Group([ (), (1,8)(2,3)(4,5)(6,7), (1,2)(3,8)(4,6)(5,7), (1,4,8,5)
  (2,6,3,7) ]), Group([ (), (1,8)(2,3)(4,5)(6,7), (1,6,8,7)(2,5,3,4), (1,2)
  (3,8)(4,7)(5,6) ]), Group([ (), (1,8)(2,3)(4,5)(6,7), (1,4)(2,6)(3,7)
  (5,8), (1,2,8,3)(4,6,5,7) ]), Group([ (), (1,8)(2,3)(4,5)(6,7), (1,4,8,5)
  (2,6,3,7), (1,2,8,3)(4,6,5,7) ]), Group([ (), (1,8)(2,3)(4,5)
  (6,7), (1,2,8,3)(4,7,5,6), (1,4)(2,7)(3,6)(5,8) ]), Group([ (), (1,8)(2,3)
  (4,5)(6,7), (1,2,8,3)(4,7,5,6), (1,6)(2,4)(3,5)(7,8) ]) ]
gap> Length(GsHNPtrueC4xC2);
6
gap> Collected(List(GsHNPfalseC4xC2,x->List(Orbits(x),Length)));
[ [ [ 4, 4 ], 3 ] ]
gap> Collected(List(GsHNPtrueC4xC2,x->List(Orbits(x),Length)));
[ [ [ 8 ], 6 ] ]
\end{verbatim}
}~\\\vspace*{-4mm}

(2-6) $G^\prime=8T32\simeq ((C_2)^3\rtimes V_4)\rtimes C_3$.
{\small 
\begin{verbatim}
gap> Read("HNP.gap");
gap> G:=TransitiveGroup(8,32); # G=8T32=(C2^3:V4):C3
[2^3]A(4)
gap> GeneratorsOfGroup(G);
[ (1,8)(2,3)(4,5)(6,7), (1,3)(2,8)(4,6)(5,7), (1,5)(2,6)(3,7)(4,8), 
  (1,2,3)(4,6,5), (2,5)(3,4) ]
gap> H:=Stabilizer(G,1); # H=A4
Group([ (2,5)(3,4), (2,3,8)(4,7,5) ])
gap> FirstObstructionN(G).ker; # Obs1N=1
[ [  ], [ [ 3 ], [  ] ] ]
gap> SchurMultPcpGroup(G); # M(G)=C2xC2xC2: Schur multiplier of G
[ 2, 2, 2 ]
gap> cGs:=MinimalStemExtensions(G);; # 7 minimal stem extensions
gap> for cG in cGs do
> bG:=cG.MinimalStemExtension;
> bH:=PreImage(cG.epi,H);
> Print(KerResH3Z(bG,bH));
> Print("\n");
> od;
[ [  ], [ [ 2, 2 ], [  ] ] ]
[ [ 2 ], [ [ 2, 2 ], [ [ 0, 1 ] ] ] ]
[ [ 2 ], [ [ 2, 2 ], [ [ 0, 1 ] ] ] ]
[ [ 2 ], [ [ 2, 2 ], [ [ 1, 1 ] ] ] ]
[ [ 2 ], [ [ 2, 2 ], [ [ 1, 0 ] ] ] ]
[ [ 2 ], [ [ 2, 2 ], [ [ 1, 1 ] ] ] ]
[ [ 2 ], [ [ 2, 2 ], [ [ 1, 1 ] ] ] ]
gap> for cG in cGs do
> bG:=cG.MinimalStemExtension;
> bH:=PreImage(cG.epi,H);
> Print(FirstObstructionN(bG,bH).ker[1]);
> Print(FirstObstructionDnr(bG,bH).Dnr[1]);
> Print("\n");
> od;
[ 2 ][  ]
[ 2 ][ 2 ]
[ 2 ][ 2 ]
[  ][  ]
[  ][  ]
[  ][  ]
[  ][  ]
gap> cG:=cGs[1];;
gap> bG:=cG.MinimalStemExtension; # bG=G- is a minimal stem extension of G
<permutation group of size 192 with 7 generators>
gap> bH:=PreImage(cG.epi,H); # bH=H-
<permutation group of size 24 with 3 generators>
gap> FirstObstructionN(bG,bH).ker; # Obs1N-=C2
[ [ 2 ], [ [ 6 ], [ [ 3 ] ] ] ]
gap> FirstObstructionDnr(bG,bH).Dnr; # Obs1Dnr-=1
[ [  ], [ [ 6 ], [  ] ] ]
gap> bGs:=AllSubgroups(bG);;
gap> Length(bGs);
326
gap> bGsHNPfalse:=Filtered(bGs,x->FirstObstructionDr(bG,x,bH).Dr[1]=[]);;
gap> Length(bGsHNPfalse);
280
gap> bGsHNPtrue:=Filtered(bGs,x->FirstObstructionDr(bG,x,bH).Dr[1]=[2]);;
gap> Length(bGsHNPtrue);
46
gap> Collected(List(bGsHNPfalse,x->StructureDescription(Image(cG.epi,x))));
[ [ "1", 2 ], [ "A4", 8 ], [ "C2", 33 ], [ "C2 x A4", 12 ], [ "C2 x C2", 53 ],
  [ "C2 x C2 x C2", 12 ], [ "C2 x D8", 3 ], [ "C3", 32 ], [ "C4", 18 ], 
  [ "C4 x C2", 15 ], [ "C6", 48 ], [ "D8", 18 ], [ "Q8", 10 ], 
  [ "SL(2,3)", 16 ] ]
gap> Collected(List(bGsHNPtrue,x->StructureDescription(Image(cG.epi,x))));
[ [ "((C2 x C2 x C2) : (C2 x C2)) : C3", 1 ], 
  [ "(C2 x C2 x C2) : (C2 x C2)", 1 ], [ "(C4 x C2) : C2", 6 ], [ "A4", 8 ], 
  [ "C2 x A4", 8 ], [ "C2 x C2", 8 ], [ "C2 x C2 x C2", 2 ], [ "C2 x D8", 6 ],
  [ "C4 x C2", 6 ] ]
gap> GsHNPfalse:=Set(bGsHNPfalse,x->Image(cG.epi,x));;
gap> Length(GsHNPfalse);
129
gap> GsHNPtrue:=Set(bGsHNPtrue,x->Image(cG.epi,x));;
gap> Length(GsHNPtrue);
46
gap> Intersection(GsHNPfalse,GsHNPtrue);
[  ]
gap> GsHNPtrueMin:=Filtered(GsHNPtrue,x->Length(Filtered(GsHNPtrue,
> y->IsSubgroup(x,y)))=1);
[ Group([ (1,7)(2,3)(4,5)(6,8), (1,2)(3,7)(4,8)(5,6), () ]), Group([ (1,3,6,4)
  (2,7,5,8), (1,8,6,7)(2,4,5,3), (1,6)(2,5)(3,4)(7,8), (1,6)(2,5)(3,4)
  (7,8) ]), Group([ (1,4)(2,8)(3,6)(5,7), (1,8)(2,4)(3,5)(6,7), () ]), 
  Group([ (1,8)(2,3)(4,5)(6,7), (1,2)(3,8)(4,7)(5,6), () ]), Group([ (1,7,6,8)
  (2,4,5,3), (1,2)(3,8)(4,7)(5,6), (1,6)(2,5)(3,4)(7,8), (1,6)(2,5)(3,4)
  (7,8) ]), Group([ (1,4)(2,7)(3,6)(5,8), (1,7)(2,4)(3,5)(6,8), () ]), 
  Group([ (1,7,6,8)(2,4,5,3), (1,2,6,5)(3,7,4,8), (1,6)(2,5)(3,4)(7,8), (1,6)
  (2,5)(3,4)(7,8) ]), Group([ (1,3,6,4)(2,7,5,8), (1,7)(2,4)(3,5)(6,8), (1,6)
  (2,5)(3,4)(7,8), (1,6)(2,5)(3,4)(7,8) ]), Group([ (1,5,6,2)(3,7,4,8), (1,3)
  (2,8)(4,6)(5,7), (1,6)(2,5)(3,4)(7,8), (1,6)(2,5)(3,4)(7,8) ]), 
  Group([ (1,5,6,2)(3,7,4,8), (1,3,6,4)(2,8,5,7), (1,6)(2,5)(3,4)(7,8), (1,6)
  (2,5)(3,4)(7,8) ]), Group([ (1,8)(2,4)(3,5)(6,7), (1,5)(2,6)(3,8)
  (4,7), () ]), Group([ (1,7)(2,4)(3,5)(6,8), (1,5)(2,6)(3,7)(4,8), () ]), 
  Group([ (1,8)(2,3)(4,5)(6,7), (1,5)(2,6)(3,7)(4,8), () ]), Group([ (1,7)
  (2,3)(4,5)(6,8), (1,5)(2,6)(3,8)(4,7), () ]) ]
gap> Length(GsHNPtrueMin);
14
gap> List(GsHNPtrueMin,IdSmallGroup);
[ [ 4, 2 ], [ 8, 2 ], [ 4, 2 ], [ 4, 2 ], [ 8, 2 ], [ 4, 2 ], [ 8, 2 ], 
  [ 8, 2 ], [ 8, 2 ], [ 8, 2 ], [ 4, 2 ], [ 4, 2 ], [ 4, 2 ], [ 4, 2 ] ]
gap> Collected(List(GsHNPfalse,x->Filtered(GsHNPtrueMin,y->IsSubgroup(x,y))));
[ [ [  ], 129 ] ]
gap> Gs:=AllSubgroups(G);;
gap> Length(Gs);
175
gap> GsC2xC2:=Filtered(Gs,x->IdSmallGroup(x)=[4,2]);;
gap> Length(GsC2xC2);
33
gap> GsC4xC2:=Filtered(Gs,x->IdSmallGroup(x)=[8,2]);;
gap> Length(GsC4xC2);
9
gap> GsHNPfalseC2xC2:=Filtered(GsHNPfalse,x->IdSmallGroup(x)=[4,2]);
[ Group([ (3,4)(7,8), (2,5)(7,8) ]), Group([ (1,2)(3,8)(4,7)(5,6), (3,4)
  (7,8), (3,4)(7,8) ]), Group([ (1,5)(2,6)(3,7)(4,8), (3,4)(7,8), (3,4)
  (7,8) ]), Group([ (1,6)(3,4), (3,4)(7,8), (1,6)(7,8) ]), Group([ (3,4)
  (7,8), (1,6)(2,5)(3,4)(7,8) ]), Group([ (1,3)(2,8)(4,6)(5,7), (2,5)
  (7,8), (2,5)(7,8) ]), Group([ (1,4)(2,7)(3,6)(5,8), (2,5)(7,8), (2,5)
  (7,8) ]), Group([ (2,5)(7,8), (1,6)(2,5), (1,6)(7,8) ]), Group([ (1,6)
  (3,4), (1,6)(2,5)(3,4)(7,8) ]), Group([ (1,6)(2,5)(3,4)(7,8), (2,5)
  (3,4), (1,6)(7,8) ]), Group([ (2,5)(3,4), (1,6)(2,5) ]), Group([ (1,7)(2,4)
  (3,5)(6,8), (2,5)(3,4), (2,5)(3,4) ]), Group([ (1,8)(2,3)(4,5)(6,7), (2,5)
  (3,4), (2,5)(3,4) ]), Group([ (1,5)(2,6)(3,7)(4,8), (1,6)(2,5), (1,6)
  (2,5) ]), Group([ (1,5)(2,6)(3,8)(4,7), (1,6)(2,5)(3,4)(7,8), (1,6)(2,5)
  (3,4)(7,8) ]), Group([ (1,2)(3,8)(4,7)(5,6), (1,6)(2,5)(3,4)(7,8), (1,6)
  (2,5)(3,4)(7,8) ]), Group([ (1,2)(3,8)(4,7)(5,6), (1,6)(2,5), (1,6)
  (2,5) ]), Group([ (1,4)(2,7)(3,6)(5,8), (1,6)(3,4), (1,6)(3,4) ]), 
  Group([ (1,3)(2,7)(4,6)(5,8), (1,6)(2,5)(3,4)(7,8), (1,6)(2,5)(3,4)
  (7,8) ]), Group([ (1,3)(2,8)(4,6)(5,7), (1,6)(2,5)(3,4)(7,8), (1,6)(2,5)
  (3,4)(7,8) ]), Group([ (1,3)(2,8)(4,6)(5,7), (1,6)(3,4), (1,6)(3,4) ]), 
  Group([ (1,8)(2,3)(4,5)(6,7), (1,6)(7,8), (1,6)(7,8) ]), Group([ (1,7)(2,4)
  (3,5)(6,8), (1,6)(7,8), (1,6)(7,8) ]), Group([ (1,8)(2,4)(3,5)(6,7), (1,6)
  (2,5)(3,4)(7,8), (1,6)(2,5)(3,4)(7,8) ]), Group([ (1,7)(2,4)(3,5)
  (6,8), (1,6)(2,5)(3,4)(7,8), (1,6)(2,5)(3,4)(7,8) ]) ]
gap> Length(GsHNPfalseC2xC2);
25
gap> GsHNPtrueC2xC2:=Filtered(GsHNPtrue,x->IdSmallGroup(x)=[4,2]);
[ Group([ (1,7)(2,3)(4,5)(6,8), (1,2)(3,7)(4,8)(5,6), () ]), Group([ (1,4)
  (2,8)(3,6)(5,7), (1,8)(2,4)(3,5)(6,7), () ]), Group([ (1,8)(2,3)(4,5)
  (6,7), (1,2)(3,8)(4,7)(5,6), () ]), Group([ (1,4)(2,7)(3,6)(5,8), (1,7)(2,4)
  (3,5)(6,8), () ]), Group([ (1,8)(2,4)(3,5)(6,7), (1,5)(2,6)(3,8)
  (4,7), () ]), Group([ (1,7)(2,4)(3,5)(6,8), (1,5)(2,6)(3,7)(4,8), () ]), 
  Group([ (1,8)(2,3)(4,5)(6,7), (1,5)(2,6)(3,7)(4,8), () ]), Group([ (1,7)
  (2,3)(4,5)(6,8), (1,5)(2,6)(3,8)(4,7), () ]) ]
gap> Length(GsHNPtrueC2xC2);
8
gap> Collected(List(GsHNPfalseC2xC2,x->List(Orbits(x),Length)));
[ [ [ 2, 2, 2 ], 4 ], [ [ 2, 2, 2, 2 ], 3 ], [ [ 2, 2, 4 ], 1 ], 
  [ [ 2, 4, 2 ], 5 ], [ [ 4, 2, 2 ], 6 ], [ [ 4, 4 ], 6 ] ]
gap> Collected(List(GsHNPtrueC2xC2,x->List(Orbits(x),Length)));
[ [ [ 4, 4 ], 8 ] ]
gap> GsHNPfalse44C2xC2:=Filtered(GsHNPfalseC2xC2,
> x->List(Orbits(x,[1..8]),Length)=[4,4]);
[ Group([ (1,5)(2,6)(3,8)(4,7), (1,6)(2,5)(3,4)(7,8), (1,6)(2,5)(3,4)
  (7,8) ]), Group([ (1,2)(3,8)(4,7)(5,6), (1,6)(2,5)(3,4)(7,8), (1,6)(2,5)
  (3,4)(7,8) ]), Group([ (1,3)(2,7)(4,6)(5,8), (1,6)(2,5)(3,4)(7,8), (1,6)
  (2,5)(3,4)(7,8) ]), Group([ (1,3)(2,8)(4,6)(5,7), (1,6)(2,5)(3,4)
  (7,8), (1,6)(2,5)(3,4)(7,8) ]), Group([ (1,8)(2,4)(3,5)(6,7), (1,6)(2,5)
  (3,4)(7,8), (1,6)(2,5)(3,4)(7,8) ]), Group([ (1,7)(2,4)(3,5)(6,8), (1,6)
  (2,5)(3,4)(7,8), (1,6)(2,5)(3,4)(7,8) ]) ]
gap> List(GsHNPtrueC2xC2,Elements);
[ [ (), (1,2)(3,7)(4,8)(5,6), (1,3)(2,7)(4,6)(5,8), (1,7)(2,3)(4,5)(6,8) ], 
  [ (), (1,2)(3,7)(4,8)(5,6), (1,4)(2,8)(3,6)(5,7), (1,8)(2,4)(3,5)(6,7) ], 
  [ (), (1,2)(3,8)(4,7)(5,6), (1,3)(2,8)(4,6)(5,7), (1,8)(2,3)(4,5)(6,7) ], 
  [ (), (1,2)(3,8)(4,7)(5,6), (1,4)(2,7)(3,6)(5,8), (1,7)(2,4)(3,5)(6,8) ], 
  [ (), (1,3)(2,7)(4,6)(5,8), (1,5)(2,6)(3,8)(4,7), (1,8)(2,4)(3,5)(6,7) ], 
  [ (), (1,3)(2,8)(4,6)(5,7), (1,5)(2,6)(3,7)(4,8), (1,7)(2,4)(3,5)(6,8) ], 
  [ (), (1,4)(2,7)(3,6)(5,8), (1,5)(2,6)(3,7)(4,8), (1,8)(2,3)(4,5)(6,7) ], 
  [ (), (1,4)(2,8)(3,6)(5,7), (1,5)(2,6)(3,8)(4,7), (1,7)(2,3)(4,5)(6,8) ] ]
gap> List(GsHNPfalseC2xC2,Elements);
[ [ (), (3,4)(7,8), (2,5)(7,8), (2,5)(3,4) ], 
  [ (), (3,4)(7,8), (1,2)(3,7)(4,8)(5,6), (1,2)(3,8)(4,7)(5,6) ], 
  [ (), (3,4)(7,8), (1,5)(2,6)(3,7)(4,8), (1,5)(2,6)(3,8)(4,7) ], 
  [ (), (3,4)(7,8), (1,6)(7,8), (1,6)(3,4) ], 
  [ (), (3,4)(7,8), (1,6)(2,5), (1,6)(2,5)(3,4)(7,8) ], 
  [ (), (2,5)(7,8), (1,3)(2,7)(4,6)(5,8), (1,3)(2,8)(4,6)(5,7) ], 
  [ (), (2,5)(7,8), (1,4)(2,7)(3,6)(5,8), (1,4)(2,8)(3,6)(5,7) ], 
  [ (), (2,5)(7,8), (1,6)(7,8), (1,6)(2,5) ], 
  [ (), (2,5)(7,8), (1,6)(3,4), (1,6)(2,5)(3,4)(7,8) ], 
  [ (), (2,5)(3,4), (1,6)(7,8), (1,6)(2,5)(3,4)(7,8) ], 
  [ (), (2,5)(3,4), (1,6)(3,4), (1,6)(2,5) ], 
  [ (), (2,5)(3,4), (1,7)(2,3)(4,5)(6,8), (1,7)(2,4)(3,5)(6,8) ], 
  [ (), (2,5)(3,4), (1,8)(2,3)(4,5)(6,7), (1,8)(2,4)(3,5)(6,7) ], 
  [ (), (1,2)(3,7)(4,8)(5,6), (1,5)(2,6)(3,7)(4,8), (1,6)(2,5) ], 
  [ (), (1,2)(3,7)(4,8)(5,6), (1,5)(2,6)(3,8)(4,7), (1,6)(2,5)(3,4)(7,8) ], 
  [ (), (1,2)(3,8)(4,7)(5,6), (1,5)(2,6)(3,7)(4,8), (1,6)(2,5)(3,4)(7,8) ], 
  [ (), (1,2)(3,8)(4,7)(5,6), (1,5)(2,6)(3,8)(4,7), (1,6)(2,5) ], 
  [ (), (1,3)(2,7)(4,6)(5,8), (1,4)(2,7)(3,6)(5,8), (1,6)(3,4) ], 
  [ (), (1,3)(2,7)(4,6)(5,8), (1,4)(2,8)(3,6)(5,7), (1,6)(2,5)(3,4)(7,8) ], 
  [ (), (1,3)(2,8)(4,6)(5,7), (1,4)(2,7)(3,6)(5,8), (1,6)(2,5)(3,4)(7,8) ], 
  [ (), (1,3)(2,8)(4,6)(5,7), (1,4)(2,8)(3,6)(5,7), (1,6)(3,4) ], 
  [ (), (1,6)(7,8), (1,7)(2,3)(4,5)(6,8), (1,8)(2,3)(4,5)(6,7) ], 
  [ (), (1,6)(7,8), (1,7)(2,4)(3,5)(6,8), (1,8)(2,4)(3,5)(6,7) ], 
  [ (), (1,6)(2,5)(3,4)(7,8), (1,7)(2,3)(4,5)(6,8), (1,8)(2,4)(3,5)(6,7) ], 
  [ (), (1,6)(2,5)(3,4)(7,8), (1,7)(2,4)(3,5)(6,8), (1,8)(2,3)(4,5)(6,7) ] ]
gap> List(GsHNPfalse44C2xC2,Elements);
[ [ (), (1,2)(3,7)(4,8)(5,6), (1,5)(2,6)(3,8)(4,7), (1,6)(2,5)(3,4)(7,8) ], 
  [ (), (1,2)(3,8)(4,7)(5,6), (1,5)(2,6)(3,7)(4,8), (1,6)(2,5)(3,4)(7,8) ], 
  [ (), (1,3)(2,7)(4,6)(5,8), (1,4)(2,8)(3,6)(5,7), (1,6)(2,5)(3,4)(7,8) ], 
  [ (), (1,3)(2,8)(4,6)(5,7), (1,4)(2,7)(3,6)(5,8), (1,6)(2,5)(3,4)(7,8) ], 
  [ (), (1,6)(2,5)(3,4)(7,8), (1,7)(2,3)(4,5)(6,8), (1,8)(2,4)(3,5)(6,7) ], 
  [ (), (1,6)(2,5)(3,4)(7,8), (1,7)(2,4)(3,5)(6,8), (1,8)(2,3)(4,5)(6,7) ] ]
gap> ZG:=Centre(G);
Group([ (1,6)(2,5)(3,4)(7,8) ])
gap> List(GsHNPtrueC2xC2,x->Intersection(x,ZG));
[ Group(()), Group(()), Group(()), Group(()), Group(()), Group(()), 
  Group(()), Group(()) ]
gap> List(GsHNPfalse44C2xC2,x->Intersection(x,ZG));
[ Group([ (1,6)(2,5)(3,4)(7,8) ]), Group([ (1,6)(2,5)(3,4)(7,8) ]), 
  Group([ (1,6)(2,5)(3,4)(7,8) ]), Group([ (1,6)(2,5)(3,4)(7,8) ]), 
  Group([ (1,6)(2,5)(3,4)(7,8) ]), Group([ (1,6)(2,5)(3,4)(7,8) ]) ]
gap> GsHNPfalseC4xC2:=Filtered(GsHNPfalse,x->IdSmallGroup(x)=[8,2]);
[ Group([ (1,2,6,5)(3,8,4,7), (1,6)(2,5)(3,4)(7,8), (3,4)(7,8) ]), 
  Group([ (1,3,6,4)(2,7,5,8), (1,6)(2,5)(3,4)(7,8), (1,6)(3,4), (2,5)
  (7,8) ]), Group([ (1,7,6,8)(2,4,5,3), (1,6)(2,5)(3,4)(7,8), (2,5)(3,4) ]) ]
gap> Length(GsHNPfalseC4xC2);
3
gap> GsHNPtrueC4xC2:=Filtered(GsHNPtrue,x->IdSmallGroup(x)=[8,2]);
[ Group([ (1,3,6,4)(2,7,5,8), (1,8,6,7)(2,4,5,3), (1,6)(2,5)(3,4)(7,8), (1,6)
  (2,5)(3,4)(7,8) ]), Group([ (1,7,6,8)(2,4,5,3), (1,2)(3,8)(4,7)(5,6), (1,6)
  (2,5)(3,4)(7,8), (1,6)(2,5)(3,4)(7,8) ]), Group([ (1,7,6,8)(2,4,5,3), (1,2,
   6,5)(3,7,4,8), (1,6)(2,5)(3,4)(7,8), (1,6)(2,5)(3,4)(7,8) ]), 
  Group([ (1,3,6,4)(2,7,5,8), (1,7)(2,4)(3,5)(6,8), (1,6)(2,5)(3,4)
  (7,8), (1,6)(2,5)(3,4)(7,8) ]), Group([ (1,5,6,2)(3,7,4,8), (1,3)(2,8)(4,6)
  (5,7), (1,6)(2,5)(3,4)(7,8), (1,6)(2,5)(3,4)(7,8) ]), Group([ (1,5,6,2)
  (3,7,4,8), (1,3,6,4)(2,8,5,7), (1,6)(2,5)(3,4)(7,8), (1,6)(2,5)(3,4)
  (7,8) ]) ]
gap> Length(GsHNPtrueC4xC2);
6
gap> Collected(List(GsHNPfalseC4xC2,x->List(Orbits(x),Length)));
[ [ [ 4, 4 ], 3 ] ]
gap> Collected(List(GsHNPtrueC4xC2,x->List(Orbits(x),Length)));
[ [ [ 8 ], 6 ] ]
gap> Syl2G:=SylowSubgroup(G,2);
Group([ (2,5)(3,4), (2,5)(7,8), (1,2)(3,8)(4,7)(5,6), (1,8)(2,3)(4,5)
(6,7), (1,6)(2,5)(3,4)(7,8) ])
gap> IsNormal(G,Syl2G);
true
gap> IsConjugate(SymmetricGroup(8),Syl2G,TransitiveGroup(8,22));
true
\end{verbatim}
}
\end{example}

\smallskip
\begin{example}[$G=9Tm$ $(m=2,5,7,9,11,14,23)$]\label{ex9}
~{}\vspace*{-2mm}\\

(3-1) $G=9T2\simeq (C_3)^2$. 
{\small 
\begin{verbatim}
gap> Read("HNP.gap");
gap> G:=TransitiveGroup(9,2); # G=9T2=C3xC3
E(9)=3[x]3
gap> H:=Stabilizer(G,1); # H=1
Group(())
gap> FirstObstructionN(G).ker; # Obs1N=1
[ [  ], [ [  ], [  ] ] ]
gap> SchurMultPcpGroup(G); # M(G)=C3: Schur multiplier of G
[ 3 ]
gap> ScG:=SchurCoverG(G);
rec( SchurCover := Group([ (2,4,5)(3,6,7), (1,2,3)(4,7,8)(5,6,9) ]), 
  epi := [ (2,4,5)(3,6,7), (1,2,3)(4,7,8)(5,6,9) ] -> 
    [ (1,4,7)(2,5,8)(3,6,9), (1,2,9)(3,4,5)(6,7,8) ], Tid := [ 9, 7 ] )
gap> StructureDescription(TransitiveGroup(9,7));
"(C3 x C3) : C3"
gap> tG:=ScG.SchurCover; # tG=G~=(C3xC3):C3 is a Schur cover of G 
Group([ (2,4,5)(3,6,7), (1,2,3)(4,7,8)(5,6,9) ])
gap> tH:=PreImage(ScG.epi,H); # tH=H~=C3
Group([ (1,9,8)(2,5,4)(3,6,7) ])
gap> FirstObstructionN(tG,tH).ker; # Obs1N~=C3
[ [ 3 ], [ [ 3 ], [ [ 1 ] ] ] ]
gap> FirstObstructionDnr(tG,tH).Dnr; # Obs1Dnr~=1
[ [  ], [ [ 3 ], [  ] ] ]
gap> tGs:=AllSubgroups(tG);;
gap> Length(tGs);
19
gap> tGsHNPfalse:=Filtered(tGs,x->FirstObstructionDr(tG,x,tH).Dr[1]=[]);;
gap> tGsHNPtrue:=Filtered(tGs,x->FirstObstructionDr(tG,x,tH).Dr[1]=[3]);;
gap> List([tGsHNPfalse,tGsHNPtrue],Length);
[ 18, 1 ]
gap> Collected(List(tGsHNPfalse,StructureDescription));
[ [ "1", 1 ], [ "C3", 13 ], [ "C3 x C3", 4 ] ]
gap> Collected(List(tGsHNPtrue,StructureDescription));
[ [ "(C3 x C3) : C3", 1 ] ]
gap> Collected(List(tGsHNPfalse,x->StructureDescription(Image(ScG.epi,x))));
[ [ "1", 2 ], [ "C3", 16 ] ]
gap> Collected(List(tGsHNPtrue,x->StructureDescription(Image(ScG.epi,x))));
[ [ "C3 x C3", 1 ] ]
\end{verbatim}
}~\\\vspace*{-4mm}

(3-2) $G=9T5\simeq (C_3)^2\rtimes C_2$. 
{\small 
\begin{verbatim}
gap> Read("HNP.gap");
gap> G:=TransitiveGroup(9,5); # G=9T5=(C3xC3):C2
S(3)[1/2]S(3)=3^2:2
gap> H:=Stabilizer(G,1); # H=C2
Group([ (2,9)(3,8)(4,7)(5,6) ])
gap> FirstObstructionN(G).ker; # Obs1N=1
[ [  ], [ [ 2 ], [  ] ] ]
gap> SchurMultPcpGroup(G); # M(G)=C3: Schur multiplier of G
[ 3 ]
gap> ScG:=SchurCoverG(G);
rec( SchurCover := Group([ (2,3)(4,7)(5,6), (1,2,3)(4,7,8)(5,6,9), (2,4,5)
  (3,6,7) ]), Tid := [ 9, 12 ], 
  epi := [ (2,3)(4,7)(5,6), (1,2,3)(4,7,8)(5,6,9), (2,4,5)(3,6,7) ] -> 
    [ (2,9)(3,8)(4,7)(5,6), (1,4,7)(2,5,8)(3,6,9), (1,2,9)(3,4,5)(6,7,8) ] )
gap> tG:=ScG.SchurCover; # tG=G~ is a Schur cover of G 
Group([ (2,3)(4,7)(5,6), (1,2,3)(4,7,8)(5,6,9), (2,4,5)(3,6,7) ])
gap> StructureDescription(tG);
"((C3 x C3) : C3) : C2"
gap> tH:=PreImage(ScG.epi,H); # tH=H~=C6
Group([ (2,3)(4,7)(5,6), (1,8,9)(2,4,5)(3,7,6) ])
gap> FirstObstructionN(tG,tH).ker; # Obs1N~=C3
[ [ 3 ], [ [ 6 ], [ 2 ] ] ]
gap> FirstObstructionDnr(tG,tH).Dnr; # Obs1Dnr~=1
[ [  ], [ [ 6 ], [  ] ] ]
gap> tGs:=AllSubgroups(tG);;
gap> Length(tGs);
62
gap> tGsHNPfalse:=Filtered(tGs,x->FirstObstructionDr(tG,x,tH).Dr[1]=[]);;
gap> tGsHNPtrue:=Filtered(tGs,x->FirstObstructionDr(tG,x,tH).Dr[1]=[3]);;
gap> List([tGsHNPfalse,tGsHNPtrue],Length);
[ 60, 2 ]
gap> Collected(List(tGsHNPfalse,StructureDescription));
[ [ "1", 1 ], [ "C2", 9 ], [ "C3", 13 ], [ "C3 x C3", 4 ], [ "C3 x S3", 12 ], 
  [ "C6", 9 ], [ "S3", 12 ] ]
gap> Collected(List(tGsHNPtrue,StructureDescription));
[ [ "((C3 x C3) : C3) : C2", 1 ], [ "(C3 x C3) : C3", 1 ] ]
gap> Collected(List(tGsHNPfalse,x->StructureDescription(Image(ScG.epi,x))));
[ [ "1", 2 ], [ "C2", 18 ], [ "C3", 16 ], [ "S3", 24 ] ]
gap> Collected(List(tGsHNPtrue,x->StructureDescription(Image(ScG.epi,x))));
[ [ "(C3 x C3) : C2", 1 ], [ "C3 x C3", 1 ] ]
\end{verbatim}
}~\\\vspace*{-4mm}

(3-3) $G=9T7\simeq (C_3)^2\rtimes C_3$. 
{\small 
\begin{verbatim}
gap> Read("HNP.gap");
gap> G:=TransitiveGroup(9,7); # G=9T7=(C3xC3):C3
E(9):3=[3^2]3
gap> H:=Stabilizer(G,1); # H=C3
Group([ (3,4,5)(6,8,7) ])
gap> FirstObstructionN(G).ker; # Obs1N=1
[ [  ], [ [ 3 ], [  ] ] ]
gap> SchurMultPcpGroup(G); # M(G)=C3xC3: Schur multiplier of G
[ 3, 3 ]
gap> cGs:=MinimalStemExtensions(G);;
gap> for cG in cGs do
> tG:=cG.MinimalStemExtension;
> tH:=PreImage(cG.epi,H);
> Print(KerResH3Z(tG,tH));
> Print("\n");
> od;
[ [  ], [ [ 3 ], [  ] ] ]
[ [ 3 ], [ [ 3 ], [ [ 1 ] ] ] ]
[ [ 3 ], [ [ 3, 3 ], [ [ 0, 1 ] ] ] ]
[ [ 3 ], [ [ 3, 3 ], [ [ 0, 1 ] ] ] ]
gap> for cG in cGs do
> bG:=cG.MinimalStemExtension;
> bH:=PreImage(cG.epi,H);
> Print(FirstObstructionN(bG,bH).ker[1]);
> Print(FirstObstructionDnr(bG,bH).Dnr[1]);
> Print("\n");
> od;
[ 3 ][  ]
[ 3 ][ 3 ]
[ 3 ][ 3 ]
[ 3 ][ 3 ]
gap> cG:=cGs[1];
rec( MinimalStemExtension := <permutation group of size 81 with 4 generators>,
  epi := [ (1,5,15)(2,9,24)(3,12,29)(4,14,31)(6,18,37)(7,21,42)(8,23,44)(10,
        26,47)(11,28,49)(13,30,50)(16,34,55)(17,36,57)(19,39,60)(20,41,62)(22,
        43,63)(25,46,65)(27,48,66)(32,52,69)(33,54,71)(35,56,72)(38,59,74)(40,
        61,75)(45,64,76)(51,68,78)(53,70,79)(58,73,80)(67,77,81), 
      (1,2,6)(3,22,51)(4,40,32)(5,23,34)(7,35,25)(8,53,10)(9,36,12)(11,20,
        33)(13,38,16)(14,21,18)(15,63,69)(17,27,19)(24,72,47)(26,43,77)(28,61,
        68)(29,44,71)(30,73,52)(31,62,55)(37,50,60)(39,56,64)(41,70,46)(42,57,
        49)(45,58,67)(48,59,54)(65,75,81)(66,80,78)(74,79,76), 
      (1,3,10)(2,7,19)(4,11,25)(5,12,26)(6,16,32)(8,20,38)(9,21,39)(13,27,
        45)(14,28,46)(15,29,47)(17,33,51)(18,34,52)(22,40,58)(23,41,59)(24,42,
        60)(30,48,64)(31,49,65)(35,53,67)(36,54,68)(37,55,69)(43,61,73)(44,62,
        74)(50,66,76)(56,70,77)(57,71,78)(63,75,80)(72,79,81) ] -> 
    [ (3,4,5)(6,8,7), (1,4,7)(2,5,8)(3,6,9), (1,2,9)(3,4,5)(6,7,8) ] )
gap> bG:=cG.MinimalStemExtension;
<permutation group of size 81 with 4 generators>
gap> bH:=PreImage(cG.epi,H); # bH=H-=C3xC3
<permutation group of size 9 with 2 generators>
gap> KerResH3Z(bG,bH);
[ [  ], [ [ 3 ], [  ] ] ]
gap> FirstObstructionN(bG,bH).ker; # Obs1N-=C3
[ [ 3 ], [ [ 3, 3 ], [ [ 0, 1 ] ] ] ]
gap> FirstObstructionDnr(bG,bH).Dnr; # Obs1Dnr-=1
[ [  ], [ [ 3, 3 ], [  ] ] ]
gap> bGs:=AllSubgroups(bG);;
gap> Length(bGs);
50
gap> bGsHNPfalse:=Filtered(bGs,x->FirstObstructionDr(bG,x,bH).Dr[1]=[]);;
gap> Length(bGsHNPfalse);
36
gap> bGsHNPtrue:=Filtered(bGs,x->FirstObstructionDr(bG,x,bH).Dr[1]=[3]);;
gap> Length(bGsHNPtrue);
14
gap> Collected(List(bGsHNPfalse,StructureDescription));
[ [ "1", 1 ], [ "C3", 22 ], [ "C3 x C3", 7 ], [ "C9", 6 ] ]
gap> Collected(List(bGsHNPtrue,StructureDescription));
[ [ "(C3 x C3 x C3) : C3", 1 ], [ "(C3 x C3) : C3", 1 ], [ "C3 x C3", 9 ],
  [ "C3 x C3 x C3", 1 ], [ "C9 : C3", 2 ] ]
gap> Collected(List(bGsHNPfalse,x->StructureDescription(Image(cG.epi,x))));
[ [ "1", 2 ], [ "C3", 34 ] ]
gap> Collected(List(bGsHNPtrue,x->StructureDescription(Image(cG.epi,x))));
[ [ "(C3 x C3) : C3", 1 ], [ "C3 x C3", 13 ] ]
\end{verbatim}
}~\\\vspace*{-4mm}

(3-4) $G=9T9\simeq (C_3)^2\rtimes C_4$. 
{\small 
\begin{verbatim}
gap> Read("HNP.gap");
gap> G:=TransitiveGroup(9,9); # G=9T9=(C3xC3):C4
E(9):4
gap> H:=Stabilizer(G,1); # H=C4
Group([ (2,5,9,6)(3,4,8,7) ])
gap> FirstObstructionN(G).ker; # Obs1N=1
[ [  ], [ [ 4 ], [  ] ] ]
gap> SchurMultPcpGroup(G); # M(G)=C3: Schur multiplier of G
[ 3 ]
gap> ScG:=SchurCoverG(G);
rec( SchurCover := Group([ (1,2)(3,5,4,6)(7,11,10,12)(8,13,9,14)(15,17)
  (16,18), (2,3,4)(5,7,8)(6,9,10)(11,15,12)(13,16,14) ]), Tid := [ 18, 49 ], 
  epi := [ (1,2)(3,5,4,6)(7,11,10,12)(8,13,9,14)(15,17)(16,18), 
      (2,3,4)(5,7,8)(6,9,10)(11,15,12)(13,16,14) ] -> 
    [ (2,6,9,5)(3,7,8,4), (1,6,5)(2,7,3)(4,9,8) ] )
gap> tG:=ScG.SchurCover; # tG=G~ is a Schur cover of G 
Group([ (1,2)(3,5,4,6)(7,11,10,12)(8,13,9,14)(15,17)(16,18), (2,3,4)(5,7,8)
(6,9,10)(11,15,12)(13,16,14) ])
gap> StructureDescription(tG);
"((C3 x C3) : C3) : C4"
gap> tH:=PreImage(ScG.epi,H); # tH=H~=C12 
Group([ (1,2)(3,6,4,5)(7,12,10,11)(8,14,9,13)(15,17)(16,18), (1,17,18)
(2,15,16)(3,12,14)(4,11,13)(5,7,8)(6,10,9) ])
gap> StructureDescription(tH);
"C12"
gap> FirstObstructionN(tG,tH).ker; # Obs1N~=C3
[ [ 3 ], [ [ 12 ], [ 4 ] ] ]
gap> FirstObstructionDnr(tG,tH).Dnr; # Obs1Dnr~=1
[ [  ], [ [ 12 ], [  ] ] ]
gap> tGs:=AllSubgroups(tG);;
gap> Length(tGs);
81
gap> tGsHNPfalse:=Filtered(tGs,x->FirstObstructionDr(tG,x,tH).Dr[1]=[]);;
gap> tGsHNPtrue:=Filtered(tGs,x->FirstObstructionDr(tG,x,tH).Dr[1]=[3]);;
gap> List([tGsHNPfalse,tGsHNPtrue],Length);
[ 78, 3 ]
gap> Collected(List(tGsHNPfalse,StructureDescription));
[ [ "1", 1 ], [ "C12", 9 ], [ "C2", 9 ], [ "C3", 13 ], [ "C3 x C3", 4 ], 
  [ "C3 x S3", 12 ], [ "C4", 9 ], [ "C6", 9 ], [ "S3", 12 ] ]
gap> Collected(List(tGsHNPtrue,StructureDescription));
[ [ "((C3 x C3) : C3) : C2", 1 ], [ "((C3 x C3) : C3) : C4", 1 ], 
  [ "(C3 x C3) : C3", 1 ] ]
gap> Collected(List(tGsHNPfalse,x->StructureDescription(Image(ScG.epi,x))));
[ [ "1", 2 ], [ "C2", 18 ], [ "C3", 16 ], [ "C4", 18 ], [ "S3", 24 ] ]
gap> Collected(List(tGsHNPtrue,x->StructureDescription(Image(ScG.epi,x))));
[ [ "(C3 x C3) : C2", 1 ], [ "(C3 x C3) : C4", 1 ], [ "C3 x C3", 1 ] ]
\end{verbatim}
}~\\\vspace*{-4mm}

(3-5) $G=9T11\simeq (C_3)^2\rtimes C_6$. 
{\small 
\begin{verbatim}
gap> Read("HNP.gap");
gap> G:=TransitiveGroup(9,11); # G=9T11=(C3xC3):C6
E(9):6=1/2[3^2:2]S(3)
gap> H:=Stabilizer(G,1); # H=C6
Group([ (3,4,5)(6,8,7), (2,9)(3,8)(4,7)(5,6) ])
gap> FirstObstructionN(G).ker; # Obs1N=1
[ [  ], [ [ 6 ], [  ] ] ]
gap> SchurMultPcpGroup(G); # M(G)=C3: Schur multiplier of G
[ 3 ]
gap> ScG:=SchurCoverG(G);
rec( SchurCover := Group([ (1,2)(3,5)(4,6)(7,8)(9,11)(10,12)(13,14)(15,17)
  (16,18), (1,7,13)(2,8,14)(3,9,15)(4,10,16)(5,11,17)(6,12,18), (1,4,5)(2,3,6)
  (7,10,17)(8,15,12)(9,18,14)(11,13,16) ]), Tid := [ 18, 86 ], 
  epi := [ (1,2)(3,5)(4,6)(7,8)(9,11)(10,12)(13,14)(15,17)(16,18), 
      (1,7,13)(2,8,14)(3,9,15)(4,10,16)(5,11,17)(6,12,18), 
      (1,4,5)(2,3,6)(7,10,17)(8,15,12)(9,18,14)(11,13,16) ] -> 
    [ (2,9)(3,8)(4,7)(5,6), (3,4,5)(6,8,7), (1,4,7)(2,5,8)(3,6,9) ] )
gap> tG:=ScG.SchurCover; # tG=G~ is a Schur cover of G 
Group([ (1,2)(3,5)(4,6)(7,8)(9,11)(10,12)(13,14)(15,17)(16,18), (1,7,13)
(2,8,14)(3,9,15)(4,10,16)(5,11,17)(6,12,18), (1,4,5)(2,3,6)(7,10,17)(8,15,12)
(9,18,14)(11,13,16) ])
gap> StructureDescription(tG);
"((C3 x C3 x C3) : C3) : C2"
gap> tH:=PreImage(ScG.epi,H); # tH=H~=C6xC3
Group([ (1,2)(3,5)(4,6)(7,8)(9,11)(10,12)(13,14)(15,17)(16,18), (3,15,9)
(5,17,11), (1,7,13)(2,8,14)(3,15,9)(4,10,16)(5,17,11)(6,12,18) ])
gap> StructureDescription(tH);
"C6 x C3"
gap> FirstObstructionN(tG,tH).ker; # Obs1N~=C3
[ [ 3 ], [ [ 3, 6 ], [ [ 1, 4 ] ] ] ]
gap> FirstObstructionDnr(tG,tH).Dnr; # Obs1Dnr~=1
[ [  ], [ [ 3, 6 ], [  ] ] ]
gap> tGs:=AllSubgroups(tG);;
gap> Length(tGs);
142
gap> tGsHNPfalse:=Filtered(tGs,x->FirstObstructionDr(tG,x,tH).Dr[1]=[]);;
gap> tGsHNPtrue:=Filtered(tGs,x->FirstObstructionDr(tG,x,tH).Dr[1]=[3]);;
gap> List([tGsHNPfalse,tGsHNPtrue],Length);
[ 114, 28 ]
gap> Collected(List(tGsHNPfalse,StructureDescription));
[ [ "1", 1 ], [ "C2", 9 ], [ "C3", 22 ], [ "C3 x C3", 7 ], [ "C3 x S3", 12 ], 
  [ "C6", 36 ], [ "C6 x C3", 9 ], [ "C9", 6 ], [ "S3", 12 ] ]
gap> Collected(List(tGsHNPtrue,StructureDescription));
[ [ "((C3 x C3 x C3) : C3) : C2", 1 ], [ "((C3 x C3) : C3) : C2", 1 ], 
  [ "(C3 x C3 x C3) : C3", 1 ], [ "(C3 x C3) : C3", 1 ], [ "C3 x C3", 9 ], 
  [ "C3 x C3 x C3", 1 ], [ "C3 x C3 x S3", 3 ], [ "C3 x S3", 9 ], 
  [ "C9 : C3", 2 ] ]
gap> Collected(List(tGsHNPfalse,x->StructureDescription(Image(ScG.epi,x))));
[ [ "1", 2 ], [ "C2", 18 ], [ "C3", 34 ], [ "C6", 36 ], [ "S3", 24 ] ]
gap> Collected(List(tGsHNPtrue,x->StructureDescription(Image(ScG.epi,x))));
[ [ "(C3 x C3) : C2", 1 ], [ "(C3 x C3) : C3", 1 ], [ "(C3 x C3) : C6", 1 ], 
  [ "C3 x C3", 13 ], [ "C3 x S3", 12 ] ]
\end{verbatim}
}~\\\vspace*{-4mm}

(3-6) $G=9T14\simeq (C_3)^2\rtimes Q_8$. 
{\small 
\begin{verbatim}
gap> Read("HNP.gap");
gap> G:=TransitiveGroup(9,14); # G=9T14=(C3xC3):Q8
M(9)=E(9):Q_8
gap> H:=Stabilizer(G,1); # H=Q8
Group([ (2,8,9,3)(4,6,7,5), (2,5,9,6)(3,4,8,7) ])
gap> StructureDescription(H);
"Q8"
gap> FirstObstructionN(G).ker; # Obs1N=1
[ [  ], [ [ 2, 2 ], [  ] ] ]
gap> SchurMultPcpGroup(G); # M(G)=C3: Schur multiplier of G
[ 3 ]
gap> ScG:=SchurCoverG(G);
rec( SchurCover := Group([ (2,4,3,5)(6,9,7,8)(10,20,13,19)(11,14,12,17)
  (15,24,16,25)(18,22,21,23), (2,6,3,7)(4,8,5,9)(10,16,13,15)(11,22,12,23)
  (14,18,17,21)(19,25,20,24), (1,2,3)(4,10,11)(5,12,13)(6,14,15)(7,16,17)
  (8,18,19)(9,20,21)(22,23,26)(24,25,27) ]), Tid := [ 27, 83 ], 
  epi := [ (2,4,3,5)(6,9,7,8)(10,20,13,19)(11,14,12,17)(15,24,16,25)(18,22,21,
        23), (2,6,3,7)(4,8,5,9)(10,16,13,15)(11,22,12,23)(14,18,17,21)(19,25,
        20,24), (1,2,3)(4,10,11)(5,12,13)(6,14,15)(7,16,17)(8,18,19)(9,20,
        21)(22,23,26)(24,25,27) ] -> 
    [ (2,8,9,3)(4,6,7,5), (2,6,9,5)(3,7,8,4), (1,6,5)(2,7,3)(4,9,8) ] )
gap> StructureDescription(TransitiveGroup(27,83));
"((C3 x C3) : C3) : Q8"
gap> tG:=ScG.SchurCover; # tG=G~ is a Schur cover of G 
Group([ (2,4,3,5)(6,9,7,8)(10,20,13,19)(11,14,12,17)(15,24,16,25)
(18,22,21,23), (2,6,3,7)(4,8,5,9)(10,16,13,15)(11,22,12,23)(14,18,17,21)
(19,25,20,24), (1,2,3)(4,10,11)(5,12,13)(6,14,15)(7,16,17)(8,18,19)(9,20,21)
(22,23,26)(24,25,27) ])
gap> tH:=PreImage(ScG.epi,H); # tH=H~=C3xQ8
Group([ (2,4,3,5)(6,9,7,8)(10,20,13,19)(11,14,12,17)(15,24,16,25)
(18,22,21,23), (2,7,3,6)(4,9,5,8)(10,15,13,16)(11,23,12,22)(14,21,17,18)
(19,24,20,25), (1,27,26)(2,24,22)(3,25,23)(4,16,21)(5,15,18)(6,19,12)(7,20,11)
(8,13,14)(9,10,17) ])
gap> StructureDescription(tH);
"C3 x Q8"
gap> FirstObstructionN(tG,tH).ker; # Obs1N~=C3
[ [ 3 ], [ [ 2, 6 ], [ [ 0, 2 ] ] ] ]
gap> FirstObstructionDnr(tG,tH).Dnr; # Obs1Dnr~=1
[ [  ], [ [ 2, 6 ], [  ] ] ]
gap> tGs:=AllSubgroups(tG);;
gap> Length(tGs);
138
gap> tGsHNPfalse:=Filtered(tGs,x->FirstObstructionDr(tG,x,tH).Dr[1]=[]);;
gap> tGsHNPtrue:=Filtered(tGs,x->FirstObstructionDr(tG,x,tH).Dr[1]=[3]);;
gap> List([tGsHNPfalse,tGsHNPtrue],Length);
[ 132, 6 ]
gap> Collected(List(tGsHNPfalse,StructureDescription));
[ [ "1", 1 ], [ "C12", 27 ], [ "C2", 9 ], [ "C3", 13 ], [ "C3 x C3", 4 ], 
  [ "C3 x Q8", 9 ], [ "C3 x S3", 12 ], [ "C4", 27 ], [ "C6", 9 ], 
  [ "Q8", 9 ], [ "S3", 12 ] ]
gap> Collected(List(tGsHNPtrue,StructureDescription));
[ [ "((C3 x C3) : C3) : C2", 1 ], [ "((C3 x C3) : C3) : C4", 3 ], 
  [ "((C3 x C3) : C3) : Q8", 1 ], [ "(C3 x C3) : C3", 1 ] ]
gap> Collected(List(tGsHNPfalse,x->StructureDescription(Image(ScG.epi,x))));
[ [ "1", 2 ], [ "C2", 18 ], [ "C3", 16 ], [ "C4", 54 ], [ "Q8", 18 ], 
  [ "S3", 24 ] ]
gap> Collected(List(tGsHNPtrue,x->StructureDescription(Image(ScG.epi,x))));
[ [ "(C3 x C3) : C2", 1 ], [ "(C3 x C3) : C4", 3 ], [ "(C3 x C3) : Q8", 1 ], 
  [ "C3 x C3", 1 ] ]
\end{verbatim}
}~\\\vspace*{-4mm}

(3-7) $G=9T23\simeq ((C_3)^2\rtimes Q_8)\rtimes C_3$. 
{\small 
\begin{verbatim}
gap> Read("HNP.gap");
gap> G:=TransitiveGroup(9,23); # G=9T23=((C3xC3):Q8):C3
E(9):2A_4
gap> H:=Stabilizer(G,1); # H=SL(2,3)
Group([ (3,4,5)(6,8,7), (2,4,6)(5,9,7) ]) 
gap> StructureDescription(H);
"SL(2,3)"
gap> FirstObstructionN(G).ker; # Obs1N=1
[ [  ], [ [ 3 ], [  ] ] ]
gap> SchurMultPcpGroup(G); # M(G)=C3: Schur multiplier of G
[ 3 ]
gap> ScG:=SchurCoverG(G);
rec( SchurCover := Group([ (2,4,5)(3,7,6)(10,22,21)(12,19,24)(15,20,25)
  (17,23,18), (2,5,3,6)(4,8,7,9)(10,23,17,22)(11,12,16,15)(13,18,14,21)
  (19,25,20,24), (1,2,3)(4,10,11)(5,12,13)(6,14,15)(7,16,17)(8,18,19)(9,20,21)
  (22,26,23)(24,27,25) ]), Tid := [ 27, 212 ], 
  epi := [ (2,4,5)(3,7,6)(10,22,21)(12,19,24)(15,20,25)(17,23,18), 
      (2,5,3,6)(4,8,7,9)(10,23,17,22)(11,12,16,15)(13,18,14,21)(19,25,20,24), 
      (1,2,3)(4,10,11)(5,12,13)(6,14,15)(7,16,17)(8,18,19)(9,20,21)(22,26,
        23)(24,27,25) ] -> [ (3,4,5)(6,8,7), (2,8,9,3)(4,6,7,5), 
      (1,6,5)(2,7,3)(4,9,8) ] )
gap> StructureDescription(TransitiveGroup(27,212));
"(((C3 x C3) : C3) : Q8) : C3"
gap> tG:=ScG.SchurCover; # tG=G~ is a Schur cover of G 
Group([ (2,4,5)(3,7,6)(10,22,21)(12,19,24)(15,20,25)(17,23,18), (2,5,3,6)
(4,8,7,9)(10,23,17,22)(11,12,16,15)(13,18,14,21)(19,25,20,24), (1,2,3)
(4,10,11)(5,12,13)(6,14,15)(7,16,17)(8,18,19)(9,20,21)(22,26,23)(24,27,25) ])
gap> tH:=PreImage(ScG.epi,H); # tH=H~=C3xSL(2,3)
Group([ (1,27,26)(2,15,17)(3,12,10)(4,20,23)(5,25,18)(6,24,21)(7,19,22)
(8,11,14)(9,16,13), (1,26,27)(2,13,19)(3,14,20)(4,18,15)(5,22,11)(6,23,16)
(7,21,12)(8,17,24)(9,10,25), (1,26,27)(2,23,25)(3,22,24)(4,18,15)(5,17,20)
(6,10,19)(7,21,12)(8,14,11)(9,13,16) ])
gap> StructureDescription(tH);
"C3 x SL(2,3)"
gap> FirstObstructionN(tG,tH).ker; # Obs1N~=C3
[ [ 3 ], [ [ 3, 3 ], [ [ 1, 2 ] ] ] ]
gap> FirstObstructionDnr(tG,tH).Dnr; # Obs1Dnr~=1
[ [  ], [ [ 3, 3 ], [  ] ] ]
gap> tGs:=AllSubgroups(tG);;
gap> Length(tGs);
495
gap> tGsHNPfalse:=Filtered(tGs,x->FirstObstructionDr(tG,x,tH).Dr[1]=[]);;
gap> tGsHNPtrue:=Filtered(tGs,x->FirstObstructionDr(tG,x,tH).Dr[1]=[3]);;
gap> List([tGsHNPfalse,tGsHNPtrue],Length);
[ 384, 111 ]
gap> Collected(List(tGsHNPfalse,x->StructureDescription(Image(ScG.epi,x))));
[ [ "1", 2 ], [ "C2", 18 ], [ "C3", 88 ], [ "C4", 54 ], [ "C6", 144 ], 
  [ "Q8", 18 ], [ "S3", 24 ], [ "SL(2,3)", 36 ] ]
gap> Collected(List(tGsHNPtrue,x->StructureDescription(Image(ScG.epi,x))));
[ [ "((C3 x C3) : Q8) : C3", 1 ], [ "(C3 x C3) : C2", 1 ], 
  [ "(C3 x C3) : C3", 4 ], [ "(C3 x C3) : C4", 3 ], [ "(C3 x C3) : C6", 4 ], 
  [ "(C3 x C3) : Q8", 1 ], [ "C3 x C3", 49 ], [ "C3 x S3", 48 ] ]
\end{verbatim}
}
\end{example}


\smallskip
\begin{example}[$G=10T7\simeq A_5$, $G=10T26\simeq \PSL_2(\bF_9)\simeq A_6$ and $G=10T32\simeq S_6$]\label{ex10}
~{}\vspace*{-2mm}\\

(4-1) $G=10T7\simeq A_5$. 
{\small 
\begin{verbatim}
gap> Read("HNP.gap");
gap> G:=TransitiveGroup(10,7); # G=10T7=A5
A_5(10)
gap> H:=Stabilizer(G,1); # H=S3
Group([ (2,8)(3,6)(4,7)(5,10), (2,10)(3,9)(4,8)(5,7) ])
gap> StructureDescription(H);
"S3"
gap> FirstObstructionN(G).ker; # Obs1N=C2
[ [ 2 ], [ [ 2 ], [ [ 1 ] ] ] ]
gap> FirstObstructionDnr(G).Dnr; # Obs1Dnr=1
[ [  ], [ [ 2 ], [  ] ] ]
gap> Gs:=AllSubgroups(G);;
gap> Length(Gs);
59
gap> GsHNPfalse:=Filtered(Gs,x->FirstObstructionDr(G,x).Dr[1]=[]);;
gap> GsHNPtrue:=Filtered(Gs,x->FirstObstructionDr(G,x).Dr[1]=[2]);;
gap> List([GsHNPfalse,GsHNPtrue],Length);
[ 48 , 11 ]
gap> Collected(List(GsHNPfalse,StructureDescription));
[ [ "1", 1 ], [ "C2", 15 ], [ "C3", 10 ], [ "C5", 6 ], [ "D10", 6 ], 
  [ "S3", 10 ] ]
gap> Collected(List(GsHNPtrue,StructureDescription));
[ [ "A4", 5 ], [ "A5", 1 ], [ "C2 x C2", 5 ] ]
\end{verbatim}
}~\\\vspace*{-4mm}

(4-2) $G=10T26\simeq \PSL_2(\bF_9)\simeq A_6$. 
{\small 
\begin{verbatim}
gap> Read("HNP.gap");
gap> G:=TransitiveGroup(10,26); # G=10T26=SPL(2,9)=A6
L(10)=PSL(2,9)
gap> H:=Stabilizer(G,1); # H=(C3xC3):C4
Group([ (3,9,6,10)(4,8,5,7), (2,4)(3,7)(6,9)(8,10) ])
gap> StructureDescription(H);
"(C3 x C3) : C4"
gap> FirstObstructionN(G).ker; # Obs1N=C4
[ [ 4 ], [ [ 4 ], [ [ 1 ] ] ] ]
gap> FirstObstructionDnr(G).Dnr; # ObsDnr=C2
[ [ 2 ], [ [ 4 ], [ [ 2 ] ] ] ]
gap> Gs:=AllSubgroups(G);;
gap> Length(Gs);
501
gap> GsHNPfalse:=Filtered(Gs,x->FirstObstructionDr(G,x).Dr[1]<>[4]);;
gap> GsHNPtrue:=Filtered(Gs,x->FirstObstructionDr(G,x).Dr[1]=[4]);;
gap> List([GsHNPfalse,GsHNPtrue],Length);
[ 425, 76 ]
gap> Collected(List(GsHNPfalse,StructureDescription));
[ [ "(C3 x C3) : C2", 10 ], [ "(C3 x C3) : C4", 10 ], [ "1", 1 ], 
  [ "A4", 30 ], [ "A5", 12 ], [ "C2", 45 ], [ "C2 x C2", 30 ], 
  [ "C3", 40 ], [ "C3 x C3", 10 ], [ "C4", 45 ], [ "C5", 36 ], 
  [ "D10", 36 ], [ "S3", 120 ] ]
gap> Collected(List(GsHNPtrue,StructureDescription));
[ [ "A6", 1 ], [ "D8", 45 ], [ "S4", 30 ] ]
\end{verbatim}
}~\\\vspace*{-4mm}

(4-3) $G=10T32\simeq S_6$. 
{\small 
\begin{verbatim}
gap> Read("HNP.gap");
gap> G:=TransitiveGroup(10,32); # G=10T32=S6
S_6(10)=L(10):2
gap> GeneratorsOfGroup(G);
[ (1,2,10)(3,4,5)(6,7,8), (1,3,2,6)(4,5,8,7), (1,2)(4,7)(5,8)(9,10), 
  (3,6)(4,7)(5,8) ]
gap> H:=Stabilizer(G,1); # H=(S3xS3):C2
Group([ (3,6)(4,7)(5,8), (3,10)(6,9)(7,8), (2,4)(3,7)(6,9)(8,10) ])
gap> FirstObstructionN(G).ker; # Obs1N=C2
[ [ 2 ], [ [ 2, 2 ], [ [ 1, 1 ] ] ] ]
gap> FirstObstructionDnr(G).Dnr; # Obs1Dnr=1
[ [  ], [ [ 2, 2 ], [  ] ] ]
gap> Gs:=AllSubgroups(G);;
gap> Length(Gs);
1455
gap> GsHNPfalse:=Filtered(Gs,x->FirstObstructionDr(G,x).Dr[1]=[]);;
gap> Length(GsHNPfalse);
1153
gap> GsHNPtrue:=Filtered(Gs,x->FirstObstructionDr(G,x).Dr[1]=[2]);;
gap> Length(GsHNPtrue);
302
gap> Collected(List(GsHNPfalse,StructureDescription));
[ [ "(C3 x C3) : C2", 10 ], [ "(C3 x C3) : C4", 10 ], 
  [ "(S3 x S3) : C2", 10 ], [ "1", 1 ], [ "A4", 30 ], [ "A5", 12 ], 
  [ "C2", 75 ], [ "C2 x C2", 120 ], [ "C3", 40 ], [ "C3 x C3", 10 ], 
  [ "C3 x S3", 40 ], [ "C4", 90 ], [ "C5", 36 ], [ "C5 : C4", 36 ], 
  [ "C6", 120 ], [ "D10", 36 ], [ "D12", 120 ], [ "D8", 135 ], [ "S3", 160 ], 
  [ "S3 x S3", 20 ], [ "S4", 30 ], [ "S5", 12 ] ]
gap> Collected(List(GsHNPtrue,StructureDescription));
[ [ "A6", 1 ], [ "C2 x A4", 30 ], [ "C2 x C2", 45 ], [ "C2 x C2 x C2", 30 ], 
  [ "C2 x D8", 45 ], [ "C2 x S4", 30 ], [ "C4 x C2", 45 ], [ "D8", 45 ], 
  [ "S4", 30 ], [ "S6", 1 ] ]
gap> GsHNPtrueMin:=Filtered(GsHNPtrue,x->Length(Filtered(GsHNPtrue,
> y->IsSubgroup(x,y)))=1);;
gap> Collected(List(GsHNPtrueMin,StructureDescription));
[ [ "C2 x C2", 45 ], [ "D8", 45 ] ]
gap> GsHNPfalseC2xC2:=Filtered(GsHNPfalse,x->IdSmallGroup(x)=[4,2]);;
gap> Length(GsHNPfalseC2xC2);
120
gap> GsHNPtrueC2xC2:=Filtered(GsHNPtrue,x->IdSmallGroup(x)=[4,2]);;
gap> Length(GsHNPtrueC2xC2); # there exist 45 minimal true cases
45
gap> Collected(List(GsHNPfalseC2xC2,x->List(Orbits(x),Length)));
[ [ [ 2, 2, 2, 4 ], 3 ], [ [ 2, 2, 4 ], 15 ], [ [ 2, 2, 4, 2 ], 6 ], 
  [ [ 2, 4, 2 ], 30 ], [ [ 2, 4, 2, 2 ], 9 ], [ [ 4, 2, 2 ], 45 ], 
  [ [ 4, 2, 2, 2 ], 12 ] ]
gap> Collected(List(GsHNPtrueC2xC2,x->List(Orbits(x),Length)));
[ [ [ 2, 2, 2, 2, 2 ], 45 ] ]
gap> Collected(List(GsHNPfalseC2xC2,x->Collected(List(x,
> y->List(Orbits(Group(y)),Length)))));
[ [ [ [ [  ], 1 ], [ [ 2, 2, 2 ], 2 ], [ [ 2, 2, 2, 2 ], 1 ] ], 90 ], 
  [ [ [ [  ], 1 ], [ [ 2, 2, 2, 2 ], 3 ] ], 30 ] ]
gap> Collected(List(GsHNPtrueC2xC2,x->Collected(List(x,
> y->List(Orbits(Group(y)),Length)))));
[ [ [ [ [  ], 1 ], [ [ 2, 2, 2 ], 2 ], [ [ 2, 2, 2, 2 ], 1 ] ], 45 ] ]
gap> S10:=SymmetricGroup(10);
Sym( [ 1 .. 10 ] )
gap> NS10G:=Normalizer(S10,G);
Group([ (1,8,4)(2,7,5)(3,9,10), (1,5,8,10)(2,7,9,3), (1,8)(2,3)(4,6)
(7,9), (2,3)(5,10)(7,9), (2,10,7,5)(3,4,9,6) ])
gap> StructureDescription(NS10G);
"(A6 : C2) : C2"
gap> CS10G:=Centralizer(S10,G);
Group(())
gap> StructureDescription(NS10G/CS10G); # Aut(G)=NS10G/CS10G<=S10
"(A6 : C2) : C2"
gap> Collected(List(GsHNPfalseC2xC2,
> x->StructureDescription(Normalizer(NS10G,x))));
[ [ "C2 x D8", 90 ], [ "C2 x S4", 30 ] ]
gap> Collected(List(GsHNPtrueC2xC2,
> x->StructureDescription(Normalizer(NS10G,x))));
[ [ "C8 : (C2 x C2)", 45 ] ]
gap> ChG:=CharacteristicSubgroups(G);
[ Group(()), Group([ (1,8,3)(2,6,4)(5,10,7), (1,10)(2,9)(3,6)(4,7), (1,2)(3,6)
  (4,8)(5,7) ]), S_6(10)=L(10):2 ]
gap> List(ChG,StructureDescription);
[ "1", "A6", "S6" ]
gap> GsHNPfalseD4:=Filtered(GsHNPfalse,x->IdSmallGroup(x)=[8,3]);;
gap> Length(GsHNPfalseD4);
135
gap> GsHNPtrueD4:=Filtered(GsHNPtrue,x->IdSmallGroup(x)=[8,3]);;
gap> Length(GsHNPtrueD4);
45
gap> A6:=DerivedSubgroup(G);
Group([ (1,8,3)(2,6,4)(5,10,7), (1,10)(2,9)(3,6)(4,7), (1,2)(3,6)(4,8)(5,7) ])
gap> Collected(List(GsHNPfalseD4,x->StructureDescription(Intersection(A6,x))));
[ [ "C2 x C2", 90 ], [ "C4", 45 ] ]
gap> Collected(List(GsHNPtrueD4,x->StructureDescription(Intersection(A6,x))));
[ [ "D8", 45 ] ]
\end{verbatim}
}
\end{example}

\smallskip
\begin{example}[$G=14T30\simeq \PSL_2(\bF_{13})$]\label{ex14}
~{}\vspace*{-2mm}\\

(5) $G=14T30\simeq \PSL_2(\bF_{13})$. 
{\small 
\begin{verbatim}
gap> Read("HNP.gap");
gap> G:=TransitiveGroup(14,30); # G=14T30=PSL(2,13)
L(14)=PSL(2,13)
gap> H:=Stabilizer(G,1); # H=C13:C6
Group([ (2,12,11,5,9,3)(4,6,7,8,10,13), (2,6)(3,8)(4,13)(5,14)(9,11)(10,12) ])
gap> StructureDescription(H);
"C13 : C6"
gap> FirstObstructionN(G).ker; # Obs1N=C6
[ [ 6 ], [ [ 6 ], [ [ 1 ] ] ] ]
gap> FirstObstructionDnr(G).Dnr; # ObsDnr=C3
[ [ 3 ], [ [ 6 ], [ [ 2 ] ] ] ]
gap> Gs:=AllSubgroups(G);;
gap> Length(Gs);
942
gap> GsHNPfalse1:=Filtered(Gs,x->FirstObstructionDr(G,x).Dr[1]=[]);;
gap> GsHNPfalse2:=Filtered(Gs,x->FirstObstructionDr(G,x).Dr[1]=[3]);;
gap> GsHNPtrue1:=Filtered(Gs,x->FirstObstructionDr(G,x).Dr[1]=[2]);;
gap> GsHNPtrue2:=Filtered(Gs,x->FirstObstructionDr(G,x).Dr[1]=[6]);;
gap> List([GsHNPfalse1,GsHNPfalse2,GsHNPtrue1,GsHNPtrue2],Length);
[ 276, 392, 91, 183 ]
gap> Sum(last);
942
gap> Collected(List(GsHNPfalse1,StructureDescription));
[ [ "1", 1 ], [ "C13", 14 ], [ "C2", 91 ], [ "C7", 78 ], [ "D14", 78 ], 
  [ "D26", 14 ] ]
gap> Collected(List(GsHNPfalse2,StructureDescription));
[ [ "C13 : C3", 14 ], [ "C13 : C6", 14 ], [ "C3", 91 ], [ "C6", 91 ], 
  [ "S3", 182 ] ]
gap> Collected(List(GsHNPtrue1,StructureDescription));
[ [ "C2 x C2", 91 ] ]
gap> Collected(List(GsHNPtrue2,StructureDescription));
[ [ "A4", 91 ], [ "D12", 91 ], [ "PSL(2,13)", 1 ] ]
\end{verbatim}
}
\end{example}

\smallskip
\begin{example}[$G=15T9\simeq (C_5)^2\rtimes C_3$ and $G=15T14\simeq (C_5)^2\rtimes S_3$]\label{ex15}
~{}\vspace*{-2mm}\\

(6-1) $G=15T9\simeq (C_5)^2\rtimes C_3$. 
{\small 
\begin{verbatim}
gap> Read("HNP.gap");
gap> G:=TransitiveGroup(15,9); # G=15T9=(C5xC5):C3
[5^2]3
gap> H:=Stabilizer(G,1); # H=C5
Group([ (2,5,8,11,14)(3,15,12,9,6) ])
gap> StructureDescription(H);
"C5"
gap> FirstObstructionN(G).ker; # Obs1N=C5
[ [ 5 ], [ [ 5 ], [ [ 1 ] ] ] ]
gap> FirstObstructionDnr(G).Dnr; # ObsDnr=1
[ [  ], [ [ 5 ], [  ] ] ]
gap> Gs:=AllSubgroups(G);;
gap> Length(Gs);
34
gap> GsHNPfalse:=Filtered(Gs,x->FirstObstructionDr(G,x).Dr[1]=[]);;
gap> GsHNPtrue:=Filtered(Gs,x->FirstObstructionDr(G,x).Dr[1]=[5]);;
gap> List([GsHNPfalse,GsHNPtrue],Length);
[ 32, 2 ]
gap> Collected(List(GsHNPfalse,StructureDescription));
[ [ "1", 1 ], [ "C3", 25 ], [ "C5", 6 ] ]
gap> Collected(List(GsHNPtrue,StructureDescription));
[ [ "(C5 x C5) : C3", 1 ], [ "C5 x C5", 1 ] ]
\end{verbatim}
}~\\\vspace*{-4mm}

(6-2) $G=15T14\simeq (C_5)^2\rtimes S_3$. 
{\small 
\begin{verbatim}
gap> Read("HNP.gap");
gap> G:=TransitiveGroup(15,14); # G=15T14=(C5xC5):S3
5^2:2[1/2]S(3)
gap> H:=Stabilizer(G,1); # H=C10
Group([ (2,5,8,11,14)(3,15,12,9,6), (2,12)(3,11)(4,13)(5,9)(6,8)(7,10)(14,15) ])
gap> StructureDescription(H);
"C10"
gap> FirstObstructionN(G).ker; # Obs1N=C5
[ [ 5 ], [ [ 10 ], [ 2 ] ] ]
gap> FirstObstructionDnr(G).Dnr; # ObsDnr=1
[ [  ], [ [ 10 ], [  ] ] ]
gap> Gs:=AllSubgroups(G);;
gap> Length(Gs);
96
gap> GsHNPfalse:=Filtered(Gs,x->FirstObstructionDr(G,x).Dr[1]=[]);;
gap> GsHNPtrue:=Filtered(Gs,x->FirstObstructionDr(G,x).Dr[1]=[5]);;
gap> List([GsHNPfalse,GsHNPtrue],Length);
[ 90, 6 ]
gap> Collected(List(GsHNPfalse,StructureDescription));
[ [ "1", 1 ], [ "C10", 15 ], [ "C2", 15 ], [ "C3", 25 ], [ "C5", 6 ], 
  [ "D10", 3 ], [ "S3", 25 ] ]
gap> Collected(List(GsHNPtrue,StructureDescription));
[ [ "(C5 x C5) : C3", 1 ], [ "(C5 x C5) : S3", 1 ], [ "C5 x C5", 1 ], 
  [ "C5 x D10", 3 ] ]
\end{verbatim}
}
\end{example}

\section{Application $1$: $R$-equivalence in algebraic $k$-tori}\label{S7}

\begin{definition}
Let $k$ be a field and 
$T$ be an algebraic $k$-torus. 
A exact sequence of algebraic $k$-tori 
\begin{align*}
1\to S\to Q\to T\to 1
\end{align*}
is called {\it flabby resolution} of $T$ if 
\begin{align*}
0\to\widehat{T}\to\widehat{Q}\to\widehat{S}\to 0
\end{align*}
is a flabby resolution of $G$-lattice $\widehat{T}$. 
\end{definition}

\begin{definition}[{Manin \cite[II. \S 14]{Man74}}]
We say that a rational map of $k$-varieties $f:Z\to X$ 
{\it covers a point} $x\in X(k)$ if there exists a point $z\in Z(k)$ 
such that $f$ is defined at $z$ and $f(z)=x$. 
Two points $x,y\in X(k)$ are called {\it $R$-equivalent} if 
there exist a finite sequence of points $x=x_1,\ldots,x_r=y$ and 
rational maps $f_i:\bP^1\to X$ $(1\leq i\leq r-1)$ 
such that $f_i$ covers $x_i$, $x_{i+1}$. 
\end{definition}

\begin{theorem}[{Colliot-Th\'{e}l\`{e}ne and Sansuc \cite[Theorem 2, page 199]{CTS77}, see also \cite[Section 17.1]{Vos98}}]
Let $k$ be a field, 
$T$ be an algebraic $k$-torus and 
$1\to S\to Q\to T\to 1$ be a flabby resolution of $T$. 
Then the connecting homomorphism 
\begin{align*}
T(k)\to H^1(k,S)
\end{align*}
induces an isomorphism 
\begin{align*}
T(k)/R\simeq H^1(k,S).
\end{align*}
\end{theorem}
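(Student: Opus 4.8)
The plan is to combine the long exact cohomology sequence of the flabby resolution with a specialization argument that detects $R$-equivalence. Write $\pi\colon Q\to T$ and let $\partial\colon T(k)\to H^1(k,S)$ be the connecting map. Since $Q$ is quasi-trivial (its character lattice $\widehat Q$ is permutation), Shapiro's lemma and Hilbert~90 give $H^1(k,Q)=0$, so the sequence
\[
Q(k)\xrightarrow{\pi} T(k)\xrightarrow{\partial} H^1(k,S)\to 0
\]
shows that $\partial$ is surjective with kernel $\pi(Q(k))$; thus $\partial$ induces an isomorphism $T(k)/\pi(Q(k))\xrightarrow{\sim}H^1(k,S)$. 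Because $R$-equivalence is compatible with the group law, $T(k)/R=T(k)/RT(k)$, where $RT(k)$ is the subgroup of points $R$-equivalent to the identity $e$. Hence everything reduces to the identity $\pi(Q(k))=RT(k)$, after which the asserted isomorphism $T(k)/R\simeq H^1(k,S)$ is exactly the one induced by $\partial$.

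First I would dispose of the inclusion $\pi(Q(k))\subseteq RT(k)$. A quasi-trivial torus $Q$ is a dense open subvariety of an affine space, so any two of its $k$-points are joined by the $k$-points of a line lying in $Q$ away from finitely many points; thus $Q(k)$ is a single $R$-equivalence class, that of the identity. Composing the parametrizing rational maps $\bP^1\dashrightarrow Q$ with $\pi$ shows that $\pi$ carries $R$-equivalent points to $R$-equivalent points, so $\pi(Q(k))$ lies in the $R$-class of $\pi(e)=e$, i.e.\ in $RT(k)$.

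The hard part will be the reverse inclusion $RT(k)\subseteq\pi(Q(k))$, equivalently that $\partial$ is constant on $R$-equivalence classes; since $\partial$ is a homomorphism this reduces to a single link: a rational map $f\colon\bP^1\dashrightarrow T$, defined on a dense open $U\ni 0,\infty$, joining $f(0)$ and $f(\infty)$, for which I must show $\partial(f(0))=\partial(f(\infty))$. Over $U$ the map $f$ is a morphism, so pulling back $\pi$ produces an $S$-torsor $f^{\ast}Q$ on $U$ whose class $\alpha=\partial_U(f)\in H^1(U,S)$ specializes at each $k$-point $p\in U(k)$ to $\partial\big(f(p)\big)$, by functoriality of the connecting map under $\operatorname{Spec}k(p)\to U$. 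It therefore suffices to prove that $\alpha$ extends to an $S$-torsor on all of $\bP^1_k$ that is pulled back from $\operatorname{Spec}k$: such a class has equal specializations at $0$ and $\infty$.

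This is where flabbiness of $\widehat S$ is indispensable, and it is the main obstacle. I would analyze $\alpha$ through its residues at the finitely many closed points $P\in\bP^1\setminus U$ (the poles of $f$). The residue at $P$ is $\delta(\operatorname{ord}_P f)$, where $\delta\colon(\widehat T^{\circ})^{\mathcal G_P}\to H^1(\mathcal G_P,\widehat S^{\circ})$ is the connecting homomorphism of the cocharacter sequence $0\to\widehat S^{\circ}\to\widehat Q^{\circ}\to\widehat T^{\circ}\to 0$, the cocharacter $\operatorname{ord}_P f\in(\widehat T^{\circ})^{\mathcal G_P}$ records the pole orders of $f$ at $P$, $\widehat M^{\circ}=\operatorname{Hom}_{\bZ}(\widehat M,\bZ)$, and $\mathcal G_P$ acts through the residue field $\kappa(P)$. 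Since $\widehat S$ is flabby, its dual $\widehat S^{\circ}$ is coflabby, so $H^1(\mathcal G_P,\widehat S^{\circ})=0$ and every residue vanishes; hence $\alpha$ extends to an $S$-torsor $\widetilde\alpha$ on $\bP^1_k$. Its geometric class in $H^1(\bP^1_{\overline{k}},S)\simeq\widehat S^{\circ}$ is the total degree of a geometric lift of $f$, which is zero because the divisor of a rational function on $\bP^1$ has degree $0$. The Leray (Hochschild--Serre) sequence, together with $H^0(\bP^1_{\overline{k}},S)=S(\overline{k})$, then identifies the geometrically trivial classes on $\bP^1_k$ with the image of $H^1(k,S)$, so $\widetilde\alpha$ is constant. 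Specializing at $0$ and $\infty$ gives $\partial(f(0))=\partial(f(\infty))$; applying this along a chain shows $\partial$ is constant on $R$-classes, so $t\sim e$ forces $\partial(t)=0$ and $RT(k)\subseteq\ker\partial=\pi(Q(k))$. The delicate points to nail down are the precise description of the residue map in terms of cocharacters and the flabby/coflabby duality $\widehat H^{-1}(H,\widehat S)=0\Rightarrow H^1(H,\widehat S^{\circ})=0$; once these are in place the remaining computation is formal.
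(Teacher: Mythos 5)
The paper does not actually prove this statement; it quotes it from Colliot-Th\'el\`ene and Sansuc \cite{CTS77}, so your proposal can only be measured against the classical argument. Its skeleton you reproduce correctly: $H^1(k,Q)=0$ (Hilbert 90 plus Shapiro) identifies $H^1(k,S)$ with $T(k)/\pi(Q(k))$; the inclusion $\pi(Q(k))\subseteq RT(k)$ follows from the $k$-rationality of the quasi-trivial torus $Q$; and the crux is that $\partial$ is constant on $R$-equivalence classes, which you attack --- as Colliot-Th\'el\`ene and Sansuc do --- by extending the pulled-back torsor across the missing points of the line, using that flabbiness of $\widehat{S}$ is equivalent, via the cup-product duality between $\widehat{H}^{-1}(H,\widehat{S})$ and $\widehat{H}^{1}(H,\widehat{S}^{\circ})$, to the vanishing of all the residue groups $H^1(\mathcal{G}_P,\widehat{S}^{\circ})$.

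The gap is in your final step over $\bP^1$. An extension $\widetilde{\alpha}\in H^1(\bP^1_k,S)$ of $\alpha$ is not unique: by the localization sequence, two extensions differ by the image of $\bigoplus_P H^0(\kappa(P),\widehat{S}^{\circ})\to H^1(\bP^1_k,S)$ (divisor classes supported on $\bP^1\setminus U$), so ``its geometric class'' is not an invariant of $f$, and the assertion that it equals ``the total degree of a geometric lift of $f$'' and hence vanishes is unjustified. Indeed, the restriction $H^1(\bP^1_{\overline{k}},S)\to H^1(U_{\overline{k}},S)$ is the zero map (the target vanishes since ${\rm Pic}(U_{\overline{k}})=0$), so the condition $\widetilde{\alpha}|_U=\alpha$ puts no constraint at all on the geometric class, and for a geometrically nontrivial extension your Hochschild--Serre argument says nothing. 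The clean repair is to run the argument on $\bA^1$ instead of $\bP^1$: after an automorphism of $\bP^1$ the two points to be compared are $0,1\in\bA^1$; coflabbiness of $\widehat{S}^{\circ}$ kills the residues at the closed points of $\bA^1\setminus U$, so $\alpha$ extends to $H^1(\bA^1_k,S)$; and since ${\rm Pic}(\bA^1_{\overline{k}})=0$ and $\overline{k}[\bA^1]^{\times}=\overline{k}^{\times}$, Hochschild--Serre gives $H^1(\bA^1_k,S)\simeq H^1(k,S)$, i.e. \emph{every} extension is automatically constant, whence equal specializations at $0$ and $1$. (Alternatively one can stay on $\bP^1$ and correct a chosen extension by subtracting the class of the divisor $d\cdot[0]$ pushed forward along the cocharacter $d\in(\widehat{S}^{\circ})^{\mathcal{G}}$, after proving that such classes specialize trivially at every $k$-rational point --- but that is precisely the extra lemma your write-up skips.)
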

\begin{theorem}[{Colliot-Th\'{e}l\`{e}ne and Sansuc \cite[Corollary 5, page 201]{CTS77}, see also \cite[Section 17.2]{Vos98}}]\label{thCTS77-5}
Let $k$ be a field and 
$T$ be an algebraic $k$-torus which splits over finite Galois extension 
$K$ of $k$ with $G={\rm Gal}(K/k)$. 
Let $1\to S\to Q\to T\to 1$ be a flabby resolution of $T$. 
Then\\ 
{\rm (i)} If $k=\bF_q$ or a field of cohomological dimension 
${\rm cd}(k)\leq 1$, then 
\begin{align*}
T(k)/R=0;
\end{align*}
{\rm (ii)} If $k$ is a local field, 
then 
\begin{align*}
T(k)/R\simeq H^1(G,\widehat{S})^{\vee};
\end{align*}
{\rm (iii)} If $k$ is a global field, then there exists an exact sequence 
\begin{align*}
0\to\Sha^2(G,\widehat{S})^{\vee}\to T(k)/R\to \Ch^1(G,\widehat{S})^{\vee}\to 0
\end{align*}
where 
\begin{align*}
\Sha^2(G,\widehat{S})&={\rm Ker}\{
H^2(G,\widehat{S})\xrightarrow{\rm res}\bigoplus_{v\in V_k} 
H^2(G_v,\widehat{S})
\},\\
\Ch^1(G,\widehat{S})&={\rm Coker}\{
H^1(G,\widehat{S})\xrightarrow{\rm res}\bigoplus_{v\in V_k} 
H^1(G_v,\widehat{S})
\}.
\end{align*}
\end{theorem}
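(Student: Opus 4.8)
The plan is to reduce all three statements to a single computation, that of $H^1(k,S)$ for the flasque torus $S$ occurring in the flasque resolution $1\to S\to Q\to T\to 1$, and then to evaluate this group field-by-field with the appropriate duality theorem. By the preceding theorem of Colliot-Th\'el\`ene and Sansuc, the connecting homomorphism induces an isomorphism $T(k)/R\simeq H^1(k,S)$, so in every case it suffices to identify $H^1(k,S)$. Since $S$ splits over $K$ and $H^1(K,\bG_m)=0$ (Hilbert 90), the inflation--restriction sequence gives $H^1(k,S)=H^1(G,S(K))$; this is the bridge from the geometric object $S$ to the $G$-module $\widehat{S}$.

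For part (i) I would invoke the vanishing of $H^1$ for connected linear algebraic groups over fields of cohomological dimension $\le 1$: when $k=\bF_q$ this is Lang's theorem, and for general $\mathrm{cd}(k)\le 1$ it is Serre's Conjecture~I (Steinberg). As $S$ is a smooth connected torus, $H^1(k,S)=0$, whence $T(k)/R=0$. For part (ii) the key input is Tate local duality for tori over a local field $k$: the cup-product pairing $H^1(k,S)\times H^1(k,\widehat{S})\to H^2(k,\bG_m)=\bQ/\bZ$ is a perfect pairing of finite groups. Combined with $H^1(k,\widehat{S})=H^1(G,\widehat{S})$ (again by inflation--restriction, using that $\widehat{S}$ is a torsion-free $G$-module so $H^1(K,\widehat{S})=0$), this yields $H^1(k,S)\simeq H^1(G,\widehat{S})^{\vee}$ and hence $T(k)/R\simeq H^1(G,\widehat{S})^{\vee}$.

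Part (iii) is where the real work lies and is the main obstacle. Here I would run the Poitou--Tate nine-term exact sequence for the torus $S$ over the global field $k$, whose Tate dual module is $\widehat{S}$. The relevant segment $H^1(k,S)\xrightarrow{\lambda}\bigoplus_v H^1(k_v,S)\to H^1(k,\widehat{S})^{\vee}\to\cdots$ shows that $\ker\lambda=\Sha^1(k,S)$ and that $\mathrm{im}\,\lambda$ is the kernel of the map $\bigoplus_v H^1(k_v,S)\to H^1(k,\widehat{S})^{\vee}$. Using the local duality of part (ii) to identify $H^1(k_v,S)\simeq H^1(G_v,\widehat{S})^{\vee}$, this latter map becomes the $\bQ/\bZ$-dual of the localization $H^1(G,\widehat{S})\to\bigoplus_v H^1(G_v,\widehat{S})$, so its kernel is $(\mathrm{coker})^{\vee}=\Ch^1(G,\widehat{S})^{\vee}$. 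Dually, Poitou--Tate gives $\Sha^1(k,S)\simeq\Sha^2(k,\widehat{S})^{\vee}$. Splitting the sequence as $0\to\Sha^1(k,S)\to H^1(k,S)\to\mathrm{im}\,\lambda\to 0$ then produces the desired sequence $0\to\Sha^2(G,\widehat{S})^{\vee}\to T(k)/R\to\Ch^1(G,\widehat{S})^{\vee}\to 0$.

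The delicate points I expect to absorb the most effort are two. First, one must replace the absolute Galois cohomology groups $H^i(k,\widehat{S})$ and $\Sha^i(k,\widehat{S})$ by their finite-level counterparts $H^i(G,\widehat{S})$ and $\Sha^i(G,\widehat{S})$; this is handled by inflation--restriction together with a Chebotarev density argument showing that local triviality is detected on the decomposition subgroups of $G$. Second, one must check that the local dualities of part (ii) are compatible with the global Poitou--Tate pairing, so that the identification of $\mathrm{im}\,\lambda$ with $\Ch^1(G,\widehat{S})^{\vee}$ and of $\ker\lambda$ with $\Sha^2(G,\widehat{S})^{\vee}$ is legitimate. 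Once these compatibilities are in place, the three cases follow uniformly from the single isomorphism $T(k)/R\simeq H^1(k,S)$.
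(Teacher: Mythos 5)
This theorem is quoted in the paper from Colliot-Th\'el\`ene and Sansuc \cite{CTS77} (Corollaire 5, together with the preceding Th\'eor\`eme 2 giving $T(k)/R\simeq H^1(k,S)$) and no proof of it appears in the paper, so there is no internal argument to compare yours against; your proposal is, in substance, a correct reconstruction of the argument of the cited source. The chain you use --- the isomorphism $T(k)/R\simeq H^1(k,S)$, Hilbert 90 plus inflation--restriction to pass to $H^1(G,S(K))$ and to identify $H^1(k,\widehat{S})$ with $H^1(G,\widehat{S})$, Lang/Steinberg for (i), Tate--Nakayama local duality for (ii), and the Poitou--Tate sequence together with the duality $\Sha^1(k,S)\simeq\Sha^2(k,\widehat{S})^{\vee}$ for (iii) --- is the standard route, and the two delicate points you isolate (descent from absolute Galois cohomology to the finite level $G$, which for $\Sha^2$ indeed needs the Chebotarev argument, and compatibility of the local pairings with the global one) are exactly the right ones. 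The only substantive point you do not address is why the direct sum $\bigoplus_{v\in V_k}H^1(G_v,\widehat{S})$ appearing in $\Ch^1(G,\widehat{S})$ makes sense, i.e. why the localization map lands in the direct sum rather than the product: since $\widehat{S}$ is flasque and $G_v$ is cyclic at every unramified place $v$, periodicity of Tate cohomology for cyclic groups gives $H^1(G_v,\widehat{S})\simeq\widehat H^{-1}(G_v,\widehat{S})=0$ for all but finitely many $v$; this also legitimizes the identification $\bigl(\bigoplus_v H^1(G_v,\widehat{S})\bigr)^{\vee}=\bigoplus_v H^1(G_v,\widehat{S})^{\vee}$ used in your dualization step. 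With that remark added, your outline is complete and agrees with the proof in \cite{CTS77}.
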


When $T=R^{(1)}_{K/k}(\bG_m)$ and $K/k$ is a finite Galois extension, 
we have $\widehat{T}=J_G$ and $H^1(k,\widehat{S})\simeq H^3(G,\bZ)$. 
Hence Theorem \ref{thCTS77-5} can be stated as follows:
\begin{theorem}[{Colliot-Th\'{e}l\`{e}ne and Sansuc \cite[Corollary 1, page 207]{CTS77}, see also \cite[Section 17.2]{Vos98}}]\label{thCTS77-1}
Let $k$ be a field and 
$K/k$ be a finite Galois extension with Galois group $G={\rm Gal}(K/k)$. 
Let $T=R^{(1)}_{K/k}(\bG_m)$ be the norm one torus defined by $K/k$. 
Then\\ 
{\rm (i)} If $k=\bF_q$ or a field of cohomological dimension 
${\rm cd}(k)\leq 1$, then 
\begin{align*}
T(k)/R=0;
\end{align*}
{\rm (ii)} If $k$ is a local field, then 
\begin{align*}
T(k)/R\simeq H^3(G,\bZ)^{\vee};
\end{align*}
{\rm (iii)} If $k$ is a global field, then there exists an exact sequence 
\begin{align*}
0\to\Sha^4(G,\bZ)^{\vee}\to T(k)/R\to \Ch^3(G,\bZ)^{\vee}\to 0.
\end{align*}
\end{theorem}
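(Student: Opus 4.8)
The plan is to deduce Theorem~\ref{thCTS77-1} from the general Theorem~\ref{thCTS77-5} by specializing to $T=R^{(1)}_{K/k}(\bG_m)$ and translating the cohomology of $\widehat{S}$ into the cohomology of the trivial module $\bZ$. First I would fix a flabby resolution $1\to S\to Q\to T\to 1$, so that dually $0\to\widehat{T}\to\widehat{Q}\to\widehat{S}\to 0$ is a flabby resolution of the $G$-lattice $\widehat{T}$. Since $K/k$ is Galois we have $H=\{1\}$ and $\widehat{T}=J_{G/H}=J_G$, and by Theorem~\ref{thVos69} the flabby representative satisfies $[\widehat{S}]=[J_G]^{fl}=[{\rm Pic}\,X_K]$. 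With this in hand, part~(i) is immediate, as Theorem~\ref{thCTS77-5}(i) asserts $T(k)/R=0$ for $k=\bF_q$ or ${\rm cd}(k)\leq 1$ with no further input.

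The core of the argument is a natural degree-two shift
\begin{align*}
H^i(H,\widehat{S})\simeq H^{i+2}(H,\bZ)\qquad(i\geq 1,\ H\leq G),
\end{align*}
compatible with restriction along inclusions of subgroups. For $i=1$ this is exactly the cited theorem of Voskresenskii and Colliot-Th\'{e}l\`{e}ne--Sansuc, $H^1(H,{\rm Pic}\,X_K)\simeq H^3(H,\bZ)$, applied to each $H\leq G$. To set up the shift in general I would combine two dimension shifts: from the defining sequence $0\to\bZ\to\bZ[G]\to J_G\to 0$ the module $\bZ[G]$ is cohomologically trivial, so $H^{i+1}(H,J_G)\simeq H^{i+2}(H,\bZ)$ for all $i\geq 0$ and all $H$; and from the flabby resolution $0\to J_G\to\widehat{Q}\to\widehat{S}\to 0$ the connecting homomorphism relates $H^i(H,\widehat{S})$ to $H^{i+1}(H,J_G)$. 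The shift then amounts to the assertion that this connecting homomorphism is an isomorphism, which for $i=1$ is the quoted statement. Granting the shift, part~(ii) follows at once: over a local field Theorem~\ref{thCTS77-5}(ii) gives $T(k)/R\simeq H^1(G,\widehat{S})^{\vee}\simeq H^3(G,\bZ)^{\vee}$.

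For part~(iii) I would feed the shift, together with its naturality in $H$, into the exact sequence of Theorem~\ref{thCTS77-5}(iii). Because the isomorphisms $H^1(-,\widehat{S})\simeq H^3(-,\bZ)$ and $H^2(-,\widehat{S})\simeq H^4(-,\bZ)$ commute with the restriction maps $H^\bullet(G,-)\to\bigoplus_{v}H^\bullet(G_v,-)$, they carry $\Sha^2(G,\widehat{S})$ isomorphically onto $\Sha^4(G,\bZ)$ and $\Ch^1(G,\widehat{S})$ onto $\Ch^3(G,\bZ)$, turning the sequence $0\to\Sha^2(G,\widehat{S})^{\vee}\to T(k)/R\to\Ch^1(G,\widehat{S})^{\vee}\to 0$ into the asserted one.

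The hard part will be the $H^2$ instance of the degree-two shift, since the excerpt only supplies the $i=1$ case. One cannot formally bootstrap $H^2(H,\widehat{S})\simeq H^4(H,\bZ)$ from $H^1(H,\widehat{S})\simeq H^3(H,\bZ)$: the middle term $\widehat{Q}$ of the flabby resolution is only permutation, not cohomologically trivial, so the connecting map $H^2(H,\widehat{S})\to H^3(H,J_G)$ is controlled by $H^2(H,\widehat{Q})$ and $H^3(H,\widehat{Q})$, which depend on the particular compactification. I would therefore return to the explicit flabby resolution of $J_G$ underlying Colliot-Th\'{e}l\`{e}ne--Sansuc's Proposition~1 to establish the shift in the required degrees uniformly over all $H\leq G$, and then verify that the resulting isomorphisms are natural with respect to restriction to each decomposition group $G_v$, which is precisely what the $\Sha$ and $\Ch$ identifications in part~(iii) demand.
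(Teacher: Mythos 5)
Your parts (i) and (ii) are correct and coincide with the paper's own route: the paper does not reprove this statement but presents it as a quotation of \cite[Corollaire 1]{CTS77}, obtained by restating Theorem \ref{thCTS77-5} with $\widehat{T}=J_G$ and $H^1(H,\widehat{S})\simeq H^1(H,{\rm Pic}\,X_K)\simeq H^3(H,\bZ)$; the first isomorphism holds because $H^1$ of a permutation lattice vanishes, so $H^1(-,\widehat{S})$ depends only on the flabby class $[J_G]^{fl}$. Your reduction $\Ch^1(G,\widehat{S})\simeq\Ch^3(G,\bZ)$ is also essentially fine, provided you realize the degree-one shift by connecting homomorphisms (so that it commutes with restriction to the $G_v$) rather than merely quoting it subgroup by subgroup.

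The genuine gap is the identification $\Sha^2(G,\widehat{S})\simeq\Sha^4(G,\bZ)$ in part (iii), and the mechanism you propose for it cannot succeed. You plan to extract a natural isomorphism $H^2(H,\widehat{S})\simeq H^4(H,\bZ)$, valid for all $H\leq G$, from one explicit flabby resolution. But Theorem \ref{thCTS77-5} is stated for an arbitrary flabby resolution, and, unlike $H^1$, the group $H^2(H,\widehat{S})$ is not an invariant of the flabby class, since $H^2$ of a permutation lattice need not vanish: already for $G=C_2$, both $0\to J_G\to \bZ[C_2]\to\bZ\to 0$ and $0\to J_G\to\bZ[C_2]\oplus\bZ\to\bZ\oplus\bZ\to 0$ are flabby resolutions, and the second has $H^2(G,\widehat{S})\simeq(\bZ/2\bZ)^{\oplus 2}\not\simeq\bZ/2\bZ\simeq H^4(G,\bZ)$. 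So there is no resolution-independent pointwise shift in degree two, and you give no proof of one for your preferred resolution either. The statement that is actually true lives only at the level of $\Sha$ and $\Ch$ over the global field, and it is arithmetic: it rests on three facts that never enter your plan, namely (a) $H^2(C,J_G)\simeq H^3(C,\bZ)=0$ for every cyclic $C\leq G$; (b) $\Sha^2_\omega(G,P)=0$ for every permutation lattice $P$, because a character of a subgroup is detected by its restrictions to cyclic subgroups; and (c) the Chebotarev density theorem, which realizes every cyclic subgroup of $G$ as a decomposition group $G_v$, so that triviality at all places of $k$ controls restrictions to all cyclic subgroups. With (a)--(c) one obtains, for instance, injectivity of the connecting map $\Sha^2(G,\widehat{S})\to\Sha^3(G,J_G)\simeq\Sha^4(G,\bZ)$ by a chase through $H^2(G,\widehat{Q})\to H^2(G,\widehat{S})\to H^3(G,J_G)$; surjectivity is more delicate still, because for $\widehat{Q}\simeq\bigoplus_i\bZ[G/H_i]$ one has $\Sha^3(G,\widehat{Q})\simeq\bigoplus_i\Sha^3(H_i,\bZ)$, and by Theorem \ref{thTate} the summand $\Sha^3(H_i,\bZ)$ vanishes if and only if the Hasse norm principle holds for $K/K^{H_i}$, which can fail (e.g. $H_i\simeq V_4$). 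Until this Sha-level identification is proved, or is explicitly delegated to the cited Corollaire 1 of \cite{CTS77} as the paper does, your part (iii) is not established.
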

When $k$ is a local field, 
Voskresenskii's theorem (\cite{Vos67}), 
Kunyavskii's theorem (Theorem \ref{thKun1}), 
Theorem \ref{thmain1-4p} and Theorem \ref{thmain1-5p} 
enable us to get $T(k)/R$ 
for algebraic $k$-tori $T$ of dimension $\leq 5$. 
We also refer to Merkurjev \cite{Mer08} for algebraic 
$k$-tori $T$ of dimension $3$. 
\begin{theorem}
Let $k$ be a local field and 
$T$ be an algebraic $k$-torus of dimension $n\leq 5$. 
Then 
\begin{align*}
T(k)/R\leq 
\begin{cases}
0 & (n=1,2),\\
\bZ/2\bZ & (n=3),\\
(\bZ/2\bZ)^{\oplus 2} & (n=4,5)\\
\end{cases}
\end{align*}
and 
$T(k)/R\simeq H^1(G,[\widehat{T}]^{fl})$ 
is given as in Theorem \ref{thKun1} $(n=3)$, 
Theorem \ref{thmain1-4p} $(n=4)$ 
and Theorem \ref{thmain1-5p} $(n=5)$. 
\end{theorem}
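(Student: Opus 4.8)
The plan is to read $T(k)/R$ off the flabby class $[\widehat{T}]^{fl}$ via the Colliot-Th\'el\`ene--Sansuc formula over a local field, and then to import the classification of $H^1(G,[\widehat{T}]^{fl})$ already obtained in Kunyavskii's theorem and in Theorems \ref{thmain1-4p} and \ref{thmain1-5p}. First I would fix a concrete flabby resolution of $T$. By Theorem \ref{thVos69} the smooth $k$-compactification $X$ supplies an exact sequence of $G$-lattices $0\to\widehat{T}\to\widehat{Q}\to{\rm Pic}\,X_K\to 0$ with $\widehat{Q}$ permutation and ${\rm Pic}\,X_K$ flabby. Applying the contravariant equivalence between $G$-lattices and $K$-split $k$-tori, the surjection $\widehat{Q}\to{\rm Pic}\,X_K$ dualizes to an inclusion and the inclusion $\widehat{T}\to\widehat{Q}$ to a surjection, producing an exact sequence of tori $1\to S\to Q\to T\to 1$ with $\widehat{S}={\rm Pic}\,X_K$. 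This is a flabby resolution of $T$ in the sense of the Definition opening Section \ref{S7}, so that $\widehat{S}$ is an explicit flabby lattice whose class is $[\widehat{T}]^{fl}$, and no separate well-definedness argument is needed.

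Next I would invoke Theorem \ref{thCTS77-5}(ii): over a local field $k$,
\begin{align*}
T(k)/R\simeq H^1(G,\widehat{S})^{\vee}=H^1(G,{\rm Pic}\,X_K)^{\vee}\simeq H^1(k,{\rm Pic}\,\overline{X})^{\vee}.
\end{align*}
The group $H^1(k,{\rm Pic}\,\overline{X})\simeq H^1(G,[\widehat{T}]^{fl})$ is finite: it equals $\Sha^2_\omega(G,\widehat{T})$, a subquotient of the finite group $H^2(G,\widehat{T})$. Hence its Pontryagin dual is a finite abelian group, non-canonically isomorphic to itself. Since the theorem asserts only an isomorphism type, this yields $T(k)/R\simeq H^1(G,[\widehat{T}]^{fl})$, reducing everything to the already-computed values of the right-hand group.

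The remaining step is purely assembling the dimension-by-dimension input. For $n=1,2$, Voskresenskii's rationality theorem \cite{Vos67} shows every such torus is $k$-rational, hence stably rational, so $[\widehat{T}]^{fl}=0$ by Theorem \ref{thEM73}(i) and $H^1(G,[\widehat{T}]^{fl})=0$; thus $T(k)/R=0$. For $n=3,4,5$ I would quote, respectively, Kunyavskii's Theorem \ref{thKun1}, Theorem \ref{thmain1-4p}, and Theorem \ref{thmain1-5p}, which show that $H^1(G,[\widehat{T}]^{fl})$ takes only the values $0$ and $\bZ/2\bZ$ when $n=3$, and $0$, $\bZ/2\bZ$, $(\bZ/2\bZ)^{\oplus 2}$ when $n=4,5$. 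These are exactly the right-hand sides of the asserted case distinction, and the precise value of $T(k)/R$ for each $\bZ$-class of $G\le\GL_n(\bZ)$ is the one tabulated there.

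Because essentially all computational content is imported, the theorem is a corollary and presents no analytic obstacle. The one place where care is required -- and where a careless argument could fail -- is the duality bookkeeping: one must verify that the flabby part $\widehat{S}$ of the \emph{torus} resolution is genuinely ${\rm Pic}\,X_K$ rather than a coflabby partner or its dual, and that passing from $H^1(G,\widehat{S})^{\vee}$ to $H^1(G,[\widehat{T}]^{fl})$ is legitimate at the level of isomorphism types precisely because both are finite abelian. Once the contravariance of the lattice--torus equivalence is tracked correctly, the identification is immediate.
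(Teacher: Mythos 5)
Your proposal is correct and follows exactly the route the paper intends (and leaves largely implicit): dualize Voskresenskii's sequence from Theorem \ref{thVos69} into a flabby resolution of tori, apply Theorem \ref{thCTS77-5}(ii) over the local field, use finiteness of $H^1(G,[\widehat{T}]^{fl})$ to drop the Pontryagin dual, and import the classifications from \cite{Vos67}, Theorem \ref{thKun1}, Theorem \ref{thmain1-4p} and Theorem \ref{thmain1-5p}. Your added care about the contravariance of the lattice--torus equivalence and the self-duality of finite abelian groups is exactly the bookkeeping the paper suppresses, so there is no gap.
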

Also, Theorem \ref{thmain2} enables us 
to obtain $T(k)/R\simeq H^1(k,{\rm Pic}\,\overline{X})\simeq 
H^1(G,[J_{G/H}]^{fl})$ for norm one tori $T=R^{(1)}_{K/k}(\bG_m)$ 
where $k$ is a local field and $[K:k]=n\leq 15$ and $n\neq 12$.

\begin{theorem}
Let $2\leq n\leq 15$ be an integer with $n\neq 12$. 
Let $k$ be a local field, 
$K/k$ be a separable field extension of degree $n$ 
and $L/k$ be the Galois closure of $K/k$. 
Assume that $G={\rm Gal}(L/k)=nTm$ is a transitive subgroup of $S_n$ 
and $H={\rm Gal}(L/K)$ with $[G:H]=n$. 
Let $T=R^{(1)}_{K/k}(\bG_m)$ be the norm one torus of $K/k$ of 
dimension $n-1$. 
Then 
\begin{align*}
T(k)/R\simeq H^1(G,[J_{G/H}]^{fl})\leq 
\begin{cases}
0 & (n=2,3,5,7,11,13)\\
\bZ/2\bZ & (n=4,6,10,14),\\
(\bZ/2\bZ)^{\oplus 3} & (n=8),\\
\bZ/3\bZ & (n=9),\\
\bZ/5\bZ & (n=15)
\end{cases}
\end{align*}
and $T(k)/R\neq 0$
if and only if $G$ is given as in Table $1$. 
\end{theorem}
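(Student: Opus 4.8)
The plan is to derive the statement by combining Colliot-Th\'el\`ene and Sansuc's description of $R$-equivalence over a local field with the cohomological computation of Theorem \ref{thmain2}. First I would fix a flabby resolution $1\to S\to Q\to T\to 1$ of $T=R^{(1)}_{K/k}(\bG_m)$, which exists by Theorem \ref{thEM} applied to the $G$-lattice $\widehat T=J_{G/H}$; dually it gives a short exact sequence $0\to J_{G/H}\to\widehat Q\to\widehat S\to 0$ with $\widehat Q$ permutation and $\widehat S$ flabby, so that $[\widehat S]=[J_{G/H}]^{fl}$ by definition of the flabby class. Since $k$ is a local field, Theorem \ref{thCTS77-5}(ii) yields a canonical isomorphism $T(k)/R\simeq H^1(G,\widehat S)^{\vee}$.

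The next step is to replace $H^1(G,\widehat S)$ by the flabby-class invariant $H^1(G,[J_{G/H}]^{fl})$, independently of the chosen resolution. The key observation is that $H^1(G,P)=0$ for every permutation $G$-lattice $P=\oplus_i\bZ[G/H_i]$: by Shapiro's lemma $H^1(G,\bZ[G/H_i])\simeq H^1(H_i,\bZ)={\rm Hom}(H_i,\bZ)=0$ because each $H_i$ is finite. Consequently $H^1(G,-)$ is additive and annihilates permutation lattices, so it descends to the monoid $\cC(G)/\cS(G)$ of similarity classes; if $\widehat S_1,\widehat S_2$ arise from two flabby resolutions then $\widehat S_1\oplus P_1\simeq\widehat S_2\oplus P_2$ for permutation $P_1,P_2$, whence $H^1(G,\widehat S_1)\simeq H^1(G,\widehat S_2)$. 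Writing $H^1(G,[J_{G/H}]^{fl})$ for this common value, and using that a finite abelian group is (non-canonically) isomorphic to its Pontryagin dual, I obtain $T(k)/R\simeq H^1(G,[J_{G/H}]^{fl})$, which is the isomorphism of part (i).

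Finally, both (i) and (ii) follow by feeding this isomorphism into Theorem \ref{thmain2}. Under the same hypotheses ($2\le n\le 15$, $n\neq 12$, $G=nTm$ transitive, $H$ the stabilizer of a letter), that theorem computes $H^1(G,[J_{G/H}]^{fl})$ and shows it is nonzero precisely for the groups listed in Table $1$; transporting this through $T(k)/R\simeq H^1(G,[J_{G/H}]^{fl})$ gives the ``if and only if'' of (ii) and reads off the bound of (i) directly from the entries of Table $1$. The genuinely hard content---the case-by-case evaluation of the flabby class and of its first cohomology for all transitive groups up to degree $15$---is exactly what Theorem \ref{thmain2} already supplies, so no new difficulty of that kind arises here. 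The only point that must be checked with care is the resolution-independence of $H^1(G,\widehat S)$ established above, since the Colliot-Th\'el\`ene--Sansuc formula is phrased for an arbitrary flabby resolution and one needs to know that the resulting group does not depend on the choice before identifying it with the invariant tabulated in Theorem \ref{thmain2}.
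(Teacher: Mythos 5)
Your proposal is correct and is essentially the paper's own argument: the paper obtains this theorem in exactly the same way, by combining the local formula $T(k)/R\simeq H^1(G,\widehat{S})^{\vee}$ of Theorem \ref{thCTS77-5}~(ii) for a flabby resolution $1\to S\to Q\to T\to 1$ with the case-by-case computation of $H^1(G,[J_{G/H}]^{fl})$ in Theorem \ref{thmain2} and Table~1. The two points you single out for care---vanishing of $H^1(G,-)$ on permutation lattices via Shapiro's lemma (hence independence of the chosen resolution) and the non-canonical self-duality of finite abelian groups---are precisely the standard facts the paper leaves implicit, so no new content is needed beyond what you supply.
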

%

\section{Application $2$: Tamagawa number $\tau(T)$}\label{S8}

By Theorem \ref{thmain3}, we obtain the 
Tamagawa number $\tau(T)$ of algebraic $k$-tori $T$ 
(see Ono \cite{Ono63}, \cite{Ono65} and Voskresenskii \cite[Chapter 5]{Vos98}). 

\begin{theorem}[{Ono \cite[Main theorem, page 68]{Ono63}, see also \cite[Theorem 2, page 146]{Vos98}}]
Let $k$ be a global field, $T$ be an algebraic $k$-torus 
and $\tau(T)$ be the Tamagawa number of $T$.  
Then 
\begin{align*}
\tau(T)=\frac{|H^1(k,\widehat{T})|}{|\Sha(T)|}.
\end{align*}
In particular, 
if $T$ is retract $k$-rational, then $\tau(T)=|H^1(k,\widehat{T})|$.
\end{theorem}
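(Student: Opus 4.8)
The plan is to prove Ono's formula by reducing to the case of quasi-trivial (induced) tori, where both sides are visibly equal, and then to propagate the identity to a general $T$ through resolutions, the propagation being governed by the behaviour of Tamagawa numbers in exact sequences. First I would record that the right-hand side makes sense: since the Galois action on $\widehat{T}$ factors through a finite quotient $G={\rm Gal}(K/k)$ of the splitting field, one has $H^1(k,\widehat{T})\simeq H^1(G,\widehat{T})$, a finite group, and $\Sha(T)$ is finite by the finiteness theorems for tori, so $|H^1(k,\widehat{T})|/|\Sha(T)|$ is a well-defined positive rational. Both $\tau(\cdot)$ and this cohomological ratio are multiplicative under direct products of tori, so it suffices to compare them on generators of the relevant classes and to work one resolution step at a time.

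The base case is that of a quasi-trivial torus $Q$, i.e. one with $\widehat{Q}$ a permutation $G$-lattice (for instance $Q=R_{K/k}(\bG_m)$). Here $\tau(Q)=1$; this is where the analytic input enters, via the residue of the Dedekind zeta function at $s=1$ together with the product formula, which forces the Tamagawa volume of $\bG_m$ to be $1$ and hence, by Weil restriction and Shapiro's lemma, that of every induced torus. On the cohomological side $H^1(k,\widehat{Q})\simeq H^1(G,\widehat{Q})=0$ by Shapiro's lemma (using $H^1(H,\bZ)=0$), and $\Sha(Q)=0$ since the Hasse principle holds for such $Q$. Thus the formula reads $1=1/1$.

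For a general $T$ the plan is to connect it to quasi-trivial tori through a flasque resolution $1\to S\to Q\to T\to 1$ with $Q$ quasi-trivial, dual to a permutation resolution $0\to\widehat{T}\to\widehat{Q}\to\widehat{S}\to 0$ of the character lattice as provided by Theorem \ref{thEM}, and to transport the formula along such sequences. The governing principle is a \emph{defect formula}: for any exact sequence of tori $1\to T_1\to T_2\to T_3\to 1$, the ratio $\tau(T_1)\tau(T_3)/\tau(T_2)$ equals a prescribed alternating product of orders of Galois-cohomology groups of the $\widehat{T_i}$ and of the $\Sha(T_i)$, read off from the long exact sequences in cohomology. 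Granting this, an induction on resolutions reduces the identity for $T$ to the settled quasi-trivial case. Establishing the defect formula is the main obstacle: it requires a careful comparison of the Tamagawa measures on $T_1(\bA)$, $T_2(\bA)$, $T_3(\bA)$ — including the Artin $L$-value convergence factors attached to the representations $\widehat{T_i}\otimes\bC$, whose contributions must cancel correctly — together with local and global Tate duality, in particular the Poitou--Tate duality $\Sha(T)\simeq\Sha^2(k,\widehat{T})^{\vee}$ which converts the global volume defect into the cohomological ratio. Once this measure-theoretic comparison is in hand, everything else is formal bookkeeping.

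Finally, the ``in particular'' clause is immediate from the main formula: if $T$ is retract $k$-rational then $[\widehat{T}]^{fl}$ is invertible, whence $H^1(k,{\rm Pic}\,\overline{X})\simeq H^1(G,[\widehat{T}]^{fl})=0$, and over a global field this forces $\Sha(T)=0$ as recorded in Section \ref{S3}. Therefore $|\Sha(T)|=1$ and $\tau(T)=|H^1(k,\widehat{T})|$.
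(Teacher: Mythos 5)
The paper does not actually prove the displayed formula: it is quoted as Ono's theorem (Ono \cite{Ono63}), and the only argument the paper supplies is for the final clause --- retract $k$-rationality gives $[\widehat{T}]^{fl}$ invertible by Theorem \ref{thEM73} (iii), hence $H^1(G,[\widehat{T}]^{fl})=0$, hence $A(T)=\Sha(T)=0$ over a global field, so $\tau(T)=|H^1(k,\widehat{T})|$. Your treatment of that clause is exactly this argument and is correct. For the main formula, however, your proposal has two genuine gaps.

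First, the ``defect formula'' for $\tau(T_1)\tau(T_3)/\tau(T_2)$ in an exact sequence of tori is not proved but only postulated (``Granting this\ldots''), and it is not something that can be ``read off from the long exact sequences'': comparing the three Tamagawa measures involves the convergence factors $L_v(1,\widehat{T_i})$, the residues of the associated Artin $L$-functions, the failure of $T_2(\bA_k)\to T_3(\bA_k)$ to be surjective (controlled by local and global $H^1$), and Poitou--Tate duality. That comparison \emph{is} the analytic content of Ono's theorem, so assuming it begs the question. Second, even granting the defect formula, your induction does not close: a flasque resolution $1\to S\to Q\to T\to 1$ yields a single equation relating $\tau(Q)=1$ to the \emph{two} unknowns $\tau(S)$ and $\tau(T)$, and $S$ is flasque, not quasi-trivial, so resolving $S$ in turn only introduces yet another unknown torus --- the recursion never terminates. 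The classical way to make the reduction to quasi-trivial tori work is Ono's lemma: for every $T$ there exist an integer $m\geq 1$ and quasi-trivial tori $Q_1,Q_2$ together with an isogeny between $T^m\times Q_1$ and $Q_2$; combined with the multiplicativity of both sides and the precise behaviour of $\tau$ and of the cohomological ratio under isogeny, this pins down $\tau(T)$ from the quasi-trivial case. Without Ono's lemma (or an equivalent substitute), your scheme cannot conclude.
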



For the last assertion, see Theorem \ref{thEM73}. 
As a consequence of Theorem \ref{thVos69}, 
Theorem \ref{thmain1-4p} and Theorem \ref{thmain1-5p}
(Theorem \ref{thmain1-4} and Theorem \ref{thmain1-5}), we have: 
\begin{theorem}
Let $k$ be a global field and 
$T$ be an algebraic $k$-torus of dimension $4$ $($resp. $5$$)$. 
Among $710$ $($reps. $6079$$)$ cases of algebraic $k$-tori $T$, 
if $T$ is one of the $688$ $($resp. $5805$$)$ cases 
with $H^1(k,{\rm Pic}\,\overline{X})=0$, 
then $\tau(T)=|H^1(k,\widehat{T})|$. 
\end{theorem}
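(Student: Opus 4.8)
The plan is to derive the statement as a direct consequence of Ono's formula together with Voskresenskii's exact sequence (Theorem \ref{thV}), once the classification of the vanishing of $H^1(k,{\rm Pic}\,\overline{X})$ furnished by Theorem \ref{thmain1-4p} and Theorem \ref{thmain1-5p} is in hand. The whole argument splits into a cohomological implication and a counting check.

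First I would establish the key implication: if $H^1(k,{\rm Pic}\,\overline{X})=0$ then $\Sha(T)=0$. This is immediate from Theorem \ref{thV}, since the Pontryagin dual of the zero group is zero, so the middle term of the exact sequence
\begin{align*}
0\to A(T)\to H^1(k,{\rm Pic}\,\overline{X})^{\vee}\to \Sha(T)\to 0
\end{align*}
vanishes, forcing $A(T)=0$ and $\Sha(T)=0$ by exactness (this equivalence is already recorded right after Theorem \ref{thV}). With $|\Sha(T)|=1$, Ono's formula $\tau(T)=|H^1(k,\widehat{T})|/|\Sha(T)|$ collapses to $\tau(T)=|H^1(k,\widehat{T})|$, which is precisely the desired conclusion for each individual torus.

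Next I would verify the enumeration of the $688$ (resp.\ $5805$) cases. By the Hoshi--Yamasaki classification \cite{HY17}, among the $710$ (resp.\ $6079$) algebraic $k$-tori of dimension $4$ (resp.\ $5$) there are exactly $710-216=494$ (resp.\ $6079-3003=3076$) retract $k$-rational ones, and each of these satisfies $H^1(k,{\rm Pic}\,\overline{X})=0$ by the implication ``retract rational $\Rightarrow H^1=0$'' recorded in Section \ref{S1}. Among the remaining $216$ (resp.\ $3003$) not retract rational tori, Theorem \ref{thmain1-4p} (resp.\ Theorem \ref{thmain1-5p}) asserts that precisely $194$ (resp.\ $2729$) have $H^1(k,{\rm Pic}\,\overline{X})=0$. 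Summing the two disjoint contributions gives $494+194=688$ (resp.\ $3076+2729=5805$), matching the count in the statement, and the first paragraph then supplies $\tau(T)=|H^1(k,\widehat{T})|$ for every such torus.

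I do not expect a genuine obstacle here: the arithmetic input (Ono's formula and Voskresenskii's sequence) is already available in the excerpt, and the only substantial content—the determination of exactly when $H^1(k,{\rm Pic}\,\overline{X})$ vanishes—is what Theorem \ref{thmain1-4p} and Theorem \ref{thmain1-5p} provide. The single point requiring care is purely bookkeeping: one must confirm that the $194$ and $2729$ figures are counted strictly within the not retract rational tori, so that they are disjoint from the $494$ and $3076$ retract rational ones; this is guaranteed because Theorem \ref{thmain1-4p} and Theorem \ref{thmain1-5p} range over exactly the not retract rational cases, and retract rationality already forces $H^1=0$.
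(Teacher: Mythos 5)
Your proposal is correct and follows essentially the same route as the paper: the vanishing $H^1(k,{\rm Pic}\,\overline{X})=0$ forces $\Sha(T)=0$ via Voskresenskii's exact sequence (Theorem \ref{thV}), so Ono's formula gives $\tau(T)=|H^1(k,\widehat{T})|$, and the counts $494+194=688$ and $3076+2729=5805$ follow from the Hoshi--Yamasaki classification together with Theorem \ref{thmain1-4p} and Theorem \ref{thmain1-5p}. This is exactly how the paper derives the statement, which it presents as a direct consequence of those results without further argument.
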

When $T=R^{(1)}_{K/k}(\bG_m)$ and $K/k$ is a finite Galois extension, 
i.e. $\widehat{T}=J_G$, 
it follows from Endo and Miyata \cite[Theorem 1.5]{EM75} that 
if all the Sylow subgroups of $G={\rm Gal}(K/k)$ are cyclic, 
then $|\Sha(T)|=1$ and hence 
$\tau(T)=|H^1(G,J_G)|=|H^2(G,\bZ)|=|H^1(G,\bQ/\bZ)|=|G^{ab}|$. 
For norm one tori $T=R^{(1)}_{K/k}(\bG_m)$ with 
$[K:k]=n\leq 15$ and $n\neq 12$, 
Kunyavskii's theorem (Theorem \ref{thKun2}), 
Drakokhrust and Platonov's theorem (Theorem \ref{thDP}) 
and Theorem \ref{thmain3} enable us to 
get the Tamagawa number $\tau(T)$: 

\begin{theorem}
Let $2\leq n\leq 15$ be an integer with $n\neq 12$. 
Let $k$ be a number field, 
$K/k$ be a field extension of degree $n$ 
and $L/k$ be the Galois closure of $K/k$. 
Assume that $G={\rm Gal}(L/k)=nTm$ 
is a transitive subgroup of $S_n$ and 
$H={\rm Gal}(L/K)$ with $[G:H]=n$. 
Let $T=R^{(1)}_{K/k}(\bG_m)$ be the norm one torus of $K/k$ 
of dimension $n-1$. 
Then $\tau(T)=|H^1(G,J_{G/H})|$ 
except for the cases in Table $1$. 
For the cases in Table $1$, 
we have $\tau(T)=|H^1(G,J_{G/H})|/|\Sha(T)|$ 
where $H^1(G,J_{G/H})$ is given as in Section \ref{S9} and 
$\Sha(T)$ is given as in Theorem \ref{thmain3}. 
\end{theorem}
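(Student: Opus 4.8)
The plan is to obtain the statement as an immediate consequence of Ono's formula $\tau(T)=|H^1(k,\widehat{T})|/|\Sha(T)|$, once $H^1(k,\widehat{T})$ has been reduced to finite-group cohomology and the vanishing of $\Sha(T)$ outside Table $1$ has been imported from Theorem \ref{thmain3}.

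First I would identify $H^1(k,\widehat{T})$ with $H^1(G,J_{G/H})$. Since $\widehat{T}=J_{G/H}$ is the character lattice of $T=R^{(1)}_{K/k}(\bG_m)$ and the action of $\mathcal{G}={\rm Gal}(\overline{k}/k)$ on it factors through $G={\rm Gal}(L/k)$, the inflation--restriction sequence for $\mathcal{N}={\rm Gal}(\overline{k}/L)$ reads $0\to H^1(G,\widehat{T})\to H^1(k,\widehat{T})\to H^1(\mathcal{N},\widehat{T})^G$. As $\mathcal{N}$ acts trivially on the free $\bZ$-module $\widehat{T}$, every continuous homomorphism $\mathcal{N}\to\widehat{T}$ is trivial, so $H^1(\mathcal{N},\widehat{T})=0$ and hence $H^1(k,\widehat{T})\simeq H^1(G,J_{G/H})$. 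Both this group and $\Sha(T)$ are finite, so Ono's formula applies and gives, in every case, $\tau(T)=|H^1(G,J_{G/H})|/|\Sha(T)|$. Here the order $|H^1(G,J_{G/H})|$ is the one recorded in Section \ref{S9}: the short exact sequence $0\to\bZ\to\bZ[G/H]\to J_{G/H}\to 0$ together with Shapiro's lemma yields the connecting isomorphism $H^1(G,J_{G/H})\simeq{\rm Ker}\{{\rm res}\colon H^2(G,\bZ)\to H^2(H,\bZ)\}$, and $\Sha(T)$ is the group described in Theorem \ref{thmain3} (it is the same Shafarevich--Tate group entering Ono's formula, by Theorem \ref{thOno}).

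The second step is the case split. For $G=nTm$ not listed in Table $1$, Theorem \ref{thmain3} asserts $\Sha(T)=0$, whence $|\Sha(T)|=1$ and the formula collapses to $\tau(T)=|H^1(G,J_{G/H})|$. For the $G$ appearing in Table $1$, one keeps the full expression $\tau(T)=|H^1(G,J_{G/H})|/|\Sha(T)|$, reading off $|H^1(G,J_{G/H})|$ from Section \ref{S9} and $|\Sha(T)|$ (which depends on the local decomposition data) from Theorem \ref{thmain3} and Table $2$.

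I do not expect a genuine obstacle, since the two substantive inputs are already in place: the determination of $\Sha(T)$ through the Drakokhrust--Platonov obstruction in Theorem \ref{thmain3}, and the GAP computation of $H^1(G,J_{G/H})$ underlying Section \ref{S9}. The only points demanding care are bookkeeping ones: confirming the clean reduction $H^1(k,\widehat{T})\simeq H^1(G,J_{G/H})$ and the finiteness needed to legitimately invoke Ono's formula, and checking that the group $\Sha(T)$ tabulated in Theorem \ref{thmain3} coincides with the one appearing in Ono's formula, which is guaranteed by Theorem \ref{thOno}.
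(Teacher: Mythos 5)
Your proposal is correct and follows essentially the same route as the paper: the theorem is obtained there exactly as a direct corollary of Ono's formula $\tau(T)=|H^1(k,\widehat{T})|/|\Sha(T)|$, the standard identification $H^1(k,\widehat{T})\simeq H^1(G,J_{G/H})$ (the Galois action factors through $G$ and $H^1$ of the kernel vanishes on a lattice), the vanishing $\Sha(T)=0$ outside Table $1$ from Theorem \ref{thmain3}, and the GAP values of $H^1(G,J_{G/H})$ in Section \ref{S9}. Your additional remarks (finiteness, and the description of $H^1(G,J_{G/H})$ as ${\rm Ker}\{{\rm res}\colon H^2(G,\bZ)\to H^2(H,\bZ)\}$ via Shapiro's lemma) are accurate bookkeeping consistent with the paper.
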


We give GAP computations 
of $H^1(G,J_{G/H})$ 
for $G={\rm Gal}(L/k)=nTm$ $(n\leq 15)$ 
in Section \ref{S9} as the appendix of this paper. 

\section{Appendix: Computation of $H^1(G,J_{G/H})$ for $G={\rm Gal}(L/k)=nTm$ $(n\leq 15)$ }\label{S9}
~{}\vspace*{-4mm}\\
{\small 
\begin{verbatim}
gap> Read("FlabbyResolutionFromBase.gap");
gap> for n in [2..15] do for m in [1..NrTransitiveGroups(n)] do
> Print([[n,m],Filtered(H1(Norm1TorusJ(n,m)),x->x>1)],"\n");od;Print("\n");od;
[ [ 2, 1 ], [ 2 ] ]

[ [ 3, 1 ], [ 3 ] ]
[ [ 3, 2 ], [  ] ]

[ [ 4, 1 ], [ 4 ] ]
[ [ 4, 2 ], [ 2, 2 ] ]
[ [ 4, 3 ], [ 2 ] ]
[ [ 4, 4 ], [  ] ]
[ [ 4, 5 ], [  ] ]

[ [ 5, 1 ], [ 5 ] ]
[ [ 5, 2 ], [  ] ]
[ [ 5, 3 ], [  ] ]
[ [ 5, 4 ], [  ] ]
[ [ 5, 5 ], [  ] ]

[ [ 6, 1 ], [ 6 ] ]
[ [ 6, 2 ], [ 2 ] ]
[ [ 6, 3 ], [ 2 ] ]
[ [ 6, 4 ], [ 3 ] ]
[ [ 6, 5 ], [ 2 ] ]
[ [ 6, 6 ], [ 3 ] ]
[ [ 6, 7 ], [  ] ]
[ [ 6, 8 ], [  ] ]
[ [ 6, 9 ], [ 2 ] ]
[ [ 6, 10 ], [ 2 ] ]
[ [ 6, 11 ], [  ] ]
[ [ 6, 12 ], [  ] ]
[ [ 6, 13 ], [ 2 ] ]
[ [ 6, 14 ], [  ] ]
[ [ 6, 15 ], [  ] ]
[ [ 6, 16 ], [  ] ]

[ [ 7, 1 ], [ 7 ] ]
[ [ 7, 2 ], [  ] ]
[ [ 7, 3 ], [  ] ]
[ [ 7, 4 ], [  ] ]
[ [ 7, 5 ], [  ] ]
[ [ 7, 6 ], [  ] ]
[ [ 7, 7 ], [  ] ]

[ [ 8, 1 ], [ 8 ] ]
[ [ 8, 2 ], [ 2, 4 ] ]
[ [ 8, 3 ], [ 2, 2, 2 ] ]
[ [ 8, 4 ], [ 2, 2 ] ]
[ [ 8, 5 ], [ 2, 2 ] ]
[ [ 8, 6 ], [ 2 ] ]
[ [ 8, 7 ], [ 4 ] ]
[ [ 8, 8 ], [ 2 ] ]
[ [ 8, 9 ], [ 2, 2 ] ]
[ [ 8, 10 ], [ 4 ] ]
[ [ 8, 11 ], [ 2, 2 ] ]
[ [ 8, 12 ], [  ] ]
[ [ 8, 13 ], [ 2 ] ]
[ [ 8, 14 ], [ 2 ] ]
[ [ 8, 15 ], [ 2 ] ]
[ [ 8, 16 ], [ 4 ] ]
[ [ 8, 17 ], [ 2 ] ]
[ [ 8, 18 ], [ 2 ] ]
[ [ 8, 19 ], [ 2 ] ]
[ [ 8, 20 ], [ 4 ] ]
[ [ 8, 21 ], [ 2, 2 ] ]
[ [ 8, 22 ], [ 2, 2 ] ]
[ [ 8, 23 ], [  ] ]
[ [ 8, 24 ], [ 2 ] ]
[ [ 8, 25 ], [  ] ]
[ [ 8, 26 ], [ 2 ] ]
[ [ 8, 27 ], [ 4 ] ]
[ [ 8, 28 ], [ 2 ] ]
[ [ 8, 29 ], [ 2 ] ]
[ [ 8, 30 ], [ 2 ] ]
[ [ 8, 31 ], [ 2, 2 ] ]
[ [ 8, 32 ], [  ] ]
[ [ 8, 33 ], [ 2 ] ]
[ [ 8, 34 ], [ 2 ] ]
[ [ 8, 35 ], [ 2 ] ]
[ [ 8, 36 ], [  ] ]
[ [ 8, 37 ], [  ] ]
[ [ 8, 38 ], [  ] ]
[ [ 8, 39 ], [  ] ]
[ [ 8, 40 ], [  ] ]
[ [ 8, 41 ], [ 2 ] ]
[ [ 8, 42 ], [ 2 ] ]
[ [ 8, 43 ], [  ] ]
[ [ 8, 44 ], [  ] ]
[ [ 8, 45 ], [ 2 ] ]
[ [ 8, 46 ], [ 2 ] ]
[ [ 8, 47 ], [ 2 ] ]
[ [ 8, 48 ], [  ] ]
[ [ 8, 49 ], [  ] ]
[ [ 8, 50 ], [  ] ]

[ [ 9, 1 ], [ 9 ] ]
[ [ 9, 2 ], [ 3, 3 ] ]
[ [ 9, 3 ], [  ] ]
[ [ 9, 4 ], [ 3 ] ]
[ [ 9, 5 ], [  ] ]
[ [ 9, 6 ], [ 3 ] ]
[ [ 9, 7 ], [ 3 ] ]
[ [ 9, 8 ], [  ] ]
[ [ 9, 9 ], [  ] ]
[ [ 9, 10 ], [  ] ]
[ [ 9, 11 ], [  ] ]
[ [ 9, 12 ], [  ] ]
[ [ 9, 13 ], [ 3 ] ]
[ [ 9, 14 ], [  ] ]
[ [ 9, 15 ], [  ] ]
[ [ 9, 16 ], [  ] ]
[ [ 9, 17 ], [ 3 ] ]
[ [ 9, 18 ], [  ] ]
[ [ 9, 19 ], [  ] ]
[ [ 9, 20 ], [  ] ]
[ [ 9, 21 ], [  ] ]
[ [ 9, 22 ], [ 3 ] ]
[ [ 9, 23 ], [  ] ]
[ [ 9, 24 ], [  ] ]
[ [ 9, 25 ], [ 3 ] ]
[ [ 9, 26 ], [  ] ]
[ [ 9, 27 ], [  ] ]
[ [ 9, 28 ], [ 3 ] ]
[ [ 9, 29 ], [  ] ]
[ [ 9, 30 ], [  ] ]
[ [ 9, 31 ], [  ] ]
[ [ 9, 32 ], [  ] ]
[ [ 9, 33 ], [  ] ]
[ [ 9, 34 ], [  ] ]

[ [ 10, 1 ], [ 10 ] ]
[ [ 10, 2 ], [ 2 ] ]
[ [ 10, 3 ], [ 2 ] ]
[ [ 10, 4 ], [ 2 ] ]
[ [ 10, 5 ], [ 2 ] ]
[ [ 10, 6 ], [ 2 ] ]
[ [ 10, 7 ], [  ] ]
[ [ 10, 8 ], [ 5 ] ]
[ [ 10, 9 ], [ 2 ] ]
[ [ 10, 10 ], [ 2 ] ]
[ [ 10, 11 ], [ 2 ] ]
[ [ 10, 12 ], [ 2 ] ]
[ [ 10, 13 ], [  ] ]
[ [ 10, 14 ], [ 5 ] ]
[ [ 10, 15 ], [  ] ]
[ [ 10, 16 ], [  ] ]
[ [ 10, 17 ], [ 2 ] ]
[ [ 10, 18 ], [ 2 ] ]
[ [ 10, 19 ], [ 2 ] ]
[ [ 10, 20 ], [ 2 ] ]
[ [ 10, 21 ], [ 2 ] ]
[ [ 10, 22 ], [ 2 ] ]
[ [ 10, 23 ], [  ] ]
[ [ 10, 24 ], [  ] ]
[ [ 10, 25 ], [  ] ]
[ [ 10, 26 ], [  ] ]
[ [ 10, 27 ], [ 2 ] ]
[ [ 10, 28 ], [ 2 ] ]
[ [ 10, 29 ], [  ] ]
[ [ 10, 30 ], [  ] ]
[ [ 10, 31 ], [  ] ]
[ [ 10, 32 ], [  ] ]
[ [ 10, 33 ], [ 2 ] ]
[ [ 10, 34 ], [  ] ]
[ [ 10, 35 ], [  ] ]
[ [ 10, 36 ], [  ] ]
[ [ 10, 37 ], [  ] ]
[ [ 10, 38 ], [  ] ]
[ [ 10, 39 ], [  ] ]
[ [ 10, 40 ], [ 2 ] ]
[ [ 10, 41 ], [ 2 ] ]
[ [ 10, 42 ], [ 2 ] ]
[ [ 10, 43 ], [ 2 ] ]
[ [ 10, 44 ], [  ] ]
[ [ 10, 45 ], [  ] ]

[ [ 11, 1 ], [ 11 ] ]
[ [ 11, 2 ], [  ] ]
[ [ 11, 3 ], [  ] ]
[ [ 11, 4 ], [  ] ]
[ [ 11, 5 ], [  ] ]
[ [ 11, 6 ], [  ] ]
[ [ 11, 7 ], [  ] ]
[ [ 11, 8 ], [  ] ]

[ [ 12, 1 ], [ 12 ] ]
[ [ 12, 2 ], [ 2, 6 ] ]
[ [ 12, 3 ], [ 2, 2 ] ]
[ [ 12, 4 ], [ 3 ] ]
[ [ 12, 5 ], [ 4 ] ]
[ [ 12, 6 ], [ 3 ] ]
[ [ 12, 7 ], [ 6 ] ]
[ [ 12, 8 ], [  ] ]
[ [ 12, 9 ], [ 2 ] ]
[ [ 12, 10 ], [ 2, 2 ] ]
[ [ 12, 11 ], [ 4 ] ]
[ [ 12, 12 ], [ 2 ] ]
[ [ 12, 13 ], [ 2 ] ]
[ [ 12, 14 ], [ 6 ] ]
[ [ 12, 15 ], [ 2 ] ]
[ [ 12, 16 ], [ 2, 2 ] ]
[ [ 12, 17 ], [ 4 ] ]
[ [ 12, 18 ], [ 2, 2 ] ]
[ [ 12, 19 ], [ 4 ] ]
[ [ 12, 20 ], [ 3 ] ]
[ [ 12, 21 ], [ 2 ] ]
[ [ 12, 22 ], [  ] ]
[ [ 12, 23 ], [ 2 ] ]
[ [ 12, 24 ], [ 2 ] ]
[ [ 12, 25 ], [ 6 ] ]
[ [ 12, 26 ], [ 3 ] ]
[ [ 12, 27 ], [  ] ]
[ [ 12, 28 ], [ 2 ] ]
[ [ 12, 29 ], [ 6 ] ]
[ [ 12, 30 ], [ 2 ] ]
[ [ 12, 31 ], [ 3 ] ]
[ [ 12, 32 ], [ 3 ] ]
[ [ 12, 33 ], [  ] ]
[ [ 12, 34 ], [ 2, 2 ] ]
[ [ 12, 35 ], [ 2 ] ]
[ [ 12, 36 ], [ 2 ] ]
[ [ 12, 37 ], [ 2, 2 ] ]
[ [ 12, 38 ], [ 2 ] ]
[ [ 12, 39 ], [ 4 ] ]
[ [ 12, 40 ], [ 2, 2 ] ]
[ [ 12, 41 ], [ 4 ] ]
[ [ 12, 42 ], [ 2 ] ]
[ [ 12, 43 ], [  ] ]
[ [ 12, 44 ], [  ] ]
[ [ 12, 45 ], [ 3 ] ]
[ [ 12, 46 ], [ 4 ] ]
[ [ 12, 47 ], [ 2, 2 ] ]
[ [ 12, 48 ], [ 2 ] ]
[ [ 12, 49 ], [  ] ]
[ [ 12, 50 ], [ 2 ] ]
[ [ 12, 51 ], [ 6 ] ]
[ [ 12, 52 ], [ 2 ] ]
[ [ 12, 53 ], [ 2 ] ]
[ [ 12, 54 ], [ 2 ] ]
[ [ 12, 55 ], [ 3 ] ]
[ [ 12, 56 ], [ 3 ] ]
[ [ 12, 57 ], [ 3 ] ]
[ [ 12, 58 ], [ 6 ] ]
[ [ 12, 59 ], [ 3 ] ]
[ [ 12, 60 ], [ 3 ] ]
[ [ 12, 61 ], [ 3 ] ]
[ [ 12, 62 ], [  ] ]
[ [ 12, 63 ], [  ] ]
[ [ 12, 64 ], [  ] ]
[ [ 12, 65 ], [  ] ]
[ [ 12, 66 ], [  ] ]
[ [ 12, 67 ], [  ] ]
[ [ 12, 68 ], [  ] ]
[ [ 12, 69 ], [ 2 ] ]
[ [ 12, 70 ], [ 2, 2 ] ]
[ [ 12, 71 ], [ 2, 2 ] ]
[ [ 12, 72 ], [ 4 ] ]
[ [ 12, 73 ], [ 4 ] ]
[ [ 12, 74 ], [ 2 ] ]
[ [ 12, 75 ], [ 2 ] ]
[ [ 12, 76 ], [  ] ]
[ [ 12, 77 ], [ 2, 2 ] ]
[ [ 12, 78 ], [ 2 ] ]
[ [ 12, 79 ], [ 4 ] ]
[ [ 12, 80 ], [ 2 ] ]
[ [ 12, 81 ], [ 2 ] ]
[ [ 12, 82 ], [ 2 ] ]
[ [ 12, 83 ], [  ] ]
[ [ 12, 84 ], [ 2 ] ]
[ [ 12, 85 ], [ 3 ] ]
[ [ 12, 86 ], [ 2 ] ]
[ [ 12, 87 ], [ 6 ] ]
[ [ 12, 88 ], [ 3 ] ]
[ [ 12, 89 ], [ 3 ] ]
[ [ 12, 90 ], [ 3 ] ]
[ [ 12, 91 ], [ 3 ] ]
[ [ 12, 92 ], [ 3 ] ]
[ [ 12, 93 ], [ 3 ] ]
[ [ 12, 94 ], [ 3 ] ]
[ [ 12, 95 ], [  ] ]
[ [ 12, 96 ], [  ] ]
[ [ 12, 97 ], [  ] ]
[ [ 12, 98 ], [  ] ]
[ [ 12, 99 ], [ 3 ] ]
[ [ 12, 100 ], [  ] ]
[ [ 12, 101 ], [  ] ]
[ [ 12, 102 ], [  ] ]
[ [ 12, 103 ], [  ] ]
[ [ 12, 104 ], [ 3 ] ]
[ [ 12, 105 ], [ 6 ] ]
[ [ 12, 106 ], [ 2 ] ]
[ [ 12, 107 ], [ 2 ] ]
[ [ 12, 108 ], [ 2 ] ]
[ [ 12, 109 ], [ 2 ] ]
[ [ 12, 110 ], [  ] ]
[ [ 12, 111 ], [  ] ]
[ [ 12, 112 ], [  ] ]
[ [ 12, 113 ], [  ] ]
[ [ 12, 114 ], [  ] ]
[ [ 12, 115 ], [  ] ]
[ [ 12, 116 ], [ 2 ] ]
[ [ 12, 117 ], [ 2, 2 ] ]
[ [ 12, 118 ], [ 2 ] ]
[ [ 12, 119 ], [ 4 ] ]
[ [ 12, 120 ], [ 2 ] ]
[ [ 12, 121 ], [ 2 ] ]
[ [ 12, 122 ], [  ] ]
[ [ 12, 123 ], [ 2 ] ]
[ [ 12, 124 ], [  ] ]
[ [ 12, 125 ], [ 2 ] ]
[ [ 12, 126 ], [ 2 ] ]
[ [ 12, 127 ], [  ] ]
[ [ 12, 128 ], [  ] ]
[ [ 12, 129 ], [ 3 ] ]
[ [ 12, 130 ], [ 2, 2 ] ]
[ [ 12, 131 ], [ 4 ] ]
[ [ 12, 132 ], [  ] ]
[ [ 12, 133 ], [  ] ]
[ [ 12, 134 ], [ 6 ] ]
[ [ 12, 135 ], [ 2 ] ]
[ [ 12, 136 ], [ 2 ] ]
[ [ 12, 137 ], [  ] ]
[ [ 12, 138 ], [  ] ]
[ [ 12, 139 ], [  ] ]
[ [ 12, 140 ], [  ] ]
[ [ 12, 141 ], [ 3 ] ]
[ [ 12, 142 ], [ 3 ] ]
[ [ 12, 143 ], [ 3 ] ]
[ [ 12, 144 ], [ 3 ] ]
[ [ 12, 145 ], [ 2 ] ]
[ [ 12, 146 ], [  ] ]
[ [ 12, 147 ], [  ] ]
[ [ 12, 148 ], [  ] ]
[ [ 12, 149 ], [  ] ]
[ [ 12, 150 ], [  ] ]
[ [ 12, 151 ], [  ] ]
[ [ 12, 152 ], [  ] ]
[ [ 12, 153 ], [  ] ]
[ [ 12, 154 ], [ 2 ] ]
[ [ 12, 155 ], [ 2 ] ]
[ [ 12, 156 ], [ 2 ] ]
[ [ 12, 157 ], [  ] ]
[ [ 12, 158 ], [ 2 ] ]
[ [ 12, 159 ], [ 2 ] ]
[ [ 12, 160 ], [ 2 ] ]
[ [ 12, 161 ], [ 2 ] ]
[ [ 12, 162 ], [ 2 ] ]
[ [ 12, 163 ], [ 2 ] ]
[ [ 12, 164 ], [ 3 ] ]
[ [ 12, 165 ], [  ] ]
[ [ 12, 166 ], [ 3 ] ]
[ [ 12, 167 ], [ 2 ] ]
[ [ 12, 168 ], [ 2, 2 ] ]
[ [ 12, 169 ], [ 2 ] ]
[ [ 12, 170 ], [ 4 ] ]
[ [ 12, 171 ], [ 2, 2 ] ]
[ [ 12, 172 ], [ 2, 2 ] ]
[ [ 12, 173 ], [ 4 ] ]
[ [ 12, 174 ], [ 2, 2 ] ]
[ [ 12, 175 ], [  ] ]
[ [ 12, 176 ], [  ] ]
[ [ 12, 177 ], [  ] ]
[ [ 12, 178 ], [  ] ]
[ [ 12, 179 ], [  ] ]
[ [ 12, 180 ], [ 2 ] ]
[ [ 12, 181 ], [ 2 ] ]
[ [ 12, 182 ], [ 2 ] ]
[ [ 12, 183 ], [ 2 ] ]
[ [ 12, 184 ], [  ] ]
[ [ 12, 185 ], [  ] ]
[ [ 12, 186 ], [  ] ]
[ [ 12, 187 ], [ 3 ] ]
[ [ 12, 188 ], [ 3 ] ]
[ [ 12, 189 ], [ 3 ] ]
[ [ 12, 190 ], [  ] ]
[ [ 12, 191 ], [  ] ]
[ [ 12, 192 ], [  ] ]
[ [ 12, 193 ], [ 2 ] ]
[ [ 12, 194 ], [  ] ]
[ [ 12, 195 ], [ 2 ] ]
[ [ 12, 196 ], [ 2 ] ]
[ [ 12, 197 ], [ 2 ] ]
[ [ 12, 198 ], [ 2 ] ]
[ [ 12, 199 ], [ 2 ] ]
[ [ 12, 200 ], [ 2 ] ]
[ [ 12, 201 ], [ 2 ] ]
[ [ 12, 202 ], [ 2 ] ]
[ [ 12, 203 ], [ 2 ] ]
[ [ 12, 204 ], [  ] ]
[ [ 12, 205 ], [ 3 ] ]
[ [ 12, 206 ], [  ] ]
[ [ 12, 207 ], [  ] ]
[ [ 12, 208 ], [ 2 ] ]
[ [ 12, 209 ], [ 2 ] ]
[ [ 12, 210 ], [ 2, 2 ] ]
[ [ 12, 211 ], [ 4 ] ]
[ [ 12, 212 ], [ 2 ] ]
[ [ 12, 213 ], [  ] ]
[ [ 12, 214 ], [ 2, 2 ] ]
[ [ 12, 215 ], [ 4 ] ]
[ [ 12, 216 ], [ 2 ] ]
[ [ 12, 217 ], [ 2 ] ]
[ [ 12, 218 ], [  ] ]
[ [ 12, 219 ], [ 2 ] ]
[ [ 12, 220 ], [ 2 ] ]
[ [ 12, 221 ], [  ] ]
[ [ 12, 222 ], [ 3 ] ]
[ [ 12, 223 ], [  ] ]
[ [ 12, 224 ], [  ] ]
[ [ 12, 225 ], [  ] ]
[ [ 12, 226 ], [  ] ]
[ [ 12, 227 ], [  ] ]
[ [ 12, 228 ], [ 3 ] ]
[ [ 12, 229 ], [ 3 ] ]
[ [ 12, 230 ], [  ] ]
[ [ 12, 231 ], [  ] ]
[ [ 12, 232 ], [  ] ]
[ [ 12, 233 ], [  ] ]
[ [ 12, 234 ], [  ] ]
[ [ 12, 235 ], [ 2 ] ]
[ [ 12, 236 ], [ 2 ] ]
[ [ 12, 237 ], [ 2 ] ]
[ [ 12, 238 ], [ 2 ] ]
[ [ 12, 239 ], [  ] ]
[ [ 12, 240 ], [ 2 ] ]
[ [ 12, 241 ], [ 2 ] ]
[ [ 12, 242 ], [ 2, 2 ] ]
[ [ 12, 243 ], [ 2 ] ]
[ [ 12, 244 ], [ 4 ] ]
[ [ 12, 245 ], [ 4 ] ]
[ [ 12, 246 ], [ 2, 2 ] ]
[ [ 12, 247 ], [ 2 ] ]
[ [ 12, 248 ], [ 2 ] ]
[ [ 12, 249 ], [ 2 ] ]
[ [ 12, 250 ], [  ] ]
[ [ 12, 251 ], [  ] ]
[ [ 12, 252 ], [  ] ]
[ [ 12, 253 ], [ 3 ] ]
[ [ 12, 254 ], [  ] ]
[ [ 12, 255 ], [  ] ]
[ [ 12, 256 ], [  ] ]
[ [ 12, 257 ], [  ] ]
[ [ 12, 258 ], [  ] ]
[ [ 12, 259 ], [  ] ]
[ [ 12, 260 ], [ 2 ] ]
[ [ 12, 261 ], [ 2, 2 ] ]
[ [ 12, 262 ], [ 2 ] ]
[ [ 12, 263 ], [ 2 ] ]
[ [ 12, 264 ], [ 4 ] ]
[ [ 12, 265 ], [ 3 ] ]
[ [ 12, 266 ], [ 2 ] ]
[ [ 12, 267 ], [ 2 ] ]
[ [ 12, 268 ], [  ] ]
[ [ 12, 269 ], [ 2 ] ]
[ [ 12, 270 ], [  ] ]
[ [ 12, 271 ], [  ] ]
[ [ 12, 272 ], [  ] ]
[ [ 12, 273 ], [ 3 ] ]
[ [ 12, 274 ], [ 2 ] ]
[ [ 12, 275 ], [  ] ]
[ [ 12, 276 ], [  ] ]
[ [ 12, 277 ], [  ] ]
[ [ 12, 278 ], [ 2 ] ]
[ [ 12, 279 ], [ 2 ] ]
[ [ 12, 280 ], [  ] ]
[ [ 12, 281 ], [  ] ]
[ [ 12, 282 ], [  ] ]
[ [ 12, 283 ], [  ] ]
[ [ 12, 284 ], [ 3 ] ]
[ [ 12, 285 ], [  ] ]
[ [ 12, 286 ], [  ] ]
[ [ 12, 287 ], [  ] ]
[ [ 12, 288 ], [ 2 ] ]
[ [ 12, 289 ], [  ] ]
[ [ 12, 290 ], [  ] ]
[ [ 12, 291 ], [  ] ]
[ [ 12, 292 ], [ 3 ] ]
[ [ 12, 293 ], [  ] ]
[ [ 12, 294 ], [  ] ]
[ [ 12, 295 ], [  ] ]
[ [ 12, 296 ], [ 2 ] ]
[ [ 12, 297 ], [ 2 ] ]
[ [ 12, 298 ], [ 2 ] ]
[ [ 12, 299 ], [ 2 ] ]
[ [ 12, 300 ], [  ] ]
[ [ 12, 301 ], [  ] ]

[ [ 13, 1 ], [ 13 ] ]
[ [ 13, 2 ], [  ] ]
[ [ 13, 3 ], [  ] ]
[ [ 13, 4 ], [  ] ]
[ [ 13, 5 ], [  ] ]
[ [ 13, 6 ], [  ] ]
[ [ 13, 7 ], [  ] ]
[ [ 13, 8 ], [  ] ]
[ [ 13, 9 ], [  ] ]

[ [ 14, 1 ], [ 14 ] ]
[ [ 14, 2 ], [ 2 ] ]
[ [ 14, 3 ], [ 2 ] ]
[ [ 14, 4 ], [ 2 ] ]
[ [ 14, 5 ], [ 2 ] ]
[ [ 14, 6 ], [ 7 ] ]
[ [ 14, 7 ], [ 2 ] ]
[ [ 14, 8 ], [ 2 ] ]
[ [ 14, 9 ], [ 7 ] ]
[ [ 14, 10 ], [  ] ]
[ [ 14, 11 ], [  ] ]
[ [ 14, 12 ], [ 2 ] ]
[ [ 14, 13 ], [ 2 ] ]
[ [ 14, 14 ], [ 2 ] ]
[ [ 14, 15 ], [ 2 ] ]
[ [ 14, 16 ], [ 2 ] ]
[ [ 14, 17 ], [  ] ]
[ [ 14, 18 ], [  ] ]
[ [ 14, 19 ], [ 2 ] ]
[ [ 14, 20 ], [ 2 ] ]
[ [ 14, 21 ], [ 7 ] ]
[ [ 14, 22 ], [ 2 ] ]
[ [ 14, 23 ], [ 2 ] ]
[ [ 14, 24 ], [ 2 ] ]
[ [ 14, 25 ], [ 2 ] ]
[ [ 14, 26 ], [ 2 ] ]
[ [ 14, 27 ], [  ] ]
[ [ 14, 28 ], [  ] ]
[ [ 14, 29 ], [ 7 ] ]
[ [ 14, 30 ], [  ] ]
[ [ 14, 31 ], [ 2 ] ]
[ [ 14, 32 ], [ 2 ] ]
[ [ 14, 33 ], [  ] ]
[ [ 14, 34 ], [  ] ]
[ [ 14, 35 ], [  ] ]
[ [ 14, 36 ], [ 2 ] ]
[ [ 14, 37 ], [ 2 ] ]
[ [ 14, 38 ], [  ] ]
[ [ 14, 39 ], [  ] ]
[ [ 14, 40 ], [  ] ]
[ [ 14, 41 ], [  ] ]
[ [ 14, 42 ], [  ] ]
[ [ 14, 43 ], [  ] ]
[ [ 14, 44 ], [  ] ]
[ [ 14, 45 ], [ 2 ] ]
[ [ 14, 46 ], [ 2 ] ]
[ [ 14, 47 ], [ 2 ] ]
[ [ 14, 48 ], [  ] ]
[ [ 14, 49 ], [ 2 ] ]
[ [ 14, 50 ], [  ] ]
[ [ 14, 51 ], [  ] ]
[ [ 14, 52 ], [ 2 ] ]
[ [ 14, 53 ], [  ] ]
[ [ 14, 54 ], [  ] ]
[ [ 14, 55 ], [  ] ]
[ [ 14, 56 ], [  ] ]
[ [ 14, 57 ], [  ] ]
[ [ 14, 58 ], [ 2 ] ]
[ [ 14, 59 ], [ 2 ] ]
[ [ 14, 60 ], [ 2 ] ]
[ [ 14, 61 ], [ 2 ] ]
[ [ 14, 62 ], [  ] ]
[ [ 14, 63 ], [  ] ]

[ [ 15, 1 ], [ 15 ] ]
[ [ 15, 2 ], [  ] ]
[ [ 15, 3 ], [ 3 ] ]
[ [ 15, 4 ], [ 5 ] ]
[ [ 15, 5 ], [  ] ]
[ [ 15, 6 ], [  ] ]
[ [ 15, 7 ], [  ] ]
[ [ 15, 8 ], [ 3 ] ]
[ [ 15, 9 ], [ 3 ] ]
[ [ 15, 10 ], [  ] ]
[ [ 15, 11 ], [  ] ]
[ [ 15, 12 ], [ 3 ] ]
[ [ 15, 13 ], [  ] ]
[ [ 15, 14 ], [  ] ]
[ [ 15, 15 ], [  ] ]
[ [ 15, 16 ], [ 3 ] ]
[ [ 15, 17 ], [  ] ]
[ [ 15, 18 ], [  ] ]
[ [ 15, 19 ], [ 3 ] ]
[ [ 15, 20 ], [  ] ]
[ [ 15, 21 ], [  ] ]
[ [ 15, 22 ], [  ] ]
[ [ 15, 23 ], [  ] ]
[ [ 15, 24 ], [ 3 ] ]
[ [ 15, 25 ], [ 3 ] ]
[ [ 15, 26 ], [ 5 ] ]
[ [ 15, 27 ], [  ] ]
[ [ 15, 28 ], [  ] ]
[ [ 15, 29 ], [  ] ]
[ [ 15, 30 ], [ 3 ] ]
[ [ 15, 31 ], [  ] ]
[ [ 15, 32 ], [  ] ]
[ [ 15, 33 ], [ 5 ] ]
[ [ 15, 34 ], [  ] ]
[ [ 15, 35 ], [  ] ]
[ [ 15, 36 ], [ 5 ] ]
[ [ 15, 37 ], [  ] ]
[ [ 15, 38 ], [ 3 ] ]
[ [ 15, 39 ], [ 3 ] ]
[ [ 15, 40 ], [  ] ]
[ [ 15, 41 ], [  ] ]
[ [ 15, 42 ], [  ] ]
[ [ 15, 43 ], [  ] ]
[ [ 15, 44 ], [ 5 ] ]
[ [ 15, 45 ], [  ] ]
[ [ 15, 46 ], [  ] ]
[ [ 15, 47 ], [  ] ]
[ [ 15, 48 ], [  ] ]
[ [ 15, 49 ], [  ] ]
[ [ 15, 50 ], [ 3 ] ]
[ [ 15, 51 ], [  ] ]
[ [ 15, 52 ], [  ] ]
[ [ 15, 53 ], [  ] ]
[ [ 15, 54 ], [  ] ]
[ [ 15, 55 ], [  ] ]
[ [ 15, 56 ], [  ] ]
[ [ 15, 57 ], [ 3 ] ]
[ [ 15, 58 ], [  ] ]
[ [ 15, 59 ], [ 3 ] ]
[ [ 15, 60 ], [  ] ]
[ [ 15, 61 ], [  ] ]
[ [ 15, 62 ], [  ] ]
[ [ 15, 63 ], [  ] ]
[ [ 15, 64 ], [  ] ]
[ [ 15, 65 ], [  ] ]
[ [ 15, 66 ], [  ] ]
[ [ 15, 67 ], [ 3 ] ]
[ [ 15, 68 ], [  ] ]
[ [ 15, 69 ], [  ] ]
[ [ 15, 70 ], [  ] ]
[ [ 15, 71 ], [ 5 ] ]
[ [ 15, 72 ], [  ] ]
[ [ 15, 73 ], [  ] ]
[ [ 15, 74 ], [  ] ]
[ [ 15, 75 ], [ 3 ] ]
[ [ 15, 76 ], [  ] ]
[ [ 15, 77 ], [  ] ]
[ [ 15, 78 ], [  ] ]
[ [ 15, 79 ], [  ] ]
[ [ 15, 80 ], [  ] ]
[ [ 15, 81 ], [ 5 ] ]
[ [ 15, 82 ], [  ] ]
[ [ 15, 83 ], [  ] ]
[ [ 15, 84 ], [  ] ]
[ [ 15, 85 ], [  ] ]
[ [ 15, 86 ], [  ] ]
[ [ 15, 87 ], [  ] ]
[ [ 15, 88 ], [  ] ]
[ [ 15, 89 ], [  ] ]
[ [ 15, 90 ], [  ] ]
[ [ 15, 91 ], [  ] ]
[ [ 15, 92 ], [ 3 ] ]
[ [ 15, 93 ], [  ] ]
[ [ 15, 94 ], [  ] ]
[ [ 15, 95 ], [ 3 ] ]
[ [ 15, 96 ], [  ] ]
[ [ 15, 97 ], [  ] ]
[ [ 15, 98 ], [ 3 ] ]
[ [ 15, 99 ], [  ] ]
[ [ 15, 100 ], [  ] ]
[ [ 15, 101 ], [ 3 ] ]
[ [ 15, 102 ], [  ] ]
[ [ 15, 103 ], [  ] ]
[ [ 15, 104 ], [  ] ]
\end{verbatim}
}

\section{GAP algorithms}\label{S10}
We give GAP algorithms for computing the total obstruction 
${\rm Obs}(K/k)$ and 
the first obstruction ${\rm Obs}_1(L/K/k)$ 
as in Section \ref{S6}. 
The functions which are provided in this section are available from\\
{\tt https://www.math.kyoto-u.ac.jp/\~{}yamasaki/Algorithm/Norm1ToriHNP/}.\\
~{}\vspace*{-4mm}\\
{\small 
\begin{verbatim}LoadPackage("HAP");

Norm1TorusJ :=function(d,n)
    local I,M1,M2,M,f,Sn,T;
    I:=IdentityMat(d-1);
    Sn:=SymmetricGroup(d);
    T:=TransitiveGroup(d,n);
    M1:=Concatenation(List([2..d-1],x->I[x]),[-List([1..d-1],One)]);
    if d=2 then
        M:=[M1];
    else
        M2:=Concatenation([I[2],I[1]],List([3..d-1],x->I[x]));
        M:=[M1,M2];
    fi;
    f:=GroupHomomorphismByImages(Sn,Group(M),GeneratorsOfGroup(Sn),M);
    return Image(f,T);
end;

AbelianInvariantsSNF := function(G)
  local n,m,s,l;
  if Order(G)=1 then
    return [];
  fi;
  n:=AbelianInvariants(G);
  m:=DiagonalMat(n);
  s:=SmithNormalFormIntegerMat(m);
  return Filtered(DiagonalOfMat(s),x -> x>1);
end;

AbelianizationGen:= function(G)
    local Gab,pi,inv,A,iso,gen,genrep;
    Reset(GlobalMersenneTwister);
    Reset(GlobalRandomSource);
    pi:=NaturalHomomorphismByNormalSubgroup(G,DerivedSubgroup(G));
    Gab:=Image(pi);
    inv:=AbelianInvariantsSNF(Gab);
    A:=AbelianGroup(inv);
    iso:=IsomorphismGroups(A,Gab);
    gen:=List(GeneratorsOfGroup(A),x->Image(iso,x));
    genrep:=List(gen,x->PreImagesRepresentative(pi,x));
    return rec(Gab:=Gab, gen:=gen, genrep:=genrep, inv:=inv, pi:=pi);
end;

FindGenFiniteAbelian:= function(g)
    local e,a,ga,iso;
    e:=AbelianInvariants(g);
    if Length(e)>1 then
        e:=SmithNormalFormIntegerMat(DiagonalMat(e));
        e:=List([1..Length(e)],x->e[x][x]);
        e:=Filtered(e,x->x>1);
    fi;
    a:=AbelianGroup(e);
    ga:=GeneratorsOfGroup(a);
    iso:=IsomorphismGroups(a,g);
    return List(ga,x->Image(iso,x));
end;

EltFiniteAbelian:= function(arg)
    local g,c,gg,F,gF,hom,cF,e;
    g:=arg[1];
    c:=arg[2];
    if Length(arg)=3 then
        gg:=arg[3];
    else
        gg:=GeneratorsOfGroup(g);
    fi;
    F:=FreeGroup(Length(gg));
    gF:=GeneratorsOfGroup(F);
    hom:=GroupHomomorphismByImages(F,g,gF,gg);
    cF:=PreImagesRepresentative(hom,c);
    e:=List(gF,x->ExponentSumWord(cF,x));
    return e;
end;

FirstObstructionN:= function(arg)
    local G,H,Gab,Hab,K,Kinv,mat,v,Habbase,ker1;
    G:=arg[1];
    if Length(arg)=1 then
        H:=Stabilizer(G,1);
    else
        H:=arg[2];
    fi;
    Gab:=AbelianizationGen(G);
    Hab:=AbelianizationGen(H);
    Hab.Hab:=Hab.Gab;
    Unbind(Hab.Gab);
    if DerivedSubgroup(H)=H then
        return rec(ker:=[[],[[],[]]], Hab:=Hab, Gab:=Gab, psi:=[]);
    fi;
    if DerivedSubgroup(G)=G then
        return rec(ker:=[Hab.inv,[Hab.inv,IdentityMat(Length(Hab.inv))]],
            Hab:=Hab, Gab:=Gab, psi:=List(Hab.inv,x->[]));
    fi;
    K:=Image(Hab.pi,Intersection(H,DerivedSubgroup(G)));
    Kinv:=AbelianInvariantsSNF(K);
    mat:=[];
    for v in Hab.genrep do
        Add(mat,EltFiniteAbelian(Gab.Gab,Image(Gab.pi,v),Gab.gen));
    od;
    Habbase:=DiagonalMat(Hab.inv);
    ker1:=List(GeneratorsOfGroup(K),x->EltFiniteAbelian(Hab.Hab,x,Hab.gen));
    ker1:=LatticeBasis(Concatenation(Habbase,ker1));
    ker1:=LatticeBasis(Difference(ker1,Habbase));
    return rec(ker:=[Kinv,[Hab.inv,ker1]],
        Hab:=Hab, Gab:=Gab, psi:=mat);;
end;

FirstObstructionDnr:= function(arg)
    local G,H,Gab,Hab,HG,HGrep,Dnrgen,h,x,Dnr,Dnrinv,Habbase,Dnrmat;
    G:=arg[1];
    if Length(arg)=1 then
        H:=Stabilizer(G,1);
    else
        H:=arg[2];
    fi;
    Gab:=AbelianizationGen(G);
    Hab:=AbelianizationGen(H);
    Hab.Hab:=Hab.Gab;
    Unbind(Hab.Gab);
    if DerivedSubgroup(H)=H then
        return rec(Dnr:=[[],[[],[]]], Hab:=Hab, Gab:=Gab);
    fi;
    Reset(GlobalMersenneTwister);
    Reset(GlobalRandomSource);
    HG:=RightCosets(G,H);
    HGrep:=List(HG,Representative);
    Dnrgen:=[];
    for x in HGrep do
        for h in GeneratorsOfGroup(Intersection(H,H^x)) do
            Add(Dnrgen,Image(Hab.pi,Comm(h,x^-1)));
        od;
    od;
    Dnr:=Group(Dnrgen,Identity(Hab.Hab));
    Dnrinv:=AbelianInvariantsSNF(Dnr);
    Habbase:=DiagonalMat(Hab.inv);
    Dnrmat:=List(Dnrgen,x->EltFiniteAbelian(Hab.Hab,x,Hab.gen));
    Dnrmat:=LatticeBasis(Concatenation(Habbase,Dnrmat));
    Dnrmat:=LatticeBasis(Difference(Dnrmat,Habbase));
    return rec(Dnr:=[Dnrinv,[Hab.inv,Dnrmat]],
        Hab:=Hab, Gab:=Gab);
end;

FirstObstructionDr:= function(arg)
    local G,Gv,H,Gab,Hab,HGGv,HGGvrep,Hwi,Hwiab,Gvab,psi2i,i,psi2iimage,Hw,
          psi2,ker,phi1i,phi1iimage,phi1,Dr,Drinv,Habbase,Drmat;
    G:=arg[1];
    Gv:=arg[2];
    if Length(arg)=2 then
        H:=Stabilizer(G,1);
    else
        H:=arg[3];
    fi;
    Gab:=AbelianizationGen(G);
    Hab:=AbelianizationGen(H);
    Hab.Hab:=Hab.Gab;
    Unbind(Hab.Gab);
    if DerivedSubgroup(H)=H then
        return rec(Dr:=[[],[[],[]]], Hab:=Hab, Gab:=Gab);
    fi;
    HGGv:=DoubleCosets(G,H,Gv);
    HGGvrep:=List(HGGv,Representative);
    Hwi:=List(HGGvrep,x->Intersection(Gv^(x^(-1)),H));
    Hwiab:=List(Hwi,AbelianizationGen);
    Gvab:=AbelianizationGen(Gv);
    psi2i:=[];
    for i in [1..Length(HGGv)] do
        psi2iimage:=List(Hwiab[i].genrep,x->x^HGGvrep[i]);
        psi2iimage:=List(psi2iimage,x->Image(Gvab.pi,x));
        Add(psi2i,GroupHomomorphismByImages(Hwiab[i].Gab,Gvab.Gab,Hwiab[i].gen,
            psi2iimage));
    od;
    Hw:=DirectProduct(List(Hwiab,x->x.Gab));
    psi2:=GroupHomomorphismByFunction(Hw,Gvab.Gab,x->
     Product([1..Length(HGGv)],i->Image(psi2i[i],Image(Projection(Hw,i),x))));
    ker:=Kernel(psi2);
    phi1i:=[];
    for i in [1..Length(HGGv)] do
        phi1iimage:=List(Hwiab[i].genrep,x->Image(Hab.pi,x));
        Add(phi1i,GroupHomomorphismByImages(Hwiab[i].Gab,Hab.Hab,Hwiab[i].gen,
            phi1iimage));
    od;
    phi1:=GroupHomomorphismByFunction(Hw,Hab.Hab,x->
     Product([1..Length(HGGv)],i->Image(phi1i[i],Image(Projection(Hw,i),x))));
    Dr:=Image(phi1,ker);
    Drinv:=AbelianInvariantsSNF(Dr);
    Habbase:=DiagonalMat(Hab.inv);
    Drmat:=List(GeneratorsOfGroup(Dr),x->EltFiniteAbelian(Hab.Hab,x,Hab.gen));
    Drmat:=LatticeBasis(Concatenation(Habbase,Drmat));
    Drmat:=LatticeBasis(Difference(Drmat,Habbase));
    return rec(Dr:=[Drinv,[Hab.inv,Drmat]],
        Hab:=Hab, Gab:=Gab);
end;

MaximalSubgroups2:= function(G)
    Reset(GlobalMersenneTwister);
    Reset(GlobalRandomSource);
    return SortedList(MaximalSubgroups(G));
end;

SchurCoverG:= function(G)
    local epi,iso,ScG,ScGg,GG,GGg,Gg,n,i,id;
    Reset(GlobalMersenneTwister);
    Reset(GlobalRandomSource);
    epi:=EpimorphismSchurCover(G);
    iso:=IsomorphismPermGroup(Source(epi));
    ScG:=Source(epi);
    ScGg:=GeneratorsOfGroup(ScG);
    GG:=Range(iso);
    GGg:=List(ScGg,x->Image(iso,x));
    Gg:=List(ScGg,x->Image(epi,x));
    epi:=GroupHomomorphismByImages(GG,G,GGg,Gg);
    n:=NrMovedPoints(Source(epi));
    if n>=2 and n<=30 and IsTransitive(Source(epi),[1..n]) then
        for i in [1..NrTransitiveGroups(n)] do
            if Order(TransitiveGroup(n,i))=Order(Source(epi)) and
             IsConjugate(SymmetricGroup(n),
              TransitiveGroup(n,i),Source(epi)) then
                id:=[n,i];
                break;
            fi;
        od;
        return rec(SchurCover:=Source(epi), epi:=epi, Tid:=id);
    else
        return rec(SchurCover:=Source(epi), epi:=epi);
    fi;
end;

MinimalStemExtensions:= function(G)
    local ScG,ScGg,K,MK,ans,m,pi,cG,cGg,iso,GG,GGg,Gg,epi,n,i,id;
    ScG:=SchurCoverG(G);
    ScGg:=GeneratorsOfGroup(ScG.SchurCover);
    K:=Kernel(ScG.epi);
    MK:=MaximalSubgroups2(K);
    ans:=[];
    for m in MK do
        pi:=NaturalHomomorphismByNormalSubgroup(ScG.SchurCover,m);
        cG:=Range(pi);
        cGg:=List(ScGg,x->Image(pi,x));
        iso:=IsomorphismPermGroup(Range(pi));
        GG:=Range(iso);
        GGg:=List(cGg,x->Image(iso,x));
        Gg:=List(ScGg,x->Image(ScG.epi,x));
        epi:=GroupHomomorphismByImages(GG,G,GGg,Gg);
        n:=NrMovedPoints(Source(epi));
        if n>=2 and n<=30 and IsTransitive(Source(epi),[1..n]) then
            for i in [1..NrTransitiveGroups(n)] do
                if Order(TransitiveGroup(n,i))=Order(Source(epi)) and
                 IsConjugate(SymmetricGroup(n),
                  TransitiveGroup(n,i),Source(epi)) then
                    id:=[n,i];
                    break;
                fi;
            od;
            Add(ans,rec(MinimalStemExtension:=Source(epi), epi:=epi, Tid:=id));
        else
            Add(ans,rec(MinimalStemExtension:=Source(epi), epi:=epi));
        fi;
    od;
    return ans;
end;

ResHnZ:= function(arg)
    local RG,RH,n,G,H,inj,map,mapZ,CRGn,CRHn,HnG,HnH,m,res,null,ker,Hng,Hnggen,
          Hnh,Hnhgen,resHnggen,torbase,im,coker,hom,cokergen,cokergen1;
    RG:=arg[1];
    RH:=arg[2];
    n:=arg[3];
    G:=RG!.group;
    H:=RH!.group;
    inj:=GroupHomomorphismByFunction(H,G,x->x);
    map:=EquivariantChainMap(RH,RG,inj);
    mapZ:=HomToIntegers(map);
    if Length(arg)>=4 then
        CRGn:=arg[4];
    else
        CRGn:=CR_CocyclesAndCoboundaries(RG,n,true);
    fi;
    if Length(arg)=5 then
        CRHn:=arg[5];
    else
        CRHn:=CR_CocyclesAndCoboundaries(RH,n,true);
    fi;
    HnG:=CRGn.torsionCoefficients;
    HnH:=CRHn.torsionCoefficients;
    if HnG=[] then
        if HnH=[] then
            return rec(HnGZ:=[],HnHZ:=HnH,Res:=[],Ker:=[[],[[],[]]],
                       Coker:=[[],[[],[]]]);
        else
            return rec(HnGZ:=[],HnHZ:=HnH,Res:=[],Ker:=[[],[[],[]]],
                       Coker:=[HnH,[HnH,IdentityMat(Length(HnH))]]);
        fi;
    fi;
    if HnH=[] then
        return rec(HnGZ:=HnG,HnHZ:=[],
            Res:=List(HnG,x->[]),Ker:=[HnG,[HnG,IdentityMat(Length(HnG))]],
                      Coker:=[[],[[],[]]]);
    fi;
    m:=List(IdentityMat(Length(HnG)),x->
            CRHn.cocycleToClass(mapZ!.mapping(CRGn.classToCocycle(x),n)));
    null:=NullspaceIntMat(m);
    Hng:=AbelianGroup(HnG);
    Hnggen:=GeneratorsOfGroup(Hng);
    Hnh:=AbelianGroup(HnH);
    Hnhgen:=GeneratorsOfGroup(Hnh);
    resHnggen:=List(m,x->Product([1..Length(Hnhgen)],y->Hnhgen[y]^x[y]));
    res:=GroupHomomorphismByImages(Hng,Hnh,Hnggen,resHnggen);
    ker:=Kernel(res);
    im:=Image(res);
    null:=List(GeneratorsOfGroup(ker),x->EltFiniteAbelian(Hng,x,Hnggen));
    torbase:=DiagonalMat(HnG);
    null:=LatticeBasis(Concatenation(torbase,null));
    null:=LatticeBasis(Difference(null,torbase));
    hom:=NaturalHomomorphismByNormalSubgroup(Hnh,im);
    coker:=Image(hom);
    if Order(coker)=1 then
        return rec(HnGZ:=HnG,HnHZ:=HnH,Res:=m,
            Ker:=[AbelianInvariantsSNF(ker),[HnG,null]],Coker:=[[],[HnH,[]]]);
    fi;
    cokergen:=FindGenFiniteAbelian(coker);
    cokergen1:=List(cokergen,x->Representative(PreImages(hom,x)));
    cokergen1:=List(cokergen1,x->EltFiniteAbelian(Hnh,x,Hnhgen));
    return rec(HnGZ:=HnG,HnHZ:=HnH,Res:=m,
        Ker:=[AbelianInvariantsSNF(ker),[HnG,null]],
              Coker:=[AbelianInvariants(coker),[HnH,cokergen1]]);
end;

CosetRepresentationTid:= function(G,H)
    local Gg,HG,HGg,HGgr,n,i,id;
    Gg:=GeneratorsOfGroup(G);
    HG:=RightCosets(G,H);
    HGg:=List(Gg,x->Permutation(x,HG,OnRight));
    HGgr:=Group(HGg,());
    n:=Index(G,H);
    if n=1 then
        id:=[1,1];
    elif n<=30 then
        for i in [1..NrTransitiveGroups(n)] do
            if Order(TransitiveGroup(n,i))=Order(HGgr) and
              IsConjugate(SymmetricGroup(n),TransitiveGroup(n,i),HGgr) then
                id:=[n,i];
                break;
            fi;
        od;
    else
        id:=fail;
    fi;
    return id;
end;

AlwaysHNPholds:= function(Tid)
    local n,i,tbl,tbl4,tbl6,tbl8,tbl9,tbl10,tbl14,tbl15;
    tbl4:=[2,4];
    tbl6:=[4,12];
    tbl8:=[2,3,4,9,11,13,14,15,19,21,22,31,32,37,38];
    tbl9:=[2,5,7,9,11,14,23];
    tbl10:=[7,26,32];
    tbl14:=[30];
    tbl15:=[9,14];
    tbl:=[[],[],[],tbl4,[],tbl6,[],tbl8,tbl9,tbl10,[],[],[],tbl14,tbl15];
    if Tid=fail then
        return fail;
    fi;
    n:=Tid[1];
    i:=Tid[2];
    if IsPrime(n) or n=1 then
        return true;
    elif n=12 or n>15 then
        return fail;
    elif i in tbl[n] then
        return false;
    else
        return true;
    fi;
end;

IsMetacyclic:= function(G)
    local p;
    if Order(G)=1 then
        return true;
    fi;
    for p in Set(Factors(Order(G))) do
        if not IsCyclic(SylowSubgroup(G,p)) then
            return false;
        fi;
    od;
    return true;
end;

ChooseGi:= function(bG,bH)
    local bGs,Gicandidates,Gis,cGi,Gi,His,Hi,flag;
    bGs:=ConjugacyClassesSubgroups(bG);
    Gicandidates:=Filtered(bGs,x->not IsMetacyclic(Representative(x)));
    Gis:=[];
    for cGi in Gicandidates do
        for Gi in Elements(cGi) do
            His:=Reversed(List(ConjugacyClassesSubgroups(Intersection(Gi,bH)),
             Representative));
            flag:=false;
            for Hi in His do
                if AlwaysHNPholds(CosetRepresentationTid(Gi,Hi))=true then
                    Add(Gis,Gi);
                    flag:=true;
                    break;
                fi;
            od;
            if flag=true then
                break;
            fi;
        od;
    od;
    return Gis;
end;

KerResH3Z:= function(G,H)
    local RG,CRG3,H3Z,torbase,kerbase,Gis,Gi,RGi,ker,H3,H3g,K;
    if IsNilpotent(G) then
        RG:=ResolutionNormalSeries(LowerCentralSeries(G),4);
    elif IsSolvable(G) then
        RG:=ResolutionNormalSeries(DerivedSeries(G),4);
    else
        RG:=ResolutionFiniteGroup(G,4);
    fi;
    CRG3:=CR_CocyclesAndCoboundaries(RG,3,true);
    H3Z:=CRG3.torsionCoefficients;
    if H3Z=[] then
        return [[],[[],[]]];
    fi;
    torbase:=DiagonalMat(H3Z);
    kerbase:=IdentityMat(Length(H3Z));
    Gis:=ChooseGi(G,H);
    for Gi in Gis do
        if IsNilpotent(Gi) then
            RGi:=ResolutionNormalSeries(LowerCentralSeries(Gi),4);
        elif IsSolvable(Gi) then
            RGi:=ResolutionNormalSeries(DerivedSeries(Gi),4);
        else
            RGi:=ResolutionFiniteGroup(Gi,4);
        fi;
        ker:=ResHnZ(RG,RGi,3,CRG3).Ker;
        kerbase:=LatticeIntersection(kerbase,Union(ker[2][2],torbase));
        kerbase:=LatticeBasis(kerbase);
    od;
    kerbase:=LatticeBasis(Difference(kerbase,torbase));
    H3:=AbelianGroup(H3Z);
    H3g:=GeneratorsOfGroup(H3);
    K:=Group(List(kerbase,x->Product([1..Length(x)],y->H3g[y]^x[y])),Identity(H3));
    return [AbelianInvariantsSNF(K),[H3Z,kerbase]];
end;
\end{verbatim}
}


\end{document}